\documentclass[twoside]{article}

\usepackage{PRIMEarxiv}

\usepackage[utf8]{inputenc} 
\usepackage[T1]{fontenc}    
\usepackage{hyperref}       
\usepackage{url}            
\usepackage{booktabs}       
\usepackage{amsfonts}       
\usepackage{nicefrac}       
\usepackage{microtype}      
\usepackage{lipsum}
\usepackage{mathtools}
\usepackage{multicol}
\usepackage{fancyhdr}       
\usepackage{graphicx}       
\graphicspath{{media/}}     
\usepackage{relsize}
\usepackage{amsmath}
\usepackage{amsthm}
\usepackage{amssymb}
\usepackage{tikz}
\usepackage{tikz-cd}
\usetikzlibrary{calc}
\usepackage{faktor}
\usepackage{adjustbox}
\usepackage{yhmath}
\usepackage{scalerel}

\usetikzlibrary{arrows,calc,matrix}

  \newtheorem{theorem}{Theorem}[section]
	\newtheorem{proposition}[theorem]{Proposition}
	
	\newtheorem{lemma}[theorem]{Lemma}
	\newtheorem{corollary}[theorem]{Corollary}
	\theoremstyle{definition}
	\newtheorem*{claim*}{Claim}
	\newtheorem{remark}[theorem]{Remark}
	
	\newtheorem{definition}[theorem]{Definition}
	
	\newtheorem{example}[theorem]{Example}
	
	\theoremstyle{remark}

   \DeclareMathOperator {\lk}{lk} 
    \DeclareMathOperator {\st}{st}

  \DeclareMathOperator {\Out}{Out} 
  \DeclareMathOperator {\Aut}{Aut} 

\usepackage{flowchart}
\usetikzlibrary{arrows}

\newcommand{\abs}[1]{\left|#1\right|}
\usepackage{scalerel,stackengine}
\stackMath
\newcommand\reallywidehat[1]{%
	\savestack{\tmpbox}{\stretchto{%
			\scaleto{%
				\scalerel*[\widthof{\ensuremath{#1}}]{\kern-.6pt\bigwedge\kern-.6pt}%
				{\rule[-\textheight/2]{1ex}{\textheight}}
			}{\textheight}%
		}{0.5ex}}%
	\stackon[1pt]{#1}{\tmpbox}%
}
\newcommand\restr[2]{{
		\left.\kern-\nulldelimiterspace 
		#1 
		\right|_{#2} 
}}
  
\title{$\ell^2$-Betti numbers and $\Sigma$-invariants of graph products of cyclic groups and their automorphisms}
\author{Marcos Escartín-Ferrer}

\begin{document}
 \maketitle

\begin{abstract}
	We study the $\ell^2$-Betti numbers of graph products of cyclic groups. We relate these invariants to the $\ell^2$-Betti numbers of right-angled Artin groups and obtain explicit formulas in several cases determined by the structure of the finite-order vertices in the defining graph. In particular, we consider the cases where these vertices form a clique or have identical links, and derive a formula for the first $\ell^2$-Betti number when the defining graph has exactly two finite-order vertices. We also compute the BNSR-invariants of graph products of cyclic groups and the first $\ell^2$-Betti number of their automorphism groups when the defining graph has at most two finite-order vertices.
\end{abstract}
\keywords{Graph products of cyclic groups \and $\ell^2$-Betti numbers \and $\Sigma$-invariants \and Automorphism groups}
\section*{Acknowledgements}
The author is partially supported by the Departamento de Ciencia, Universidad y Sociedad del Conocimiento del Gobierno de Aragón (grant code: E22-23R: “Álgebra y Geometría”), and by the Spanish Government PID2021-126254NBI00 and PID2024155800NB-C32.The author would like to thank Dawid Kielak for helpful conversations during a short stay in Oxford.
\section{Introduction}
Let $\Gamma$ be a finite simplicial graph together with a map $o: V(\Gamma)\to \mathbb{Z}_{\geq 2}\cup\lbrace\infty\rbrace$. The \textbf{graph product of cyclic groups} $\mathcal{G}_\Gamma$ is defined by the presentation
$$\mathcal{G}_\Gamma=\left\langle V(\Gamma)\middle|,
uv=vu\ \text{for every } \{u,v\}\in E(\Gamma),
v^{o(v)}=1\ \text{if }o(v)<\infty
\right\rangle.
$$
When $o(v)=\infty$ for every $v\in V(\Gamma)$, the group $\mathcal{G}_\Gamma$ is the right-angled Artin group (RAAG) $A_\Gamma$. When $o(v)=2$ for every $v\in V(\Gamma)$, it is the right-angled Coxeter group (RACG) $W_\Gamma$. Thus, graph products of cyclic groups provide a common framework containing both RAAGs and RACGs, while allowing the vertex groups to have arbitrary finite orders.

The interplay between the algebraic structure of a graph product and the combinatorics of its defining graph has been a central theme in the study of these groups. Many group-theoretic and homological properties of graph products can be described in terms of links, stars, induced subgraphs, and the distribution of finite and infinite  order vertices. In particular, the presence of finite order vertices introduces phenomena that do not occur for right-angled Artin groups and leads to additional contributions to various invariants. In this paper, we investigate this interaction through the $\ell^2$-Betti numbers, BNSR-invariants, and automorphism groups of graph product of cyclic groups.

We first study the $\ell^2$-Betti numbers of graph products of cyclic groups. Davis and Leary computed the $\ell^2$-Betti numbers of RAAGs. More precisely, if $A_\Gamma$ is a RAAG, then $\beta_{i+1}^{(2)}(A_\Gamma)=\overline{\beta}_i(\widehat{\Gamma})$ for every $i\geq 0$, where $\overline{\beta}_i$ denotes the $i$-th reduced Betti number and $\widehat{\Gamma}$ is the \textbf{flag complex} of $\Gamma$, defined as the simplicial complex obtained by filling in an $(n-1)$-simplex for every $n$-clique in $\Gamma$ \cite{Davis}.

For RACGs, although there are many partial results (cf. \cite{Avramidi, Davis Okun, Davis Okun 2,Jankiewicz}), a general formula for their $\ell^2$-Betti numbers remains an open problem.

These two cases represent the two extreme cases for the orders of the vertex groups. We focus instead on the intermediate setting, where both finite and infinite order vertices occur in the defining graph. Denote by $\Gamma^{<\infty}$ the subgraph induced by the finite-order vertices, and suppose that $V(\Gamma^{<\infty})=\{u_1,\dots,u_n\}$. We can then define the following homomorphism:
$$\varphi:\mathcal{G}_\Gamma\longrightarrow\prod_{i=1}^n\mathbb{Z}/o(u_i)\mathbb{Z},$$
where $\varphi(u_i)=(0,\dots,0,1,0,\dots,0)$, with the $1$ in the $i$-th component, and $\varphi(v)=0$ whenever $o(v)=\infty$. The kernel $\ker(\varphi)$ is a finite-index subgroup of $\mathcal{G}_\Gamma$, for which an explicit presentation can be obtained using the Reidemeister--Schreier algorithm. By studying this kernel, we identify two cases in which it is isomorphic to a RAAG, allowing us to compute the $\ell^2$-Betti numbers explicitly in terms of the $\ell^2$-Betti numbers of RAAGs associated to certain subgraphs of $\Gamma$.
\begin{theorem}\label{Main1}
	Let $\mathcal{G}_\Gamma$ be a graph product of cyclic groups. Then:
	\begin{enumerate}
		\item If $\Gamma^{<\infty}$ is a clique, then
		$$ \beta_i^{(2)}(\mathcal{G}_\Gamma)=\frac{1}{\abs{\mathcal{G}_{\Gamma^{<\infty}}}}\left(\sum_{\Omega\subset\Gamma^{<\infty}}\left(\prod_{w\in V(\Gamma^{<\infty}\setminus\Omega)}(o(w)-1)\right)\beta_i^{(2)}(A_{\Gamma\setminus\Omega})\right)\qquad\text{for all } i\geq 0.$$
		\item If $V(\Gamma^{<\infty})=\{u_1,\dots,u_n\}$ and $\lk(u_1)=\cdots=\lk(u_n)=:L$,	then
		$$
		\beta_i^{(2)}(\mathcal{G}_\Gamma)
		=
		\beta_i^{(2)}(A_{\Gamma_{\mathcal{U}_1}})
		+
		\left(
		n-1-\sum_{j=1}^{n}\frac{1}{o(u_j)}
		\right)
		\beta_{i-1}^{(2)}(A_L)
		\qquad\text{for all } i\geq 0,
		$$
		where $\Gamma_{\mathcal{U}_1}$ is the graph obtained from $\Gamma$ by deleting $u_2,\dots,u_n$.
	\end{enumerate}
\end{theorem}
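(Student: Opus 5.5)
The plan is to pass to the finite-index subgroup $K:=\ker(\varphi)$ and use multiplicativity of $\ell^2$-Betti numbers, $\beta_i^{(2)}(\mathcal{G}_\Gamma)=\beta_i^{(2)}(K)/[\mathcal{G}_\Gamma:K]$. Here $[\mathcal{G}_\Gamma:K]=\prod_j o(u_j)$. In case (1) this equals $\abs{\mathcal{G}_{\Gamma^{<\infty}}}$, since the clique generates the finite abelian group $P=\prod_j\mathbb{Z}/o(u_j)\mathbb{Z}$.

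For case (1), $\varphi$ restricts to an isomorphism on $\mathcal{G}_{\Gamma^{<\infty}}\cong P$, so $\mathcal{G}_\Gamma=K\rtimes P$. I will take $P$ itself as Schreier transversal and run Reidemeister--Schreier.
\begin{enumerate}
\item I expect $K$ to be generated by the conjugates $v^g$ with $o(v)=\infty$ and $g\in P$, subject to $v^g=v^{gh}$ for $h\in P_v:=\langle \lk(v)\cap\Gamma^{<\infty}\rangle$. Hence the generators are indexed by pairs $(v,gP_v)$.
\item The relations should only be commutations: $(v,gP_v)$ commutes with $(w,hP_w)$ exactly when $\{v,w\}\in E(\Gamma)$ and $gP_v\cap hP_w\neq\emptyset$. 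So $K=A_\Delta$ is a RAAG, and by Davis--Leary $\beta^{(2)}_{i}(K)=\overline{\beta}_{i-1}(\widehat\Delta)$.
\item To compute $\overline{\beta}_{*}(\widehat\Delta)$, I will cover $\widehat\Delta$ by the subcomplexes $\widehat\Delta_g$ ($g\in P$) spanned by the vertices $(v,gP_v)$. Each $\widehat\Delta_g$ is a copy of the flag complex of $\Gamma\setminus\Gamma^{<\infty}$.
\item The intersection patterns of these subcomplexes are governed by the subgroups $\langle\Omega\rangle\le P$. Running Mayer--Vietoris/inclusion--exclusion over the coset structure, and reorganising by the subsets $\Omega\subset\Gamma^{<\infty}$, should produce the weights $\prod_{w\notin\Omega}(o(w)-1)$ multiplying the ($\ell^2$-)Betti numbers of $A_{\Gamma\setminus\Omega}$.
\end{enumerate}
An alternative I will keep in reserve for step 4 is to do the same inclusion--exclusion directly at the level of the $P$-equivariant Salvetti-type complex of $K$, where the nerve computation is cleaner.

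For case (2), I will avoid the kernel and use the splitting
$$\mathcal{G}_\Gamma=\mathcal{G}_{\Gamma\setminus\mathcal U}*_{\mathcal{G}_L}\bigl(\mathcal{G}_L\times F\bigr),\qquad F=\mathbb{Z}/o(u_1)*\cdots*\mathbb{Z}/o(u_n).$$
This holds because all $u_j$ have the same link $L$ and, being pairwise nonadjacent, they generate a free product.
\begin{enumerate}
\item For the second factor, Künneth gives $\beta_i^{(2)}(\mathcal{G}_L\times F)=\sum_k\beta_k^{(2)}(\mathcal{G}_L)\beta^{(2)}_{i-k}(F)$. Since $F$ is infinite, $\beta_0^{(2)}(F)=0$, and by the Euler characteristic $\beta_1^{(2)}(F)=n-1-\sum_j 1/o(u_j)$. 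This yields $\beta_{i-1}^{(2)}(A_L)\bigl(n-1-\sum_j 1/o(u_j)\bigr)$, using that the vertices of $L$ have infinite order, so $\mathcal{G}_L=A_L$.
\item In the analogous splitting $A_{\Gamma_{\mathcal U_1}}=\mathcal{G}_{\Gamma\setminus\mathcal U}*_{A_L}(A_L\times\mathbb{Z})$, the second factor has vanishing $\ell^2$-Betti numbers.
\item Compare the two $\ell^2$ Mayer--Vietoris sequences, which share the terms for $\mathcal{G}_{\Gamma\setminus\mathcal U}$ and $A_L$. The difference is then exactly the contribution of the $F$-factor.
\end{enumerate}
The case $L=\emptyset$, where $A_L$ is trivial and one has a free product, is handled separately by the free product formula.

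The main obstacle in case (2) is that $\ell^2$ Mayer--Vietoris is only a long exact sequence, not additive in general. I must show that the connecting maps agree in both sequences, or that the relevant maps induced by $A_L\hookrightarrow A_L\times F$ and $A_L\hookrightarrow A_L\times\mathbb{Z}$ vanish on $\ell^2$-homology. I would try to prove this vanishing by noting that $H^{(2)}_*(A_L\times F)$ is concentrated in the $\beta_1^{(2)}(F)$-part via Künneth, so the image of $H^{(2)}_*(A_L)$ is zero. In case (1), the delicate step is the combinatorial inclusion--exclusion over cosets that produces the exact weights.
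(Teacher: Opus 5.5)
Your part (2) is a correct argument along a different route from the paper, but part (1) has a genuine gap at step 4.

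\textbf{Part (1).} Steps 1--3 are fine. With $P=\mathcal{G}_{\Gamma^{<\infty}}$ as transversal, $K$ is indeed the RAAG on your graph $\Delta$, and the $\widehat\Delta_g$ cover $\widehat\Delta$. But step 4 is the whole content of the computation, and ``inclusion--exclusion over the coset structure'' does not deliver it.

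The intersection $\widehat\Delta_{g_0}\cap\dots\cap\widehat\Delta_{g_p}$ is the flag complex of $\lk(D)\setminus\Gamma^{<\infty}$, where $D$ is the set of finite vertices at whose coordinates the $g_k$ differ. Summing the homology of these pieces with signs (the $E^1$-page of the Mayer--Vietoris spectral sequence of the cover) determines only Euler characteristics unless you prove degeneration. You would then still have to convert complexes on subsets of $\Gamma\setminus\Gamma^{<\infty}$ into the complexes $\widehat{\Gamma\setminus\Omega}$, in which the finite vertices outside $\Omega$ appear as cone points.

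The missing idea, which is what the paper does in Theorem~\ref{BettiClique}, is to remove the finite vertices one at a time.
\begin{itemize}
\item For a single $v\in V(\Gamma^{<\infty})$, the kernel of $\mathcal{G}_\Gamma\to\mathbb{Z}/o(v)\mathbb{Z}$ is $\mathcal{G}_{\Delta'}$, where $\Delta'$ is the $o(v)$-fold amalgam of $\Gamma_v$ along $\lk(v)$.
\item Since $\Gamma^{<\infty}$ is a clique, the other finite vertices lie in $\lk(v)$. Hence $(\Delta')^{<\infty}=\Gamma^{<\infty}\setminus\{v\}$ is again a clique and you can induct.
\item For a $k$-fold amalgam $Y=X\cup_A\cdots\cup_A X$ the homology splits exactly: $\overline{\mathrm{H}}_i(Y)\cong\overline{\mathrm{H}}_i(X)\oplus\mathrm{H}_i(X,A)^{k-1}$. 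The fold map $Y\to X$ retracts onto the first copy, and excision computes $\mathrm{H}_i(Y,X)$.
\item Moreover $\mathrm{H}_i(\widehat{\Gamma_v},\widehat{\lk(v)})\cong\overline{\mathrm{H}}_i(\widehat\Gamma)$, because $\widehat\Gamma$ is $\widehat{\Gamma_v}$ with a cone attached along $\widehat{\lk(v)}$.
\end{itemize}
This is Proposition~\ref{AmalgamatedGraph}, and it gives $\beta_i^{(2)}(A_{\Delta'})=(o(v)-1)\beta_i^{(2)}(A_\Gamma)+\beta_i^{(2)}(A_{\Gamma_v})$. Iterating this recursion is exactly what produces the weights $\prod_{w\notin\Omega}(o(w)-1)$. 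Your $\Delta$ is the end result of this iteration, so the fix fits your setup, but without it step 4 is an expectation, not a proof.

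\textbf{Part (2).} Here your approach is valid and genuinely different. The paper computes an explicit Reidemeister--Schreier presentation of $\ker(\varphi)$ as a RAAG and counts the surviving generators $X$. It then decomposes the defining graph twice as an iterated amalgam and evaluates flag-complex homology by suspension arguments; this also shows $\mathcal{G}_\Gamma$ is virtually a RAAG.

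You instead use $\mathcal{G}_\Gamma=\mathcal{G}_{\Gamma\setminus\mathcal U}\ast_{A_L}(A_L\times F)$ and compare $\ell^2$ Mayer--Vietoris sequences with those for $A_{\Gamma_{\mathcal U_1}}=\mathcal{G}_{\Gamma\setminus\mathcal U}\ast_{A_L}(A_L\times\mathbb{Z})$. The points to make precise are:
\begin{itemize}
\item By naturality of the K\"unneth formula, $H_k^{(2)}(A_L)\to H_k^{(2)}(A_L\times F)$ factors through $H_k^{(2)}(A_L)\otimes H_0^{(2)}(F)=0$, and similarly for $A_L\times\mathbb{Z}$.
\item The rank of $H_k^{(2)}(A_L)\to H_k^{(2)}(\mathcal{G}_{\Gamma\setminus\mathcal U})$ is the same whether computed with $\mathcal{N}(\mathcal{G}_\Gamma)$- or $\mathcal{N}(A_{\Gamma_{\mathcal U_1}})$-coefficients, since induction is exact and dimension-preserving.
\item A dimension count in the two long exact sequences then yields $\beta_i^{(2)}(\mathcal{G}_\Gamma)-\beta_i^{(2)}(A_{\Gamma_{\mathcal U_1}})=\beta_i^{(2)}(A_L\times F)=\beta_1^{(2)}(F)\,\beta_{i-1}^{(2)}(A_L)$.
\end{itemize}
This route is shorter, explains the coefficient as $-\chi(F)$, and needs no separate case $L=\emptyset$.

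Note that ``$F$ is infinite'' requires $n\geq 2$, and this is not a technicality. For $n=1$ formula (2) is false: for $\Gamma$ a single vertex it gives $\beta_1^{(2)}(\mathcal{G}_\Gamma)=-1/o(u_1)$. You should state $n\geq 2$ explicitly; the paper's generator count $A$ likewise degenerates to $0$ when $n=1$.
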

Here, given a subgraph $\Delta\subset\Gamma$, we denote by $A_\Delta$ the RAAG associated to the graph $\Delta$, that is, the group obtained from $\mathcal{G}_\Delta$ by removing the finite-order relators.

For a torsion-free group $G$ satisfying the Atiyah conjecture, Lück showed in \cite[Lemma 10.28 (3)]{Luck} that the $\ell^2$-Betti numbers of $G$ can be expressed as $\beta_i^{(2)}(G)=\dim_{\mathcal{D}(G)} \mathrm{H}^i(G,\mathcal{D}(G))$ for all $i\geq0$, where $\mathcal{D}(G)$ denotes the Linnell division ring of $G$. By studying the space of derivations $\mathrm{Der}(G,\mathcal{D}(G))$ and using the identification

$$\mathrm{H}^1(G,\mathcal{D}(G))
\cong
\frac{\mathrm{Der}(G,\mathcal{D}(G))}
{\mathrm{InnDer}(G,\mathcal{D}(G))},$$

we compute the first $\ell^2$-Betti number of of graph products of cyclic groups with two finite-order vertices.
\begin{theorem}\label{Main2}
	Let $\mathcal{G}_\Gamma$ be a graph product of cyclic groups with $V(\Gamma^{<\infty})=\{u,v\}$. Then
	
		$$\beta_1^{(2)}(\mathcal{G}_\Gamma)=\frac{1}{o(u)o(v)}\left((o(u)-1)(o(v)-1)\beta_1^{(2)}(A_\Gamma)+(o(v)-1)\beta_{1}^{(2)}(A_{\Gamma_u})+(o(u)-1)\beta_{1}^{(2)}(A_{\Gamma_v})+\beta_{1}^{(2)}(A_{\Gamma_{uv}})\right).$$
\end{theorem}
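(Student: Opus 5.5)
We pass to the finite-index subgroup $K=\ker(\varphi)$, where $\varphi\colon\mathcal{G}_\Gamma\to\mathbb{Z}/o(u)\times\mathbb{Z}/o(v)$. Since $[\mathcal{G}_\Gamma:K]=o(u)o(v)$, multiplicativity gives $\beta_1^{(2)}(\mathcal{G}_\Gamma)=\beta_1^{(2)}(K)/(o(u)o(v))$. The task is therefore to show
$$\beta_1^{(2)}(K)=(o(u)-1)(o(v)-1)\beta_1^{(2)}(A_\Gamma)+(o(v)-1)\beta_1^{(2)}(A_{\Gamma_u})+(o(u)-1)\beta_1^{(2)}(A_{\Gamma_v})+\beta_1^{(2)}(A_{\Gamma_{uv}}).$$
Here $\Gamma_u$, $\Gamma_v$, $\Gamma_{uv}$ denote $\Gamma$ with $u$, $v$, or both removed, matching the subsets $\Omega$ in Theorem \ref{Main1}(1).

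\textbf{Case $u\sim v$.} Then $\Gamma^{<\infty}$ is a clique and the formula is exactly Theorem \ref{Main1}(1).

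\textbf{Case $u,v$ non-adjacent.} Apply Reidemeister--Schreier with transversal $\{u^av^b\}$. The group $K$ is generated by the conjugates $w^{u^av^b}$ of the infinite-order vertices $w$. Since $u\in\lk(w)$ forces $w^{u^a}=w$, $K$ is a ``graph-product-like'' group whose vertices are indexed by orbits. It is torsion-free and satisfies the Atiyah conjecture, being a finite-index subgroup of a graph product of cyclic groups, hence virtually cocompact special. We then compute
$$\beta_1^{(2)}(K)=\dim_{\mathcal{D}(K)}\mathrm{Der}(K,\mathcal{D}(K))-\dim\mathrm{InnDer}.$$
Equivalently, and more cleanly, we work directly with $\mathcal{G}_\Gamma$ using $\mathcal{D}(K)$-valued cohomology (or the induced module), in the following steps.

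\textbf{Step 1.} A derivation $d$ is determined by its values on the vertices. The relations impose the following constraints:
\begin{itemize}
\item $d(u)$ lies in the kernel of $1+u+\dots+u^{o(u)-1}$, a subspace of relative dimension $(o(u)-1)/o(u)$; likewise for $d(v)$.
\item Each edge $\{x,y\}$ gives $(1-y)d(x)=(1-x)d(y)$.
\end{itemize}

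\textbf{Step 2.} Decompose $\mathcal{D}$ according to the idempotents $e_{u}=\frac1{o(u)}\sum u^k$ and $1-e_u$, and similarly for $v$. Since $u,v$ need not commute, these idempotents do not commute globally. However, $\mathcal{D}(K)\otimes\mathbb{C}[\mathbb{Z}/o(u)\times\mathbb{Z}/o(v)]$ decomposes via induced modules from the subgroups $\langle u\rangle$ and $\langle v\rangle$ separately.

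On the summand where $u$ acts trivially, the $u$-relations collapse. The cocycle equations then become those of the RAAG on $\Gamma_u$, with $u$ contributing the coboundary direction only, giving $\beta_1^{(2)}(A_{\Gamma_u})$. The generic summand, where neither acts trivially, behaves like $A_\Gamma$ with both $u$ and $v$ free, giving $\beta_1^{(2)}(A_\Gamma)$. This accounts for the weights $(o(u)-1)(o(v)-1)$, $(o(v)-1)$, $(o(u)-1)$, $1$. Concretely, the dimensions of the eigenspaces of $u$ and $v$ acting on $\ell^2(\mathcal{G}_\Gamma)$ (namely $1/o$ and $(o-1)/o$) should multiply because $u$ and $v$ generate a free product $\mathbb{Z}/o(u)*\mathbb{Z}/o(v)$. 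This is the source of the $\frac1{o(u)o(v)}$ factors.

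\textbf{Step 3.} Verify the dimension bookkeeping against Mayer--Vietoris as a cross-check. The amalgam
$$\mathcal{G}_\Gamma=\mathcal{G}_{\Gamma_u}*_{\mathcal{G}_{\lk u}}\mathcal{G}_{\st u}$$
has $\mathcal{G}_{\st u}=\mathbb{Z}/o(u)\times\mathcal{G}_{\lk u}$, so $\beta^{(2)}_*(\mathcal{G}_{\st u})=\beta^{(2)}_*(\mathcal{G}_{\lk u})/o(u)$. The associated exact sequence in low degrees gives
$$\beta_1^{(2)}(\mathcal{G}_\Gamma)=\beta_1^{(2)}(\mathcal{G}_{\Gamma_u})+\Bigl(1-\tfrac1{o(u)}\Bigr)\beta_0\text{-type correction}+\dots$$
This holds provided the map $H_1^{(2)}$ of the link is controlled, and the same splitting for $A_\Gamma$ matches terms.

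\textbf{Main obstacle.} The hard part is exactly this control. In degree $1$ the Mayer--Vietoris sequence involves the image of $H_1^{(2)}(\mathcal{G}_{\lk u})$, which need not inject. This is why the derivation approach is used: the kernel of the connecting maps must be computed explicitly on the $\mathcal{D}$-side, showing that it has the same dimension as in the RAAG case for $A_\Gamma$ and $A_{\Gamma_u}$. Here $\lk u$ has only one finite vertex, namely $v$ if $v\in\lk u$, which is excluded in this case, so $\lk u$ is a RAAG. I expect to use this to reduce $\mathcal{G}_{\Gamma_u}$ (one finite vertex, covered by Theorem \ref{Main1}(1)) and combine it with the identical RAAG-level sequence, so that the uncontrolled terms cancel.
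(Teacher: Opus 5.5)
\textbf{Verdict.} The adjacent case is fine. The non-adjacent case has a genuine gap, and the statement itself fails there in general.

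\textbf{Adjacent case.} This is exactly what the paper does (Theorem~\ref{BettiClique}).

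\textbf{Non-adjacent case.} Here you do not yet have a proof. Step~2 is a heuristic, not an argument. The idempotents $e_u,e_v$ do not commute, so there is no joint decomposition in which the ``$u$ acts trivially'' and ``$v$ acts trivially'' summands can be handled separately. The claim that the relative dimensions $1/o$ and $(o-1)/o$ ``multiply because $\langle u,v\rangle$ is a free product'' is precisely what would need proving. Step~3 leaves the rank of the map out of $H_1^{(2)}(\mathcal{G}_{\lk u})$ uncontrolled, as you acknowledge. Two smaller points:
\begin{enumerate}
\item Torsion-freeness of $K$ does not follow from virtual specialness. It holds because every finite subgroup of $\mathcal{G}_\Gamma$ is conjugate into $\langle u\rangle$ or $\langle v\rangle$, on which $\varphi$ is injective.
\item $\mathcal{D}(K)$ carries no $\mathcal{G}_\Gamma$-action, so Step~1 must be set up for $K$ itself.
\end{enumerate}

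\textbf{The heuristic gives the wrong answer.} Let $\Gamma$ be the $4$-cycle $u-a-v-b-u$ with $o(a)=o(b)=\infty$. Then $\mathcal{G}_\Gamma\cong(\mathbb{Z}/o(u)\mathbb{Z}\ast\mathbb{Z}/o(v)\mathbb{Z})\times F_2$. Both factors are infinite, so K\"unneth gives $\beta_1^{(2)}(\mathcal{G}_\Gamma)=0$; the paper's own remark on cycles also gives $0$ here. The displayed formula instead gives $1/(o(u)o(v))$, because $\Gamma,\Gamma_u,\Gamma_v$ are connected and $\Gamma_{uv}$ has two components. So Step~2 cannot be completed in the form you intend.

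\textbf{What the paper does.} It writes $\ker\varphi$, via Reidemeister--Schreier, as an iterated HNN extension of the RAAG $A_\Delta$ over $A_{\lk(u)}$, with stable letters $X_{i_1,i_2}$. It then computes $\dim_{\mathcal{D}}\mathrm{Der}(\ker\varphi,\mathcal{D}(\ker\varphi))$ through the quantity $(1-a)^{-1}f(a)$. This quantity is constant on components of $\Delta$ and is carried across each stable letter. The contributions are:
\begin{enumerate}
\item a component of $\Gamma_{uv}$ adjacent to neither of $u,v$ contributes $o(u)o(v)$ dimensions;
\item one adjacent only to $u$ contributes $o(v)$;
\item one adjacent only to $v$ contributes $o(u)$;
\item all $N$ components adjacent to both $u$ and $v$, together with all the $X$'s, contribute a \emph{single} dimension.
\end{enumerate}
This collapse in the last item is the ingredient your plan lacks. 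It yields
$$\beta_1^{(2)}(\mathcal{G}_\Gamma)=\Bigl(1-\tfrac{1}{o(u)}-\tfrac{1}{o(v)}\Bigr)\beta_1^{(2)}(A_\Gamma)+\tfrac{1}{o(u)}\beta_1^{(2)}(A_{\Gamma_u})+\tfrac{1}{o(v)}\beta_1^{(2)}(A_{\Gamma_v}).$$

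\textbf{Where the displayed form breaks.} The paper rewrites this in the displayed form using $\beta_1^{(2)}(A_{\Gamma_{uv}})=\beta_1^{(2)}(A_{\Gamma_u})+\beta_1^{(2)}(A_{\Gamma_v})-\beta_1^{(2)}(A_\Gamma)$. That component count is valid only when $N\le 1$. When $u,v$ are non-adjacent and lie in one component of $\Gamma$, the displayed formula exceeds the true value by $(N-1)/(o(u)o(v))$.

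So for non-adjacent $u,v$ the correct target is the three-term formula above. The statement as displayed holds there only when at most one component of $\Gamma_{uv}$ is adjacent to both $u$ and $v$.
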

Using this approach, we also give an alternative proof of the computation of the $\ell^2$-Betti numbers of torsion-free one-relator groups (see \cite{Dicks} for the original proof) and compute the first $\ell^2$-Betti number of a locally indicable twisted RAAG (see Section~\ref{Twisted RAAGs} for the definition).

A second aspect of the paper concerns the BNSR-invariants of graph products of cyclic groups. Given a group $G$, the BNSR-invariants are subsets of the character sphere $S(G)$, whose elements are equivalence classes of nonzero \textbf{characters}, that is, homomorphisms $\chi:G\to\mathbb{R}$. There are two families of invariants: one of homotopical flavour and one of homological flavour, denoted by $\Sigma^n(G)$ and $\Sigma^n(G,A)$, respectively, where $A$ is a left $G$-module. A key feature of these invariants is that they encode a great deal of information about the $F_n$ and $FP_n$ properties of \textbf{coabelian subgroups} of $G$, that is, subgroups $H\leq G$ such that $G/H$ is abelian.

A group $G$ is said to be \textbf{algebraically fibred} if it admits an epimorphism
$G\twoheadrightarrow\mathbb{Z}$ with finitely generated kernel. More generally, a group $G$ is said to be \textbf{$\mathcal{P}$-algebraically fibred}, where $\mathcal{P}$ is a finiteness property such as $F_n$ or $FP_n$, if it admits an epimorphism
$G\twoheadrightarrow\mathbb{Z}$ whose kernel is of type $\mathcal{P}$.

For graph products of cyclic groups, given an epimorphism
$\chi:G\twoheadrightarrow\mathbb{Z}$, it follows that
$[\chi]\in\Sigma^n(G)$ if and only if $\ker(\chi)$ is of type $F_n$, and
$[\chi]\in\Sigma^n(G,\mathbb{Z})$ if and only if $\ker(\chi)$ is of type $FP_n$. Kielak and Fisher proved in \cite{Fisher, Kielak} that for a virtually RFRS group $G$ virtual fibring of type $FP_n$ is equivalent to the vanishing of the $\ell^2$-Betti numbers $\beta_i^{(2)}(G)=0$ for all $i=0,\dots,n$, and graph products of cyclic groups are virtually RFRS (cf.~\cite{Huang}), this provides a connection between the $\ell^2$-Betti numbers and the BNSR-invariants.

For a RAAG $A_\Gamma$, Meier, Meinert, and VanWyk gave in \cite{Meier-Meinert-VanWyk 1998} combinatorial criteria, in terms of $\Gamma$, for determining whether a character $\chi:A_\Gamma\to\mathbb{R}$ lies in $\Sigma^n(A_\Gamma)$ or $\Sigma^n(A_\Gamma,\mathbb{Z})$. The corresponding conditions are referred to as the \textbf{homotopical $n$-link condition} and the \textbf{homological $n$-link condition}, respectively. We refer the reader to Section~\ref{SigmaSection} for their definitions. We prove that the homological $n$-link condition gives a complete characterization in the homological setting and that it is sufficient for membership in the homotopical invariant.

\begin{theorem}\label{Main3}
	Let $\mathcal{G}_\Gamma$ be a graph product of cyclic groups and let $\chi:\mathcal{G}_\Gamma\to\mathbb{R}$ be a non-zero character. Then:
	\begin{enumerate}
		\item $[\chi]\in\Sigma^n(\mathcal{G}_\Gamma,\mathbb{Z})$ if and only if $\chi$ satisfies the homological $n$-link condition.
		\item If $ [\chi]\in\Sigma^n(\mathcal{G}_\Gamma)$, then $\chi$ satisfies the homotopical $n$-link condition.
	\end{enumerate}
\end{theorem}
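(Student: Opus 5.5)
We plan to reduce to the right-angled Artin case through a finite-index subgroup and transfer the known results there. The first observation is that a character $\chi:\mathcal{G}_\Gamma\to\mathbb{R}$ vanishes on every finite-order vertex, since $\mathbb{R}$ is torsion-free. So $\chi$ is determined by its values on $V(\Gamma)\setminus V(\Gamma^{<\infty})$. In particular $\chi$ factors through $\mathcal{G}_\Gamma\to A_\Gamma$, or rather it is compatible with the natural character $\tilde\chi$ of $A_\Gamma$ taking the same values on the infinite-order vertices and $0$ on the finite-order ones. We take the homological and homotopical $n$-link conditions for $\mathcal{G}_\Gamma$ to be the Meier--Meinert--VanWyk conditions, phrased in terms of the living subgraph $\mathcal{L}(\chi)$ (vertices with $\chi(v)\neq 0$) and the links of dead simplices in $\widehat\Gamma$. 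With this reading, dead vertices include every finite-order vertex.

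The first tool is the standard fact that $\Sigma$-invariants pass to finite-index subgroups. For $H\le G$ of finite index, $[\chi]\in\Sigma^n(G,\mathbb{Z})$ if and only if $[\chi|_H]\in\Sigma^n(H,\mathbb{Z})$, and likewise for the homotopical version. We apply this with $H=\ker\varphi$, where $\varphi$ is the map to $\prod\mathbb{Z}/o(u_i)$. Via Reidemeister--Schreier, $H$ is the graph product over the graph $\Gamma'$ obtained by taking copies $g\cdot v$ of the infinite-order vertices, indexed by cosets of the finite-order vertex groups. Concretely, $H$ is again of the form "RAAG-like", namely a graph product of infinite cyclic groups twisted by commutations. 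I would rather avoid this general presentation, however, and use the Davis complex.

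The alternative uses the Davis-type cube complex $X$ on which $\mathcal{G}_\Gamma$ acts cocompactly with finite stabilizers. Its vertex links are (joins of discrete sets with) the flag complex, and the stabilizers are the special subgroups $\mathcal{G}_\Omega$ for cliques $\Omega$. The character $\chi$ induces a $\chi$-equivariant height function on $X$, affine on cubes. We then run the Bestvina--Brady / Meier--Meinert--VanWyk Morse-theoretic argument. The ascending and descending links at a vertex are joins built from $\lk$ of dead simplices together with positive and negative copies of living vertices; finite-order vertices contribute their whole $\mathbb{Z}/o$ orbit to the horizontal part. The homological criterion of Brown--Bieri--Renz for cocompact complexes with finite stabilizers holds because we work rationally. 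Homologically, though, we need $\mathbb{Z}$-coefficients, and stabilizers being finite is a problem over $\mathbb{Z}$. For that reason I would use the finite-index torsion-free subgroup $H$, on which $X$ is free and cocompact. This gives that $[\chi|_H]\in\Sigma^n(H,\mathbb{Z})$ if and only if the ascending links are $(n-1)$-acyclic, and these links coincide with those appearing in the $n$-link condition for $\Gamma$.

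The key steps, in order, are:
\begin{enumerate}
\item Compute the ascending and descending links in $X$ and identify their acyclicity with the homological $n$-link condition. Dead finite-order vertices behave like dead infinite ones, with $\mathbb{Z}/o$-many sheets that join as discrete sets.
\item Prove part 1 in both directions. Sufficiency follows by Morse theory; necessity uses the Meier--Meinert--VanWyk style argument, producing nontrivial reduced homology in some $\lk$ from the failure of acyclicity via a Mayer--Vietoris or spectral sequence in the filtration by height.
\item Prove part 2 as written: $\Sigma^n\subseteq\Sigma^n(-,\mathbb{Z})$ gives the homological condition. The homotopical condition then requires connectivity; here I would show necessity of $\pi_1$-conditions via $\Sigma^2$ and the known $n\le 2$ argument, and for higher $n$ use that homotopical equals homological plus $\Sigma^2$.
\end{enumerate}
I expect the main obstacle to be the necessity direction in step 2. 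It requires exhibiting failure of $FP_n$ from a non-acyclic link, which in the RAAG case uses the explicit structure of the descending links; here one must check that the extra sheets coming from finite-order vertices do not kill the obstructing homology classes.
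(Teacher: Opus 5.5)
\textbf{There is a genuine gap: necessity.} The ``only if'' of part 1 and all of part 2 are exactly where your proposal stops being an argument.
\begin{itemize}
\item The Morse lemma on $X$ by itself only gives sufficiency. Your claimed equivalence ``$[\chi|_H]\in\Sigma^n(H,\mathbb{Z})$ if and only if the ascending links are $(n-1)$-acyclic'' is not a general fact.
\item ``A Mayer--Vietoris or spectral sequence in the filtration by height'' does not say which class of $\widehat{\lk_{\mathrm{Liv}^\chi}(\Delta)}$ obstructs $\mathrm{FP}_n$, or why it survives the $\mathbb{Z}/o(u)$-sheets. You admit this yourself.
\item Part 2 is reduced to ``the known $n\le 2$ argument'', but no such argument is known for graph products of cyclic groups. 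That is part of what has to be proved.
\end{itemize}

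The idea you are missing is to push $\Sigma^n$-membership down to retracts and special subgroups, rather than to a finite-index subgroup. Your first observation, that $\chi$ kills every finite-order vertex, is the right start.

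\emph{The empty dead clique.} The retraction $\rho:\mathcal{G}_\Gamma\to A_{\mathrm{Liv}^\chi}$ killing all dead vertices satisfies $\chi=\chi|_{A_{\mathrm{Liv}^\chi}}\circ\rho$. By the Meier--Meinert--VanWyk result on retracts, $[\chi|_{A_{\mathrm{Liv}^\chi}}]$ lies in $\Sigma^n(A_{\mathrm{Liv}^\chi},\mathbb{Z})$ (resp.\ $\Sigma^n(A_{\mathrm{Liv}^\chi})$). Theorem~\ref{SigmaRAAG}, applied to this nowhere-vanishing character, then says exactly that $\widehat{\mathrm{Liv}^\chi}$ is $(n-1)$-acyclic (resp.\ $(n-1)$-connected). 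So part 2 needs no Hurewicz or $\Sigma^2$ detour.

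\emph{Non-empty dead cliques.} For a dead vertex $v$, $\mathcal{G}_\Gamma$ splits over $\mathcal{G}_{\lk(v)}$: as an HNN extension of $\mathcal{G}_{\Gamma_v}$ if $o(v)=\infty$, and as $\mathcal{G}_{\Gamma_v}\ast_{\mathcal{G}_{\lk(v)}}\mathcal{G}_{\st(v)}$ if $o(v)<\infty$. Three ingredients then give $\chi|_{\mathcal{G}_{\lk(v)}}\neq 0$ and $[\chi|_{\mathcal{G}_{\lk(v)}}]\in\Sigma^{n-1}(\mathcal{G}_{\lk(v)},\mathbb{Z})$:
\begin{itemize}
\item the retraction onto $\mathcal{G}_{\Gamma_v}$;
\item Meinert's description of $\Sigma^1$ (Theorem~\ref{Sigma1});
\item the Meier--Meinert--VanWyk splitting result.
\end{itemize}
Iterate over the vertices of a dead clique $\Delta$ and induct on $n$, using that $\lk_{\mathrm{Liv}^\chi}(\Delta)$ is the living subgraph of $\bigcap_{v\in\Delta}\lk(v)$. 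This gives the required acyclicity for $|V(\Delta)|<n$. When $|V(\Delta)|=n$, dominance from $\Sigma^1$ gives non-emptiness.

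\textbf{Sufficiency is also under-specified, even though your route could work there.} Every finite-order vertex is dead, so whenever $\Gamma^{<\infty}\neq\emptyset$ your height function is constant on some edges of $X$. The Bestvina--Brady Morse lemma therefore does not apply. You would need a version for affine functions with horizontal cells, in the style of Bux--Gonz\'alez. There, a horizontal cell of type $\Delta$ imposes $(n-|V(\Delta)|-1)$-acyclicity of $\widehat{\lk_{\mathrm{Liv}^\chi}(\Delta)}$, not a uniform $(n-1)$-acyclicity of vertex ascending links.

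The paper avoids Morse theory altogether:
\begin{itemize}
\item From the contractible coset complex of clique subgroups it deletes all cosets of subgroups on which $\chi$ vanishes.
\item It shows the remaining complex $C(\mathcal{H})$ is still $(n-1)$-acyclic, using the Escart\'{\i}n--P\'erez connectivity lemma, the Deligne-complex computation of Lemma~\ref{DeligneLemma}, and a Quillen fibre argument identifying $|\mathcal{J}_S|$ with $\widehat{\lk_{\mathrm{Liv}^\chi}(\Delta)}$.
\item It then applies Meinert's criterion, Theorem~\ref{teo:MMVW}. The stabilizers are abelian groups $\mathcal{G}_\Delta$ with $\chi|_{\mathcal{G}_\Delta}\neq 0$, hence lie in $\Sigma^\infty$.
\end{itemize}
This works over $\mathbb{Z}$ with no passage to a torsion-free subgroup.

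Separately, your aside that $\ker\varphi$ is a graph product on copies of the infinite-order vertices is not what Reidemeister--Schreier gives in general. Proposition~\ref{Presentation} also has the extra generators $X_{i_1,\dots,i_n}$ and conjugation relators, and only in the special cases of Propositions~\ref{Endomorphism} and~\ref{EndomorphismLink} does one get a graph product. You do not rely on this claim, though.
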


For the converse implication in the homotopical setting, we obtain a weaker statement. Namely, we prove that if $\chi$ satisfies a stronger condition, which we call the \textbf{strong homological $n$-link condition}, then  $[\chi]\in\Sigma^n(\mathcal{G}_\Gamma)$.

Finally, we consider automorphism groups of graph products of cyclic groups. In \cite{Escartin}, the author computed the first $\ell^2$-Betti number of a RAAG and proved that

$$\beta_1^{(2)}(\operatorname{Aut}(A_\Gamma))\neq 0\quad\text{if and only if}\quad A_\Gamma\cong\mathbb{Z}^2.$$

Using the formulas obtained above for the $\ell^2$-Betti numbers of graph products, we compute the first $\ell^2$-Betti number of the automorphism group of a graph product of cyclic groups with exactly one finite-order vertex.

\begin{theorem}\label{Main4}
	Let $\mathcal{G}_\Gamma$ be a graph product of cyclic groups with
	$V(\Gamma^{<\infty})=\{v\}$. Then
	$$\beta_1^{(2)}(\operatorname{Aut}(\mathcal{G}_\Gamma))\neq 0
	\quad \text{if and only if} \quad
	\mathcal{G}_\Gamma \in
	\bigl\{
	\mathbb{Z}^2\times \mathbb{Z}/o(v)\mathbb{Z},
	\mathbb{Z}\ast \mathbb{Z}/o(v)\mathbb{Z}
	\bigr\}.$$
\end{theorem}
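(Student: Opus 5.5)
The plan is to follow the strategy used in \cite{Escartin} for RAAGs: find a finite-index subgroup, or a quotient with small kernel, of $\operatorname{Aut}(\mathcal{G}_\Gamma)$ whose first $\ell^2$-Betti number can be computed or shown to vanish. Recall two facts. First, $\beta_1^{(2)}$ scales by the index when passing to finite-index subgroups. Second, $\beta_1^{(2)}(G)=0$ whenever $G$ contains an infinite normal subgroup $N$ that is finitely generated (or such that $\beta_1^{(2)}(N)<\infty$) with $\beta_0^{(2)}(N)=0$, by Gaboriau's theorem. For graph products of cyclic groups, Corredor--Gutierrez-type generating sets for $\operatorname{Aut}(\mathcal{G}_\Gamma)$ are available. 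These consist of graph symmetries, inversions, partial conjugations and transvections; when $o(v)<\infty$ there are also transvections by the finite-order vertex only in the allowed cases. Let $\operatorname{Aut}^0(\mathcal{G}_\Gamma)$ be the finite-index subgroup generated by inversions, transvections and partial conjugations, with the graph symmetries removed.

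\textbf{Step 1: the two exceptional groups.} For $\mathbb{Z}\ast\mathbb{Z}/o(v)\mathbb{Z}$, the inner automorphism group is isomorphic to the group itself, since the center is trivial, and it has finite index in $\operatorname{Aut}$. Indeed, the outer automorphism group is finite: the automorphisms are generated by inversions, $x\mapsto xv^{k}$ and $x\mapsto v^k x$, together with conjugations, and $\Out$ is finite because $v$ has finite order. Hence
$$\beta_1^{(2)}(\operatorname{Aut})=\beta_1^{(2)}(\mathbb{Z}\ast\mathbb{Z}/o(v)\mathbb{Z})/[\operatorname{Aut}:\operatorname{Inn}],$$
and $\beta_1^{(2)}(\mathbb{Z}\ast\mathbb{Z}/o(v)\mathbb{Z})=1-1/o(v)>0$, either by Theorem~\ref{Main1} or by the free product formula. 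For $\mathbb{Z}^2\times\mathbb{Z}/o(v)\mathbb{Z}$, the automorphism group contains, with finite index, $GL_2(\mathbb{Z})\ltimes \operatorname{Hom}(\mathbb{Z}^2,\mathbb{Z}/o(v))$ (times a finite group). The normal subgroup $\operatorname{Hom}(\mathbb{Z}^2,\mathbb{Z}/o(v))$ is finite, so modulo it we get a group commensurable with $GL_2(\mathbb{Z})$, which is virtually free of rank $\geq 2$. Hence $\beta_1^{(2)}\neq 0$.

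\textbf{Step 2: all other $\Gamma$ give vanishing.} Split into cases according to $\Gamma$. If $\mathcal{G}_\Gamma$ has trivial center, then $\operatorname{Inn}(\mathcal{G}_\Gamma)\cong\mathcal{G}_\Gamma$ is an infinite finitely generated normal subgroup of $\operatorname{Aut}$. Gaboriau's criterion reduces the problem to showing $\beta_1^{(2)}(\operatorname{Aut})=0$ via $\beta_1^{(2)}(\mathcal{G}_\Gamma)$ being finite. That alone is not enough, so instead I would use the version for normal subgroups: if $N\trianglelefteq G$ is finitely generated and infinite, and $G/N$ contains an element of infinite order (or $G/N$ is infinite), then $\beta_1^{(2)}(G)=0$. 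So the task is to show that $\Out(\mathcal{G}_\Gamma)$ is infinite unless $\mathcal{G}_\Gamma=\mathbb{Z}\ast\mathbb{Z}/o(v)$ or $\Out$ is finite. When $\Out$ is finite, $\beta_1^{(2)}(\operatorname{Aut})=\beta_1^{(2)}(\mathcal{G}_\Gamma)/[\operatorname{Aut}:\operatorname{Inn}]$, and we must show $\beta_1^{(2)}(\mathcal{G}_\Gamma)=0$ in those cases. Here Theorem~\ref{Main1} applies with the single vertex $v$ (a clique), giving
$$\beta_1^{(2)}(\mathcal{G}_\Gamma)=\tfrac{1}{o(v)}\bigl((o(v)-1)\beta_1^{(2)}(A_\Gamma)+\beta_1^{(2)}(A_{\Gamma\setminus v})\bigr).$$
This vanishes iff $\Gamma$ and $\Gamma\setminus v$ are connected, by Davis--Leary (or if $\Gamma\setminus v$ is a single vertex?). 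Thus disconnected cases must be handled by exhibiting infinite-order partial conjugations, which exist whenever $\Gamma\setminus\st(w)$ has a component for some infinite-order $w$, or when a free factor with infinite-order vertices exists.

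If the center is nontrivial, $\Gamma$ has a central vertex. If it is an infinite-order vertex $z$, the transvections $x\mapsto xz$ generate an infinite normal abelian subgroup (when $\Gamma$ has another infinite-order vertex), or a copy of $\mathbb{Z}$ normal in $\operatorname{Aut}$. This kills $\beta_1^{(2)}$ unless we are in the $\mathbb{Z}^2\times\mathbb{Z}/o(v)$ situation, where it is instead $GL_2$. In general, a finite-index subgroup maps onto $GL_k(\mathbb{Z})\ltimes\mathbb{Z}^{k\cdot m}$ structures, and for $k\geq3$ property (T) or the infinite normal abelian subgroup gives vanishing.

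\textbf{Main obstacle.} The main difficulty is the case analysis in Step 2 when $\Out(\mathcal{G}_\Gamma)$ is infinite but the natural infinite normal subgroup is not obviously finitely generated. This mirrors the RAAG argument of \cite{Escartin}, where one uses that the subgroup generated by partial conjugations and transvections, the Torelli-type subgroup, is normal and finitely generated. I would try to transport that argument by comparing $\operatorname{Aut}(\mathcal{G}_\Gamma)$ with $\operatorname{Aut}(A_\Gamma)$ through the description of generators, checking that the presence of the torsion vertex $v$ only adds finitely many extra generators (transvections $x\mapsto xv$, and partial conjugations by $v$) that do not destroy normality.
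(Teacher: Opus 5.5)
Your skeleton matches the paper's. You use the extension $1\to\mathcal{G}_\Gamma/\mathrm{Z}(\mathcal{G}_\Gamma)\to\operatorname{Aut}(\mathcal{G}_\Gamma)\to\Out(\mathcal{G}_\Gamma)\to1$ and get vanishing by L\"uck's Theorem~7.2(7) when both ends are infinite. When $\Out(\mathcal{G}_\Gamma)$ is finite, you pass to the finite-index subgroup of inner automorphisms and apply Theorem~\ref{Main1}. Your Step~1 is fine.

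What is missing is the step that carries the theorem: \emph{if $\Out(\mathcal{G}_\Gamma)$ is finite, then $\Gamma$ and $\Gamma_v$ are connected, unless $\Gamma$ consists of two isolated vertices $x,v$.} You only say that disconnected cases ``must be handled by exhibiting infinite-order partial conjugations'', and that mechanism fails. Take $\Gamma=\{x\}\sqcup\{w,v\}$ with the single edge $\{w,v\}$, or the path with edges $\{x,v\},\{v,y\}$ (where $\Gamma_v$ is disconnected). In both, every $\Gamma\setminus\st(\cdot)$ is empty or connected, so no partial conjugation is outer. $\Out$ is infinite only because of the transvections $x\mapsto xw$ (as $\lk(x)=\emptyset\subset\st(w)$) and $x\mapsto xy$ (as $\lk(x)=\{v\}\subset\st(y)$). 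Your criterion should also read ``$\Gamma\setminus\st(w)$ has at least two components''. And an infinite-order free factor does not force $\Out$ to be infinite, as $\mathbb{Z}\ast\mathbb{Z}/o(v)\mathbb{Z}$ itself shows.

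The paper supplies this step in Lemmas~\ref{LemmaDisconnected} and~\ref{Gamma connected}, using conditions (2) and (3) of Proposition~\ref{FiniteOut} jointly. If $\Gamma$ is disconnected, (2) at each infinite-order $w$ forces exactly two components, with $w$ adjacent to everything in its own. So the component avoiding $v$ is a clique of infinite-order vertices, hence a single vertex $x$ by (3). Then $\lk(x)=\emptyset$ lies in every star, so (3) leaves only $v$ in the other component. If $\Gamma$ is connected but $\Gamma_v$ is not, take $w$ adjacent to $v$ in a component $C$ of $\Gamma_v$; then (2) forces two components and $C\subset\st(w)$. Any other $w'\in C$ would satisfy $\lk(w')\subset\st(w)$, contradicting (3). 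So both components are single vertices, $\Gamma$ is the path above, and (3) excludes it.

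You also never treat $\mathrm{Z}(\mathcal{G}_\Gamma)=\langle v\rangle$. Your finite-$\Out$ computation assumes trivial centre, and for a non-trivial centre you only discuss a central infinite-order vertex. When $v$ is central, $\operatorname{Inn}(\mathcal{G}_\Gamma)\cong A_{\Gamma_v}$, and vanishing again requires $\Gamma_v$ connected, i.e.\ the same lemma.

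For a central infinite-order vertex $z$, your ``infinite normal abelian subgroup'' generated by the transvections $x\mapsto xz$ is unjustified (such subgroups are not normal in general) and unnecessary. If $\Gamma$ is not complete, there is another infinite-order vertex $u$, and $u\mapsto uz$ has infinite order in $\Out$, so the extension argument applies. If $\Gamma$ is complete, then $\mathcal{G}_\Gamma\cong\mathbb{Z}^k\times\mathbb{Z}/o(v)\mathbb{Z}$.

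Finally, the ``main obstacle'' you identify is not one. $\operatorname{Inn}(\mathcal{G}_\Gamma)$ is a quotient of $\mathcal{G}_\Gamma$, hence always finitely generated, so the infinite-$\Out$ case is already settled by your own Step~2. The real content is the combinatorial classification of graphs with finite $\Out$ described above.
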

We also compute the first $\ell^2$-Betti number of the automorphism group of a graph product of cyclic groups with two finite-order vertices. However, the resulting description is more involved, and we therefore omit it from the introduction; see Theorem~\ref{TheoremAut2}.

The paper is organized as follows. In Section~2, we recall the necessary background on graph products and their automorphisms, and $\Sigma$-invariants. Section~3 is devoted to the study of $\ker(\varphi)$ and the proof of Theorem~\ref{Main1}. In Section~4, we study the first $\ell^2$-Betti number using derivations and prove Theorem~\ref{Main2}. Section~5 is devoted to the study of the BNSR-invariants, where we prove Theorem~\ref{Main3}. Finally, in Section~6, we study automorphism groups and prove Theorem~\ref{Main4}.
\section{Preliminaries}
\subsection{$\Sigma$-Invariants}
Define the \textbf{character sphere} of a finitely generated group $G$ as
$$
S(G)=(\mathrm{Hom}(G,\mathbb{R})\setminus\{0\})/\sim
$$
where $\chi_1,\chi_2\in\mathrm{Hom}(G,\mathbb{R})\setminus\lbrace 0\rbrace$ are equivalent if $\chi_1=t\chi_2$ for some $t>0$. If $n$ denotes the torsion-free rank of the abelianization of $G$, then $S(G)\cong\mathbb{S}^{n-1}$ (c.f. \cite[Lemma A1.1]{Strebel}). A character $\chi:G\to \mathbb{Z}$ is called \textbf{discrete}, and the set of all discrete characters is dense in $S(G)$ (c.f. \cite[Lemma B3.24]{Strebel}). We define the homological and homotopical Sigma invariants of a group as follows:

\begin{definition}
	Let $G$ be a finitely generated group and $\chi:G\to\mathbb{R}$ a character. If $A$ is a left $G$-module and $n\geq 1$, the \textbf{Bieri–Neumann–Strebel–Renz (BNSR) invariants} are defined by
	$$
	\Sigma^n(G,A)=\{[\chi]\in S(G)\mid A \text{ is of type }\mathrm{FP}_n \text{ over }\mathbb{Z}G_\chi\}
	$$
	where $G_\chi=\{g\in G\mid\chi(g)\geq 0\}$.
\end{definition}

\begin{definition}
	Let $G$ be a group admitting a finite $K(G,1)$, and let $X$ be a finite model for $K(G,1)$ having a single 0-cell. There is a bijection between the vertices of the universal cover $\widetilde{X}$ and the elements of $G$. Fix a base point $b\in\widetilde{X}^{(0)}$. Given a character $\chi:G\to\mathbb{R}$, define a function $\widetilde{\chi}:\widetilde{X}^{(0)}\to\mathbb{R}$ by $\widetilde{\chi}(g\cdot b)=\chi(g)$. For $a\in\mathbb{R}$, let $\widetilde{X_{\chi}}^{[a,\infty)}$ be the maximal subcomplex of $\widetilde{X}$ with $0$-skeleton $\{x\in\widetilde{X}^{(0)}\mid \widetilde{\chi}(x)\geq a\}$. Then, for each $d\geq 0$, the inclusion $\widetilde{X_{\chi}}^{[0,\infty)}\hookrightarrow \widetilde{X_{\chi}}^{[-d,\infty)}$ induces a map $\tau_i^d:\pi_i\bigl(\widetilde{X_{\chi}}^{[0,\infty)}\bigr)\longrightarrow \pi_i\bigl(\widetilde{X_{\chi}}^{[-d,\infty)}\bigr).$
	Define the \textbf{homotopical Sigma invariants} by
	$$\Sigma^n(G)=\Bigl\{[\chi]\in S(G)\ \Bigm|\ \exists\, d>0 \text{ such that }\tau_i^d\text{ is trivial for all } i<n \Bigr\}.$$
\end{definition}
Observe that:
	$$\Sigma^\infty(G,A)\subseteq \dots \subseteq \Sigma^n(G,A)\subseteq \dots \subseteq \Sigma^1(G,A)\subseteq \Sigma^0(G,A)=S(G).$$
$$\Sigma^\infty(G)\subseteq \dots \subseteq \Sigma^1(G)\subseteq \Sigma^0(G)=S(G).$$

Renz proved in his thesis (cf. \cite{Renz}) that the homotopical Sigma invariants are well-defined, i.e. they do not depend on the chosen model for $K(G,1)$. Bieri and Renz also showed that both $\Sigma^n(G,\mathbb{Z})$ and $\Sigma^n(G)$ are open in $S(G)$, and that if $n=\mathrm{cd}(G)$ is the cohomological dimension of $G$, then $\Sigma^m(G,\mathbb{Z})=\Sigma^n(G,\mathbb{Z})$ for every $m\geq n$. Similarly, if $n=\mathrm{gd}(G)$ is the geometric dimension of $G$, then $\Sigma^m(G)=\Sigma^n(G)$ for every $m\geq n$ (c.f.\ \cite{Bieri-Renz}).

Another important remark is that the homotopical and homological Sigma invariants are closely related. Indeed, $\Sigma^1(G)=\Sigma^1(G,\mathbb{Z})$ and $\Sigma^n(G)=\Sigma^n(G,\mathbb{Z})\cap\Sigma^2(G)$ for every $n\geq 1$ (c.f.\ \cite{Renz}). The main result about Sigma invariants, which explains why they encode the finiteness properties of the coabelian subgroups, is the following:
\begin{theorem}[Bieri-Renz \cite{Bieri-Renz}]\label{teo:FinitenesSigma}
	Let $G$ be a group of type $\mathrm{F}_n$ and $G'\leq N\leq G$. Then:
	\begin{enumerate}
		\item[(i)] $N$ is of type $\mathrm{FP}_n$ if and only if $\{[\chi]\in S(G)\mid \chi(N)=0\}\subseteq \Sigma^n(G,\mathbb{Z})$.
		\item[(ii)] $N$ is of type $\mathrm{F}_n$ if and only if
		$\{[\chi]\in S(G)\mid \chi(N)=0\}\subseteq \Sigma^n(G)$.
	\end{enumerate}
\end{theorem}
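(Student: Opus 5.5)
The plan is to go through Brown's criterion, using the topological characterisation of $\Sigma$-invariants (Bieri–Renz) together with Theorem~\ref{teo:FinitenesSigma} as the key translation tool. Since $G$ is of type $\mathrm{F}_n$, $\mathbb{Z}$ is of type $\mathrm{FP}_n$ over $\mathbb{Z}G$, and the group $G/N$ is finitely generated abelian, say $G/N\cong\mathbb{Z}^r\times T$ with $T$ finite.

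\textbf{Step 1: reduction to $T=0$.} Replace $N$ by the finite-index overgroup $N_1$ that is the preimage of $T$. Then $N$ is of type $\mathrm{FP}_n$ (resp.\ $\mathrm{F}_n$) if and only if $N_1$ is. Moreover $\chi(N)=0$ if and only if $\chi(N_1)=0$, because characters vanish on torsion. So we may assume $Q:=G/N\cong\mathbb{Z}^r$.

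\textbf{Step 2: restate the condition.} $N$ is of type $\mathrm{FP}_n$ exactly when $\mathbb{Z}$ is $\mathrm{FP}_n$ over $\mathbb{Z}N$. The characters vanishing on $N$ are those factoring through $Q$, so they form the sphere $S(G,N)\cong S(Q)$. Take a free resolution $F_\ast\to\mathbb{Z}$ over $\mathbb{Z}G$ that is finitely generated in degrees $\le n$, and view it as a complex of free $\mathbb{Z}N$-modules.

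\textbf{Step 3: the ``if'' direction.} Assume $S(G,N)\subseteq\Sigma^n(G,\mathbb{Z})$. By the Bieri–Renz valuation criterion, for each $[\chi]\in S(G,N)$ there is a chain endomorphism $\varphi_\chi$ of the truncated resolution, lifting the identity, that strictly increases the $\chi$-valuation on finitely many basis elements. Compactness of $S(Q)$ and openness of $\Sigma^n$ give finitely many such maps covering the whole sphere. Composing them, one shows that the chains are pushed into a bounded region of $Q=\mathbb{Z}^r$ up to homotopy. It then follows that the $\mathbb{Z}N$-complex is chain homotopy equivalent, in degrees $<n$, to one that is finitely generated over $\mathbb{Z}N$. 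This is the main obstacle: the uniform control over all directions of $\mathbb{R}^r$ simultaneously. I would handle it by induction on $r$. The case $r=1$ is the classical theorem that a kernel of $\chi\colon G\twoheadrightarrow\mathbb{Z}$ is $\mathrm{FP}_n$ if and only if $\pm\chi\in\Sigma^n$, proved by a Mayer–Vietoris argument over $\mathbb{Z}G_\chi$ and $\mathbb{Z}G_{-\chi}$, whose intersection is $\mathbb{Z}N$. For the inductive step, split $Q=\mathbb{Z}^{r-1}\times\mathbb{Z}$ and apply the case $r=1$ to the intermediate group $M$, the preimage of $\mathbb{Z}^{r-1}$. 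This requires the standard fact that $\Sigma^n(M)$ contains the restrictions of characters in $\Sigma^n(G)$ that vanish on $N$ when $G/M$ is finitely generated free abelian. I would import this fact as the Meinert-type restriction lemma.

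\textbf{Step 4: the converse and the homotopical statement.} Conversely, if $N$ is of type $\mathrm{FP}_n$, then for each $\chi$ vanishing on $N$ we have $N\le G_\chi$. Since $G_\chi$ is an ascending union over $Q_\chi$, a finite $\mathrm{FP}_n$ partial resolution over $\mathbb{Z}N$ induces one over $\mathbb{Z}G_\chi$ by inducing up along $\mathbb{Z}N\to\mathbb{Z}G_\chi$. Hence $[\chi]\in\Sigma^n(G,\mathbb{Z})$. Part (ii) then follows from part (i) combined with the same argument for $n=2$ at the level of $\pi_1$, using $\Sigma^n(G)=\Sigma^n(G,\mathbb{Z})\cap\Sigma^2(G)$. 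It also uses the fact that $N$ is of type $\mathrm{F}_n$ if and only if it is $\mathrm{FP}_n$ and finitely presented. Finite presentability is checked with the same compactness-and-induction scheme on the filtration $\widetilde{X_\chi}^{[a,\infty)}$.
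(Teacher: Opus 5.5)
The paper gives no proof of this statement; it quotes it from Bieri--Renz. Your proposal therefore has to stand on its own, and as written it has a genuine gap in Step~3, at exactly the point you flag as the main obstacle.

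The ``Meinert-type restriction lemma'' you import is false as stated. Restricting a character in $\Sigma^n(G)$ to a coabelian $M$ with $G/M$ free abelian need not land in $\Sigma^n(M)$, even when the character vanishes on $N$. Take $G=F_2\times\mathbb{Z}=\langle a,b\rangle\times\langle t\rangle$, $N=G'$, $M=\langle a,b\rangle$ (so $G/M\cong\mathbb{Z}$ and $M/N\cong\mathbb{Z}^2$), and $\chi(a)=\chi(t)=1$, $\chi(b)=0$. Then $[\chi]\in\Sigma^1(G)$, since $\mathrm{Liv}^\chi=\{a,t\}$ is connected and dominant; in fact $[\chi]\in\Sigma^\infty(G)$, since $\chi$ is non-zero on the central element $t$. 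But $\chi|_M\neq 0$ and $\Sigma^1(F_2)=\emptyset$.

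What your inductive step actually needs is the global implication $S(G,N)\subseteq\Sigma^n(G,\mathbb{Z})\Rightarrow S(M,N)\subseteq\Sigma^n(M,\mathbb{Z})$. This cannot be obtained character by character, and the natural proof of it is circular. For $[\psi]\in S(M,N)$ put $N'=\ker\psi$. It is normal in $G$, satisfies $S(G,N')\subseteq S(G,N)$, and $G/N'$ is free abelian of rank $\operatorname{rk}(\operatorname{im}\psi)+1\le r$. One then applies the theorem itself to $N'$, and the easy direction to $M/N'$. So the induction on $r$ does not get around the difficulty; it only moves it into an unproved (and, in your formulation, false) lemma.

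What is missing is the argument you outline and then set aside. The finitely many endomorphisms $\varphi_{\chi_1},\dots,\varphi_{\chi_k}$, whose neighbourhoods cover the compact sphere $S(G,N)$, must be combined into a uniform contraction towards a bounded region of $Q\otimes\mathbb{R}$. Concretely, you must show that cycles of degree $<n$ lying over a ball bound over a uniformly larger ball. Then Brown's criterion, applied to the $N$-cocompact pieces lying over balls, gives that $\mathbb{Z}$ is $\mathrm{FP}_n$ over $\mathbb{Z}N$. This is the real content of the Bieri--Renz proof and has to be done directly.

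The same applies to the $\pi_1$-level argument you invoke for part (ii): your induction scheme breaks there in the same way, since the counterexample above is already homotopical ($\Sigma^1(G)=\Sigma^1(G,\mathbb{Z})$).

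The following parts are fine: the reduction to $T=0$, the base case $r=1$ via Mayer--Vietoris on the two half-space filtrations, and the reduction of (ii) to (i) plus finite presentability. There are two smaller points:
\begin{enumerate}
\item In Step~4, inducing a $\mathbb{Z}N$-resolution of $\mathbb{Z}$ up to $\mathbb{Z}G_\chi$ resolves $\mathbb{Z}G_\chi\otimes_{\mathbb{Z}N}\mathbb{Z}=\mathbb{Z}Q_\chi$, not $\mathbb{Z}$. You still need that $\mathbb{Z}$ is $\mathrm{FP}_n$ over $\mathbb{Z}Q_\chi$ (that is, $\Sigma^n(Q,\mathbb{Z})=S(Q)$ for abelian $Q$), followed by a splicing argument.
\item Your opening sentence lists the very theorem being proved among the tools, which should be removed.
\end{enumerate}
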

The next Theorem, which will be the key to compute the $\Sigma$-invariants of graph products of cyclic groups, is due to Schmitt and Meinert. 
\begin{theorem}[Theorem 3.2 \cite{Meier-Meinert-VanWyk 1998}]\label{teo:MMVW}
	Let $X$ be a $G$–CW–complex with cocompact $n$-skeleton, and let $\chi:G\to\mathbb{R}$ be a character such that for every $p$-cell $\sigma$ of $X$ with $p\leq n$, we have $G_\sigma\nsubseteq\ker(\chi)$, where $G_\sigma$ denotes the stabilizer of $\sigma$. Then:
	\begin{itemize}
		\item[(i)] If $X$ is $(n-1)$-connected and $[\chi|_{G_\sigma}]\in\Sigma^{\,n-p}(G_\sigma)$ for each $p$-cell $\sigma$ with $p\leq n$, then $[\chi]\in\Sigma^n(G)$.
		
		\item[(ii)] If $X$ is $(n-1)$-acyclic and $[\chi|_{G_\sigma}]\in\Sigma^{\,n-p}(G_\sigma,\mathbb{Z})$ for each $p$-cell $\sigma$ with $p\leq n$, then $[\chi]\in\Sigma^n(G,\mathbb{Z})$.
	\end{itemize}
\end{theorem}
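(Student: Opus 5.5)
The plan is to prove the homological statement (ii) algebraically, from a partial resolution built out of the cellular chains of $X$. Statement (i) then follows by running the same argument geometrically, with a Borel-type construction and Brown's criterion.

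\textbf{Homological part.} Since $X$ is $(n-1)$-acyclic, the augmented cellular chain complex
$$C_n(X)\to C_{n-1}(X)\to\cdots\to C_0(X)\to\mathbb{Z}\to 0$$
is exact in degrees $<n$. Each $C_p(X)$ is a finite direct sum of modules $\mathbb{Z}[G/G_\sigma]$, possibly twisted by an orientation character, with one summand for each $G$-orbit of $p$-cells; finiteness comes from cocompactness of the $n$-skeleton. We restrict everything to the monoid ring $\mathbb{Z}G_\chi$.

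\emph{Key observation.} The hypothesis $G_\sigma\not\subseteq\ker\chi$ gives some $t\in G_\sigma$ with $\chi(t)>0$, and this makes $G_\chi$ act transitively on $G/G_\sigma$. Indeed, for any $h\in G$ and $k\gg0$ we have $h=a\cdot t^{-k}$ with $a=ht^{k}$ and $\chi(a)\ge 0$. It follows that $\mathbb{Z}[G/G_\sigma]\cong \mathbb{Z}G_\chi\otimes_{\mathbb{Z}(G_\sigma)_\chi}\mathbb{Z}$ as $\mathbb{Z}G_\chi$-modules; one checks that the stabiliser of the coset $G_\sigma$ in $G_\chi$ is $(G_\sigma)_\chi$.

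The hypothesis $[\chi|_{G_\sigma}]\in\Sigma^{n-p}(G_\sigma,\mathbb{Z})$ says that $\mathbb{Z}$ is of type $FP_{n-p}$ over $\mathbb{Z}(G_\sigma)_\chi$. Inducing a finite-type partial resolution along the flat extension $\mathbb{Z}(G_\sigma)_\chi\subseteq \mathbb{Z}G_\chi$ shows that $C_p(X)$ is of type $FP_{n-p}$ over $\mathbb{Z}G_\chi$. We then apply the standard lemma: a complex that is exact in degrees $<n$ and whose $p$-th term is $FP_{n-p}$ has augmentation module of type $FP_n$. This is proved by a spectral-sequence or mapping-cone induction on $n$. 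The conclusion is that $\mathbb{Z}$ is $FP_n$ over $\mathbb{Z}G_\chi$, i.e.\ $[\chi]\in\Sigma^n(G,\mathbb{Z})$.

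\textbf{Homotopical part.} First, $\Sigma^{n-p}(G_\sigma)\neq\emptyset$ forces $G_\sigma$ to be of type $F_{n-p}$. We replace $X$ by a free $G$-complex $Y$ built as a complex of spaces over the $n$-skeleton of $X$. The fibre over a $p$-cell $\sigma$ is the universal cover of a $K(G_\sigma,1)$ with finite $(n-p)$-skeleton, and the gluing maps come from the inclusions of stabilisers of faces. Because the fibres are contractible, $Y\to X$ is highly connected, so $Y$ is $(n-1)$-connected, free, and has cocompact $n$-skeleton.

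Next we choose a $\chi$-equivariant height function $h$ on $Y$. On each fibre it restricts, up to bounded error, to a $\chi|_{G_\sigma}$-height function. By Brown's criterion, it suffices to show that the superlevel sets $Y^{\ge a}$ are essentially $(n-1)$-connected. Now $Y^{\ge a}$ is itself a complex of spaces over $X$, with fibre over $\sigma$ equal to a superlevel set of the fibre. By hypothesis this fibre superlevel set is essentially $(n-p-1)$-connected. It is nonempty, and indeed cofinal, for every $a$, because $t\in G_\sigma$ with $\chi(t)>0$ pushes points arbitrarily high. Using this, we show by induction over the skeleta of $X$ that any map $S^i\to Y^{\ge a}$ with $i<n$ can be homotoped inside $Y^{\ge a-d}$ to a constant map. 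The procedure is to push the sphere, cell by cell, down the fibres, lose a bounded amount of height at each of the finitely many orbit types, and use the $(n-1)$-connectivity of $X$ to fill in the base directions.

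\textbf{Main obstacle.} The main difficulty is this last gluing step: controlling the height loss uniformly while extending null-homotopies across fibres of differing connectivity. I would handle it by working skeleton by skeleton and taking $d$ to be the maximum of the finitely many fibre lags, plus the bounded discrepancy in the gluing maps.
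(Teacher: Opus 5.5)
The paper gives no proof of this theorem; it quotes it from Meier--Meinert--VanWyk and attributes it to Schmitt and Meinert. So your argument has to stand on its own.

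\textbf{The homological part (ii) is correct.} It is the natural $\Sigma$-analogue of Brown's criterion, but two assertions each need a line of justification.

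First, for a monoid action, transitivity of $G_\chi$ on $G/G_\sigma$ together with the stabiliser computation does not by itself give $\mathbb{Z}[G/G_\sigma]\cong\mathbb{Z}G_\chi\otimes_{\mathbb{Z}(G_\sigma)_\chi}\mathbb{Z}$. You must also show that $a,b\in G_\chi$ with $aG_\sigma=bG_\sigma$ are identified in the tensor product. Write $b=as$ with $s\in G_\sigma$. For $k\gg0$ both $t^k$ and $st^k$ lie in $(G_\sigma)_\chi$, so $a\otimes1=ast^k\otimes1=bt^k\otimes1=b\otimes1$.

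Second, flatness of $\mathbb{Z}G_\chi$ over $\mathbb{Z}(G_\sigma)_\chi$ is not automatic for monoid rings. As a right module, $\mathbb{Z}G_\chi$ splits over the cosets $gG_\sigma$, and left multiplication by $g^{-1}$ identifies the piece over $gG_\sigma$ with $\mathbb{Z}\{s\in G_\sigma\mid\chi(s)\geq-\chi(g)\}$. That piece is the union of the chain of free cyclic submodules $y\,\mathbb{Z}(G_\sigma)_\chi$ with $y\in G_\sigma$ and $\chi(y)\geq-\chi(g)$, hence flat (free when $\chi|_{G_\sigma}$ is discrete). With these two points, your dimension-shifting lemma finishes (ii).

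\textbf{The gap is in the homotopical part (i).} The step you flag as the main obstacle is the entire content of (i), and it is only asserted. As stated, it is also not quite right:
\begin{itemize}
\item $Y^{\geq a}$ is not a complex of spaces over $X$ whose fibres are the fibre superlevel sets. The equivariant height on $Y$ and the gluing maps between fibres respect levels only up to a bounded error, so $Y^{\geq a}$ is merely sandwiched between such complexes at levels $a\pm c$.
\item The lag does not stay at ``maximum fibre lag plus discrepancy''. A skeleton-by-skeleton filling loses height at each of up to $n$ stages.
\item Building $Y$ as a genuine complex of spaces over a possibly non-regular $G$-CW complex, with compatible equivariant fibre maps and cocompact $n$-skeleton, needs a Brown-style skeletal construction rather than a literal Borel construction.
\end{itemize}
None of this is fatal. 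But you give no argument that lifting a null-homotopy from $X$ into $Y^{\geq a-d}$ actually works when the fibres are only \emph{essentially} connected.

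\textbf{The cleanest repair is to avoid general $n$.} Since $\Sigma^m(H)\subseteq\Sigma^m(H,\mathbb{Z})$ and $(n-1)$-connected implies $(n-1)$-acyclic, the hypotheses of (i) imply those of (ii). Hence $[\chi]\in\Sigma^n(G,\mathbb{Z})$.
\begin{itemize}
\item For $n=1$ this already is (i), because $\Sigma^1(G)=\Sigma^1(G,\mathbb{Z})$.
\item For $n\geq2$, by $\Sigma^n(G)=\Sigma^n(G,\mathbb{Z})\cap\Sigma^2(G)$ it suffices to prove the case $n=2$ for the same $X$. This $X$ is simply connected with cocompact $2$-skeleton, and $\Sigma^{n-p}(G_\sigma)\subseteq\Sigma^{2-p}(G_\sigma)$. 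Also, $G$ being $F_2$ and $FP_n$ makes it $F_n$, so $\Sigma^n(G)$ is defined.
\end{itemize}
In the case $n=2$ your lifting only has to handle loops in $Y^{\geq a}$ and van Kampen diagrams over $X$. There, vertex fibres are essentially $1$-connected, edge fibres essentially $0$-connected, and $2$-cell fibres nonempty at every height. This concrete argument is what you still need to write out.
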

\subsection{Graph product of cyclic groups and their automorphisms}
We begin by recalling several well-known structural properties of graph products of cyclic groups.

\begin{lemma}[\cite{Green}]
	Let $\mathcal{G}_\Gamma$ be a graph product of cyclic groups. Then:
	\begin{enumerate}
		\item Let $\Delta$ be an \textbf{induced subgraph} of $\Gamma$; that is, a subgraph such that whenever $v,w\in V(\Delta)$ satisfy ${v,w}\in E(\Gamma)$, then ${v,w}\in E(\Delta)$. The subgroup of $\mathcal{G}_\Gamma$ generated by $V(\Delta)$ is isomorphic to $\mathcal{G}_\Delta$. Such subgroups are called \textbf{special subgroups}.
		\item The centre $\mathrm{Z}(\mathcal{G}_\Gamma)$ of $\mathcal{G}_\Gamma$ is the special subgroup $\mathcal{G}_\Delta$, where $\Delta$ is the subgraph induced by the vertices $v\in V(\Gamma)$ satisfying $\mathrm{st}(v)=\Gamma$.
		\item A special subgroup $\mathcal{G}_\Delta$ is finite if and only if $\Delta$ is complete and each vertex group has finite order.
	\end{enumerate}
\end{lemma}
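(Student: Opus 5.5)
The lemma cited from \cite{Green} has three parts, and I would treat each via the normal form theorem for graph products. A word $g_1\cdots g_k$, with each $g_j$ a non-trivial element of a vertex group, is \emph{reduced} if it cannot be shortened by two moves: swapping adjacent syllables from adjacent vertices, and merging two syllables from the same vertex once they have been shuffled next to each other. Green's theorem states that every element has a reduced expression, that two reduced expressions of the same element differ only by swaps of adjacent commuting syllables, and that the set of vertices involved and the length are therefore invariants of the element. I would take this normal form as the basic tool, proved by the usual van der Waerden style action on the set of reduced words, and deduce (1)--(3) from it.

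For (1), let $\Delta$ be an induced subgraph. Since $\Delta$ is induced, every relator of $\mathcal{G}_\Delta$ is a relator of $\mathcal{G}_\Gamma$, so there is a natural map $\mathcal{G}_\Delta\to\mathcal{G}_\Gamma$. There is also a retraction $\mathcal{G}_\Gamma\to\mathcal{G}_\Delta$ sending the vertices outside $\Delta$ to $1$; it respects all relators, again because $\Delta$ is induced. The composite $\mathcal{G}_\Delta\to\mathcal{G}_\Gamma\to\mathcal{G}_\Delta$ is the identity, so the natural map is injective and its image is the subgroup generated by $V(\Delta)$.

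For (3), if $\Delta$ is complete with all vertex orders finite, then $\mathcal{G}_\Delta$ is the direct product $\prod_{v\in V(\Delta)}\mathbb{Z}/o(v)\mathbb{Z}$, which is finite. Conversely, if some $v\in V(\Delta)$ has $o(v)=\infty$, then $\langle v\rangle\cong\mathbb{Z}$ by (1), so $\mathcal{G}_\Delta$ is infinite. If instead $\Delta$ has two non-adjacent vertices $u,w$, then by (1) the subgroup $\langle u,w\rangle$ is the free product $\mathbb{Z}/o(u)\ast\mathbb{Z}/o(w)$. This contains the element $uw$, which has infinite order, so again $\mathcal{G}_\Delta$ is infinite.

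For (2), let $\Delta$ be the set of vertices $v$ with $\mathrm{st}(v)=\Gamma$. Each such $v$ commutes with every generator, so $\mathcal{G}_\Delta\subseteq Z(\mathcal{G}_\Gamma)$. The converse inclusion is the main obstacle. Take a central element $g$ with reduced form $g_1\cdots g_k$, and suppose some syllable $g_j$ lies in a vertex $v$ whose star is not all of $\Gamma$. Choose $w$ not adjacent to $v$, with $w\neq v$. I would then compare the reduced forms of $wg$ and $gw$ using the normal form uniqueness. Any syllable of $g$ lying in $w$ could be shuffled to one end of the word, but it could not pass the syllable in $v$. So $w$ fails to cancel or merge on at least one side, and the two reduced words then differ in a way not accounted for by commuting swaps. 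This contradicts $wg=gw$.

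The delicate point is handling the syllables in $w$ that already occur in $g$. For this I would use the left-divisor and right-divisor sets from the normal form theory: $w$ can be absorbed on the left only if $g$ has a reduced form beginning with a $w$-syllable, and symmetrically on the right. Once this is set up, a length count gives the contradiction.
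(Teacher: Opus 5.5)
The paper gives no proof of this lemma; it is quoted from Green's thesis, so your argument has to stand on its own. Parts (1) and (3) are correct as written. In (1), the retraction killing the vertices outside $\Delta$ is well defined exactly because $\Delta$ is induced, so injectivity needs no normal forms. Part (3) follows from (1) applied either to a single vertex of infinite order, or to a non-adjacent pair $u,w$, where $uw$ has infinite order in $\mathbb{Z}/o(u)\mathbb{Z}\ast\mathbb{Z}/o(w)\mathbb{Z}$. The inclusion $\mathcal{G}_\Delta\subseteq\mathrm{Z}(\mathcal{G}_\Gamma)$ in (2) is also fine.

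\textbf{The gap.} The problem is the reverse inclusion in (2). You argue that each $w$-syllable can reach at most one end, ``so $w$ fails to cancel or merge on at least one side''. This does not follow, because distinct $w$-syllables of $g$ may lie on the two sides of the $v$-syllable. The step uses nothing about centrality, and it fails for $g=wvw$ with $o(w)=\infty$: here $w$ merges on both sides, and $wg=w^2vw$ and $gw=wvw^2$ are both reduced of length $3$. So first/last-letter sets plus a length count cannot give the contradiction. What distinguishes $wg$ from $gw$ is the order of the $w$-syllables relative to the $v$-syllables, and you need to make that an invariant.

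\textbf{How to repair it.}
Since $v,w$ are non-adjacent, no commuting swap ever exchanges two syllables lying in $\{v,w\}$. By the normal form theorem, the left-to-right sequence $s(h)$ of $v$- and $w$-syllables in a reduced word for $h$ therefore depends only on $h$. Let $P$ be the block of $w$-syllables of $s(g)$ before its first $v$-syllable.

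A $w$-syllable that can be shuffled to the front of $g$ is preceded only by syllables from $\lk(w)$, which contains neither $v$ nor $w$. Hence it is the first entry of $s(g)$. It follows that the initial block of $s(wg)$ is $P$ with $w$ prepended, with its first entry changed, or with its first entry deleted. In every case it differs from $P$.

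Symmetrically, multiplying on the right by $w$ only changes entries after the last $v$-syllable, so the initial block of $s(gw)$ is $P$. Thus $wg\neq gw$, which is the desired contradiction.

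Alternatively, for $\st(v)\neq\Gamma$ the splitting $\mathcal{G}_\Gamma=\mathcal{G}_{\Gamma_v}\ast_{\mathcal{G}_{\lk(v)}}\mathcal{G}_{\st(v)}$ is a non-trivial amalgam, so the centre lies in $\mathcal{G}_{\lk(v)}$. Since the set of vertices involved in an element is an invariant, a central element then involves no vertex $v$ with $\st(v)\neq\Gamma$.
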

There is one induced subgraph that will be of particular importance in this paper, namely $\Gamma^{<\infty}$, the subgraph induced by the finite-order vertices of $\mathcal{G}_\Gamma$. In the rest of the paper, given a set of vertices ${v_1,\dots,v_n}$ of $\Gamma$, we denote by $\Gamma_{v_1\cdots v_n}$ the subgraph of $\Gamma$ obtained by deleting $v_1,\dots,v_n$.

We will make use of the known formulas for the $\ell^2$-Betti numbers of RAAGs to study the $\ell^2$-Betti numbers of graph products of cyclic groups.
\begin{theorem}[Davis-Leary \cite{Davis}]\label{BettiRAAG} Let $A_\Gamma$ be a RAAG. Then:
	$$\beta_{i+1}^{(2)}(A_\Gamma)=\overline{\beta}_i(\widehat{\Gamma})~\mathrm{for~every~}i\geq 0$$
where $\overline{\beta}_i$ denote the reduced  $i$-th Betti number and  $\widehat{\Gamma}$ is the \textbf{flag complex} of $\Gamma$, defined as the simplicial complex obtained by filling in an $(n-1)$-simplex for every $n$-clique in $\Gamma$.
\end{theorem}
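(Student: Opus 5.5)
The statement is the Davis--Leary theorem, so the natural route is to compute the $\ell^2$-homology of $A_\Gamma$ through the Salvetti complex, via a K\"unneth and Mayer--Vietoris induction on the number of vertices. Let $S_\Gamma$ be the Salvetti complex: a finite cube complex with one $k$-torus $T^\sigma$ for each $(k-1)$-simplex $\sigma$ of $\widehat{\Gamma}$, plus the empty simplex giving the $0$-cell. It is a $K(A_\Gamma,1)$, by Charney--Davis, since its universal cover is a CAT(0) cube complex. Hence $\beta_i^{(2)}(A_\Gamma)$ is the $\ell^2$-Betti number of the free cocompact $A_\Gamma$-CW-complex $\widetilde{S_\Gamma}$. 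The proof is an induction on $|V(\Gamma)|$ that tracks three cases.

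First I will settle the two building blocks. If $\Gamma$ is empty, $A_\Gamma=1$, so $\beta_0^{(2)}=1$, and this matches the convention $\overline{\beta}_{-1}(\emptyset)=1$. For the cone case, suppose $\Gamma=\{v\}*\Gamma'$ with $v$ adjacent to everything. Then $A_\Gamma=\mathbb{Z}\times A_{\Gamma'}$, and the K\"unneth formula for $\ell^2$-Betti numbers together with $\beta^{(2)}_*(\mathbb{Z})=0$ gives that all $\beta_i^{(2)}(A_\Gamma)$ vanish. This matches $\widehat{\Gamma}$ being a cone, which is contractible, so all its reduced Betti numbers are $0$.

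For the induction step, when $\Gamma$ is not a cone I will pick a vertex $v$. This decomposes $A_\Gamma$ as an amalgamated product
$$A_\Gamma=A_{\Gamma_v}\ast_{A_{\lk(v)}}A_{\st(v)},$$
with $A_{\st(v)}=\mathbb{Z}\times A_{\lk(v)}$. Correspondingly, $\widehat{\Gamma}=\widehat{\Gamma_v}\cup \widehat{\st(v)}$, where the two pieces meet in $\widehat{\lk(v)}$ and $\widehat{\st(v)}$ is a cone. The Mayer--Vietoris sequence in $\ell^2$-homology for the amalgam (equivalently, for the decomposition of $\widetilde{S_\Gamma}$ into induced copies of $\widetilde{S_{\Gamma_v}}$ and $\widetilde{S_{\st(v)}}$) reads
$$\cdots\to H_i^{(2)}(A_{\lk(v)}\subset A_\Gamma)\to H_i^{(2)}(A_{\Gamma_v}\subset A_\Gamma)\oplus H_i^{(2)}(A_{\st(v)}\subset A_\Gamma)\to H_i^{(2)}(A_\Gamma)\to\cdots.$$
Dimensions are preserved under induction from subgroups, and the middle summand from the star vanishes by the cone case. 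The simplicial Mayer--Vietoris sequence for $\widehat{\Gamma}$ has exactly the same shape, with a shift by one. By induction the outer terms agree dimensionwise: $\beta_{i}^{(2)}(A_{\Delta})=\overline{\beta}_{i-1}(\widehat\Delta)$ for $\Delta=\Gamma_v,\lk(v)$.

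The main obstacle is that knowing the dimensions of the terms in two long exact sequences does not determine the dimension of the remaining term. One also needs the connecting maps to have matching ranks. To handle this, I will avoid comparing ranks and instead compute the $\ell^2$-chain complex directly. Cells of $\widetilde{S_\Gamma}$ are orbits indexed by simplices $\sigma$ (with $\sigma=\emptyset$ allowed), and the cellular boundary of the cell of $T^\sigma$ is $\sum_{v\in\sigma}\pm(1-v)\,[T^{\sigma\setminus v}]$. Since $\mathcal{D}(A_\Gamma)$ (or $\mathcal{U}(A_\Gamma)$) is a division ring or von Neumann regular, $1-v$ is invertible there for every generator $v$. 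Rescaling basis elements by products of the $(1-v)^{-1}$ then makes the chain complex $\mathcal{D}\otimes C_*(\widetilde{S_\Gamma})$ isomorphic to $\mathcal{D}\otimes_\mathbb{Z}$ of the augmented simplicial chain complex of $\widehat{\Gamma}$, shifted by one. The rescaling is legitimate because the letters commute exactly when they lie in a common simplex. Taking $\mathcal{D}$-dimensions of homology then gives $\beta_{i+1}^{(2)}=\overline{\beta}_i(\widehat\Gamma)$ directly, and the induction is only needed as a cross-check. The step I would verify carefully is the sign- and commutativity-compatibility of this rescaling.
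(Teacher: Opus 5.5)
The paper gives no proof of this theorem; it imports it from Davis--Leary. Your argument is correct and gives an independent proof. Its decisive part, the last paragraph, stands on its own.

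Concretely, for $\sigma=\{v_1<\dots<v_k\}$ you have $\partial e_\sigma=\sum_j(-1)^{j-1}(v_j-1)e_{\sigma\setminus v_j}$. Set $f_\sigma=c_\sigma\otimes e_\sigma$ with $c_\sigma=\prod_{w\in\sigma}(w-1)^{-1}$. This is well defined because $\sigma$ is a clique, and it satisfies $c_\sigma(v-1)=c_{\sigma\setminus v}$. So it turns $\partial$ into the augmented simplicial boundary of $\widehat{\Gamma}$, with matching signs. Since $\mathcal{D}$ is a $\mathbb{Q}$-algebra, $H_{i+1}$ is then $\mathcal{D}\otimes_{\mathbb{Z}}\widetilde{H}_i(\widehat{\Gamma})$, of dimension $\overline{\beta}_i(\widehat{\Gamma})$.

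Two points need tightening. First, von Neumann regularity alone does not make $1-v$ a unit. In $\mathcal{U}(A_\Gamma)$ you need that $1-v$ is a non-zero-divisor in $\mathcal{N}(A_\Gamma)$, which holds because $v$ has infinite order. In $\mathcal{D}(A_\Gamma)$ you need the strong Atiyah conjecture for RAAGs (e.g.\ via RFRS) together with the identification $\beta_i^{(2)}=\dim_{\mathcal{D}}H_i(\mathcal{D}\otimes_{\mathbb{Z}A_\Gamma}C_*)$. Second, drop the Mayer--Vietoris induction entirely: as you observe yourself, matching dimensions of the outer terms in two long exact sequences determines nothing without control of the maps.

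The nearest argument in the paper is the derivation computation of Section~4, which recovers only the case $i=0$. The identity $(1-u)^{-1}f(u)=(1-v)^{-1}f(v)$ for adjacent $u,v$ is exactly the cocycle condition on the $2$-torus $T^{\{u,v\}}$ for the $1$-cochain $v\mapsto f(v)$ rescaled by $(1-v)^{-1}$. It says the rescaled cochain is a locally constant $0$-cochain on $\widehat{\Gamma}$, so your rescaling is the all-degrees version of that computation.

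The derivation method is flexible in degree one: it adapts to twisted relations $uvu=v$, to one-relator groups and to some Artin groups. Your method gives every degree at once, but relies on a finite $K(A_\Gamma,1)$ whose boundary coefficients are all of the form $\pm(v-1)$ with commuting $v$'s, which is exactly what the Salvetti complex provides.
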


To finish the section let us recall that there exists a canonical generating set for the automorphism group of a graph product of cyclic groups.
\begin{theorem}[Corredor-Gutierrez \cite{Corredor-Gutierrez}]\label{GeneratorsAUT}
	Let $\mathcal{G}_\Gamma$ be a graph product of cyclic groups. Then $\mathrm{Aut}(\mathcal{G}_\Gamma)$ is generated by the following types of automorphisms:
	\begin{itemize}
		\item \textbf{Labelled graph automorphisms:} every automorphism of $\Gamma$ preserving vertex labels induces an automorphism of $\mathcal{G}_\Gamma$.
		\item \textbf{Factor automorphisms:} every automorphism of an individual vertex group extends to an automorphism of $\mathcal{G}_\Gamma$.
		\item \textbf{Transvections:} for $u,v\in V(\Gamma)$, define $\tau^v_u$ as follows:
		\begin{itemize}
			\item If $\mathrm{lk}(u)\subset\mathrm{st}(v)$ and $o(u)=\infty$, set $\tau^v_u(u)=uv$ and fix all other generators.
			\item If $\mathrm{st}(u)\subset\mathrm{st}(v)$ and there exists a prime $p$ such that $o(u)=p^i$ and $o(v)=p^j$, define $\tau^v_u(u)=uv^k$, where $k=\max\{1,p^{j-i}\}$.
		\end{itemize}
		\item \textbf{Partial conjugations:} if $v\in V(\Gamma)$ and $\Gamma\setminus\mathrm{st}(v)\neq\emptyset$, and $C$ is a connected component of $\Gamma\setminus\mathrm{st}(v)$, define $\pi^v_C$ by
		$$\pi^v_C(w)=
		\begin{cases}
			vwv^{-1}, & \text{if } w\in V(C),\\[2pt]
			w, & \text{otherwise.}
		\end{cases}$$
	\end{itemize}
\end{theorem}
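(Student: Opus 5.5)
The plan is the classical two-part strategy used for right-angled Artin groups (Servatius, Laurence), adapted to finite vertex groups. First we check that every listed map is an automorphism. Then we show that an arbitrary automorphism can be reduced to the identity by composing with these maps, by induction on a length complexity built from Green's normal form.

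\textbf{Step 1 (well-definedness).} For each type we check that the defining relators are sent to the identity and that an inverse of the same type exists.
\begin{itemize}
\item Labelled graph automorphisms and factor automorphisms are clear.
\item Partial conjugations: for $\pi^v_C$, if $w\in C$ and $w'\notin C$ are adjacent, then $w'\in \mathrm{st}(v)$. So $w'$ commutes with $v$, and the commutation relation $ww'=w'w$ is preserved. Orders are preserved because we only conjugate. The inverse is $\pi^{v^{-1}}_C$.
\item Transvections with $o(u)=\infty$: the condition $\mathrm{lk}(u)\subset \mathrm{st}(v)$ ensures that $uv$ commutes with every neighbour of $u$, and there is no order relation to check.
\item Transvections with $o(u)=p^i$, $o(v)=p^j$: the condition $\mathrm{st}(u)\subset\mathrm{st}(v)$ gives that $u$ and $v$ commute. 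Then $(uv^k)^{p^i}=u^{p^i}v^{kp^i}=1$ precisely because $p^j\mid kp^i$, which is the reason for the choice $k=\max\{1,p^{j-i}\}$. In this case the order of $uv^k$ is exactly $p^i$, and the inverse map is $u\mapsto uv^{-k}$.
\end{itemize}

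\textbf{Step 2 (structure of images).} Let $\phi\in\mathrm{Aut}(\mathcal{G}_\Gamma)$. We use Green's normal form and the fact that every finite subgroup is conjugate into a finite special subgroup $\mathcal{G}_\Delta$, with $\Delta$ a clique of finite-order vertices. So for each finite-order vertex $u$, $\phi(u)$ is conjugate into such a clique subgroup.

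We also use centralisers, which are well understood: the centraliser of a vertex $v$ is $\mathcal{G}_{\mathrm{st}(v)}$. Comparing centralisers of $u$ and $\phi^{-1}$-images lets us relate the stars and links of vertices to the supports of their images.

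\textbf{Step 3 (peak reduction).} Define the complexity of $\phi$ as $\sum_{w\in V(\Gamma)}\ell(\phi(w))$, where $\ell$ is the cyclically reduced syllable length. The key lemma is: if some $\phi(w)$ is not, up to conjugation, a single syllable times an element of a clique in $\mathrm{st}(w')$ for a suitable vertex $w'$, then some partial conjugation or transvection strictly lowers the complexity.
\begin{itemize}
\item First, partial conjugations are used to remove common conjugators, treating the components of $\Gamma\setminus\mathrm{st}(v)$ one at a time as in Laurence's argument.
\item Then transvections are used to strip off the extra syllables. These syllables lie in vertices $v$ with $\mathrm{lk}(u)\subset\mathrm{st}(v)$, respectively $\mathrm{st}(u)\subset\mathrm{st}(v)$, because they must commute with the images of all the neighbours of $u$.
\end{itemize}

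\textbf{Step 4 (minimal complexity).} At minimal complexity, each $\phi(w)$ is a generator of a vertex group $\langle w'\rangle$ with $o(w')=o(w)$, possibly times commuting elements that a transvection removes. The map $w\mapsto w'$ is then a label-preserving graph automorphism, and the remaining ambiguity is a factor automorphism.

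\textbf{Main obstacle.} The hard part will be the reduction lemma of Step 3 in the presence of finite vertex groups. There, a syllable $v^m$ attached to $u$ can be removed only in multiples of $k$, which forces the prime-power hypothesis. We also have to rule out mixed-prime situations, in which $\phi(u)=uv^m$ would have the wrong order. We would first try to handle this by comparing orders: $uv^m$ has order $\mathrm{lcm}(o(u),o(v^m))$, and that order must equal $o(u)$. This forces $v^m$ into the subgroup generated by $v^k$ exactly when $o(u)$ and $o(v)$ are powers of the same prime. When the primes differ, this forces $v^m=1$, so no transvection is needed.
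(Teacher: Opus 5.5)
There is a genuine gap, and it sits in your last paragraph. Your dichotomy ``powers of the same prime'' versus ``the primes differ'' silently assumes that every finite $o(w)$ is a prime power. The statement, as written here for arbitrary $o\colon V(\Gamma)\to\mathbb{Z}_{\geq 2}\cup\{\infty\}$, makes no such assumption. In general the condition $o(v^m)\mid o(u)$ only gives $o(v^m)\mid\gcd(o(u),o(v))$, not $v^m=1$.

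In fact the statement is false in that generality. Take $\Gamma$ a single edge $\{u,v\}$ with $o(u)=2$ and $o(v)=6$. Then $\mathcal{G}_\Gamma\cong\mathbb{Z}/2\mathbb{Z}\times\mathbb{Z}/6\mathbb{Z}$, and $\mathrm{Aut}(\mathcal{G}_\Gamma)\cong\mathrm{GL}_2(\mathbb{F}_2)\times\mathrm{Aut}(\mathbb{Z}/3\mathbb{Z})$ has order $12$. The map $u\mapsto uv^3$, $v\mapsto v$ is an automorphism. However, the listed generators only produce $\{\mathrm{id},\,v\mapsto v^{-1}\}$:
\begin{itemize}
\item there is no nontrivial label-preserving graph automorphism;
\item $\mathrm{Aut}(\mathbb{Z}/2\mathbb{Z})$ is trivial;
\item $6$ is not a prime power, so no transvection is defined;
\item $\Gamma$ is complete, so there are no partial conjugations.
\end{itemize}
So no argument can prove the statement literally. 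You need the Corredor--Gutierrez hypothesis that every vertex group is primary cyclic, i.e.\ $\mathbb{Z}$ or $\mathbb{Z}/p^a\mathbb{Z}$. The general case reduces to this by replacing each vertex of order $\prod_l p_l^{a_l}$ with a clique of vertices of orders $p_l^{a_l}$ sharing its link. Maps like $u\mapsto uv^3$ then become transvections of the refined graph, not of $\Gamma$. The paper gives no proof of its own; it only cites Corredor--Gutierrez, and its phrasing drops this hypothesis.

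Even with the primary hypothesis, Steps 3 and 4 are not yet a proof. The reduction lemma is the whole content of the theorem and is only asserted.

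Moreover, your complexity $\sum_w\ell(\phi(w))$ uses cyclically reduced length, which is a conjugacy invariant of each $\phi(w)$ separately. It therefore cannot measure progress when partial conjugations strip off conjugators. At minimal complexity you only know that each $\phi(w)$ is conjugate to an element of a vertex group, possibly with commuting factors, and the conjugator may depend on $w$. You do not get that $\phi(w)$ is itself a generator, as Step 4 claims. Handling these ``conjugating'' automorphisms is the analogue of Laurence's theorem that they are generated by partial conjugations. That result is the heart of his proof and needs its own argument, for instance with unreduced lengths.

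Similarly, for finite-order $u$, $\phi(u)$ lies in a conjugate of a clique subgroup $\prod\mathbb{Z}/p_l^{a_l}\mathbb{Z}$. You must pick a vertex carrying the full order $p^i$ and prove the star inclusions needed to remove the other factors by transvections. That is where the centraliser comparison of Step 2 would have to be carried out, and it is only gestured at.
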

\section{$\ell^2$-Betti numbers of graph products of cyclic groups}
\subsection{A finite index subgroup of graph products of cyclic groups}
Let $\mathcal{G}_\Gamma$ be a graph product of cyclic groups and let $\mathcal{U}=\{u_1,\dots,u_n\}$ be a set of finite order vertices of $\Gamma$. Define a group homomorphism
$$
\varphi:\mathcal{G}_\Gamma\to\prod_{i=1}^n\mathbb{Z}/o(u_i)\mathbb{Z}
$$
by $\varphi(u_i)=(0,\dots,0,1,0,\dots,0)$, where the $1$ is in the $i$-th component, and $\varphi(v)=0$ if $v\notin \mathcal{U}$. We compute a presentation for $\ker(\varphi)$ using the Reidemeister-Schreier algorithm.
By the First Isomorphism Theorem,
$$
\mathcal{G}_\Gamma/\ker(\varphi)\cong\mathrm{im}(\varphi)\cong
\prod_{i=1}^n\mathbb{Z}/o(u_i)\mathbb{Z},
$$
so $\ker(\varphi)$ is a subgroup of index $m:=\prod\limits_{i=1}^n o(u_i)$. Consider the following Schreier transversal:
$$
T=\{u_1^{i_1}\cdots u_n^{i_n}\mid 0\le i_j<o(u_j)\text{ for }1\le j\le n\}.
$$

We first compute the generators of $\ker(\varphi)$:

\begin{itemize}
	\item If $v\notin \mathcal{U}$, then $\ker(\varphi)u_1^{i_1}\cdots u_n^{i_n}v
	=\ker(\varphi)u_1^{i_1}\cdots u_n^{i_n}$,
	so there exists $v_{i_1,\dots,i_n}\in\ker(\varphi)$ such that $v_{i_1,\dots,i_n}u_1^{i_1}\cdots u_n^{i_n}
	=u_1^{i_1}\cdots u_n^{i_n}v$.
	Thus,
	$$
	v_{i_1,\dots,i_n}
	=u_1^{i_1}\cdots u_n^{i_n}v(u_1^{i_1}\cdots u_n^{i_n})^{-1}
	=v^{u_1^{i_1}\cdots u_n^{i_n}}.
	$$
	
	\item For $u_j$, we have $\ker(\varphi)u_1^{i_1}\cdots u_n^{i_n}u_j
	=\ker(\varphi)u_1^{i_1}\cdots u_j^{i_j+1}\cdots u_n^{i_n}$, so there exists $\prescript{j}{}{X_{i_1,\dots,i_n}}\in\ker(\varphi)$ such that $
	\prescript{j}{}{X_{i_1,\dots,i_n}}
	u_1^{i_1}\cdots u_j^{i_j+1}\cdots u_n^{i_n}
	=u_1^{i_1}\cdots u_n^{i_n}u_j.
	$
	Hence,
	$$
	\prescript{j}{}{X_{i_1,\dots,i_n}}
	=[u_{j+1}^{i_{j+1}}\cdots u_n^{i_n},u_j]^{u_1^{i_1}\cdots u_j^{i_j}}.
	$$
\end{itemize}
Therefore, $\ker(\varphi)$ is generated by
$$
\{v_{i_1,\dots,i_n}\mid 0\le i_j<o(u_j)\text{ for }1\le j\le n\}
\cup\{\prescript{j}{}{X_{i_1,\dots,i_n}}
\mid 1\le j\le n-1,\ 0\le i_k<o(u_k)\text{ for }1\le k\le n\}.
$$
where $\prescript{j}{}{X_{i_1,\dots,i_n}}=1$ if and only if $u_j$ commutes with $u_{j+1}^{i_{j+1}}\cdots u_n^{i_n}$. We now compute the relators of $\ker(\varphi)$.

\begin{itemize}
	\item If $v,w\notin \mathcal{U}$ and $\{v,w\}\in E(\Gamma)$, then $u_1^{i_1}\cdots u_n^{i_n}vw=u_1^{i_1}\cdots u_n^{i_n}wv$,
	which implies
	$$
	v_{i_1,\dots,i_n}w_{i_1,\dots,i_n}
	=w_{i_1,\dots,i_n}v_{i_1,\dots,i_n}.
	$$
	
	\item If $v\notin \mathcal{U}$ and $1\le j\le n$ with
	$\{v,u_j\}\in E(\Gamma)$, then $u_1^{i_1}\cdots u_n^{i_n}vu_j=u_1^{i_1}\cdots u_n^{i_n}u_jv$, which implies
	$$v_{i_1,\dots,i_n}\prescript{j}{}{X_{i_1,\dots,i_n}}
	=\prescript{j}{}{X_{i_1,\dots,i_n}}v_{i_1,\dots,i_j+1,\dots,i_n}\Rightarrow
	v_{i_1,\dots,i_n}
	=v_{i_1,\dots,i_j+1,\dots,i_n}^{\prescript{j}{}{X_{i_1,\dots,i_n}}}.
	$$

	\item If $\{u_k,u_j\}\in E(\Gamma)$, then $u_1^{i_1}\cdots u_n^{i_n}u_ku_j
	=u_1^{i_1}\cdots u_n^{i_n}u_ju_k$, which implies
	$$
	\prescript{k}{}{X_{i_1,\dots,i_n}}\prescript{j}{}{X_{i_1,\dots,i_k+1,\dots,i_n}}=\prescript{j}{}{X_{i_1,\dots,i_n}}\prescript{k}{}{X_{i_1,\dots,i_j+1,\dots,i_n}}.
	$$
	\item For each relator of the form $v^{o(v)}=1$ with $v\notin \mathcal{U}$ we obtain 
	$$u_1^{i_1}\cdots u_n^{i_n}v^{o(v)}=u_1^{i_1}\cdots u_n^{i_n}\Rightarrow v_{i_1,\dots,i_n}^{o(v)}=1$$
	\item For each relator of the form $u_j^{o(u_j)}=1$, we obtain
	$u_1^{i_1}\cdots u_n^{i_n}u_j^{o(u_j)}=u_1^{i_1}\cdots u_n^{i_n}$,
	and hence
	$$
	\prod_{k=0}^{o(u_j)-1}
	\prescript{j}{}{X_{i_1,\dots,i_j+k,\dots,i_n}}=1.
	$$
\end{itemize}

Observe that for each $u_j$, the relators of the last type are redundant when $i_j$ varies and all other indices are fixed. Hence, it suffices to consider the single relator
$$
\prod_{k=0}^{o(u_j)-1}
\prescript{j}{}{X_{i_1,\dots,k,\dots,i_n}}=1.
$$

\begin{proposition}\label{Presentation}
	$\ker(\varphi)$ is a finitely presented group with presentation given by:
	$$\stretchleftright[1000]{<}{\begin{array}{l|cl}
		v_{i_1,\dots,i_n}, \text{ for }v\notin \mathcal{U} 	\text{ and } &v_{i_1,\dots,i_n}w_{i_1,\dots,i_n}=w_{i_1,\dots,i_n}v_{i_1,\dots,i_n}\text{ if }\lbrace v,w\rbrace\in E(\Gamma) \\
		0\leq i_k<o(u_k)\text{ with }k=1,\dots,n&v_{i_1,\dots,i_n}
		=v_{i_1,\dots,i_j+1,\dots,i_n}^{\prescript{j}{}{X_{i_1,\dots,i_n}}}\text{ if }\lbrace v,u_j\rbrace\in E(\Gamma) \\
	&\prescript{k}{}{X_{i_1,\dots,i_n}}\prescript{j}{}{X_{i_1,\dots,i_k+1,\dots,i_n}}=\prescript{j}{}{X_{i_1,\dots,i_n}}\prescript{k}{}{X_{i_1,\dots,i_j+1,\dots,i_n}}\text{ if }\lbrace u_i,u_j\rbrace\in E(\Gamma)\\
	 	& v_{i_1,\dots,i_n}^{o(v)}=1\text{ if }o(v)<\infty\\
		\prescript{j}{}{X_{i_1,\dots,i_n}}\text{ for every }1\leq j\leq n-1,& \prod_{k=0}^{o(u_j)-1}
		\prescript{j}{}{X_{i_1,\dots,k,\dots,i_n}}=1\text{ for every }1\leq j\leq n-1\\
		0\leq i_k<o(u_k)\text{ with }1\leq k\leq n-1&\prescript{j}{}{X_{i_1,\dots,i_n}}=1\text{ if }u_j\text{ commutes with }u_{j+1}^{i_{j+1}}\cdots u_n^{i_n}\\
		\end{array}}{>}
	   $$
\end{proposition}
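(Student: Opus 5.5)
This is a direct application of the Reidemeister--Schreier theorem to the presentation of $\mathcal{G}_\Gamma$ with the Schreier transversal $T$. Finite presentability is automatic, since $\ker(\varphi)$ has finite index $m=\prod o(u_i)$ in the finitely presented group $\mathcal{G}_\Gamma$. What remains is to check that the generators and relators listed are exactly the Reidemeister--Schreier ones, after removing redundancies.

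\textbf{Step 1: $T$ is a Schreier transversal.} Every prefix of $u_1^{i_1}\cdots u_n^{i_n}$ lies in $T$. The elements of $T$ are pairwise distinct modulo $\ker(\varphi)$, because $\varphi$ separates their exponent vectors, and $|T|=m$. Hence $T$ is a Schreier transversal.

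\textbf{Step 2: generators.} The Reidemeister--Schreier generators are $\gamma(t,x)=tx\,\overline{tx}^{-1}$ for $t\in T$ and $x\in V(\Gamma)$, and the computations preceding the statement identify them.
\begin{itemize}
\item For $x=v\notin\mathcal U$ we have $\overline{tv}=t$, giving $v_{i_1,\dots,i_n}$.
\item For $x=u_j$ we obtain ${}^{j}X_{i_1,\dots,i_n}$. This uses the convention that $i_j+1$ is read modulo $o(u_j)$; when $i_j=o(u_j)-1$, the coset representative wraps around to $i_j=0$.
\item For $j=n$, the element $t u_n$ is already in $T$ (or equals $t u_n^{o(u_n)}$ reduced), so ${}^{n}X=1$ trivially. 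Likewise ${}^{j}X_{i_1,\dots,i_n}$ is freely trivial whenever $tu_j\equiv\overline{tu_j}$ in the free group.
\end{itemize}
I would then note that the relator ``${}^{j}X=1$ if $u_j$ commutes with $u_{j+1}^{i_{j+1}}\cdots u_n^{i_n}$'' is a consequence of the rewritten commutation relators. This is the main subtlety; recording it explicitly in the presentation does no harm.

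\textbf{Step 3: relators.} The relators are the rewrites $\tau(trt^{-1})$ for $t\in T$ and $r$ a defining relator of $\mathcal{G}_\Gamma$. I would run through the relator types.
\begin{itemize}
\item A commutator $[v,w]$ rewrites to $[v_{\mathbf i},w_{\mathbf i}]$.
\item A relator $[v,u_j]$ rewrites by tracking the coset change $\mathbf i\mapsto\mathbf i+e_j$.
\item A relator $[u_k,u_j]$ rewrites to the stated product relation.
\item A power $v^{o(v)}$ rewrites to $v_{\mathbf i}^{o(v)}$.
\item A power $u_j^{o(u_j)}$ rewrites to the cyclic product $\prod_k {}^{j}X_{\dots,i_j+k,\dots}$.
\end{itemize}
These are exactly the displayed items.

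\textbf{Step 4: redundancy.} For fixed indices other than $i_j$, the $o(u_j)$ cyclic-product relators are cyclic conjugates of one another. Conjugating one of them by its first letter yields the next, so one representative per orbit suffices. The main obstacle is bookkeeping the index wrap-around modulo $o(u_j)$ in the rewriting, and verifying that each ${}^{j}X$ reduces to the stated commutator conjugate. Both follow by a direct computation in the free group.
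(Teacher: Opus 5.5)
Your proposal is correct and follows the paper's own argument: a Reidemeister--Schreier computation with the same Schreier transversal $T$, the same identification of generators and rewritten relators, and the same observation that the cyclic-product relators obtained by varying $i_j$ are conjugate to one another, so one suffices. The only slip is your claim that the identifications are free-group computations. In the wrap-around case $i_j=o(u_j)-1$, the Reidemeister--Schreier generator differs from the stated commutator conjugate by a conjugate of $u_j^{o(u_j)}$, and ${}^{n}X_{i_1,\dots,i_{n-1},o(u_n)-1}$ is such a conjugate (killed by the rewritten power relator) rather than freely trivial; so these identities hold in $\mathcal{G}_\Gamma$ rather than in the free group, which is harmless.
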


Since $\ker(\varphi)$ is a finite-index subgroup of $\mathcal{G}_\Gamma$, it suffices to understand the $\ell^2$-Betti numbers of $\ker(\varphi)$ in order to compute those of $\mathcal{G}_\Gamma$. The group $\ker(\varphi)$ is generally quite complicated, and we do not expect to understand it for an arbitrary graph $\Gamma$. Nevertheless, in the next two sections we will consider two situations in which $\ker(\varphi)$ can be analysed explicitly.

\subsection{$\Gamma^{<\infty}$ is a clique}\label{SectionClique}
We now assume that $\st(u_1)=\dots=\st(u_n)$. Under this hypothesis, each $\prescript{j}{}{X_{i_1,\dots,i_n}}$ is trivial, the subgraph $\mathcal{U}$ is complete, and for each $v\notin\mathcal{U}$ either $v\in V(\mathrm{lk}(\mathcal{U}))$ or $v$ is not adjacent to any $u_i$. Thus, $v_{i_1,\dots,i_n}=v$ for each $v\in V(\mathrm{lk}(\mathcal{U}))$, with $0\leq i_k<o(u_k)$ for every $k=1,\dots,n$ and all other $v_{i_1,\dots,i_n}$ are pairwise distinct. We may therefore rewrite the presentation of $\ker(\varphi)$ as follows.
\begin{proposition}\label{Endomorphism}
	$\ker(\varphi)$ is a finitely presented group with presentation given by:
	$$\stretchleftright[1000]{<}{\begin{array}{l|cl}
			v_{i_1,\dots,i_n}, \text{ for }v\notin V(\mathrm{st}(\mathcal{U})) 	\text{ and } &v_{i_1,\dots,i_n}w_{i_1,\dots,i_n}=w_{i_1,\dots,i_n}v_{i_1,\dots,i_n}\text{ if }\lbrace v,w\rbrace\in E(\Gamma)\text{ and }v,w\notin \mathrm{st}(\mathcal{U})  \\
			0\leq i_k<o(u_k)\text{ with }k=1,\dots,n&vw_{i_1,\dots,i_n}=w_{i_1,\dots,i_n}v\text{ if }\lbrace v,w\rbrace\in E(\Gamma),v\in \mathrm{lk}(\mathcal{U}) \text{ and }w\notin \mathrm{st}(\mathcal{U})  \\
			&vw=wv\text{ if }\lbrace v,w\rbrace\in E(\Gamma)\text{ and }v,w\in \mathrm{lk}(\mathcal{U})  \\
			v \text{ for }v\in V(\mathrm{lk}(\mathcal{U}))& v_{i_1,\dots,i_n}^{o(v)}=1\text{ if }o(v)<\infty\text{ and }v\notin V(\mathrm{st}(\mathcal{U})) \\
			& v^{o(v)}=1\text{ if }o(v)<\infty\text{ and }v\in V(\mathrm{lk}(\mathcal{U}))
	\end{array}}{>}$$
In particular $\ker(\varphi)\cong \mathcal{G}_\Delta$, where $\Delta=\Gamma_{\mathcal{U}}\ast_{\lk(\mathcal{U})}\stackrel{m}{\cdots}\ast_{\lk(\mathcal{U})}\Gamma_{\mathcal{U}}$ and  $\Gamma_{\mathcal{U}}$ is the subgraph of $\Gamma$ spanned by
$V(\Gamma)\setminus\{u_1,\dots,u_n\}$.
\end{proposition}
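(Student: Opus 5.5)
The plan is to specialise the presentation of Proposition~\ref{Presentation} to the hypothesis $\st(u_1)=\dots=\st(u_n)$ and simplify it in three steps.

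\textbf{Step 1: the generators $\prescript{j}{}{X}$ disappear.} Since the $u_j$ have equal stars, each $u_j$ lies in $\st(u_k)$ for every $k$. Hence $\mathcal{U}$ is a clique and the $u_j$ pairwise commute. Therefore $u_j$ commutes with every $u_{j+1}^{i_{j+1}}\cdots u_n^{i_n}$, and the last relator family of Proposition~\ref{Presentation} gives $\prescript{j}{}{X_{i_1,\dots,i_n}}=1$. With these generators deleted, the following relators become trivial:
\begin{itemize}
\item the product relators $\prod_k \prescript{j}{}{X}=1$;
\item the commutation relators among the $X$'s.
\end{itemize}
The conjugation relators $v_{i_1,\dots,i_n}=v_{\dots,i_j+1,\dots}^{\prescript{j}{}{X}}$ become plain identifications $v_{i_1,\dots,i_n}=v_{i_1,\dots,i_j+1,\dots,i_n}$, one for each $v$ adjacent to $u_j$.

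\textbf{Step 2: collapse the indices.} By the equal-star hypothesis, a vertex $v\notin\mathcal{U}$ is adjacent to one $u_j$ if and only if it is adjacent to all of them. In that case $v\in\lk(\mathcal{U})$.
\begin{itemize}
\item For $v\in\lk(\mathcal{U})$, every index coordinate can be shifted freely modulo $o(u_j)$. So all copies $v_{i_1,\dots,i_n}$ are identified with a single generator, which equals $v$ itself since $v_{0,\dots,0}=v$.
\item For $v\notin \st(\mathcal{U})$, no identification relator involves $v_{i_1,\dots,i_n}$. The $m$ copies therefore remain distinct generators.
\end{itemize}
Rewriting the remaining relators, namely the edge commutations and the power relators $v_{i_1,\dots,i_n}^{o(v)}=1$, under this identification gives exactly the displayed presentation. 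There is one small check to make: for an edge $\{v,w\}$ with $v\in\lk(\mathcal{U})$ and $w\notin\st(\mathcal{U})$, the relator becomes $v\,w_{i_1,\dots,i_n}=w_{i_1,\dots,i_n}\,v$ for every index tuple.

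\textbf{Step 3: read off the graph.} The simplified presentation is, by inspection, the graph product presentation of the following graph $\Delta$. Its vertex set is $V(\lk(\mathcal{U}))\sqcup\bigsqcup_{t\in T}\bigl(V(\Gamma_{\mathcal{U}})\setminus V(\lk(\mathcal{U}))\bigr)$, and its edges are those of the $m$ copies of $\Gamma_{\mathcal{U}}$, each indexed by $t=u_1^{i_1}\cdots u_n^{i_n}\in T$, glued along their common copy of $\lk(\mathcal{U})$. Vertex orders are inherited from $\Gamma$.

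The step I expect to need the most care is confirming that $\Delta$ has no extra edges. Copies of vertices $v,w\notin\st(\mathcal{U})$ with different indices must not commute, and indeed no relator pairs $v_{\mathbf i}$ with $w_{\mathbf i'}$ for $\mathbf i\neq\mathbf i'$. With that checked, $\Delta$ is precisely the amalgam $\Gamma_{\mathcal U}\ast_{\lk(\mathcal U)}\cdots\ast_{\lk(\mathcal U)}\Gamma_{\mathcal U}$ with $m$ factors, and hence $\ker(\varphi)\cong\mathcal{G}_\Delta$. Finite presentability is immediate because the index $m$ and the graph $\Gamma$ are both finite.
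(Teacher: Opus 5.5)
Your proposal is correct and follows essentially the same route as the paper. Under $\st(u_1)=\dots=\st(u_n)$, the paper likewise notes that $\mathcal{U}$ is a clique, so every $\prescript{j}{}{X_{i_1,\dots,i_n}}$ is trivial; that the copies of each $v\in\lk(\mathcal{U})$ collapse to $v$ while copies of vertices outside $\st(\mathcal{U})$ stay distinct; and it reads off $\Delta$ from the fact that, for each index tuple, $\{v\mid v\in V(\lk(\mathcal{U}))\}\cup\{v_{i_1,\dots,i_n}\mid v\notin V(\st(\mathcal{U}))\}$ spans a copy of $\Gamma_{\mathcal{U}}$, with your write-up making explicit the bookkeeping (which relators become trivial, no cross-index edges) that the paper leaves implicit.
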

\begin{proof} For each choice of indices $\{i_1,\dots,i_n\}$, the set
	$$
	\{v\mid v\in V(\lk(\mathcal{U}))\}
	\;\cup\;
	\{v_{i_1,\dots,i_n}\mid v\notin V(\st(\mathcal{U}))\}
	$$ induces a subgraph isomorphic to $\Gamma_{\mathcal{U}}$, so the claim follows.
\end{proof}
Hence, we need to understand the ordinary Betti numbers of amalgamated products of graphs. To this end, we will use the following proposition.

\begin{proposition}\label{AmalgamatedGraph}
	Let $\Gamma$ be a simplicial graph and let $\Delta\subset\Gamma$. Consider the amalgamated product $\Omega=\Gamma\ast_{\Delta}\stackrel{n}{\cdots}\ast_{\Delta}\Gamma$. Let $\Delta_1$ be a graph such that $\Delta\subset\Delta_1$ and $\widehat{\Delta}_1$ is contractible, and define $\gamma=\Gamma\ast_{\Delta}\Delta_1$. Then
	$$\overline{\beta}_i(\widehat{\Omega})=(n-1)\overline{\beta}_i(\widehat{\gamma})+\overline{\beta}_i(\widehat{\Gamma})\quad\text{for every } i\geq 0.$$
\end{proposition}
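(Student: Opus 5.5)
We will prove the formula by induction on $n$, using Mayer--Vietoris for flag complexes. The basic observation is that the flag complex of an amalgamated product of graphs along a common induced subgraph is the union of the two flag complexes, intersecting in the flag complex of the common subgraph. Indeed, every clique of $\Gamma\ast_\Delta\Gamma'$ lies entirely in one factor, since no edges join vertices of $\Gamma\setminus\Delta$ and $\Gamma'\setminus\Delta$. Hence
$$\widehat{\Gamma\ast_\Delta\Gamma'}=\widehat{\Gamma}\cup_{\widehat\Delta}\widehat{\Gamma'}.$$
Throughout we assume $\Delta\neq\emptyset$. Otherwise $\widehat\Delta$ is empty, the reduced Mayer--Vietoris sequence needs the augmented convention, and the formula should be checked directly with $\overline\beta_{-1}(\emptyset)=1$.

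The first step is to handle the auxiliary graph $\gamma=\Gamma\ast_\Delta\Delta_1$. Its flag complex is $\widehat\Gamma\cup_{\widehat\Delta}\widehat{\Delta}_1$, and $\widehat\Delta_1$ is contractible. So, up to homotopy, $\widehat\gamma$ is $\widehat\Gamma$ with a cone attached over $\widehat\Delta$, i.e. it is the mapping cone of the inclusion $\iota:\widehat\Delta\hookrightarrow\widehat\Gamma$. The reduced Mayer--Vietoris sequence then gives $\widetilde H_i(\widehat\gamma)\cong\widetilde H_i(\widehat\Gamma,\widehat\Delta)$. This identifies $\widehat\gamma$ with the relative homology of the pair, independently of the choice of $\Delta_1$.

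The second step is the induction. Write $\Omega_n=\Omega_{n-1}\ast_\Delta\Gamma$, so that $\widehat{\Omega}_n=\widehat{\Omega}_{n-1}\cup_{\widehat\Delta}\widehat\Gamma$, and apply reduced Mayer--Vietoris:
$$\cdots\to\widetilde H_i(\widehat\Delta)\xrightarrow{(\iota_*,\iota_*)}\widetilde H_i(\widehat\Omega_{n-1})\oplus\widetilde H_i(\widehat\Gamma)\to\widetilde H_i(\widehat\Omega_n)\to\widetilde H_{i-1}(\widehat\Delta)\to\cdots$$
A cleaner route avoids computing kernels of $(\iota_*,\iota_*)$ by hand. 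Let $X=\widehat\Omega_n$ and $A=\widehat\Gamma$ be one copy of $\widehat\Gamma$ inside it. By excision, $\widetilde H_i(X,A)\cong\bigoplus_{k=2}^{n}H_i(\widehat\Gamma,\widehat\Delta)$: the copies of $\widehat\Gamma$ meet only in $\widehat\Delta\subset A$, so collapsing $A$ wedges together the quotients $\widehat\Gamma/\widehat\Delta$, or excising a neighbourhood of $A$ gives the same.

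Moreover, $A$ is a retract of $X$: fold every copy of $\widehat\Gamma$ onto $A$ by the identity, which is well defined because the copies agree on $\widehat\Delta$. The long exact sequence of the pair $(X,A)$ therefore splits:
$$\widetilde H_i(X)\cong\widetilde H_i(A)\oplus H_i(X,A)\cong\widetilde H_i(\widehat\Gamma)\oplus\widetilde H_i(\widehat\gamma)^{\oplus(n-1)}.$$
Taking ranks, with any fixed field coefficients (or rationally), gives
$$\overline\beta_i(\widehat\Omega)=(n-1)\overline\beta_i(\widehat\gamma)+\overline\beta_i(\widehat\Gamma).$$

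The main obstacle is justifying the excision or quotient identification $H_i(X,A)\cong\bigoplus H_i(\widehat\Gamma,\widehat\Delta)$ carefully. Since $(X,A)$ is a CW pair, we have $H_i(X,A)\cong\widetilde H_i(X/A)$, and $X/A$ is a wedge of $n-1$ copies of $\widehat\Gamma/\widehat\Delta$. The retraction argument must also respect the amalgamation; this holds because all vertex identifications happen exactly on $\Delta$, and $\Delta$ is an induced subgraph of each copy.
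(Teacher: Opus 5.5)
Your proof is correct, but it takes a different route from the paper's.

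\textbf{The paper's route.} It argues by induction on $i$. For $i=0$ it counts connected components. For $i\geq 1$ it compares the Mayer--Vietoris sequence of $\widehat{\Omega}$ (viewed as $n$ copies of $\widehat{\Gamma}$ glued along $\widehat{\Delta}$) with $n-1$ copies of the Mayer--Vietoris sequence of $\widehat{\gamma}$, padded by an extra $\overline{\mathrm{H}}_i(\widehat{\Gamma})$. It then invokes the Five Lemma to identify the middle terms.

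\textbf{Your route.} You replace this with two structural facts:
\begin{enumerate}
\item The fold map makes one copy $A$ of $\widehat{\Gamma}$ a retract of $X=\widehat{\Omega}$, so the long exact sequence of $(X,A)$ splits.
\item $C_*(X)/C_*(A)\cong\bigoplus_{k=2}^{n}C_*(\widehat{\Gamma})/C_*(\widehat{\Delta})$, which is your wedge description of $X/A$ at the chain level.
\end{enumerate}
Separately, excision together with contractibility of $\widehat{\Delta}_1$ gives $\widetilde{H}_i(\widehat{\gamma})\cong H_i(\widehat{\Gamma},\widehat{\Delta})$.

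\textbf{What your approach buys.}
\begin{itemize}
\item An explicit integral splitting $\widetilde{H}_i(\widehat{\Omega})\cong\widetilde{H}_i(\widehat{\Gamma})\oplus H_i(\widehat{\Gamma},\widehat{\Delta})^{\oplus(n-1)}$.
\item A uniform treatment of all degrees, with no separate count for $i=0$.
\item The observation that the choice of $\Delta_1$ is irrelevant.
\item It avoids the weak point of the paper's argument. The Five Lemma needs a morphism between the two long exact sequences, and the paper never writes one down. To make that step rigorous, one would have to check by hand that the two maps out of $(n-1)\overline{\mathrm{H}}_*(\widehat{\Delta})$ have the same rank.
\end{itemize}
The paper's route is shorter and stays closer to a routine Mayer--Vietoris computation, but your retraction argument is the cleaner and more robust proof.

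\textbf{Two minor points.} First, the opening ``induction on $n$'' is vestigial, since your final argument is direct. Second, you do not need to exclude $\Delta=\emptyset$. Compute $\widetilde{H}_i(\widehat{\gamma})$ via $H_i(\widehat{\gamma},\widehat{\Delta}_1)\cong H_i(\widehat{\Gamma},\widehat{\Delta})$ rather than reduced Mayer--Vietoris, and use the chain-level description of $C_*(X)/C_*(A)$. With those two changes, the same argument covers that case verbatim, provided $\Gamma\neq\emptyset$.
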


\begin{proof}
	We argue by induction on $i$. For $i=0$, the reduced zeroth Betti number equals the number of connected components minus $1$, so it suffices to count connected components. Let $r$ denote the number of connected components of $\widehat{\Gamma}$, and let $l$ denote the number of connected components of $\widehat{\Gamma}$ that intersect $\Delta$. Then the number of connected components of $\widehat{\Omega}$ is $l+(r-l)n$, while the number of connected components of $\widehat{\gamma}$ is $r-l+1$. Hence, the formula follows.
	
	Now assume $i\geq 1$. Applying the Mayer--Vietoris long exact sequence to $\widehat{\Omega}$ yields
	$$\cdots \rightarrow (n-1)\overline{\mathrm{H}}_i(\widehat{\Delta})\rightarrow n\overline{\mathrm{H}}_i(\widehat{\Gamma})\rightarrow \overline{\mathrm{H}}_i(\widehat{\Omega})\rightarrow \cdots.$$
	Similarly, applying the Mayer--Vietoris long exact sequence to $\widehat{\gamma}$ gives
	$$\cdots \rightarrow \overline{\mathrm{H}}_i(\widehat{\Delta})\rightarrow \overline{\mathrm{H}}_i(\widehat{\Gamma})\oplus \overline{\mathrm{H}}_i(\widehat{\Delta}_1)\rightarrow \overline{\mathrm{H}}_i(\widehat{\gamma})\rightarrow \cdots .$$
	Since $\widehat{\Delta}_1$ is contractible, multiplying this long exact sequence by $n-1$ we obtain
	$$\cdots \rightarrow (n-1)\overline{\mathrm{H}}_i(\widehat{\Delta})\rightarrow (n-1)\overline{\mathrm{H}}_i(\widehat{\Gamma})\rightarrow (n-1)\overline{\mathrm{H}}_i(\widehat{\gamma})\rightarrow \cdots .$$
	Adding a $\overline{\mathrm{H}}_i(\widehat{\Gamma})$ term to the middle group yields
	$$\cdots \rightarrow (n-1)\overline{\mathrm{H}}_i(\widehat{\Delta})\rightarrow n\overline{\mathrm{H}}_i(\widehat{\Gamma})\rightarrow (n-1)\overline{\mathrm{H}}_i(\widehat{\gamma})\oplus \overline{\mathrm{H}}_i(\widehat{\Gamma})\rightarrow \cdots .
	$$
	Thus, we obtain two long exact sequences in which, by the induction hypothesis, the groups in degrees $i$ and $i-1$ agree. By the Five Lemma, this induces an isomorphism in the middle groups
	$$\overline{\mathrm{H}}_i(\widehat{\Omega})\cong(n-1)\overline{\mathrm{H}}_i(\widehat{\gamma})\oplus\overline{\mathrm{H}}_i(\widehat{\Gamma}).$$
	Taking dimensions completes the proof.
\end{proof}
We can now compute the $\ell^2$-Betti numbers of every graph product of cyclic groups whose finite order vertices form a clique.
\begin{theorem}\label{BettiClique}
	Let $\mathcal{G}_\Gamma$ be a graph product of cyclic groups and assume that $\Gamma^{<\infty}$ is a clique. Then $\mathcal{G}_\Gamma$ is virtually a RAAG and:
	$$\beta_i^{(2)}(\mathcal{G}_\Gamma)=\frac{1}{\abs{\mathcal{G}_{\Gamma^{<\infty}}}}\left(\sum_{\Omega\subset\Gamma^{<\infty}}\left(\prod_{w\in V(\Gamma^{<\infty}\setminus\Omega)}(o(w)-1)\right)\beta_i^{(2)}(A_{\Gamma\setminus\Omega})\right)~\forall~i\geq 0$$
\end{theorem}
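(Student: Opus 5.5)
Take $\mathcal{U}=V(\Gamma^{<\infty})=\{u_1,\dots,u_n\}$ and let $\varphi$ be the homomorphism of Section~3.1, with kernel of index $m=\prod_i o(u_i)=\abs{\mathcal{G}_{\Gamma^{<\infty}}}$. The subsection assumes $\st(u_1)=\dots=\st(u_n)$, which a clique need not satisfy, so I first reduce to that case.

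I would induct on $n$, peeling off one finite-order vertex at a time, say $u=u_n$. Let $\varphi_u:\mathcal{G}_\Gamma\to\mathbb{Z}/o(u)\mathbb{Z}$ send $u\mapsto 1$ and every other vertex to $0$. By Proposition~\ref{Endomorphism} with $\mathcal{U}=\{u\}$, $\ker(\varphi_u)\cong\mathcal{G}_\Delta$ with $\Delta=\Gamma_u\ast_{\lk(u)}\stackrel{o(u)}{\cdots}\ast_{\lk(u)}\Gamma_u$. The finite-order vertices of $\Delta$ are the $u_j$ with $j<n$, and they lie in $\lk(u)$ because $\Gamma^{<\infty}$ is a clique. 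Hence they are not duplicated and still form a clique in $\Delta$, now with $n-1$ vertices. By multiplicativity of $\ell^2$-Betti numbers under finite index,
\[
\beta_i^{(2)}(\mathcal{G}_\Gamma)=\frac{1}{o(u)}\,\beta_i^{(2)}(\mathcal{G}_\Delta).
\]
Iterating $n$ times realises $\mathcal{G}_\Gamma$ as virtually a RAAG $A_{\Delta'}$, which gives the first claim.

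For the formula, I would prove a RAAG identity at each step and then unwind the induction:
\[
\beta_i^{(2)}(A_\Delta)=(o(u)-1)\,\beta_i^{(2)}(A_{\Gamma_u})+\beta_i^{(2)}(A_\Gamma),
\]
where $A_\Gamma$ denotes the RAAG on $\Gamma$ with the label of $u$ forgotten. To obtain it, apply Theorem~\ref{BettiRAAG} to shift to reduced Betti numbers of flag complexes, and then Proposition~\ref{AmalgamatedGraph} with the graph $\Gamma_u$, subgraph $\lk(u)$, $\Delta_1=\st(u)$ and $n=o(u)$. The flag complex $\widehat{\st(u)}$ is a cone, hence contractible, and $\gamma=\Gamma_u\ast_{\lk(u)}\st(u)=\Gamma$. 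This yields $\overline{\beta}_k(\widehat\Delta)=(o(u)-1)\overline{\beta}_k(\widehat\Gamma)+\overline{\beta}_k(\widehat{\Gamma_u})$, which does not match the identity above. The roles look swapped, so I would recheck which of $\Gamma$, $\Gamma_u$ carries the coefficient $o(u)-1$.

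Test this on $\Gamma=\{u\}$ with the other finite vertices absent. Then $\Delta$ is $o(u)$ points, so $\beta_1^{(2)}(A_\Delta)=o(u)-1$; meanwhile $\beta_1^{(2)}(A_\Gamma)=0$ and $\beta_1^{(2)}(A_{\Gamma_u})=\beta_1^{(2)}(1)=0$. So the formula from Proposition~\ref{AmalgamatedGraph} cannot be right as I applied it. The issue is degree $0$ and the empty complex: the reduced homology of $\widehat{\emptyset}$ is $\overline{\beta}_{-1}=1$, which is exactly $\beta_0^{(2)}$ of the trivial group. Rather than fit in the empty complex, I would verify $i=1$ by counting components directly, and run the Mayer--Vietoris argument in augmented form so that $\overline{\beta}_{-1}$ is included.

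With the per-step identity fixed (I expect coefficient $o(u)-1$ on $\beta_i^{(2)}(A_\Gamma)$ and coefficient $1$ on $\beta_i^{(2)}(A_{\Gamma_u})$, consistent with the test case), induct on $n$. Each remaining vertex $u_j$ contributes a choice: $(o(u_j)-1)$ times the term with $u_j$ kept, or $1$ times the term with $u_j$ deleted. The identity holds for each subgraph $\Gamma\setminus\Omega$ at the later steps because $\lk(u_j)$ in $\Delta$ is again an amalgam and deleting clique vertices commutes with the amalgamation. Expanding the product and dividing by $m$ gives a sum over subsets $\Omega\subset\Gamma^{<\infty}$ of $\prod_{w\notin\Omega}(o(w)-1)\,\beta_i^{(2)}(A_{\Gamma\setminus\Omega})$, as stated.

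The main obstacle is the bookkeeping in this induction. After the first step, the links of the remaining $u_j$ in $\Delta$ change: they become amalgamated copies. So I must check that the single-vertex identity applied inside $\Delta$ produces $\Gamma\setminus\Omega$-type terms. Concretely, I need the graph obtained by peeling $u_j$ from $\Delta$ (respectively from $\Delta\setminus u_j$) to have flag complex Betti numbers matching those of the corresponding amalgam of $\Gamma$. I would handle this by applying Proposition~\ref{AmalgamatedGraph} once more, observing that $u_j$ lies in the amalgamating subgraph $\lk(u)$.
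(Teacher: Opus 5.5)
Your plan is the paper's proof. It inducts on $\abs{V(\Gamma^{<\infty})}$, passes to $\ker(\varphi_u)\cong\mathcal{G}_\Delta$ via Proposition~\ref{Endomorphism} with $\mathcal{U}=\{u\}$, divides by $o(u)$, and applies Theorem~\ref{BettiRAAG} together with Proposition~\ref{AmalgamatedGraph} with $\Delta_1=\st(u)$, so that $\gamma=\Gamma$.

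The bookkeeping you flag as the main obstacle goes away if you apply the inductive hypothesis to $\mathcal{G}_\Delta$ as a whole. Take $\Omega\subset\Delta^{<\infty}=\Gamma^{<\infty}\setminus\{u\}$; these vertices lie in $\lk(u)$. Then $\Delta\setminus\Omega$ is exactly the $o(u)$-fold amalgam of $(\Gamma\setminus\Omega)_u$ over $\lk_{\Gamma\setminus\Omega}(u)$. So the one-vertex identity applied to $\Gamma\setminus\Omega$ gives $\beta_i^{(2)}(A_{\Delta\setminus\Omega})=(o(u)-1)\beta_i^{(2)}(A_{\Gamma\setminus\Omega})+\beta_i^{(2)}(A_{\Gamma\setminus(\Omega\cup\{u\})})$, and substituting finishes the induction. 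You never need to analyse $\lk_\Delta(u_j)$.

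The detour in your third paragraph rests on a miscalculation. Your application of Proposition~\ref{AmalgamatedGraph} was correct from the start. It is the identity you displayed just before it that has the coefficients swapped.

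In your test case $\Gamma=\{u\}$ you have $\Gamma_u=\lk(u)=\emptyset$. Hence $\Delta=\emptyset$ and $\ker(\varphi_u)$ is trivial, not free of rank $o(u)$. In degree $1$ both sides of every candidate identity are then $0$, so the test proves nothing.

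The configuration with $o(u)$ isolated points is $\Gamma=\{u\}\sqcup\{w\}$, where $\mathcal{G}_\Gamma=\mathbb{Z}/o(u)\mathbb{Z}\ast\mathbb{Z}$. There $\beta_1^{(2)}(A_\Delta)=o(u)-1$, $\beta_1^{(2)}(A_\Gamma)=1$ and $\beta_1^{(2)}(A_{\Gamma_u})=0$. This confirms $(o(u)-1)\beta_1^{(2)}(A_\Gamma)+\beta_1^{(2)}(A_{\Gamma_u})$, and for $o(u)\geq 3$ it rules out the swapped version. When $o(u)=2$ the two versions coincide anyway.

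So no re-derivation is needed for $i\geq 1$. The only case genuinely outside Theorem~\ref{BettiRAAG} is $i=0$, and it is immediate. Since $\beta_0^{(2)}(A_\Lambda)$ equals $1$ if $\Lambda=\emptyset$ and $0$ otherwise, both sides of the one-vertex identity equal $1$ exactly when $\Gamma_u=\emptyset$. This is what your remark about augmented homology and $\overline{\beta}_{-1}$ amounts to.
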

\begin{proof}
	Proceed by induction over $\abs{ V(\Gamma^{<\infty})}$. For the base case, say $\Gamma^{<\infty}=\lbrace v\rbrace$ and $o(v)=n$, let $\Delta:=\Gamma_v\ast_{\lk(v)}\stackrel{n}{\cdots}\ast_{\lk(v)}\Gamma_v$. By Proposition \ref{Endomorphism} applied to the map $\varphi:\mathcal{G}_\Gamma\to\mathbb{Z}/n\mathbb{Z}$ defined as $\varphi(v)=v$ and $\varphi(w)=0~\forall~w\neq v$, we have $\beta_i^{(2)}(\mathcal{G}_\Gamma)=\frac{1}{n}\beta_i^{(2)}(A_{\Delta})$. Using Theorem \ref{BettiRAAG} and Proposition \ref{AmalgamatedGraph}:
	$$\beta_i^{(2)}(A_\Delta)=(n-1)\beta_i^{(2)}(A_\Gamma)+\beta_i^{(2)}(A_{\Gamma_v})~\forall~i\geq 0$$
	and the base case follows. For the induction step suppose that the formula holds for all graph products $\mathcal{G}_\Lambda$ such that $\abs{ V(\Lambda^{<\infty})}=r$. Assume that $\mathcal{G}_\Gamma$ satisfies $\abs{ V(\Gamma^{<\infty})}=r+1$. Take $v\in V(\Gamma^{<\infty})$ and assume that $o(v)=n$ and define $\Delta$ and $\varphi$ as before. By proposition  \ref{Endomorphism} we have that $\beta_i^{(2)}(\mathcal{G}_\Gamma)=\frac{1}{n}\beta_i^{(2)}(\mathcal{G}_{\Delta})$ and $\Delta$ is a graph such that $\abs{ V(\Delta^{<\infty})}=r$. Apply the induction hypothesis to $\Delta$:
	$$\beta_i^{(2)}(\mathcal{G}_\Delta)=\frac{1}{\abs{\mathcal{G}_{\Delta^{<\infty}}}}\left(\sum_{\Omega\subset\Delta^{<\infty}}\left(\prod_{w\in V(\Delta^{<\infty}\setminus\Omega)}(o(w)-1)\right)\beta_i^{(2)}(A_{\Delta\setminus\Omega})\right)~\forall~i\geq 0$$
	Using Proposition \refeq{AmalgamatedGraph} and Theorem \ref{BettiRAAG}:
	$$\beta_i^{(2)}(A_{\Delta\setminus\Omega})=(o(v)-1)\beta_i^{(2)}(A_{\Gamma\setminus\Omega})+\beta_i^{(2)}(A_{\Gamma\setminus(\Omega\cup \lbrace v\rbrace)})~\forall~i\geq 0,\Omega\subset\Delta^{<\infty}$$
	and substituting this into the above equation the result follows.
\end{proof}
\subsection{$\lk(u_1)=\dots=\lk(u_n)$}\label{SectionLink}
We now assume that $\lk(u_1)=\dots=\lk(u_n):=L$. Hence, $\st(\mathcal{U})$ is the graph induced by $\lbrace u_1,\dots,u_n\rbrace\cup L$. Under this hypothesis $\lbrace u_i,u_j\rbrace\notin E(\Gamma)$ for any indices $i$ and $j$. Also, for each $v\notin\mathcal{U}$ either $v\in V(L)$ or $v$ is not adjacent to any $u_i$. Thus, $v_{i_1,\dots,i_n}=v$ for each $v\in V(\mathrm{lk}(\mathcal{U}))$, with $0\leq i_k<o(u_k)$ for every $k=1,\dots,n$. On the other hand, the fifth type of relators of Proposition \ref{Presentation} imply that:
$$\prescript{j}{}{X_{i_1,\dots,0,\dots,i_n}}=\left(\prod_{k=1}^{o(u_j)-1}
\prescript{j}{}{X_{i_1,\dots,k,\dots,i_n}}\right)^{-1}$$
Hence, we can delete the generator $\prescript{j}{}{X_{i_1,\dots,i_n}}$ with a zero on the index $j$ for each $1\leq j\leq o(u_j)-1$. Moreover, the relator $v_{i_1,\dots,i_n}
=v_{i_1,\dots,i_j+1,\dots,i_n}^{\prescript{j}{}{X_{i_1,\dots,0,\dots,i_n}}}$ is easily deduced from the relators of the form $v_{i_1,\dots,i_n}
=v_{i_1,\dots,i_j+1,\dots,i_n}^{\prescript{j}{}{X_{i_1,\dots,k,\dots,i_n}}}$ with $k=1,\dots,o(u_j)-1$, so we can delete it.

\begin{lemma}
	There are
	$$
	A:=m\left(n-1-\sum_{j=1}^{n}\frac{1}{o(u_j)}\right)+1
	$$
	non-zero generators of the form $\prescript{j}{}{X_{i_1,\dots,i_n}}$. They are exactly the ones satisfying that $1\leq j\leq n-1$, $0\leq i_k<o(u_k)$ for each $1\leq k\leq n$, $i_j\neq0$ and $(i_{j+1},\dots,i_n)\neq(0,\dots,0)$.
\end{lemma}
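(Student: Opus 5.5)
The plan is to identify exactly which generators $\prescript{j}{}{X_{i_1,\dots,i_n}}$ survive the reductions made just before the lemma, and then count them with a telescoping sum. Recall that $\prescript{j}{}{X_{i_1,\dots,i_n}}=[u_{j+1}^{i_{j+1}}\cdots u_n^{i_n},u_j]^{u_1^{i_1}\cdots u_j^{i_j}}$. We only need $1\le j\le n-1$, since for $j=n$ the first entry of the commutator is the empty word, so $\prescript{n}{}{X}$ is always trivial. Moreover, the generators with $i_j=0$ were already eliminated using the fifth type of relators of Proposition~\ref{Presentation}. So the surviving candidates are those with $1\le j\le n-1$ and $i_j\neq 0$.

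Next I determine which of these are trivial. A generator is trivial exactly when $u_j$ commutes with $g:=u_{j+1}^{i_{j+1}}\cdots u_n^{i_n}$. Under the hypothesis $\lk(u_1)=\cdots=\lk(u_n)=L$, no two $u_k$ are adjacent. Hence the special subgroup generated by $u_j,u_{j+1},\dots,u_n$ is the free product $\mathbb{Z}/o(u_j)\ast\cdots\ast\mathbb{Z}/o(u_n)$, by the lemma of \cite{Green} on special subgroups. In this free product, $g$ is trivial if and only if $(i_{j+1},\dots,i_n)=(0,\dots,0)$, because the exponents satisfy $0\le i_k<o(u_k)$ and so $g$ is a reduced word. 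If $g\neq 1$, then $g$ is a reduced word not involving $u_j$, and $u_j$ is a nontrivial element of a different free factor. By the normal form theorem, $u_j g u_j^{-1}g^{-1}$ is then reduced and nontrivial, so $[g,u_j]\neq 1$. This gives the characterization: the non-trivial generators are exactly those with $1\le j\le n-1$, $i_j\neq0$ and $(i_{j+1},\dots,i_n)\neq 0$.

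It remains to count. Write $o_k=o(u_k)$ and $P_j=\prod_{k\le j}o_k$, so that $P_0=1$ and $P_n=m$. For fixed $j$, the indices $i_1,\dots,i_{j-1}$ are free, $i_j$ has $o_j-1$ choices, and the tail $(i_{j+1},\dots,i_n)$ has $m/P_j-1$ choices. The number of generators for this $j$ is therefore
\[
P_{j-1}(o_j-1)\Bigl(\frac{m}{P_j}-1\Bigr)=m\Bigl(1-\frac{1}{o_j}\Bigr)-(P_j-P_{j-1}).
\]
Summing over $j=1,\dots,n-1$, the second term telescopes to $P_{n-1}-1=m/o_n-1$. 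This yields
\[
m\Bigl(n-1-\sum_{j=1}^{n-1}\frac1{o_j}\Bigr)-\frac{m}{o_n}+1=A.
\]

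The only step that is not pure bookkeeping is the nontriviality claim, which rests on the normal form argument in the free product. I expect it to be routine once the pairwise non-adjacency of the $u_k$ is noted, as above.
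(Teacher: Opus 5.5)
Your proof is correct and follows essentially the same route as the paper. You decide triviality of $\prescript{j}{}{X_{i_1,\dots,i_n}}$ by whether $u_j$ commutes with $u_{j+1}^{i_{j+1}}\cdots u_n^{i_n}$, using that the $u_k$ are pairwise non-adjacent, and then count the admissible tuples for each $j$ and sum; your free-product normal form argument and explicit telescoping just make rigorous the two steps the paper treats briefly (it appeals only to commutation of generators, and calls the summation a straightforward calculation).
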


\begin{proof}
	Two generators of a graph product of cyclic groups commute if and only if they are connected in the defining graph. Hence, since $u_i$ is not connected to $u_j$ for any indices $i$ and $j$, we have $[u_{j+1}^{i_{j+1}}\cdots u_n^{i_n},u_j]=1$
	if and only if $(i_{j+1},\dots,i_n)=(0,\dots,0)$. Thus, for each $j$, the number of non-zero distinct generators $\prescript{j}{}{X_{i_1,\dots,i_n}}$ equals the number of $n$-tuples
	$(i_1,\dots,i_n)$ satisfying
	$$
	0\leq i_k<o(u_k)\ \text{ for all } 1\leq k\leq n,\quad
	i_j\neq 0,\quad
	(i_{j+1},\dots,i_n)\neq (0,\dots,0).
	$$
	The number of such $n$-tuples is
	$$o(u_1)\cdots o(u_{j-1})\bigl(o(u_j)-1\bigr)\bigl(o(u_{j+1})\cdots o(u_n)-1\bigr).	$$
	Hence, the total number of non-zero generators is
	$$\sum_{j=1}^{n-1}o(u_1)\cdots o(u_{j-1})\bigl(o(u_j)-1\bigr)\bigl(o(u_{j+1})\cdots o(u_n)-1\bigr).$$
	A straightforward calculation shows that this sum equals $A$.
\end{proof}
Hence, the presentation for $\ker(\varphi)$ can be expressed as follows.
\begin{proposition}\label{EndomorphismLink}
		$\ker(\varphi)$ is a finitely presented group with presentation given by:
	$$\stretchleftright[1000]{<}{\begin{array}{l|cl}
			v_{i_1,\dots,i_n}, \text{ for }v\notin V(\st(\mathcal{U})) 	\text{ and } &v_{i_1,\dots,i_n}w_{i_1,\dots,i_n}=w_{i_1,\dots,i_n}v_{i_1,\dots,i_n}\text{ if }\lbrace v,w\rbrace\in E(\Gamma)\text{ and }v,w\notin V(\mathrm{st}(\mathcal{U}))\\
			0\leq i_k<o(u_k)\text{ with }k=1,\dots,n&vw_{i_1,\dots,i_n}=w_{i_1,\dots,i_n}v\text{ if }\lbrace v,w\rbrace\in E(\Gamma),v\in V(L) \text{ and }w\notin V(\mathrm{st}(\mathcal{U})) \\
			v \text{ for }v\in V(L)& vw=wv\text{ if }\lbrace v,w\rbrace\in E(\Gamma)\text{ and }v,w\in V(L)\\
			\prescript{j}{}{X_{i_1,\dots,i_n}}\text{ for every }1\leq j\leq n-1,&v\prescript{j}{}{X_{i_1,\dots,i_n}}=	\prescript{j}{}{X_{i_1,\dots,i_n}}v\text{ if }v\in V(L)\\
			0\leq i_k<o(u_k)\text{ with }1\leq k\leq n,i_j\neq0&v_{i_1,\dots,i_n}^{o(v)}=1\text{ if }o(v)<\infty\text{ and }v\notin V(\mathrm{st}(\mathcal{U}))\\
			\text{ and }(i_{j+1},\dots,i_n)\neq(0,\dots,0)&v^{o(v)}=1\text{ if }o(v)<\infty\text{ and }v\in V(L)\\
	\end{array}}{>}$$
	Hence $\ker(\varphi)$ is a graph product of cyclic groups
\end{proposition}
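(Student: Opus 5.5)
The plan is to start from the presentation of Proposition \ref{Presentation} and simplify it step by step using the hypothesis $\lk(u_1)=\dots=\lk(u_n)=L$. The goal is to show that every surviving relator is either a commutation relation between two generators or a power relation $g^{o(g)}=1$. Once this holds, the presentation is visibly that of a graph product of cyclic groups.

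\textbf{Step 1: the generators $v_{i_1,\dots,i_n}$.}
\begin{itemize}
\item If $v\in V(L)$, then $v$ commutes with every $u_j$, so $v_{i_1,\dots,i_n}=v$ for all indices.
\item If $v\notin V(\st(\mathcal{U}))$, then $v$ is adjacent to no $u_j$. Hence the second family of relators, $v_{i_1,\dots,i_n}=v_{i_1,\dots,i_j+1,\dots,i_n}^{\prescript{j}{}{X_{i_1,\dots,i_n}}}$, never occurs for such $v$, and the $v_{i_1,\dots,i_n}$ remain free symbols indexed by all tuples.
\end{itemize}

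\textbf{Step 2: the relators involving $v\in V(L)$.} These conjugation relators become $v=v^{\prescript{j}{}{X_{i_1,\dots,i_n}}}$, that is, $v$ commutes with $\prescript{j}{}{X_{i_1,\dots,i_n}}$. This gives the row $v\prescript{j}{}{X}=\prescript{j}{}{X}v$ of the new presentation.

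\textbf{Step 3: the relators among the $u_j$.} Since no two $u_j$ are adjacent, the third family of relators (commutations among the $u$'s) is empty.

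\textbf{Step 4: the product relators.} The relators $\prod_k \prescript{j}{}{X_{i_1,\dots,k,\dots,i_n}}=1$ are used to eliminate the generators with $i_j=0$ by a Tietze move, as in the discussion preceding the lemma. Two things must be checked:
\begin{itemize}
\item The commutation relators with $v\in V(L)$ that involve an eliminated generator are consequences of the remaining ones, because a product of elements commuting with $v$ commutes with $v$.
\item The generators that are trivial because $(i_{j+1},\dots,i_n)=(0,\dots,0)$ are simply removed.
\end{itemize}
By the previous lemma, this leaves exactly the $A$ nontrivial generators $\prescript{j}{}{X}$ with $i_j\neq 0$.

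\textbf{Step 5: the remaining relators.} The relators among the $v_{i_1,\dots,i_n}$ (from edges of $\Gamma$) and the power relators are transcribed directly. The result is the displayed presentation.

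\textbf{Step 6: reading off the graph product.} Every relator is now a commutator of two generators or a power of one generator. So $\ker(\varphi)$ is the graph product $\mathcal{G}_\Delta$, where $\Delta$ is the graph with these vertices, commutation edges and orders. The new $X$-vertices have infinite order and are joined exactly to $L$. The vertices $v_{i_1,\dots,i_n}$ and $v\in V(L)$ keep their orders and their adjacencies within each copy of $\Gamma_{\mathcal{U}}$.

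\textbf{Main obstacle.} The delicate point is the Tietze elimination in Step 4. After substituting for $\prescript{j}{}{X_{i_1,\dots,0,\dots,i_n}}$, I must verify that the conjugation relators attached to that generator follow from the others and contribute nothing new. These are the relators $v_{\dots}=v_{\dots}^{\prescript{j}{}{X}}$, which can only arise for $v\in L$, where they reduce to commutations. I also need to make sure no nontrivial relation among the $X$'s is introduced, i.e.\ that the $X$'s are otherwise free. For this I would use that conjugation relators exist only for $v$ adjacent to some $u_j$, which forces $v\in L$. Every constraint on an $X$ is therefore a commutation with $L$, and the $X$'s become free vertices joined to $L$.
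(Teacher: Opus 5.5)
Your proposal is correct and follows essentially the same route as the paper: starting from Proposition~\ref{Presentation}, you use the equal-links hypothesis to identify every $v_{i_1,\dots,i_n}$ with $v$ for $v\in V(L)$, and to turn the conjugation relators (which occur only for $v\in V(L)$) into commutations with the $\prescript{j}{}{X_{i_1,\dots,i_n}}$. You then use the product relators to eliminate the generators with $i_j=0$, checking that their commutation relators follow from those of the remaining generators, and your Tietze-move justifications (adding relations already valid in $\ker(\varphi)$, discarding cyclic rotations of the product relator as conjugates) spell out what the paper's brief discussion before the counting lemma leaves implicit.
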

Assume that $\ker(\varphi)=\mathcal{G}_\Omega$ and let us compute the reduced Betti numbers of $\widehat{\Omega}$. Let $$\Delta=L\cup\lbrace\prescript{j}{}{X_{i_1,\dots,k,\dots,i_n}}\mid 1\leq j\leq n-1,i_j\neq0\rbrace$$ 
and let $\Delta_1^{i_1,\dots,i_n}$ be the graph induced by $\Delta\cup\lbrace v_{i_1,\dots, i_n}\mid v\notin V(\st(\mathcal{U}))\rbrace$ for each $0\leq i_j<o(u_j)$ and $1\leq j\leq n$. All copies of  $\Delta_1^{i_1,\dots,i_n}$ are isomorphic to a graph, let us name it $\Delta_1$. In this context one sees that $\Omega=\Delta_1\ast_\Delta\stackrel{m}{\cdots}\ast_\Delta\Delta_1$. Define $\Delta_2$ to be the graph obtained after adding a new vertex $r$ to $\Delta_1$ and connecting $r$ to all $v\in V(\Delta)$. Then, $\Delta_2=\Delta_1\ast_{\lk(r)}\st(r)=\Delta_1\ast_{\Delta}\st(r)$ and $\widehat{\st(r)}$ is contractible. Hence, by Proposition \ref{AmalgamatedGraph} it follows that:
	$$\overline{\beta}_i(\widehat{\Omega})=(m-1)\overline{\beta}_i(\widehat{\Delta}_2)+\overline{\beta}_i(\widehat{\Delta}_1)\quad\text{for every } i\geq 0.$$
Let $\Gamma_{\mathcal{U}_1}$ be the subgraph induced by $\Gamma\setminus\lbrace u_2,\dots, u_n\rbrace$. Each $\prescript{j}{}{X_{i_1,\dots,k,\dots,i_n}}$ induces a copy of $\Gamma_{\mathcal{U}_1}$ in $\Delta_1$. Hence $\Delta_1=\Gamma_{\mathcal{U}_1}\ast_{\Gamma_{\mathcal{U}}}\stackrel{A}{\cdots}\ast_{\Gamma_{\mathcal{U}}}\Gamma_{\mathcal{U}_1}$. Let $\Gamma_1$ be the graph obtained from $\Gamma_{\mathcal{U}_1}$ after adding a new vertex $l$ to $\Gamma_{\mathcal{U}_1}$ and connecting it to all $v\in V(\Gamma_{\mathcal{U}})$. Then, $\Gamma_1=\Gamma_{\mathcal{U}_1}\ast_{\lk(l)}\st(l)=\Gamma_{\mathcal{U}_1}\ast_{\Gamma_\mathcal{U}}\st(l)$ and $\widehat{\st(r)}$ is contractible. Hence, by Proposition \ref{AmalgamatedGraph} it follows that:
$$\overline{\beta}_i(\widehat{\Delta_1})=(A-1)\overline{\beta}_i(\widehat{\Gamma}_1)+\overline{\beta}_i(\widehat{\Gamma}_{\mathcal{U}_1})\quad\text{for every } i\geq 0.$$
Let $\Gamma_{\mathcal{U}_2}$ be the graph induced by $\Gamma\setminus\lbrace u_3,\dots, u_n\rbrace$ after joining $u_1$ and $u_2$. Each $\prescript{j}{}{X_{i_1,\dots,k,\dots,i_n}}$ induces a copy of $\Gamma_{\mathcal{U}_2}$ in $\Delta_2$. Hence $\Delta_2=\Gamma_{\mathcal{U}_2}\ast_{\Gamma_{\mathcal{U}_1}}\stackrel{A}{\cdots}\ast_{\Gamma_{\mathcal{U}_1}}\Gamma_{\mathcal{U}_2}$. Let $\Gamma_2$ be the graph obtained from $\Gamma_{\mathcal{U}_2}$ after adding a new vertex $w$ to $\Gamma_{\mathcal{U}_2}$ and connecting it to all $v\in V(\Gamma_{\mathcal{U}_1})$. Then, $\Gamma_2=\Gamma_{\mathcal{U}_2}\ast_{\lk(w)}\st(w)=\Gamma_{\mathcal{U}_2}\ast_{\Gamma_{\mathcal{U}_1}}\st(w)$ and $\widehat{\st(w)}$ is contractible. Hence, by Proposition \ref{AmalgamatedGraph} it follows that:
$$\overline{\beta}_i(\widehat{\Delta_2})=(A-1)\overline{\beta}_i(\widehat{\Gamma}_2)+\overline{\beta}_i(\widehat{\Gamma}_{\mathcal{U}_2})\quad\text{for every } i\geq 0.$$
Combining all formulas, we get an expression for the reduced Betti numbers of $\widehat{\Omega}$. However, this expression can be simplified using the next observation.
\begin{lemma}
	\begin{enumerate}
		\item $\widehat{\Gamma_2}$ is contractible.
		\item $\overline{\beta}_i(\widehat{\Gamma}_1)=\overline{\beta}_{i-1}(\widehat{L})$ for all $i\geq 0$.
		\item $\overline{\beta}_i(\widehat{\Gamma}_{\mathcal{U}_2})=\overline{\beta}_i(\widehat{\Gamma}_{\mathcal{U}_1})$ for all $i\geq 0$.
	\end{enumerate}
\end{lemma}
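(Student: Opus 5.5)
Each of the three claims will be proved by exhibiting a cone point or a join decomposition and then applying standard homotopy facts about flag complexes: the flag complex of a join of graphs is the join of their flag complexes, and a vertex adjacent to every other vertex makes the flag complex a cone.

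For (1), recall that $\Gamma_2$ is obtained from $\Gamma_{\mathcal{U}_2}$ by adding a vertex $w$ joined to every vertex of $\Gamma_{\mathcal{U}_1}$. Inside $\Gamma_{\mathcal{U}_2}$ the only vertex not in $\Gamma_{\mathcal{U}_1}$ is $u_2$. Its neighbours are $L$ together with $u_1$, the latter because of the added edge $u_1u_2$. Thus $\lk(u_2)=\st(u_1)$ in $\Gamma_{\mathcal{U}_2}$, and this star is contained in $\lk(w)$. Hence $\lk(u_2)\subseteq \lk(w)$, with $u_2,w$ non-adjacent. I would deduce that $\widehat{\Gamma_2}$ deformation retracts onto $\widehat{\Gamma_2\setminus u_2}$: removing a vertex whose link is a cone (here $\widehat{\lk(u_2)}=\widehat{\st(u_1)}$ is a cone on $u_1$) does not change the homotopy type. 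What remains is $\Gamma_{\mathcal{U}_1}\cup\{w\}$ with $w$ adjacent to everything, so its flag complex is a cone and contractible.

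For (2), write $\Gamma_1=\Gamma_{\mathcal{U}_1}\ast_{\Gamma_{\mathcal{U}}}\st(l)$. The vertex $u_1$ has link $L$, so $\Gamma_{\mathcal{U}_1}=\Gamma_{\mathcal{U}}\ast_L\st(u_1)$. Hence $\widehat{\Gamma_1}$ is the union of the cone $\widehat{\st(l)}$ on $\widehat{\Gamma_{\mathcal{U}}}$ and the cone $\widehat{\st(u_1)}$ on $\widehat{L}$. Collapsing the contractible cone on $\widehat{\Gamma_{\mathcal U}}$ leaves the cone on $\widehat L$ with its base crushed to a point, so $\widehat{\Gamma_1}\simeq \Sigma\widehat{L}$, the suspension. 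Equivalently, the vertices $u_1$ and $l$ are non-adjacent and $L=\lk(u_1)\cap\lk(l)$, so Mayer--Vietoris gives $\overline{\mathrm H}_i(\widehat{\Gamma_1})\cong\overline{\mathrm H}_{i-1}(\widehat L)$. The edge case $L=\emptyset$ is handled with the convention $\overline\beta_{-1}(\emptyset)=1$.

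For (3), $\Gamma_{\mathcal{U}_2}$ is obtained from $\Gamma_{\mathcal{U}_1}$ by adding $u_2$ with link $\st(u_1)$. The flag complex of $\st(u_1)$ is a cone, so it is contractible. Gluing the cone $\widehat{\st(u_2)}$ along the contractible subcomplex $\widehat{\st(u_1)}$ therefore preserves the homotopy type, and Mayer--Vietoris gives the equality of reduced Betti numbers.

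The main obstacle is bookkeeping rather than topology. One has to verify from the construction that the links in $\Gamma_2$, $\Gamma_1$ and $\Gamma_{\mathcal U_2}$ are exactly as claimed; in particular, that the added vertex is joined to all of $\Gamma_{\mathcal U_1}$, including $u_1$. One also has to handle the degenerate case $i=0$ with empty $L$ consistently with the conventions used earlier in the paper.
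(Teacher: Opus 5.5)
Your proposal is correct and follows essentially the same route as the paper. In each part you split the flag complex into the star of one vertex and its complement, glued along the link, and use that cones are contractible. The paper phrases (1) as a homotopy pushout of two contractible pieces over the contractible $\widehat{\st(u_1)}$, (2) as the suspension of $\widehat{L}$, and (3) as collapsing the dominated vertex $u_2$ onto $u_1$.

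Two small corrections, neither affecting validity. First, the star $L\cup\{u_1\}$ you use in (1) and (3) is the star of $u_1$ in $\Gamma_{\mathcal{U}_1}$, not in $\Gamma_{\mathcal{U}_2}$, where it would also contain $u_2$. Second, the containment $\lk(u_2)\subseteq\lk(w)$ is not what drives your argument for (1); for non-adjacent vertices that alone would not give a homotopy equivalence. It is the contractibility of $\widehat{\lk(u_2)}$ that does the work.
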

\begin{proof}
	For $\widehat{\Gamma}_2$ we can decompose it as $\widehat{\Omega}_1\cup\widehat{\Omega}_2$ where $\Omega_1$ is the graph induced by $\Gamma_2\setminus\lbrace u_2\rbrace$ and $\widehat{\Omega_2}$ is the graph induced by $L\cup \lbrace u_1,u_2\rbrace$.  Hence, we obtain the following pushout diagram:
	$$ \begin{tikzcd}
		\widehat{\Omega}_1\cap\widehat{\Omega}_2\arrow[hook]{r} \arrow[swap,hook]{d} & 	\widehat{\Omega}_1\simeq\bullet \arrow[hook]{d}\\%
			\widehat{\Omega}_2\simeq\bullet \arrow[hook]{r}& \widehat{\Gamma}_2
	\end{tikzcd}$$
	which implies that $\widehat{\Gamma}_2$ is homotopy equivalent to the suspension $\mathbb{S}^0\ast\widehat{\Omega}_1\cap\widehat{\Omega}_2$. Since $\widehat{\Omega}_1\cap\widehat{\Omega}_2$ equals $\widehat{\Omega}_3$, where $\Omega_3$ is the graph induced by $L\cup\lbrace u_1\rbrace=\st(u_1)$. Hence $\widehat{\Omega}_3$ is contractible and $1.$ follows.
	
	In the same way, we can decompose $\widehat{\Gamma}_1$ as $\widehat{\Omega}_1\cup\widehat{\Omega}_2$, where $\Omega_1$ is the subgraph induced by $L\cup\lbrace u_1\rbrace$ and $\Omega_2$ is the subgraph induced by $\Gamma_\mathcal{U}\cup\lbrace l\rbrace$. Since both $\widehat{\Omega}_1$ and $\widehat{\Omega}_2$ are contractible it follows that $\widehat{\Gamma}_2$ is homotopy equivalent to the suspension $\mathbb{S}^0\ast\widehat{\Omega}_1\cap\widehat{\Omega}_2$. Since $\widehat{\Omega}_1\cap\widehat{\Omega}_2$ equals $\widehat{L}$ we have $\overline{\mathrm{H}}_i(\widehat{\Gamma}_1)=\overline{\mathrm{H}}_i(\mathbb{S}^0\ast\widehat{L})=\overline{\mathrm{H}}_{i-1}(\widehat{L})$ and $2.$ follows.
	
	For $3.$ observe that in $\widehat{\Gamma}_{\mathcal{U}_2}$ $u_2$ is dominated by $u_1$, so we can collapse $u_2$ into $u_1$ and $\widehat{\Gamma}_{\mathcal{U}_2}\simeq \widehat{\Gamma}_{\mathcal{U}_1}$.
	\end{proof}
Applying the lemma we can compute a formula for the $\ell^2$-Betti numbers  when $V(\Gamma^{<\infty})=\mathcal{U}$.
\begin{theorem}\label{BettiLink}
	Assume that $V(\Gamma^{<\infty})=\lbrace u_1,\dots,u_n\rbrace$ and that  $\lk(u_1)=\dots=\lk(u_n):=L$. Then:
	$$\beta_i^{(2)}(\mathcal{G}_\Gamma)=\beta_i^{(2)}(A_{\Gamma_{\mathcal{U}_1}})+\left(n-1-\sum_{j=1}^{n}\frac{1}{o(u_j)}\right)\beta_{i-1}^{(2)}(A_{L})~\forall~i\geq 0$$
	where $\Gamma_{\mathcal{U}_1}$ is the graph obtained after deleting $u_2,\dots, u_n$ from $\Gamma$.
\end{theorem}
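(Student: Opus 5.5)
We take $\mathcal{U}=V(\Gamma^{<\infty})$ and work with the kernel $\ker(\varphi)$ of the map to $\prod_{j}\mathbb{Z}/o(u_j)\mathbb{Z}$. This kernel has index $m=\prod_j o(u_j)$ in $\mathcal{G}_\Gamma$, so multiplicativity of $\ell^2$-Betti numbers under finite index gives
\[
\beta_i^{(2)}(\mathcal{G}_\Gamma)=\tfrac{1}{m}\beta_i^{(2)}(\ker\varphi).
\]
By Proposition~\ref{EndomorphismLink}, $\ker(\varphi)=\mathcal{G}_\Omega$. Every finite-order vertex of $\Gamma$ lies in $\mathcal{U}$, and the new vertices $v_{i_1,\dots,i_n}$, $v\in V(L)$ and $\prescript{j}{}{X_{i_1,\dots,i_n}}$ all have infinite order. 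Hence $\mathcal{G}_\Omega=A_\Omega$ is a RAAG. Theorem~\ref{BettiRAAG} then reduces the problem to computing $\overline{\beta}_{i-1}(\widehat{\Omega})$. The cases $i=0$ are handled directly, since $\beta_0^{(2)}=0$ for infinite groups and the trivial cases are immediate.

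The computation proceeds by chaining the three applications of Proposition~\ref{AmalgamatedGraph} already set up, writing $\overline{\beta}_k$ for $\overline{\beta}_k$ of the flag complexes:
\[
\overline{\beta}_k(\widehat\Omega)=(m-1)\overline{\beta}_k(\widehat\Delta_2)+\overline{\beta}_k(\widehat\Delta_1),
\]
\[
\overline{\beta}_k(\widehat\Delta_1)=(A-1)\overline{\beta}_k(\widehat\Gamma_1)+\overline{\beta}_k(\widehat\Gamma_{\mathcal{U}_1}),
\]
\[
\overline{\beta}_k(\widehat\Delta_2)=(A-1)\overline{\beta}_k(\widehat\Gamma_2)+\overline{\beta}_k(\widehat\Gamma_{\mathcal{U}_2}).
\]
We then apply the preceding lemma. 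Since $\widehat\Gamma_2$ is contractible, $\overline{\beta}_k(\widehat\Delta_2)=\overline{\beta}_k(\widehat\Gamma_{\mathcal U_2})=\overline{\beta}_k(\widehat\Gamma_{\mathcal U_1})$. Since $\overline{\beta}_k(\widehat\Gamma_1)=\overline{\beta}_{k-1}(\widehat L)$, substituting gives
\[
\overline{\beta}_k(\widehat\Omega)=m\,\overline{\beta}_k(\widehat\Gamma_{\mathcal U_1})+(A-1)\overline{\beta}_{k-1}(\widehat L).
\]
With $k=i-1$, dividing by $m$, translating back through Theorem~\ref{BettiRAAG}, and using $(A-1)/m=n-1-\sum_j 1/o(u_j)$ from the counting lemma, we obtain exactly the claimed formula.

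The main obstacle is making sure the decompositions used are legitimate. First, $\Omega$ must really be the $m$-fold amalgam $\Delta_1\ast_\Delta\cdots\ast_\Delta\Delta_1$, and the copies of $\Gamma_{\mathcal U_1}$ indexed by the $A$ nonzero $X$-generators must assemble into $\Delta_1$ and $\Delta_2$ as stated. Second, the auxiliary graphs ($\st(r)$, $\st(l)$, $\st(w)$) must contain the amalgamating subgraph and have contractible flag complexes, as Proposition~\ref{AmalgamatedGraph} requires. I would verify the adjacencies in Proposition~\ref{EndomorphismLink} carefully, in particular that each $X$-vertex is adjacent exactly to $L$. This makes the star of any cone vertex a cone, and hence contractible.

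Finally, I would check the index shift in low degrees. When $L=\emptyset$ we set $\overline{\beta}_{-1}(\widehat\emptyset)=1$, which matches $\beta_0^{(2)}(A_\emptyset)=1$. This convention is consistent with $\Gamma_1$ being a suspension of $\widehat L$ under the convention $\mathbb{S}^0\ast\emptyset=\mathbb{S}^0$. The $i=1$ case then yields the free-product type correction term.
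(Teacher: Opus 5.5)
Your argument follows the paper's own proof: the same identification of $\ker(\varphi)$ with the RAAG $A_\Omega$, the same three applications of Proposition~\ref{AmalgamatedGraph}, and the same use of the lemma on $\widehat{\Gamma}_1$, $\widehat{\Gamma}_2$, $\widehat{\Gamma}_{\mathcal U_2}$. Your final identity $\overline{\beta}_k(\widehat\Omega)=m\,\overline{\beta}_k(\widehat\Gamma_{\mathcal U_1})+(A-1)\,\overline{\beta}_{k-1}(\widehat L)$ is correct.

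What neither you nor the paper says is that the chain needs $n\geq 2$. The decompositions $\Delta_1=\Gamma_{\mathcal U_1}\ast_{\Gamma_{\mathcal U}}\stackrel{A}{\cdots}\ast_{\Gamma_{\mathcal U}}\Gamma_{\mathcal U_1}$ and $\Delta_2=\Gamma_{\mathcal U_2}\ast_{\Gamma_{\mathcal U_1}}\stackrel{A}{\cdots}\ast_{\Gamma_{\mathcal U_1}}\Gamma_{\mathcal U_2}$ require at least one $X$-vertex, i.e.\ $A\geq 1$. In addition, $\Gamma_{\mathcal U_2}$ requires a vertex $u_2$. Since $A-1=m\bigl(n-1-\sum_j 1/o(u_j)\bigr)\geq m(n/2-1)$, this holds exactly when $n\geq 2$.

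For $n=1$ one gets $A=0$ and $\Delta_1=\Gamma_{\mathcal U}$. Your second identity would then read $\overline{\beta}_k(\widehat{\Gamma}_{\mathcal U})=\overline{\beta}_k(\widehat\Gamma)-\overline{\beta}_{k-1}(\widehat L)$, which holds only when $\widehat L\hookrightarrow\widehat{\Gamma}_{\mathcal U}$ is zero on reduced homology. In fact the statement itself is false for $n=1$:
for $\Gamma=\{u_1\}$ it predicts $\beta_1^{(2)}(\mathbb{Z}/o(u_1)\mathbb{Z})=-1/o(u_1)$;
for $\Gamma$ the path of length two with middle vertex $u_1$ and infinite-order endpoints, it predicts $\beta_1^{(2)}=\beta_1^{(2)}(F_2\times\mathbb{Z})-\tfrac{1}{o(u_1)}\beta_0^{(2)}(F_2)=0$, whereas $\mathcal{G}_\Gamma\cong F_2\times\mathbb{Z}/o(u_1)\mathbb{Z}$ has $\beta_1^{(2)}=1/o(u_1)$.
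Your remark that ``the trivial cases are immediate'' is exactly where this should have surfaced.

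So add the hypothesis $n\geq 2$ explicitly; for $n=1$ the correct formula is Theorem~\ref{BettiClique}. Say that the hypothesis is used precisely to make the $A$-fold amalgam decompositions meaningful. With that, your proof is complete.

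A minor point: $\widehat{\st(x)}$ is the cone on $\widehat{\lk(x)}$ for every vertex $x$, so contractibility of $\widehat{\st(r)}$, $\widehat{\st(l)}$, $\widehat{\st(w)}$ is automatic. The adjacency check you propose is what the three amalgam decompositions actually need: each $X$-vertex adjacent to exactly the vertices of $L$, with no edges among $X$-vertices and none to $V(\Gamma_{\mathcal U})\setminus V(L)$.
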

In particular, the first $\ell^2$-Betti number of these groups coincides with that of the associated right-angled Artin group.

\begin{corollary}
	Assume that $V(\Gamma^{<\infty})={u_1,\dots,u_n}$ and that
	$\lk(u_1)=\cdots=\lk(u_n)=:L$.
	Then
	$$\beta_1^{(2)}(\mathcal{G}_\Gamma)=\beta_1^{(2)}(A_\Gamma).$$
\end{corollary}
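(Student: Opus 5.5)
The plan is to specialise Theorem~\ref{BettiLink} to $i=1$ and then compare the two right-angled Artin groups $A_{\Gamma_{\mathcal{U}_1}}$ and $A_\Gamma$ using Theorem~\ref{BettiRAAG}. I assume $L\neq\emptyset$ throughout, which the statement seems to need implicitly; see the last paragraph.

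For $i=1$, Theorem~\ref{BettiLink} gives
$$\beta_1^{(2)}(\mathcal{G}_\Gamma)=\beta_1^{(2)}(A_{\Gamma_{\mathcal{U}_1}})+\Bigl(n-1-\sum_{j=1}^n\frac{1}{o(u_j)}\Bigr)\beta_0^{(2)}(A_L).$$
The zeroth $\ell^2$-Betti number of a group is $1/\abs{G}$, so it vanishes for infinite groups. If $L\neq\emptyset$, then $A_L$ contains a copy of $\mathbb{Z}$ and is infinite, so $\beta_0^{(2)}(A_L)=0$. Hence $\beta_1^{(2)}(\mathcal{G}_\Gamma)=\beta_1^{(2)}(A_{\Gamma_{\mathcal{U}_1}})$.

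It remains to show $\beta_1^{(2)}(A_{\Gamma_{\mathcal{U}_1}})=\beta_1^{(2)}(A_\Gamma)$. By Theorem~\ref{BettiRAAG}, $\beta_1^{(2)}(A_\Delta)=\overline{\beta}_0(\widehat{\Delta})$, which is the number of connected components of $\Delta$ minus one. So it suffices to check that $\Gamma$ and $\Gamma_{\mathcal{U}_1}$ have the same number of components.

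The graph $\Gamma$ is obtained from $\Gamma_{\mathcal{U}_1}$ by adding the vertices $u_2,\dots,u_n$. These are pairwise non-adjacent, and each has neighbourhood exactly $L\subseteq\Gamma_{\mathcal{U}_1}$. Since $L\neq\emptyset$, each $u_j$ joins an existing component, namely the one containing $L$ (which also contains $u_1$). So no new component is created and the count is unchanged. This concludes the argument.

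The main obstacle is the degenerate case $L=\emptyset$. There $\beta_0^{(2)}(A_L)=1$ and the component count changes, so the two sides really differ. For example, $\mathbb{Z}/2\ast\mathbb{Z}/2$ has $\beta_1^{(2)}=0$, while $A_\Gamma=F_2$ has $\beta_1^{(2)}=1$. The statement should therefore be read with the hypothesis $L\neq\emptyset$, or equivalently with no $u_j$ isolated, and I would add this hypothesis explicitly when writing it up.
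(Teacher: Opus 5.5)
Your argument is the paper's proof: specialise Theorem~\ref{BettiLink} to $i=1$, remove the correction term via $\beta_0^{(2)}(A_L)=0$, and compare the numbers of components of $\Gamma_{\mathcal{U}_1}$ and $\Gamma$ using Theorem~\ref{BettiRAAG}. Your caveat is also correct. The paper's proof uses $L\neq\emptyset$ tacitly when it asserts that $A_L$ is infinite, and $\mathbb{Z}/2\mathbb{Z}\ast\mathbb{Z}/2\mathbb{Z}$ versus $F_2$ shows the statement fails without it.

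However, adding $L\neq\emptyset$ does not fully repair the statement: you also need $n\geq 2$. For $n=1$ the equal-link hypothesis is vacuous. Theorem~\ref{BettiLink}, which both you and the paper take as input, is not valid there. The lemma counting the non-trivial generators $\prescript{j}{}{X_{i_1,\dots,i_n}}$ gives $A=0$. So the description of $\Delta_1$ as an amalgam of $A$ copies of $\Gamma_{\mathcal{U}_1}$, to which Proposition~\ref{AmalgamatedGraph} is applied, breaks down, and the auxiliary graphs $\Gamma_{\mathcal{U}_2}$, $\Gamma_2$ need a vertex $u_2$.

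Concretely, let $\Gamma$ be the path with vertices $a,u_1,b$ and edges $\{a,u_1\}$, $\{u_1,b\}$, where $o(a)=o(b)=\infty$. Then $L=\{a,b\}\neq\emptyset$. The group $\mathcal{G}_\Gamma\cong F_2\times\mathbb{Z}/o(u_1)\mathbb{Z}$ has $\beta_1^{(2)}(\mathcal{G}_\Gamma)=1/o(u_1)$, as Theorem~\ref{BettiClique} also gives. But $A_\Gamma\cong F_2\times\mathbb{Z}$ has $\beta_1^{(2)}(A_\Gamma)=0$, and Theorem~\ref{BettiLink} would predict $0$. Your component count is harmless here, since $\Gamma_{\mathcal{U}_1}=\Gamma$; the error lies in the cited formula.

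For $n\geq 2$ and $L\neq\emptyset$ the conclusion does hold. You can confirm it independently from $\mathcal{G}_\Gamma=A_{\Gamma_{\mathcal{U}}}\ast_{A_L}\bigl((\mathbb{Z}/o(u_1)\mathbb{Z}\ast\cdots\ast\mathbb{Z}/o(u_n)\mathbb{Z})\times A_L\bigr)$. This vertex group, like $F_n\times A_L$ in $A_\Gamma=A_{\Gamma_{\mathcal{U}}}\ast_{A_L}(F_n\times A_L)$, is a product of two infinite groups, so it has $\beta_0^{(2)}=\beta_1^{(2)}=0$. The $\ell^2$ Mayer--Vietoris sequence then yields the same $\beta_1^{(2)}$ for both amalgams. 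For $n=1$ you should use Theorem~\ref{BettiClique} instead, which gives equality exactly when $\beta_1^{(2)}(A_{\Gamma_{u_1}})=\beta_1^{(2)}(A_\Gamma)$.
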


\begin{proof}
	By Theorem \ref{BettiRAAG}, the first $\ell^2$-Betti number of a right-angled Artin group is determined by the number of connected components of its defining graph. Since $\Gamma_{\mathcal U_1}$ and $\Gamma$ have the same number of connected components, we have $\beta_1^{(2)}(A_{\Gamma_{\mathcal U_1}})=\beta_1^{(2)}(A_\Gamma)$.
	Moreover, $\beta_0^{(2)}(A_L)=0$ since $A_L$ is infinite. Thus, by the preceding result,
	$\beta_1^{(2)}(\mathcal{G}_\Gamma)=\beta_1^{(2)}(A_{\Gamma_{\mathcal U_1}})=\beta_1^{(2)}(A_\Gamma)$, as desired.
\end{proof}

\subsection{Example}
Let us see an example of a graph product of cyclic groups for which Theorems \ref{BettiClique} and \ref{BettiLink} do not apply but where we can compute the $\ell^2$-Betti numbers by applying the ideas of Sections \ref{SectionClique} and \ref{SectionLink}.
\begin{example}
	Consider the graph product of cyclic groups $\mathcal{G}_\Gamma$ with graph:
		$$\begin{tikzpicture}[main/.style = {draw, circle},node distance={15mm}] 
		\node[label=left:{$c$}][main] (1)  {}; 
		\node[label={$e$}][main] (2) [right of=1] {};
		\node[label=below:{$d$}][main] (3) [below of=1] {}; 
		\node[label={$b$}][main] (4) [above of=1] {};
		\node[label={$f$}][main] (5) [right of=2] {};
		\node[label={$g$}][main] (6) [above of=5] {};
		\node[label={$a$}][main] (7) [left of=1] {};
		\draw[-] (1) -- (2);
		\draw[-] (1) -- (3);
		\draw[-] (1) -- (4);
		\draw[-] (2) -- (5);
		\draw[-] (3) -- (5);
		\draw[-] (3) -- (7);
		\draw[-] (4) -- (5);
		\draw[-] (4) -- (7);
	\end{tikzpicture} $$
	where $o(b)=o(c)=o(d)=o(e)=2$ and $o(a)=o(f)=o(g)=\infty$. Consider the map $\varphi:\mathcal{G}_\Gamma\to\left(\mathbb{Z}/2\mathbb{Z}\right)^2$ given by $\varphi(b)=(1,0),\varphi(d)=(0,1)$ and $\varphi(a)=\varphi(c)=\varphi(e)=\varphi(f)=\varphi(g)=0$. By Proposition \ref{EndomorphismLink} $\ker(\varphi)\cong\mathcal{G}_{\Gamma_1}$, where $\Gamma_1$ is the next graph:
		$$\begin{tikzpicture}[main/.style = {draw, circle},node distance={15mm}] 
		\node[label=below:{$c$}][main] (1)  {}; 
		\node[label=right:{$e_{1,0}$}][main] (2) [above  of=1] {};
		\node[label=right:{$e_{0,1}$}][main] (3) [right  of=2] {};
		\node[label=right:{$e_{1,1}$}][main] (4) [right  of=3] {};
		\node[label=right:{$e_{0,0}$}][main] (5) [left   of=2] {};
		\node[label={$f$}][main] (6) [above of=2] {};
		\node[label={$g_{0,0}$}][main] (7) [right of=6] {};
		\node[label={$g_{1,0}$}][main] (8) [right of=7] {};
		\node[label={$g_{0,1}$}][main] (9) [right of=8] {};
		\node[label={$g_{0,0}$}][main] (10) [right of=9] {};
		\node[label={$\prescript{b}{}{X_{1,1}}$}][main] (11) [left of =5] {};
		\node[label={$a$}][main] (12) [left of=11] {};				
		\draw[-] (1) -- (2);
		\draw[-] (1) -- (3);
		\draw[-] (1) -- (4);
		\draw[-] (1) -- (5);
		\draw[-] (1) -- (11);
		\draw[-] (6) -- (2);
		\draw[-] (6) -- (3);
		\draw[-] (6) -- (4);	
		\draw[-] (6) -- (5);	
		\draw[-] (6) -- (11);
		\draw[-] (11) -- (12);	
	\end{tikzpicture} $$
	where $o(c)=o(e_{i,j})=2$ and the other vertices have infinite order. Now consider the map $\varphi_1:\mathcal{G}_{\Gamma_1}\to\left(\mathbb{Z}/2\mathbb{Z}\right)^4$ given by sending each $e_{i,j}$ to one of the generators of $\left(\mathbb{Z}/2\mathbb{Z}\right)^4$ and sending all the other generators to $0$. By Proposition \ref{EndomorphismLink} $\ker(\varphi_1)\cong\mathcal{G}_{\Gamma_2}$. The graph $\Gamma_2$ is defined as follows:
	\begin{itemize}
		\item The vertex set consists of $c,f$, $16$ vertices of the form $(\prescript{b}{}{X_{1,1}})_{(i_1,i_2,i_3,i_4)}$,  $16$ vertices of the form $a_{(i_1,i_2,i_3,i_4)}$, $64$ vertices of the form $\left(g_{j,k}\right)_{(i_1,i_2,i_3,i_4)}$, $17$ vertices of the form $\prescript{e_{i,j}}{}{X}_{(i_1,i_2,i_3,i_4)}$.
		\item All vertices of the form $(\prescript{b}{}{X_{1,1}})_{(i_1,i_2,i_3,i_4)}$ and $\prescript{e_{i,j}}{}{X}_{(i_1,i_2,i_3,i_4)}$ are connected to both $c$ and $f$. Each $a_{(i_1,i_2,i_3,i_4)}$ is connected to $(\prescript{b}{}{X_{1,1}})_{(i_1,i_2,i_3,i_4)}$.There are no more edges in the graph.
	\end{itemize}
	The only vertex in $\Gamma_2$ with finite order is $c$, hence we can apply Theorem \ref{BettiClique} and the $\ell^2$-Betti numbers of $\mathcal{G}_\Gamma$ are:
	$$\beta_i^{(2)}(\mathcal{G}_\Gamma)=\frac{1}{64}\beta_i^{(2)}(\mathcal{G}_{\Gamma_2})=\frac{1}{128}\left(\beta_i^{(2)}(A_{\Gamma_2})+\beta_i^{(2)}(A_{(\Gamma_2)_c})\right)$$
	and it is easy to check that:
	$$\beta_i^{(2)}(A_{\Gamma_2})=\left\{\begin{array}{ll}
		64 & \text{if } i=1 \\
		32 & \text{if } i=2 \\
		0 & \text{otherwise} \\
	\end{array}\right.
	~~~~~~\beta_i^{(2)}(A_{(\Gamma_2)_c})=\left\{\begin{array}{ll}
		64 & \text{if } i=1 \\
		0 & \text{otherwise} \\
	\end{array}\right. $$
	Hence:
		$$\beta_i^{(2)}(\mathcal{G}_{\Gamma})=\begin{dcases}
		1 & \text{if } i=1 \\
		\frac{1}{4} & \text{if } i=2 \\
		0 & \text{otherwise} \\
	\end{dcases}$$
\end{example} 
\section{The first $\ell^2$-Betti number and derivations}
Let $G$ be a group and let $M$ be a $G$-module. The first cohomology group of $G$ with coefficients in $M$ is defined by

$$\mathrm{H}^1(G,M)=\frac{\mathrm{Der}(G,M)}{\mathrm{InnDer}(G,M)},$$

where
\begin{itemize}
	\item $\mathrm{Der}(G,M)$ denotes the set of derivations from $G$ to $M$, that is, maps $f:G\to M$ satisfying $f(gh)=f(g)+g f(h)$ for all $g,h\in G$;
	\item $\mathrm{InnDer}(G,M)$ denotes the subgroup of inner derivations, consisting of maps $f_m:G\to M$ of the form $f_m(g)=gm-m$ for some $m\in M$.
	
\end{itemize}

Let $G$ be a group and let $\mathcal{N}(G)$ be the von Neumann algebra, i.e. the algebra of bounded $G$-equivariant operators on $L^2(G)$. The Linnell ring $\mathcal{D}(G)$ is defined to be the division closure of $\mathbb{C}G$ in $\mathrm{Ore}(\mathcal{N}(G))$. For more details on the definition see \cite{Luck}.
\begin{definition}
	Let $G$ be a torsion-free group. We say that $G$ satisfies the \textbf{Atiyah conjecture} if the Linnell ring $\mathcal{D}(G)$ is a division ring.
\end{definition}
\begin{remark}
	Although this is not the standard formulation of the Atiyah conjecture, a result of Linnell (see \cite{Linell}) shows that, for torsion-free groups, it is equivalent to the Atiyah conjecture over $\mathbb{Q}$.
\end{remark}
Now, assume that $G$ satisfies the strong Atiyah conjecture. By \cite[Lemma 10.28 (3)]{Luck} the first $\ell^2$-Betti number of $G$ can be computed as:

$$\beta_1^{(2)}(G)=\dim_{\mathcal{D}(G)} \mathrm{H}^1(G,\mathcal{D}(G))=\dim_{\mathcal{D}(G)} \mathrm{Der}(G,\mathcal{D}(G))-\dim_{\mathcal{D}(G)} \mathrm{InnDer}(G,\mathcal{D}(G)).$$
It is straightforward to see that $\mathrm{InnDer}(G,\mathcal{D}(G))\cong\mathcal{D}(G)$, so:
$$\beta_1^{(2)}(G)=\dim_{\mathcal{D}(G)} \mathrm{Der}(G,\mathcal{D}(G))-1.$$
\begin{theorem}
	The following families of groups satisfy the Atiyah conjecture:
	\begin{enumerate}
		\item Locally indicable groups. \cite[Jaikin-Zapirain-López-Álvarez]{Jaikin-Lopez 2020} 
		\item RFRS groups. \cite[Kielak]{Kielak}
	\end{enumerate}
\end{theorem}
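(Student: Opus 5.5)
Both parts are results from the literature, so the plan is not to reprove them. Instead, I would check that the published statements match the formulation of the Atiyah conjecture used in this paper, namely that $\mathcal{D}(G)$ is a division ring. The bridge is the Remark following the definition. It records Linnell's theorem that, for a torsion-free group $G$, the Atiyah conjecture over $\mathbb{Q}$ (all $\ell^2$-Betti numbers of finite $\mathbb{Q}G$-matrices are integers) is equivalent to $\mathcal{D}(G)$ being a division ring. So in each case it suffices to know that the group is torsion-free and satisfies the Atiyah conjecture over $\mathbb{Q}$, or that its Linnell ring is a division ring.

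For locally indicable groups, torsion-freeness is automatic: a nontrivial finite cyclic subgroup is finitely generated but admits no surjection onto $\mathbb{Z}$. Jaikin-Zapirain and López-Álvarez \cite{Jaikin-Lopez 2020} prove the strong Atiyah conjecture for locally indicable $G$ over every subfield $K\subseteq\mathbb{C}$. More precisely, they show that the division closure $\mathcal{D}_{KG}$ of $KG$ inside the algebra of affiliated operators is a Hughes-free division ring. Taking $K=\mathbb{C}$ gives exactly that $\mathcal{D}(G)$ is a division ring, which is our definition. Alternatively, taking $K=\mathbb{Q}$ gives the Atiyah conjecture over $\mathbb{Q}$, and Linnell's equivalence then yields the same conclusion.

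For RFRS groups, torsion-freeness again follows from the definition. There is a chain $G=G_0\geq G_1\geq\cdots$ of finite-index normal subgroups with trivial intersection, where each $G_{i+1}$ contains the kernel of $G_i\to H_1(G_i;\mathbb{Q})$. A nontrivial torsion element $g$ lies in some $G_i\setminus G_{i+1}$, and its image in $H_1(G_i;\mathbb{Q})$ vanishes because it has finite order. That forces $g\in G_{i+1}$, a contradiction. The Atiyah conjecture then comes from \cite{Kielak}: RFRS groups are residually torsion-free solvable, which is the hypothesis under which Schick's approximation theorem gives the Atiyah conjecture over $\mathbb{Q}$. Kielak uses this to identify $\mathcal{D}(G)$ as a division ring, and Linnell's equivalence finishes the argument.

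The only real obstacle is bookkeeping rather than mathematics. One must make sure the coefficient field in the cited theorems is $\mathbb{C}$, or reduce to $\mathbb{Q}$ via the Remark. One must also confirm that for RFRS groups the residual property needed for Schick's theorem genuinely holds, since the chain above is by rationally-abelian quotients. I would check this point first.
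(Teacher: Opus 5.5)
Your strategy matches the paper's, which gives no argument beyond the two citations. Part 1 is fine: Jaikin-Zapirain and L\'opez-\'Alvarez work over every subfield $K\subseteq\mathbb{C}$, and $K=\mathbb{C}$ is exactly the paper's definition of $\mathcal{D}(G)$.

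Part 2, however, has a real gap at the step ``Schick gives the Atiyah conjecture over $\mathbb{Q}$, and Linnell's equivalence finishes.'' Linnell's equivalence is field-by-field. For a torsion-free $G$ and a subfield $K\subseteq\mathbb{C}$ such as $\mathbb{Q}$ or $\mathbb{C}$, the strong Atiyah conjecture over $K$ holds if and only if the division closure of $KG$ in the algebra of affiliated operators is a division ring. Schick's theorem is a statement about $\mathbb{Q}G$ (or $\overline{\mathbb{Q}}G$). So this chain only shows that the division closure of $\mathbb{Q}G$ is a division ring, whereas the paper's $\mathcal{D}(G)$ is the division closure of $\mathbb{C}G$. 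Passing from $\mathbb{Q}$ to $\mathbb{C}$ is not a formality. The paper's Remark is loose on exactly this point, so appealing to it does not close the gap. The coefficient check you deferred as bookkeeping is where the argument needs an extra input.

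The cleanest fix also settles the point you wanted to check first, and it makes part 2 a consequence of part 1. Put $G^{(0)}_r=G$ and $G^{(k+1)}_r=\ker\bigl(G^{(k)}_r\to H_1(G^{(k)}_r;\mathbb{Q})\bigr)$.

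\emph{The series sits inside your RFRS chain.} By induction, $G^{(k)}_r\le G_k$. Indeed, if $x\in G^{(k+1)}_r$, then $x\in G_k$ and some power of $x$ lies in $[G^{(k)}_r,G^{(k)}_r]\le[G_k,G_k]$. Hence $x\in\ker\bigl(G_k\to H_1(G_k;\mathbb{Q})\bigr)\le G_{k+1}$. It follows that $\bigcap_k G^{(k)}_r=1$.

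\emph{Residual torsion-free solvability.} The factors $G^{(k)}_r/G^{(k+1)}_r$ are torsion-free abelian. So $G$ is residually torsion-free solvable, which is the hypothesis you wanted for Schick.

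\emph{Local indicability.} Let $H\le G$ be a nontrivial finitely generated subgroup, and take $k$ maximal with $H\le G^{(k)}_r$; this $k$ exists because the intersection is trivial. Then $H$ has nontrivial finitely generated image in the torsion-free abelian group $G^{(k)}_r/G^{(k+1)}_r$, so $H$ surjects onto $\mathbb{Z}$.

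Thus RFRS groups are locally indicable, and part 1 with $K=\mathbb{C}$ gives that $\mathcal{D}(G)$ is a division ring for RFRS $G$, with no change of coefficients. This route also makes your separate torsion-freeness argument unnecessary, although that argument is correct.
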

Recall that a group $G$ is \textbf{locally indicable} if every finitely generated subgroup is either trivial or admits an epimorphism onto $\mathbb{Z}$. As a first application of this method we can compute the $\ell^2$-Betti numbers of all torsion-free one-relator groups.
\begin{lemma}
	Let $G=\langle x_1,\ldots,x_d\mid r=1\rangle$ be a torsion-free one-relator group. Then
	$$
	\beta_i^{(2)}(G)=
	\begin{cases}
		d-2, & \text{if } i=1,\\
		0, & \text{otherwise}.
	\end{cases}
	$$
	
\end{lemma}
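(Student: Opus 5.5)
The plan is to use the derivation description of $\beta_1^{(2)}$ just established, together with standard facts about torsion-free one-relator groups. By Brodskii's theorem, a torsion-free one-relator group is locally indicable, so by the theorem above (Jaikin-Zapirain--López-Álvarez) $G$ satisfies the Atiyah conjecture. Hence $\mathcal{D}(G)$ is a division ring and
$$\beta_1^{(2)}(G)=\dim_{\mathcal{D}(G)}\mathrm{Der}(G,\mathcal{D}(G))-1 .$$
Since $G$ is infinite, $\beta_0^{(2)}(G)=0$. I will assume the relator $r$ is not trivial in the free group; otherwise $G$ is free of rank $d$ and the formula reads differently. For the higher degrees I would invoke Lyndon's theorem: the presentation $2$-complex $X$ of a torsion-free one-relator group (with $r$ not a proper power) is aspherical. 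Thus $\beta_i^{(2)}(G)=0$ for $i\geq 3$, and the $\ell^2$-Euler characteristic gives
$$\beta_2^{(2)}-\beta_1^{(2)}=\chi(X)=1-d+1=2-d .$$
It therefore suffices to prove $\beta_1^{(2)}(G)=d-2$, equivalently $\beta_2^{(2)}(G)=0$.

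For $\beta_1^{(2)}$ I would describe derivations explicitly. A derivation $f$ is determined by the values $f(x_1),\dots,f(x_d)\in\mathcal{D}(G)$, and any choice of values defines a derivation on the free group $F$. This choice descends to $G$ exactly when $f(r)=0$. By the Fox calculus,
$$f(r)=\sum_{j=1}^d \overline{\tfrac{\partial r}{\partial x_j}}\, f(x_j),$$
where the bar denotes the image in $\mathbb{Z}G\subset\mathcal{D}(G)$. So $\mathrm{Der}(G,\mathcal{D}(G))$ is the kernel of a $\mathcal{D}(G)$-linear map $\mathcal{D}(G)^d\to\mathcal{D}(G)$ given by the row of Fox derivatives. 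Its dimension is $d-1$ if some $\overline{\partial r/\partial x_j}$ is nonzero, and $d$ otherwise. This yields $\beta_1^{(2)}(G)=d-2$ provided the Fox row is not identically zero in $\mathbb{Z}G$.

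The main obstacle is therefore to show that not all $\overline{\partial r/\partial x_j}$ vanish in $\mathbb{Z}G$. If they all vanished, the second boundary map $C_2(\widetilde X)\to C_1(\widetilde X)$ would be zero. Then $\mathrm{H}_2(\widetilde X)\cong\mathbb{Z}G\neq 0$, contradicting the asphericity of $X$ given by Lyndon's theorem. Since $\mathbb{Z}G\hookrightarrow\mathcal{D}(G)$, some entry is then a nonzero, hence invertible, element of the division ring, so the map is onto and its kernel has dimension $d-1$. Combined with the Euler characteristic computation, this also gives $\beta_2^{(2)}(G)=0$, which completes the proof.
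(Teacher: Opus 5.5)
Your proposal is correct and follows the paper's route. You cut $\mathrm{Der}(G,\mathcal{D}(G))$ out of $\mathcal{D}(G)^d$ by the single linear equation $f(r)=0$ to get $\beta_1^{(2)}(G)=d-2$, and then use asphericity of the presentation complex and the Euler characteristic $2-d$ to get $\beta_2^{(2)}(G)=0$ and vanishing in higher degrees.

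Your asphericity argument that some $\overline{\partial r/\partial x_j}$ is nonzero in $\mathbb{Z}G$ is a real improvement. The paper only asserts that any generator occurring in the reduced word $r$ can be solved for. That is unjustified, and it fails for non-cyclically-reduced relators: for $r=xyx^{-1}$ the coefficient of $f(x)$ is $1-xyx^{-1}=0$ in $\mathbb{Z}G$.

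Like the paper, you also tacitly need $G\neq 1$ when you say $G$ is infinite. This excludes $\langle x\mid x\rangle$, for which the stated formula fails.
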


\begin{proof}
	By a theorem of Brodskii and Howie, every torsion-free one-relator group is locally indicable (see \cite{Brodski} and \cite{Howie}). In particular, $G$ satisfies the Atiyah conjecture, and hence the Linnell division ring $\mathcal{D}(G)$ is defined.
	
	Let	$f\in\operatorname{Der}(G,\mathcal{D}(G))$ and write the relator as a reduced word $r=a_1\cdots a_n$, where $a_i\in\{x_1^{\pm1},\ldots,x_d^{\pm1}\}$.
	
	Since $r=1$ in $G$, we have $0=f(r)
	=f(a_1)+a_1f(a_2)+\cdots+a_1\cdots a_{n-1}f(a_n)$. Moreover, for every generator $x_i$ we have $f(x_i^{-1})=-x_i^{-1}f(x_i)$. Choose a generator $x_j$ which occurs in the relator $r$. The above relation therefore determines $f(x_j)$ uniquely in terms of the values	$f(x_i)$ for $i\neq j$.	Consequently, $\operatorname{Der}(G,\mathcal{D}(G))\cong \mathcal{D}(G)^{d-1}$ and $\beta_1^{(2)}(G)=d-2$.
	
	Finally, torsion-free one-relator groups have cohomological dimension at most $2$ with the presentation complex as the classifying space (cf. \cite{Putnam} Theorem 7.1). Hence, 
	$\beta_i^{(2)}(G)=0$ for all $i>2$ and $\chi(G)=2-d$. Since $G$ is infinite, $\beta_0^{(2)}(G)=0$, and therefore $\beta_2^{(2)}(G)=\beta_1^{(2)}(G)+\chi(G)=0$.
\end{proof}

\subsection{Twisted RAAGs}\label{Twisted RAAGs}
A mixed simplicial graph $\Gamma$ is a simplicial graph in which the edge set decomposes as $E(\Gamma)=U(\Gamma)\cup D(\Gamma)$. The edges in $U(\Gamma)$ are undirected and the edges in $D(\Gamma)$ are directed. The \textbf{twisted RAAG} associated to $\Gamma$ is defined as the group with the following presentation:
$$T_\Gamma=\langle v\in V(\Gamma)\mid uv=vu~\mathrm{ if }~e=\lbrace u,v\rbrace\in U(\Gamma), uvu=v~\mathrm{ if }~e=\begin{tikzpicture}
	\node[circle, fill, inner sep=2pt, label=above:$u$] (u) at (0,0) {};
	\node[circle, fill, inner sep=2pt, label=above:$v$] (v) at (1,0) {};
	\draw[->] (u) -- (v);
\end{tikzpicture}\in D(\Gamma)\rangle$$
In the case where $D(\Gamma)=\emptyset$ the twisted RAAG is just simply a RAAG. An \textbf{oriented cycle} in a mixed simplicial graph is a cycle in which all edges are oriented consistently around the cycle.
\begin{theorem}[Antolín-Blufstein-Paris, \cite{Antolin 2025}] A twisted RAAG is locally indicable if and only if it contains no oriented cycles.
\end{theorem}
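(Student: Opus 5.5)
The statement has two directions. I would prove necessity by a direct abelianization argument and sufficiency by induction on $\abs{V(\Gamma)}$, splitting off a suitable vertex. The main idea throughout is to read the relation $uvu=v$ for a directed edge $u\to v$ as $v^{-1}uv=u^{-1}$: the head $v$ conjugates the tail $u$ to its inverse.

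\emph{Oriented cycle $\Rightarrow$ not locally indicable.} Let $u_1\to u_2\to\cdots\to u_k\to u_1$ be an oriented cycle, and let $H=\langle u_1,\dots,u_k\rangle\le T_\Gamma$.
\begin{enumerate}
\item For any homomorphism $\psi:H\to\mathbb{Z}$, the relation $u_iu_{i+1}u_i=u_{i+1}$ gives $2\psi(u_i)=0$, so $\psi(u_i)=0$.
\item Every $u_i$ is the tail of an edge of the cycle, so $\psi$ vanishes on all of $H$. Thus $H$ admits no epimorphism onto $\mathbb{Z}$.
\item $H$ is nontrivial: the map $T_\Gamma\to\mathbb{Z}/2\mathbb{Z}$ sending every generator to $1$ respects all relations, since $1+1+1\equiv 1$ and $1+1\equiv 1+1$, and it does not kill $u_1$.
\end{enumerate}
So $H$ is a nontrivial finitely generated subgroup with no epimorphism onto $\mathbb{Z}$, and $T_\Gamma$ is not locally indicable.

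\emph{No oriented cycle $\Rightarrow$ locally indicable.} The base case is a single vertex, where $T_\Gamma\cong\mathbb{Z}$. For the inductive step:
\begin{enumerate}
\item Since $D(\Gamma)$ is acyclic, there is a vertex $v$ that is the tail of no directed edge. Every directed edge at $v$ then has the form $u\to v$.
\item The assignment $\chi(v)=1$, $\chi(w)=0$ for $w\neq v$ defines an epimorphism $\chi:T_\Gamma\to\mathbb{Z}$. Each relation $uvu=v$ reads $2\chi(u)+\chi(v)=\chi(v)$, which holds because $\chi(u)=0$, and there are no relations with $v$ as a tail.
\item $v$ normalizes $T_{\lk(v)}$: it fixes the vertices joined to it by undirected edges and inverts the tails of directed edges. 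I would check that this rule respects the twisted relations inside $\lk(v)$, so that it defines an automorphism $\alpha$.
\item Hence $T_\Gamma$ is an HNN extension of $T_{\Gamma_v}$ with stable letter $v$ and associated subgroup $T_{\lk(v)}$, twisted by $\alpha$. The kernel $K=\ker\chi$ is the fundamental group of a bi-infinite line of groups. Its vertex groups are copies $v^kT_{\Gamma_v}v^{-k}$, and its edge groups are copies of $T_{\lk(v)}$.
\item $T_{\Gamma_v}$ has no oriented cycles, so it is locally indicable by induction. Since $T_\Gamma/K\cong\mathbb{Z}$, and locally indicable groups are closed under extensions, it suffices to show that $K$ is locally indicable.
\end{enumerate}

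\emph{Main obstacle.} The hard step is showing that $K$ is locally indicable. A finitely generated subgroup of $K$ lies in a finite segment $P_1\ast_{C}P_2\ast_C\cdots\ast_C P_r$ of the line, so it is enough to treat finite iterated amalgams of copies of $T_{\Gamma_v}$ over copies of $T_{\lk(v)}$. Amalgams of locally indicable groups need not be locally indicable, so extra structure is needed.

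The first thing I would try is to show that $T_{\lk(v)}$ is a retract of $T_{\Gamma_v}$, perhaps after choosing $v$ more carefully, for instance as a sink adjacent to a minimal set of tails. A retraction makes each amalgam in the segment a retract-type amalgam, to which a Howie-type argument applies.

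If that fails, I would instead try to use acyclicity to build a left order on $T_\Gamma$ or a Burns--Hale type filtration. I expect this step to carry essentially all of the difficulty of the Antolín--Blufstein--Paris theorem; the necessity direction above is elementary.
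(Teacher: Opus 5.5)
The paper gives no proof of this statement; it is quoted from Antol\'in--Blufstein--Paris, so your proposal has to stand on its own. Your necessity direction is correct and complete. Each relation $u_iu_{i+1}u_i=u_{i+1}$ forces any map $H\to\mathbb{Z}$ to kill $u_i$, and every vertex of the cycle is a tail. The map to $\mathbb{Z}/2\mathbb{Z}$ sending all generators to $1$ shows $H\neq 1$.

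The sufficiency direction has a genuine gap, exactly at the step you flag: nothing in the proposal shows that $K=\ker\chi$ is locally indicable. Your first remedy fails in general. Take $\Gamma$ with vertices $x,y,v$ and only the directed edges $y\to x$ and $y\to v$. The only sinks are $x$ and $v$, and they play symmetric roles. For $v$ you get $T_{\Gamma_v}=\langle x,y\mid yxy=x\rangle$ and $C=\langle y\rangle$. Every homomorphism $T_{\Gamma_v}\to\mathbb{Z}$ kills $y$, so $C$ is not a retract, for either choice of sink. Your second remedy is not enough as stated either. A left order does not give local indicability, since left-orderable groups need not be locally indicable. You would need a Conradian order or an explicit filtration, and you provide neither.

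The missing idea is to split off a \emph{source} instead of a sink. Since the directed edges form no oriented cycle, some vertex $t$ is the head of no directed edge. Then $t\mapsto 1$ and $w\mapsto w$ for $w\neq t$ defines a retraction $\rho\colon T_\Gamma\to T_{\Gamma_t}$. The only kind of relation that would break is $wtw=t$, which would become $w^2=1$, and none occurs.

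Each $w\in\lk(t)$ conjugates $t$ to $t^{\pm1}$. These signs extend to a homomorphism $\epsilon\colon T_{\Gamma_t}\to\{\pm1\}$, because any sign assignment on the generators respects the relations. Comparing presentations then gives $T_\Gamma\cong T_{\Gamma_t}\ast_H(\langle t\rangle\rtimes_\epsilon H)$, where $H\le T_{\Gamma_t}$ is the subgroup generated by $\lk(t)$.

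Now $\ker\rho$ meets every conjugate of $T_{\Gamma_t}$ trivially, and hence meets every edge group trivially. It meets each conjugate $g(\langle t\rangle\rtimes_\epsilon H)g^{-1}$ in $g\langle t\rangle g^{-1}\cong\mathbb{Z}$. By Bass--Serre theory, $\ker\rho$ is free, so $T_\Gamma\cong F\rtimes T_{\Gamma_t}$ with $F$ free.

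Induction on $\abs{V(\Gamma)}$ then completes the proof, using that free groups are locally indicable and that local indicability is closed under extensions. This route needs neither the line of groups nor a retraction onto the image of $T_{\lk(v)}$.
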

Hence, we may assume that $T_\Gamma$ is locally indicable, so $\mathcal{D}(T_\Gamma)$ exists and we can study the module $\mathrm{Der}(T_\Gamma,\mathcal{D}(T_\Gamma))$. To do it we consider a previous lemma.

\begin{lemma}\label{Derivation Lemma}
	Let $G$ be a torsion-free group satisfying the Atiyah conjecture and $f\in  \mathrm{Der}(T_\Gamma,\mathcal{D}(T_\Gamma))$.
	\begin{enumerate}
		\item If $u,v\in G$ are such that $uv=vu$ then $(1-v)^{-1}f(v)=(1-u)^{-1}f(u)$.
		\item f $u,v\in G$ are such that $uvu=v$ then $(1-v)^{-1}f(v)=(1-u)^{-1}f(u)$.
	\end{enumerate} 
\end{lemma}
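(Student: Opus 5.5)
The plan is to use only the derivation rule $f(gh)=f(g)+gf(h)$ on the two sides of each defining relation. This gives a linear identity between $f(u)$ and $f(v)$ with coefficients in $\mathbb{C}G\subset\mathcal{D}(G)$, which I will then rearrange inside the division ring. Throughout, $G$ is the group in question (here $T_\Gamma$), and $u,v\neq 1$; otherwise the expressions $(1-u)^{-1}$, $(1-v)^{-1}$ make no sense. Since $G$ is torsion-free and satisfies the Atiyah conjecture, $\mathcal{D}(G)$ is a division ring containing $\mathbb{C}G$. The element $1-u$ is nonzero in $\mathbb{C}G$ for $u\neq1$, hence invertible in $\mathcal{D}(G)$, and likewise $1-v$.

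For part 1, expand $f(uv)=f(vu)$ to get $f(u)+uf(v)=f(v)+vf(u)$, that is,
$$(1-v)f(u)=(1-u)f(v).$$
Since $uv=vu$, the elements $1-u$ and $1-v$ commute in $\mathbb{C}G$, and therefore so do their inverses in $\mathcal{D}(G)$. Left-multiplying by $(1-u)^{-1}(1-v)^{-1}$ gives
$$(1-u)^{-1}f(u)=(1-u)^{-1}(1-v)^{-1}(1-u)f(v).$$
Commuting $(1-v)^{-1}$ past $1-u$, the right-hand side becomes $(1-v)^{-1}f(v)$, which is the claim.

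For part 2, apply $f$ to $uvu=v$:
$$f(u)+uf(v)+uvf(u)=f(v),\qquad\text{so}\qquad (1+uv)f(u)=(1-u)f(v).$$
Here $u$ and $v$ do not commute, so the argument of part 1 does not apply directly. The key observation is the identity in $\mathbb{C}G$
$$(1+uv)(1-u)=1-u+uv-uvu=1-u+uv-v=(1-u)(1-v),$$
which uses the relation $uvu=v$. Writing $x=(1-u)^{-1}f(u)$, the displayed equation becomes $(1+uv)(1-u)x=(1-u)f(v)$, hence $(1-u)(1-v)x=(1-u)f(v)$. Cancelling the invertible element $1-u$ on the left gives $(1-v)x=f(v)$, i.e. $x=(1-v)^{-1}f(v)$.

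The only delicate point is spotting this rewriting of $1+uv$ in part 2. Everything else is the invertibility of $1-g$ for $g\neq1$, which comes from the Atiyah conjecture (division-ring) hypothesis as defined in the paper.
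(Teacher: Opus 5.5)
Your proof is correct and follows the paper's argument essentially verbatim: you expand each relation with the derivation rule and rearrange in the division ring $\mathcal{D}(G)$, and in part 2 you use the same identity $(1+uv)(1-u)=(1-u)(1-v)$. You also make explicit two points the paper leaves implicit: the need for $u,v\neq 1$, and, in part 1, the commutation of $1-u$ and $1-v$ that justifies the final rearrangement.
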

\begin{proof}
	If $uv=vu$ then $f(uv)=f(vu)\Rightarrow f(u)+uf(v)=f(v)+vf(u)\Rightarrow (1-u)f(v)=(1-v)f(u)$. But $1-x$ is invertible in $\mathcal{D}(G)$ for all $x\neq 1$, so $(1-v)^{-1}f(v)=(1-u)^{-1}f(u)$.
	
	If $uvu=v$ then $f(uvu)=f(v)\Rightarrow f(u)+uf(v)+uvf(u)=f(v)\Rightarrow (1+uv)f(u)=(1-u)f(v)$. Observe that $(1+uv)(1-u)=(1-u)(1-v)$ and so $(1-u)(1-v)(1-u)^{-1}f(u)=(1-u)f(v)\Rightarrow (1-v)^{-1}f(v)=(1-u)^{-1}f(u)$.
\end{proof}

Now, let $\Gamma'\subset\Gamma$ be a connected component of $\Gamma$. Then $(1-v)^{-1}f(v)$ is a constant value for all $v\in V(\Gamma')$. Fix a vertex $u\in V(\Gamma')$ and fix a value $f(u)\in\mathcal{D}(T_\Gamma)$, then define $c:=(1-u)^{-1}f(u)$ and $f(v)=(1-u)c$ for all $v\in V(\Gamma')$. Hence, the value of $f$ on each connected component depends solely on the value on one vertex of the component, so $\mathrm{Der}(T_\Gamma,\mathcal{D}(T_\Gamma))\cong\mathcal{D}(T_\Gamma)^{\beta_0(\Gamma)}$. Hence:
\begin{proposition}
	If $T_\Gamma$ is a twisted RAAG with no oriented cycles then $\beta_1^{(2)}(T_\Gamma)=\overline{\beta}_0(\Gamma)$
\end{proposition}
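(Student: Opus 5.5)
The plan is to compute $\dim_{\mathcal{D}(T_\Gamma)}\mathrm{Der}(T_\Gamma,\mathcal{D}(T_\Gamma))$ and then use the identity $\beta_1^{(2)}(G)=\dim_{\mathcal{D}(G)}\mathrm{Der}(G,\mathcal{D}(G))-1$ established above. By the theorem of Antolín--Blufstein--Paris, the absence of oriented cycles makes $T_\Gamma$ locally indicable. Hence it is torsion-free and, by Jaikin-Zapirain--López-Álvarez, satisfies the Atiyah conjecture. So $\mathcal{D}(T_\Gamma)$ is a division ring and the identity applies.

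Next I will identify $\mathrm{Der}(T_\Gamma,\mathcal{D}(T_\Gamma))$ with the set of assignments $V(\Gamma)\to\mathcal{D}(T_\Gamma)$ that are compatible with the defining relators. A derivation on a free group is determined freely by its values on the generators, and it descends to $T_\Gamma$ exactly when it vanishes on each relator. The relators are $[u,v]$ for undirected edges and $uvuv^{-1}$ for directed edges. The computations in Lemma~\ref{Derivation Lemma} show that vanishing on the relator for an edge $\{u,v\}$ is equivalent to $(1-u)^{-1}f(u)=(1-v)^{-1}f(v)$. For the directed case this uses $(1+uv)(1-u)=(1-u)(1-v)$; this identity is obtained from $uvu=v$ as $uv-uvu=uv-v$. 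Both steps are reversible because $1-x$ is invertible for $x\neq1$. Here $u,v$ are generators, and they are nontrivial since each maps to a generator of the abelianization.

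Then $f$ is determined by the function $c(v):=(1-v)^{-1}f(v)$. The edge conditions say exactly that $c$ is constant along edges, and therefore constant on each connected component of $\Gamma$. Conversely, any choice of one constant per component gives a compatible assignment $f(v)=(1-v)c$. This yields $\mathrm{Der}(T_\Gamma,\mathcal{D}(T_\Gamma))\cong\mathcal{D}(T_\Gamma)^{\beta_0(\Gamma)}$ as left $\mathcal{D}(T_\Gamma)$-modules. Note that $f\mapsto c$ is $\mathcal{D}$-linear, since left multiplication commutes with left multiplication by $(1-v)^{-1}$. Subtracting $1$ then gives $\beta_1^{(2)}(T_\Gamma)=\beta_0(\Gamma)-1=\overline{\beta}_0(\Gamma)$.

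The main point to check carefully is the converse direction: that an assignment constant on components really extends to a derivation of $T_\Gamma$, that is, that the relator equations are equivalent to, and not merely implied by, the equalities of the $c$-values. This needs the invertibility argument run backwards for the twisted relator. A second subtlety is checking that the module structure used to compute $\dim_{\mathcal{D}}$ is the correct one. I expect the twisted-relator equivalence to be the only real obstacle.
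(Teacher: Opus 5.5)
Your argument is essentially the paper's: Lemma~\ref{Derivation Lemma} makes $(1-v)^{-1}f(v)$ constant on each connected component, which gives $\mathrm{Der}(T_\Gamma,\mathcal{D}(T_\Gamma))\cong\mathcal{D}(T_\Gamma)^{\beta_0(\Gamma)}$, and then one subtracts $\dim\mathrm{InnDer}=1$. Your explicit check that the relator equations are \emph{equivalent} to the edge conditions, so that constant assignments really extend to derivations, fills in a step the paper leaves implicit.

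One correction is needed. $\mathrm{Der}(T_\Gamma,\mathcal{D}(T_\Gamma))$ is a \emph{right} $\mathcal{D}(T_\Gamma)$-module via $(f\cdot d)(g)=f(g)d$. It is not closed under left multiplication, and left multiplications do not commute in a noncommutative division ring. The map $f\mapsto c$ is right-linear simply by associativity, so the dimension count goes through unchanged.
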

This gives an alternative proof for the case $n=1$ of Theorem \ref{BettiRAAG}.

\begin{remark}
Let $A_\Gamma$ be an Artin group that is torsion-free and satisfies the Atiyah conjecture. Let $f\in \operatorname{Der}(A_\Gamma,\mathcal{D}(A_\Gamma))$. Every canonical relator has the form $$\langle u,v\rangle_n=\langle v,u\rangle_n$$ for some $n$, where $\langle u,v\rangle_n=\underbrace{uvuv\cdots}_{n\text{ letters}}$. If $n$ is even, applying $f$ to the relator and using the derivation rule gives
	$$\bigl(1-v+uv-vuv+\cdots-\langle v,u\rangle_{n-1}\bigr)f(u)=\bigl(1-u+vu-uvu+\cdots-\langle u,v\rangle_{n-1}\bigr)f(v).$$
	
	Moreover, $(1-v+uv-vuv+\cdots-\langle v,u\rangle_{n-1})(1-u)=(1-u+vu-uvu+\cdots-\langle u,v\rangle_{n-1})(1-v)$. These two identities imply $(1-v)^{-1}f(v)=(1-u)^{-1}f(u)$. In the case where $n$ is odd we can similarly prove that $(1-v)^{-1}f(v)=(1-u)^{-1}f(u)$.
	
	Hence, if $A_\Gamma$ is torsion-free and satisfies the Atiyah conjecture, a similar argument as in the twisteed RAAG case shows that $\beta_1^{(2)}(A_\Gamma)=\overline{\beta}_0(\Gamma)$. This is also a particular case of Davis-Leary theorem for $n=1$ (see Theorem \ref{BettiRAAG}). Some families of torsion-free Artin groups satisfying the Atiyah conjecture are RAAGs, even Artin groups of FC-type \cite{Blasco-Martinez} and even Artin groups of large type \cite{Blasco}.
\end{remark}
\subsection{Graph product of cyclic groups with $\abs{V(\Gamma^{<\infty})}=2$}
Assume $V(\Gamma^{<\infty})=\lbrace u_1,u_2\rbrace$ and that $d(u_1,u_2)\geq2$, since the $\ell^2$-Betti numbers of the case with $u_1$ and $u_2$ connected is known from Theorem \ref{BettiClique}. Moreover, we may assume $u_1$ and $u_2$ lie in the same connected component of $\Gamma$, otherwise we could write $\mathcal{G}_\Gamma=\mathcal{G}_{\Gamma_1}\ast\mathcal{G}_{\Gamma_2}$ with $u_i\in V(\Gamma_i)$ and use Theorem \ref{BettiClique} and the formula for the $\ell^2$-Betti numbers of free products (cf. \cite{Kammeyer} Theorem 4.15.ii) to compute $\beta_i^{(2)}(\mathcal{G}_\Gamma)$.

Define $\varphi:\mathcal{G}_\Gamma\to\mathbb{Z}/o(u_1)\mathbb{Z}\times \mathbb{Z}/o(u_2)\mathbb{Z}$ by $\varphi(u_1)=(1,0)$ and $\varphi(u_2)=(0,1)$. By Proposition \ref{Presentation} $\ker(\varphi)$ has the following presentation:
$$\stretchleftright[1000]{<}{\begin{array}{l|cl}
		v_{i_1,i_2}, \text{ for }v\neq u_1,u_2	\text{ and } &v_{i_1,i_2}w_{i_1,i_2}=w_{i_1,i_2}v_{i_1,i_2}\text{ if }\lbrace v,w\rbrace\in E(\Gamma) \\
		0\leq i_k<o(u_k)\text{ with }k=1,2&v_{i_1,i_2}
		=v_{i_1+1,i_2}^{X_{i_1,i_2}}\text{ if }\lbrace v,u_1\rbrace\in E(\Gamma) \\
		X_{i_1,i_2} & v_{i_1,i_2}=v_{i_1,i_2+1}\text{ if }\lbrace v,u_2\rbrace\in E(\Gamma)\\
			0<i_k<o(u_k)\text{ with }k=1,2& v_{i_1,0}=v_{i_1+1,0}\text{ if }\lbrace v,u_1\rbrace\in E(\Gamma)\\
\end{array}}{>}
$$

Let $A_\Delta$ be the RAAG generated by all $v_{i_1,i_2}$ for $v\in V(\Gamma)\setminus\lbrace u_1,u_2\rbrace$. Then, $\ker(\varphi)$ can be constructed as iterated HNN-extension starting from $A_\Delta$. Defining $K=A_{\mathrm{lk}(u_1)}$ we have:
$$\ker(\varphi)=(\cdots((A_\Delta\ast_K)\ast_K)\cdots\ast_K)\ast_K$$
where in each step we are adding a $X_{i_1,i_2}$ generator. By \cite{Huang} graph products of cyclic group are virtually RFRS, so they satisfy the Atiyah conjecture and thus $\ker(\varphi)$ being a subgroup also satisfies the Atiyah conjecture by \cite[Lemma 10.4]{Luck}. Moreover, $\ker(\varphi)$ is torsion-free being the iterated HNN extension of a torsion-free group $A_\Delta$. Thus, $\mathcal{D}(\ker(\varphi))$ is a division ring.

To compute $\dim_{\mathcal{D}(\ker(\varphi))}
\mathrm{Der}\bigl(\ker(\varphi),\mathcal{D}(\ker(\varphi))\bigr)$, we first study the connected components of $\Delta$ and then analyse the effect on the derivations of adding relators of the form $v_{i_1,i_2}=v_{i_1+1,i_2}^{X_{i_1,i_2}}$. Observe that there are four types of connected components in $\Gamma_{u_1u_2}$:

\begin{itemize}
	\item \textbf{Type 1:} Connected components that are components of both $\Gamma$ and $\Gamma_{u_1u_2}$. There are $\beta_1^{(2)}(A_\Gamma)$ such components.
	\item \textbf{Type 2:} Connected components of $\Gamma_{u_1}$ that are not connected components of $\Gamma$. There are
	$\beta_1^{(2)}(A_{\Gamma_{u_1}})-\beta_1^{(2)}(A_\Gamma)$ such components.
	\item \textbf{Type 3:} Connected components of $\Gamma_{u_2}$ that are not connected components of $\Gamma$. There are $\beta_1^{(2)}(A_{\Gamma_{u_2}})-\beta_1^{(2)}(A_\Gamma)$ such components.
	\item \textbf{Type 4:} Connected components of $\Gamma_{u_1u_2}$ that are not connected components of either $\Gamma_{u_1}$ or $\Gamma_{u_2}$.	
\end{itemize}

For each $v_{i_1,i_2}$ appearing in a relator of the form $v_{i_1,i_2}=v_{i_1+1,i_2}^{X_{i_1,i_2}}$, we have $\{v,u_1\}\in E(\Gamma)$.
Consequently, $v$ belongs to a connected component of Type 2 or Type 4. It follows that the elements $X_{i_1,i_2}$ do not affect the connected components of Type 1 or Type 3. Therefore, each connected component of $\Delta$ arising from a component of Type 1 or Type 3 contributes one dimension to $\mathrm{Der}\bigl(\ker(\varphi),\mathcal{D}(\ker(\varphi))\bigr)$. We thus begin by considering these two types of connected components.

\begin{lemma}\label{Type 1}
	There are $o(u_1)o(u_2)$ connected components in $\Delta$ for each connected component of $\Gamma_{u_1u_2}$ of Type $1$. Hence, Type $1$ connected components contribute
	$o(u_1)o(u_2)\beta_1^{(2)}(A_\Gamma)$ dimensions to $\mathrm{Der}\bigl(\ker(\varphi),\mathcal{D}(\ker(\varphi))\bigr)$.
\end{lemma}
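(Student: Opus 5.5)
Let $C$ be a connected component of $\Gamma_{u_1u_2}$ of Type 1, so $C$ is also a connected component of $\Gamma$. The plan is first to show that $C$ gives rise to exactly $o(u_1)o(u_2)$ connected components of $\Delta$. Then we show that each of these survives in $\ker(\varphi)$ as an independent summand of the derivation module.

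\textbf{Counting the copies.} The vertices of $\Delta$ are the $v_{i_1,i_2}$ with $v\in V(\Gamma)\setminus\{u_1,u_2\}$. By Proposition \ref{Presentation}, the only identifications among them come from relators $v_{i_1,i_2}=v_{i_1,i_2+1}$ and $v_{i_1,0}=v_{i_1+1,0}$, and these require $\{v,u_2\}\in E(\Gamma)$ or $\{v,u_1\}\in E(\Gamma)$ respectively. Since $C$ is a component of $\Gamma$ not containing $u_1$ or $u_2$, no vertex of $C$ is adjacent to either. Hence the generators $v_{i_1,i_2}$ with $v\in V(C)$ are pairwise distinct for the $o(u_1)o(u_2)$ choices of $(i_1,i_2)$.

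For fixed $(i_1,i_2)$, the edge relators $v_{i_1,i_2}w_{i_1,i_2}=w_{i_1,i_2}v_{i_1,i_2}$ for $\{v,w\}\in E(C)$ show that $C_{i_1,i_2}:=\{v_{i_1,i_2}\mid v\in V(C)\}$ spans a copy of $C$, hence a connected subgraph of $\Delta$. There are no edges between different copies $C_{i_1,i_2}$ and $C_{j_1,j_2}$, nor between $C_{i_1,i_2}$ and the rest of $\Delta$, because edges of $\Delta$ only join vertices with the same index and come from edges of $\Gamma$. So each $C_{i_1,i_2}$ is a connected component of $\Delta$, giving $o(u_1)o(u_2)$ components per Type 1 component of $\Gamma_{u_1u_2}$.

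\textbf{Contribution to the derivations.} Let $f\in\mathrm{Der}(\ker(\varphi),\mathcal{D}(\ker(\varphi)))$. Apply Lemma \ref{Derivation Lemma}(1) to commuting pairs along the edges of $C_{i_1,i_2}$. Since $\ker(\varphi)$ is torsion-free, $1-x$ is invertible in $\mathcal{D}(\ker(\varphi))$ for every $x\neq 1$. It follows that $(1-v_{i_1,i_2})^{-1}f(v_{i_1,i_2})$ is a constant $c_{i_1,i_2}$ on $C_{i_1,i_2}$, so $f$ restricted to $C_{i_1,i_2}$ is determined by one value, exactly as in the twisted RAAG computation.

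It remains to see that $c_{i_1,i_2}$ can be chosen freely and independently of everything else. The relators of $\ker(\varphi)$ involving vertices of $C_{i_1,i_2}$ are only the commutation relators inside $C_{i_1,i_2}$, since the conjugation relators $v_{i_1,i_2}=v_{i_1+1,i_2}^{X_{i_1,i_2}}$ need $v$ adjacent to $u_1$. Given any value $c$, assign $f(v_{i_1,i_2})=(1-v_{i_1,i_2})c$ on $C_{i_1,i_2}$ and extend arbitrarily compatibly elsewhere. The only thing to check is that this respects every commutation relator $vw=wv$ within $C_{i_1,i_2}$. The identity $(1-u)f(v)=(1-v)f(u)$ becomes $(1-u)(1-v)c=(1-v)(1-u)c$, which holds because $u$ and $v$ commute.

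Equivalently, in the HNN description $\ker(\varphi)=(\cdots(A_\Delta\ast_K)\cdots)\ast_K$, the special subgroup $A_{C_{i_1,i_2}}$ is a free factor of $A_\Delta$ that is untouched by the HNN steps. So each such component contributes exactly one free $\mathcal{D}(\ker(\varphi))$-dimension. Summing over the $\beta_1^{(2)}(A_\Gamma)$ Type 1 components gives $o(u_1)o(u_2)\beta_1^{(2)}(A_\Gamma)$ dimensions.

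The main obstacle is the independence step: making rigorous that $\mathrm{Der}$ splits off these summands. I would handle it through the free product decomposition $\ker(\varphi)=A_{C_{i_1,i_2}}\ast H$, using that derivations on a free product are determined independently on each factor.
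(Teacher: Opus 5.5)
Your proposal is correct and follows the paper's argument. The paper likewise notes that no vertex of a Type $1$ component is adjacent to $u_1$ or $u_2$, so each $v$ yields $o(u_1)o(u_2)$ distinct generators and each set $\{v_{i_1,i_2}\mid v\in V(\Omega)\}$ spans its own component of $\Delta$; the ``one dimension per component'' step is only asserted in the paragraph before the lemma, and your splitting $\ker(\varphi)=A_{C_{i_1,i_2}}\ast H$ (no relator mixes generators of $C_{i_1,i_2}$ with others) together with $\mathrm{Der}(A\ast B,M)\cong\mathrm{Der}(A,M)\oplus\mathrm{Der}(B,M)$ supplies exactly the justification that assertion lacks.
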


\begin{proof}
	Let $\Omega$ be a connected component of Type $1$. Then $u_1$ and $u_2$ are not adjacent to any vertex of $\Omega$. Therefore, each vertex $v\in V(\Omega)$ gives rise to $o(u_1)o(u_2)$ distinct vertices in $\Delta$. Moreover, for each $0\leq i_1<o(u_1)$ and $\leq i_2<o(u_2)$,
	the set $\{v_{i_1,i_2}\mid v\in V(\Omega)\}$ induces a connected component of $\Delta$. Thus, $\Omega$ gives rise to $o(u_1)o(u_2)$ connected components of $\Delta$.
\end{proof}

\begin{lemma}\label{Type 3}
	There are $o(u_1)$ connected components in $\Delta$ for each connected component of $\Gamma_{u_1u_2}$ of Type $3$. Hence, Type $3$ connected components contribute
	$o(u_1)\bigl(\beta_1^{(2)}(A_{\Gamma_{u_2}})-\beta_1^{(2)}(A_\Gamma)\bigr)$
	dimensions to $\mathrm{Der}\bigl(\ker(\varphi),\mathcal{D}(\ker(\varphi))\bigr)$.
\end{lemma}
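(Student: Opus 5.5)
I would argue exactly as in Lemma \ref{Type 1}, tracking which copies $v_{i_1,i_2}$ of a vertex get identified in $\Delta$. Let $\Omega$ be a connected component of $\Gamma_{u_1u_2}$ of Type $3$. Then $\Omega$ is a component of $\Gamma_{u_2}$ but not of $\Gamma$. Since it is already a component after deleting only $u_2$, no vertex of $\Omega$ is adjacent to $u_1$; since it is not a component of $\Gamma$, some vertex of $\Omega$ is adjacent to $u_2$. In the presentation of $\ker(\varphi)$, the relators $v_{i_1,i_2}=v_{i_1,i_2+1}$ for $\{v,u_2\}\in E(\Gamma)$ therefore identify, for each $v\in V(\Omega)\cap\lk(u_2)$ and each fixed $i_1$, all the $o(u_2)$ copies $v_{i_1,0},\dots,v_{i_1,o(u_2)-1}$. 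Meanwhile, the relators involving $u_1$ (the conjugation relators by $X_{i_1,i_2}$ and $v_{i_1,0}=v_{i_1+1,0}$) never involve vertices of $\Omega$. So no generator coming from $\Omega$ is identified across different values of $i_1$.

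Next, I fix $i_1$ and consider the subgraph of $\Delta$ induced by $\{v_{i_1,i_2}\mid v\in V(\Omega),\ 0\le i_2<o(u_2)\}$. For each $i_2$, the copy $\{v_{i_1,i_2}\mid v\in V(\Omega)\}$ induces a connected copy of $\Omega$, by the commutation relators $v_{i_1,i_2}w_{i_1,i_2}=w_{i_1,i_2}v_{i_1,i_2}$. These $o(u_2)$ copies all share the identified vertices coming from $\Omega\cap\lk(u_2)\neq\emptyset$, so their union is connected. It is a full connected component of $\Delta$: any edge of $\Delta$ leaving it would come from an edge of $\Gamma_{u_1u_2}$ leaving $\Omega$, which is impossible. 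For distinct $i_1$ the resulting components are disjoint, giving exactly $o(u_1)$ components of $\Delta$ per Type $3$ component of $\Gamma_{u_1u_2}$.

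Finally, I need the dimension count. As observed before Lemma \ref{Type 1}, the generators $X_{i_1,i_2}$ only interact with vertices adjacent to $u_1$, so they do not touch these components. Lemma \ref{Derivation Lemma} then shows that a derivation is determined on each such component by its value on a single vertex, and that this value can be chosen freely. Hence each component contributes one dimension to $\mathrm{Der}(\ker(\varphi),\mathcal{D}(\ker(\varphi)))$. Multiplying by the number $\beta_1^{(2)}(A_{\Gamma_{u_2}})-\beta_1^{(2)}(A_\Gamma)$ of Type $3$ components gives the stated contribution.

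The main point requiring care is the independence claim: that extending a derivation freely on these components imposes no constraint through the HNN relators. I would verify this by noting that the $X$-relators involve only $v$ with $\{v,u_1\}\in E(\Gamma)$, and by writing $\ker(\varphi)$ as a free product of the special subgroup spanned by these components with the rest.
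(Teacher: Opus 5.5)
Your proposal is correct and follows the paper's own argument. For each fixed $i_1$, the $o(u_2)$ copies of $\Omega$ are glued along the identified vertices $w_{i_1,0}=\dots=w_{i_1,o(u_2)-1}$ with $w\in V(\Omega)\cap\lk(u_2)$, so each value of $i_1$ gives exactly one component of $\Delta$; the count of one dimension per component is the observation the paper makes just before Lemma~\ref{Type 1}, and your free-product splitting spells out the independence from the $X_{i_1,i_2}$-relators that the paper leaves implicit.
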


\begin{proof}
	Let $\Omega$ be a connected component of Type $3$. Then there exists $w\in V(\Omega)\cap\lk(u_2)$.
	For each $0\leq i_1<o(u_1)$, the vertices	$w_{i_1,0},\ldots,w_{i_1,o(u_2)-1}$
	are identified, so they give rise to $o(u_1)$ distinct vertices in $\Delta$. Moreover, for each $0\leq i_1<o(u_1)$, the set $\{v_{i_1,i_2}\mid v\in V(\Omega),\ 0\leq i_2<o(u_2)\}$ induces a connected component of $\Delta$. Thus, $\Omega$ gives rise to $o(u_1)$ connected components of $\Delta$.
\end{proof}
Now, let us move to the study of Type $4$ connected components.
\begin{lemma}
	For each connected component of Type $4$ in $\Gamma_{u_1u_2}$ there is one connected component in $\Delta$. 
\end{lemma}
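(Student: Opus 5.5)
The plan is to argue directly from the presentation of $\ker(\varphi)$ displayed above. The graph $\Delta$ has vertex set the generators $v_{i_1,i_2}$ ($v\neq u_1,u_2$), modulo the identifications $v_{i_1,i_2}=v_{i_1,i_2+1}$ for $\{v,u_2\}\in E(\Gamma)$ and $v_{i_1,0}=v_{i_1+1,0}$ for $\{v,u_1\}\in E(\Gamma)$. Its edges are the commutation relators $v_{i_1,i_2}w_{i_1,i_2}=w_{i_1,i_2}v_{i_1,i_2}$ for $\{v,w\}\in E(\Gamma)$. The conjugation relators involving the $X_{i_1,i_2}$ are not part of $\Delta$. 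Fix a connected component $\Omega$ of $\Gamma_{u_1u_2}$ of Type $4$. For each pair $(i_1,i_2)$, let $\Omega_{i_1,i_2}$ be the image in $\Delta$ of $\{v_{i_1,i_2}\mid v\in V(\Omega)\}$. Since edges of $\Omega$ are copied with the same indices, each $\Omega_{i_1,i_2}$ is connected. Since $\Omega$ is a component of $\Gamma_{u_1u_2}$, no vertex of $\Omega_{i_1,i_2}$ is adjacent in $\Delta$ to a vertex coming from another component. So the component(s) of $\Delta$ arising from $\Omega$ are exactly the union of the $\Omega_{i_1,i_2}$, and it remains to show this union is connected.

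First I would extract the combinatorial content of Type $4$: $\Omega$ meets both $\lk(u_1)$ and $\lk(u_2)$.
\begin{itemize}
\item Suppose $\Omega$ met no vertex of $\lk(u_2)$. Then in $\Gamma_{u_1}$ nothing would attach $\Omega$ to $u_2$, so $\Omega$ would already be a component of $\Gamma_{u_1}$, which Type $4$ excludes.
\item Symmetrically, $\Omega$ meets $\lk(u_1)$, since otherwise $\Omega$ would be a component of $\Gamma_{u_2}$.
\end{itemize}
So choose $w\in V(\Omega)\cap\lk(u_2)$ and $z\in V(\Omega)\cap\lk(u_1)$.

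Next, I glue the copies in two stages.
\begin{itemize}
\item Fix $i_1$. The identification $w_{i_1,i_2}=w_{i_1,i_2+1}$ makes $\Omega_{i_1,i_2}$ and $\Omega_{i_1,i_2+1}$ share a vertex. Exactly as in Lemma~\ref{Type 3}, the union $\Omega_{i_1,*}=\bigcup_{i_2}\Omega_{i_1,i_2}$ is therefore connected.
\item The identification $z_{i_1,0}=z_{i_1+1,0}$ makes $\Omega_{i_1,0}\subset\Omega_{i_1,*}$ and $\Omega_{i_1+1,0}\subset\Omega_{i_1+1,*}$ share a vertex. So consecutive unions $\Omega_{i_1,*}$ and $\Omega_{i_1+1,*}$ intersect, and hence all of them together form one connected subgraph.
\end{itemize}
This yields a single connected component of $\Delta$ for each Type $4$ component.

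The only delicate point is bookkeeping. I must make sure the identifications are used with the right index ranges: the $u_1$-identification is only available at $i_2=0$, which is why the $i_2$-gluing has to be done first. I also must confirm that no relator in the presentation merges vertices of different components of $\Gamma_{u_1u_2}$, which holds because every relator involves copies of a single vertex or of an edge of $\Gamma_{u_1u_2}$.
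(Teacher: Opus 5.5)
Your proof is correct and follows essentially the same route as the paper. The paper uses $r\in V(\Omega)\cap\lk(u_1)$ and $s\in V(\Omega)\cap\lk(u_2)$ to build the explicit path $v_{i_1,i_2}\to s_{i_1,i_2}=s_{i_1,0}\to r_{i_1,0}=r_{i_3,0}\to s_{i_3,0}=s_{i_3,i_4}\to w_{i_3,i_4}$, which is your two-stage gluing written as a single path; you additionally justify why a Type $4$ component meets both links and why copies of distinct components of $\Gamma_{u_1u_2}$ never merge in $\Delta$, points the paper leaves implicit.
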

\begin{proof}
	Let $v_{i_1,i_2},w_{i_3,i_4}$ be two vertices in $\Delta$ such that $v$ and $w$ lie in the same Type $4$ connected component $\Omega$ of $\Gamma_{u_1u_2}$. Let $r\in V(\Omega)\cap\lk(u_1)$ and $s\in V(\Omega)\cap\lk(u_2)$, then we can construct a path from $v_{i_1,i_2}$ to $w_{i_3,i_4}$ as follows:
	$$v_{i_1,i_2}\rightarrow s_{i_1,i_2}=s_{i_1,0}\rightarrow r_{i_1,0}=r_{i_3,0}\rightarrow s_{i_3,0}=s_{i_3,i_4}\rightarrow w_{i_3,i_4}$$
	and the claim follows.
\end{proof}
\begin{proposition}\label{Type4}
	Let $f\in \mathrm{Der}\bigl(\ker(\varphi),\mathcal{D}(\ker(\varphi))\bigr)$. Assume that $v_{i_1,i_2}$ and $v_{i_1+1,i_2}$ lie in the same connected component of $\Delta$ and that $\{v,u_1\}\in E(\Gamma)$. Then $(1-X_{i_1,i_2})^{-1}f(X_{i_1,i_2})=(1-v_{i_1,i_2})^{-1}f(v_{i_1,i_2})$. As a consequence Type $4$ connected components plus all generators of the form $X_{i,j}$ contribute to
	$1$ dimension to $\mathrm{Der}\bigl(\ker(\varphi),\mathcal{D}(\ker(\varphi))\bigr)$.
\end{proposition}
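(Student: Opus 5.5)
The plan is to differentiate the relator $v_{i_1,i_2}=X_{i_1,i_2}v_{i_1+1,i_2}X_{i_1,i_2}^{-1}$ and compare the resulting identity with Lemma~\ref{Derivation Lemma}. Write $X:=X_{i_1,i_2}$, $a:=v_{i_1,i_2}$, $b:=v_{i_1+1,i_2}$, so that $a=XbX^{-1}$. Applying $f$ and using $f(X^{-1})=-X^{-1}f(X)$ gives
$$f(a)=f(X)+Xf(b)-XbX^{-1}f(X)=(1-a)f(X)+Xf(b).$$
Since $v_{i_1,i_2}$ and $v_{i_1+1,i_2}$ lie in the same connected component of $\Delta$, repeated use of Lemma~\ref{Derivation Lemma}(1) along a path of commuting generators shows that $(1-x)^{-1}f(x)$ is constant along the component. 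Call this constant $c$; then $f(a)=(1-a)c$ and $f(b)=(1-b)c$. Substituting, and using $X(1-b)=(1-a)X$, we get
$$(1-a)c=(1-a)f(X)+(1-a)Xc .$$
Because $a\neq 1$ in the torsion-free group $\ker(\varphi)$ and $\mathcal{D}(\ker(\varphi))$ is a division ring, $1-a$ is invertible. Cancelling it yields $f(X)=(1-X)c$, and since $X\neq 1$ (it is a nontrivial generator, $u_1,u_2$ being non-adjacent), this gives $(1-X)^{-1}f(X)=c=(1-a)^{-1}f(a)$, which is the first claim.

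For the consequence, I would use the previous lemma: each Type~4 component of $\Gamma_{u_1u_2}$ gives a single connected component of $\Delta$. That component contains $r_{i_1,0}$ for every $r\in\lk(u_1)$, and more generally $v_{i_1,i_2}$ and $v_{i_1+1,i_2}$ for each $v\in\lk(u_1)$ lying in it. I then need every $X_{i_1,i_2}$ to be attached to at least one Type~4 component. Here I use that $u_1$ and $u_2$ lie in the same component of $\Gamma$ with $d(u_1,u_2)\ge2$: some component of $\Gamma_{u_1u_2}$ meets both $\lk(u_1)$ and $\lk(u_2)$, and such a component is of Type~4. By the first part, all Type~4 components, together with all $X_{i,j}$, then share one common constant $c$. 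So a derivation restricted to these generators is determined by one element $c\in\mathcal{D}(\ker(\varphi))$.

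Conversely, any choice of $c$ extends consistently. I would check this by setting $f(x)=(1-x)c$ on all these generators, which is the restriction of the inner derivation $f_{-c}$; it therefore satisfies every relator automatically. Hence the contribution is exactly one dimension.

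The main obstacle is the Type~2 components, which also contain vertices $v$ with $\{v,u_1\}\in E(\Gamma)$. I need to ensure they do not get glued by the $X$'s to the Type~4 constant, or that the bookkeeping separates them. My first attempt would be to observe that for a Type~2 component, $v_{i_1,i_2}$ and $v_{i_1+1,i_2}$ lie in different components of $\Delta$. In that case the relator merely links two constants through $X$: we get $c_{a}=f(X)(\ldots)+\ldots$, with $f(X)$ already fixed by the Type~4 constant. This should be treated separately, so the proposition itself only needs the same-component hypothesis.
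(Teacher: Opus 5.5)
Your proposal is correct and follows essentially the same route as the paper: you differentiate the conjugation relator, use constancy of $(1-x)^{-1}f(x)$ on the component of $\Delta$ together with $X_{i_1,i_2}(1-v_{i_1+1,i_2})=(1-v_{i_1,i_2})X_{i_1,i_2}$ to cancel $1-v_{i_1,i_2}$, and then use the preceding lemma to tie every $X_{i,j}$ to every Type~$4$ component (at least one of which exists). Your identity $f(a)=(1-a)f(X)+Xf(b)$ has the sign right where the paper's intermediate display has a $+$ in place of a $-$, and your observation that $f(x)=(1-x)c$ is the restriction of the inner derivation $f_{-c}$ neatly supplies the ``any $c$ is allowed'' direction that the paper leaves implicit.
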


\begin{proof}
	Since $v_{i_1,i_2}$ and $v_{i_1+1,i_2}$ lie in the same connected component of $\Delta$, Lemma~\ref{Derivation Lemma}.1 gives
	$$(1-v_{i_1,i_2})^{-1}f(v_{i_1,i_2})=(1-v_{i_1+1,i_2})^{-1}f(v_{i_1+1,i_2})\Rightarrow
	f(v_{i_1+1,i_2})=(1-v_{i_1+1,i_2})(1-v_{i_1,i_2})^{-1}f(v_{i_1,i_2}).$$

	On the other hand, the relation $v_{i_1,i_2}=v_{i_1+1,i_2}^{X_{i_1,i_2}}$
	gives
	$$f(v_{i_1,i_2}X_{i_1,i_2})=f(X_{i_1,i_2}v_{i_1+1,i_2})\Rightarrow f(v_{i_1,i_2})+v_{i_1,i_2}f(X_{i_1,i_2})=f(X_{i_1,i_2})+X_{i_1,i_2}f(v_{i_1+1,i_2}).$$
	Substituting the expression for $f(v_{i_1+1,i_2})$ obtained above and rearranging, we obtain
	$$(1-v_{i_1,i_2})f(X_{i_1,i_2})=f(v_{i_1,i_2})+X_{i_1,i_2}(1-v_{i_1+1,i_2})(1-v_{i_1,i_2})^{-1}f(v_{i_1,i_2}).$$
	
	Since $X_{i_1,i_2}=(1-v_{i_1,i_2})X_{i_1,i_2}(1-v_{i_1+1,i_2})^{-1},$ we have
	$$(1-v_{i_1,i_2})f(X_{i_1,i_2})=f(v_{i_1,i_2})+(1-v_{i_1,i_2})X_{i_1,i_2}(1-v_{i_1,i_2})^{-1}f(v_{i_1,i_2}).$$
	Rearranging gives $(1-X_{i_1,i_2})^{-1}f(X_{i_1,i_2})=(1-v_{i_1,i_2})^{-1}f(v_{i_1,i_2})$, as required.	
	
	Since $u_1$ and $u_2$ lie in the same connected component of $\Gamma$ there is at least one connected component of Type $4$. Let $v_{i_1,i_2},w_{i_3,i_4}$ be two vertices in $\Delta$ such that $v$ and $w$ lie in Type $4$ connected components of $\Gamma$. If $v$ and $w$ lie in the same connected component of $\Gamma_{u_1,u_2}$ then $v_{i_1,i_2}$ and $w_{i_3,i_4}$ lie in the same connected component of $\Delta$, so by  Lemma~\ref{Derivation Lemma}.1 $(1-v_{i_1,i_2})^{-1}f(v_{i_1,i_2})=(1-w_{i_3,i_4})^{-1}f(w_{i_3,i_4})$. $v$ and $w$ lie in distinct connected component of $\Gamma_{u_1,u_2}$ then by the above discussion $(1-v_{i_1,i_2})^{-1}f(v_{i_1,i_2})=(1-X_{i_5,i_6})^{-1}f(X_{i_5,i_6})=(1-w_{i_3,i_4})^{-1}f(w_{i_3,i_4})$ for any $0<i_5<o(u_1)$ and  $0<i_6<o(u_2)$. Hence $(1-a)^{-1}f(a)$ is a constant value on the following set:
	 $$\lbrace v_{i_1,i_2}\mid 	0\leq i_k<o(u_k)\text{ with }k=1,2\text{ and }v\text{ is of Type }4\rbrace \cup\lbrace X_{i_1,i_2}\mid 	0<i_k<o(u_k)\text{ with }k=1,2\rbrace $$
	 Hence this set of generators contribute to $1$ dimension to $\mathrm{Der}\bigl(\ker(\varphi),\mathcal{D}(\ker(\varphi))\bigr)$.
\end{proof}
Finally, let us consider Type 2 connected components.
\begin{lemma}\label{Type 2}
	Type $2$ connected components contribute $o(u_2)\bigl(\beta_1^{(2)}(A_{\Gamma_{u_1}})-\beta_1^{(2)}(A_\Gamma)\bigr)$
	dimensions to $\mathrm{Der}\bigl(\ker(\varphi),\mathcal{D}(\ker(\varphi))\bigr)$.
\end{lemma}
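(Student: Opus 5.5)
The plan is to fix one Type $2$ component $\Omega$ of $\Gamma_{u_1u_2}$ and show that it contributes exactly $o(u_2)$ free parameters to $\mathrm{Der}\bigl(\ker(\varphi),\mathcal{D}(\ker(\varphi))\bigr)$, once the values on the generators $X_{i_1,i_2}$ are fixed by Proposition~\ref{Type4}. Summing over the $\beta_1^{(2)}(A_{\Gamma_{u_1}})-\beta_1^{(2)}(A_\Gamma)$ such components then gives the claim. Since $\Omega$ is a component of $\Gamma_{u_1}$ but not of $\Gamma$, it meets $\lk(u_1)$ and does not meet $\lk(u_2)$. Hence the relators $v_{i_1,i_2}=v_{i_1,i_2+1}$ never involve vertices of $\Omega$. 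The only relators touching $\Omega$ are the identifications $w_{i_1,0}=w_{i_1+1,0}$ and the conjugation relators $w_{i_1,i_2}=w_{i_1+1,i_2}^{X_{i_1,i_2}}$, for $w\in V(\Omega)\cap\lk(u_1)$ and $0<i_1$, $0<i_2$.

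\textbf{Step 1 (components of $\Delta$ over $\Omega$).} Exactly as in Lemmas~\ref{Type 1} and~\ref{Type 3}, for $i_2=0$ the identifications $w_{i_1,0}=w_{i_1+1,0}$ glue all the copies $\{v_{i_1,0}\mid v\in V(\Omega)\}$, $0\le i_1<o(u_1)$, into a single component of $\Delta$. For each fixed $i_2>0$, the sets $C_{i_1,i_2}=\{v_{i_1,i_2}\mid v\in V(\Omega)\}$, $0\le i_1<o(u_1)$, are $o(u_1)$ distinct components of $\Delta$. By Lemma~\ref{Derivation Lemma}.1, on each component $C$ the derivation has the form $f(a)=(1-a)c_C$ for a single constant $c_C\in\mathcal{D}(\ker(\varphi))$. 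Before the relators involving $X$ are imposed, this gives $1+(o(u_2)-1)o(u_1)$ parameters.

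\textbf{Step 2 (effect of the conjugation relators).} Let $c$ denote the common value of $(1-X)^{-1}f(X)$ given by Proposition~\ref{Type4}. Apply $f$ to $w_{i_1,i_2}X=Xw_{i_1+1,i_2}$ with $f(X)=(1-X)c$. Then use $X(1-w_{i_1+1,i_2})=(1-w_{i_1,i_2})X$ and cancel the invertible factor $1-w_{i_1,i_2}$. This should give
\[
c_{C_{i_1,i_2}}-(1-X_{i_1,i_2})c \;=\; X_{i_1,i_2}\,c_{C_{i_1+1,i_2}},
\]
so that $c_{C_{i_1+1,i_2}}=X_{i_1,i_2}^{-1}\bigl(c_{C_{i_1,i_2}}-c\bigr)+c$. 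This equation does not depend on the choice of $w\in V(\Omega)\cap\lk(u_1)$, so different $w$ impose the same condition.

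\textbf{Step 3 (counting).} For fixed $i_2>0$, the surviving generators $X_{i_1,i_2}$ link the $o(u_1)$ components $C_{\bullet,i_2}$ in a chain with no cycle. Each relator determines the next constant from the previous one, and consistency is automatic. So exactly one free parameter (say $c_{C_{0,i_2}}$, or whichever end of the chain) remains for each $i_2>0$, plus one for the glued component at $i_2=0$. This gives $o(u_2)$ dimensions per Type $2$ component.

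I expect the main obstacle to be the bookkeeping of which $X_{i_1,i_2}$ survive after deleting those with a zero index via the product relators. One has to check that the deleted relators (with $X_{0,i_2}$ expressed as an inverse product) are consequences of the others, as claimed in Section~\ref{SectionLink}. This guarantees that the chains really are trees, so no closing-up condition cuts the count down and no extra freedom appears. A secondary point is verifying the commutation identity $X(1-w_{i_1+1,i_2})=(1-w_{i_1,i_2})X$ used in the algebra of Step 2; this is the same manipulation as in the proof of Proposition~\ref{Type4}.
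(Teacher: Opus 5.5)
Your proposal is correct and follows the same route as the paper. For each fixed $i_2$, the conjugation relators $w_{i_1,i_2}X_{i_1,i_2}=X_{i_1,i_2}w_{i_1+1,i_2}$, with $f(X_{i_1,i_2})$ already fixed by Proposition~\ref{Type4}, propagate a single free value along $i_1$, which gives $o(u_2)$ parameters per Type~2 component; your version is more careful than the paper's in separating $i_2=0$ from $i_2>0$, in checking that different $w\in V(\Omega)\cap\lk(u_1)$ impose the same constraint, and in checking that the surviving $X_{i_1,i_2}$ form an acyclic chain.
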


\begin{proof}
	Let $\Omega$ be a connected component of Type $2$, and let
	$v\in V(\Omega)\cap\lk(u_1)$. Then $v_{i_1,i_2}X_{i_1,i_2}=X_{i_1,i_2}v_{i_1+1,i_2}$ for $0\leq i_k<o(u_k)$, $k=1,2$. If $f\in\mathrm{Der}\bigl(\ker(\varphi),\mathcal{D}(\ker(\varphi))\bigr)$, then
	$$f(v_{i_1,i_2})+v_{i_1,i_2}f(X_{i_1,i_2})=f(X_{i_1,i_2})+X_{i_1,i_2}f(v_{i_1+1,i_2}).$$
	Thus, the value $f(v_{i_1+1,i_2})$ is completely determined by
	$f(X_{i_1,i_2})$ and $f(v_{i_1,i_2})$. By Proposition~\ref{Type4}, the value of $f(X_{i_1,i_2})$ is completely determined by the Type $4$ connected components. Hence, for each fixed $i_2$, the value $f(v_{0,i_2})$ determines the values
	$f(v_{i_1,i_2})$ for all
	$i_1\in{1,\dots,o(u_1)-1}$.
	Therefore, all vertices of $\Delta$ arising from $\Omega$ are determined by the values
	$f(v_{0,i_2})$ for
	$i_2\in{0,\dots,o(u_2)-1}$.
	Thus, each Type $2$ connected component contributes $o(u_2)$ dimensions to
	$\mathrm{Der}\bigl(\ker(\varphi),\mathcal{D}(\ker(\varphi))\bigr)$.
	Since the number of Type $2$ connected components is
	$\beta_1^{(2)}(A_{\Gamma_{u_1}})
	-\beta_1^{(2)}(A_\Gamma)$, the claimed contribution follows.
\end{proof}
Combining all these results we get the computation for the first $\ell^2$-Betti number of this family of groups.
\begin{theorem}
	Let $\mathcal{G}_\Gamma$ be a graph product of cyclic groups with
	$V(\Gamma^{\infty})={u,v}$, and assume that $u$ and $v$ are not adjacent. Then
	$$\beta_1^{(2)}(\mathcal{G}_\Gamma)=\left(1-\frac{1}{o(u)}-\frac{1}{o(v)}\right)\beta_1^{(2)}(A_\Gamma)+\frac{1}{o(u)}\beta_1^{(2)}(A_{\Gamma_u})+\frac{1}{o(v)}\beta_1^{(2)}(A_{\Gamma_v}).$$
\end{theorem}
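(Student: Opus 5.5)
We combine the contributions computed in Lemmas \ref{Type 1}, \ref{Type 3}, \ref{Type 2} and Proposition \ref{Type4} to get $\dim_{\mathcal{D}(\ker(\varphi))}\mathrm{Der}(\ker(\varphi),\mathcal{D}(\ker(\varphi)))$. We then subtract $1$ for the inner derivations and divide by the index $[\mathcal{G}_\Gamma:\ker(\varphi)]=o(u)o(v)$, using multiplicativity of $\ell^2$-Betti numbers under finite-index subgroups. Throughout, write $u=u_1$, $v=u_2$, $\beta=\beta_1^{(2)}(A_\Gamma)$, $\beta_u=\beta_1^{(2)}(A_{\Gamma_u})$ and $\beta_v=\beta_1^{(2)}(A_{\Gamma_v})$. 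We are in the setting reduced to above: $u,v$ lie in the same component of $\Gamma$ and are not adjacent. In particular, $\mathcal{D}(\ker(\varphi))$ is a division ring and $\beta_1^{(2)}(\ker(\varphi))=\dim\mathrm{Der}-1$.

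First we check that the four types exhaust all generators of $\ker(\varphi)$, and that the contributions are independent, i.e.\ the dimension is additive over the types. A derivation is determined by its values on generators. The relators either stay inside a single component of $\Delta$ (commutation and identification relators), or are of the form $v_{i_1,i_2}=v_{i_1+1,i_2}^{X_{i_1,i_2}}$ with $v\in\lk(u_1)$. The latter only involve Type 2 and Type 4 vertices together with the $X$'s. So the solution space splits as a direct sum over
\begin{itemize}
\item[(a)] Type 1 components,
\item[(b)] Type 3 components,
\item[(c)] the block formed by Type 4 vertices and the $X$'s,
\item[(d)] Type 2 components, where the values are determined by free parameters $f(v_{0,i_2})$ once block (c) is fixed.
\end{itemize}
For (d) I would argue that the map to the parameters is an isomorphism. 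Any choice of $f(v_{0,i_2})$ extends consistently because the HNN structure imposes no further relation: going around $i_1\mapsto i_1+1$ mod $o(u_1)$ closes up via the relator $\prod_k X_{k,i_2}=1$ for $i_2\neq 0$, and via the identification $v_{i_1,0}=v_{i_1+1,0}$ for $i_2=0$. This gives the count
$$\dim\mathrm{Der}=o(u)o(v)\beta+o(u)(\beta_v-\beta)+o(v)(\beta_u-\beta)+1.$$

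Subtracting $1$ and dividing by $o(u)o(v)$ gives
$$\beta_1^{(2)}(\mathcal{G}_\Gamma)=\beta+\frac{\beta_v-\beta}{o(v)}+\frac{\beta_u-\beta}{o(u)}=\Bigl(1-\frac1{o(u)}-\frac1{o(v)}\Bigr)\beta+\frac{\beta_u}{o(u)}+\frac{\beta_v}{o(v)},$$
which is the stated formula.

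The main obstacle is the consistency claim for Type 2 components, i.e.\ that the $o(u_2)$ parameters per component are genuinely free and not cut down by the cyclic closing condition in $i_1$. I would verify this directly from the iterated HNN description: $\ker(\varphi)$ is built from $A_\Delta$ by HNN extensions over $K=A_{\lk(u_1)}$. For an HNN extension $H\ast_K$ one has the Mayer--Vietoris-type exact sequence in $\mathrm{Der}$: the new stable letter adds one free parameter, and the relators impose conditions only through $K$. Checking that these conditions are exactly those already used in Proposition \ref{Type4} settles the count. A second subtlety I would check is the degenerate case in which Type 4 components exist but some $X_{i_1,i_2}$ is trivial. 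This does not occur, since $u_1,u_2$ are non-adjacent.
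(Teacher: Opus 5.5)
In the case where $u$ and $v$ lie in the same connected component of $\Gamma$, your argument is the paper's: add the four contributions, subtract $1$ for the inner derivations, and divide by $o(u)o(v)$. Your extra remarks are correct. The closing relator involving $X_{0,i_2}$ follows from the others, and $X_{i_1,i_2}\neq 1$ for $i_1,i_2\neq 0$. Block (d) is an extension over (c) rather than a direct summand, but the dimensions still add. These points make the Type 2 count more convincing than the paper's sketch.

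\textbf{The gap.} The statement only assumes that $u$ and $v$ are non-adjacent, and you simply declare that they lie in the same component. The remark before the lemmas is not a proof of the formula in the other case, and your count does not extend to it.
\begin{itemize}
\item If $u$ and $v$ lie in different components, there is no Type 4 component. Proposition \ref{Type4} then no longer ties the $X_{i_1,i_2}$ to a single constant. For instance, for $\Gamma=\{u\}\sqcup\{v\}$ the group $\ker(\varphi)$ is free on the $(o(u)-1)(o(v)-1)$ letters $X_{i_1,i_2}$, which is not a one-dimensional contribution.
\item The component counts shift. There are $\beta_1^{(2)}(A_\Gamma)-1$ Type 1 components and $\beta_1^{(2)}(A_{\Gamma_u})-\beta_1^{(2)}(A_\Gamma)+1$ Type 2 components, and similarly for Type 3.
\end{itemize}
So the four lemmas do not justify your expression for $\dim\mathrm{Der}$ there.

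\textbf{What is needed.} You need a separate argument, which the paper supplies.
\begin{itemize}
\item Write $\mathcal{G}_\Gamma=\mathcal{G}_{\Gamma^1}\ast\mathcal{G}_{\Gamma^2}$ with $u\in V(\Gamma^1)$ and $v\in V(\Gamma^2)$.
\item Use the free product formula $\beta_1^{(2)}(G_1\ast G_2)=\beta_1^{(2)}(G_1)+\beta_1^{(2)}(G_2)+1-\beta_0^{(2)}(G_1)-\beta_0^{(2)}(G_2)$.
\item Apply Theorem \ref{BettiClique} to each factor, each of which has a single finite-order vertex.
\item Use the component identities $\beta_1^{(2)}(A_\Gamma)=\beta_1^{(2)}(A_{\Gamma^1})+\beta_1^{(2)}(A_{\Gamma^2})+1$, $\beta_1^{(2)}(A_{\Gamma_u})=\beta_1^{(2)}(A_{\Gamma^1_u})+\beta_1^{(2)}(A_{\Gamma^2})+1$, and the analogue for $\Gamma_v$.
\end{itemize}
The degenerate subcases $\Gamma^1=\{u\}$ or $\Gamma^2=\{v\}$ must also be checked. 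There the factor is finite, so $\beta_0^{(2)}=1/o(u)$ (resp.\ $1/o(v)$) enters the free product formula, and for example $\beta_1^{(2)}(A_{\Gamma_u})=\beta_1^{(2)}(A_{\Gamma^2})$. The formula does hold in these subcases as well.
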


\begin{proof}
	First suppose that $u$ and $v$ lie in the same connected component of $\Gamma$. Then the result follows by adding the contributions to the dimension of
	$\mathrm{Der}\bigl(\ker(\varphi),\mathcal{D}(\ker(\varphi))\bigr)$
	from the four types of connected components described in Lemmas~\ref{Type 1}, \ref{Type 2}, \ref{Type 3}, and Proposition~\ref{Type4}. Now suppose that $u$ and $v$ lie in different connected components. We can then write $\mathcal{G}_\Gamma=\mathcal{G}_{\Gamma^1}\ast\mathcal{G}_{\Gamma^2}$, where $u\in V(\Gamma^1)$ and $v\in V(\Gamma^2)$. Assume that $\lvert V(\Gamma^1)\rvert,\lvert V(\Gamma^2)\rvert>1$, since the other two cases follow similarly. By \cite[Theorem 4.15(ii)]{Kammeyer},
	$$
	\beta_1^{(2)}(\mathcal{G}_\Gamma)=\beta_1^{(2)}(\mathcal{G}_{\Gamma^1})+\beta_1^{(2)}(\mathcal{G}_{\Gamma^2})+1.$$
	By Theorem~\ref{BettiClique}, we have
	$$\beta_1^{(2)}(\mathcal{G}_\Gamma)
		=\frac{1}{o(u)}\left((o(u)-1)\beta_1^{(2)}(A_{\Gamma^1})+\beta_1^{(2)}(A_{\Gamma^1_u})\right)+\frac{1}{o(v)}\left((o(v)-1)\beta_1^{(2)}(A_{\Gamma^2})+\beta_1^{(2)}(A_{\Gamma^2_v})\right)+1.
	$$
	On the other hand, since the first $\ell^2$-Betti number of a RAAG is given by the number of connected components of its defining graph minus $1$, we have
	$$
	\beta_1^{(2)}(A_\Gamma)
	=
	\beta_1^{(2)}(A_{\Gamma^1})
	+
	\beta_1^{(2)}(A_{\Gamma^2})
	+
	1,
	$$
	$$
	\beta_1^{(2)}(A_{\Gamma_u})
	=
	\beta_1^{(2)}(A_{\Gamma^1_u})
	+
	\beta_1^{(2)}(A_{\Gamma^2})
	+
	1,
	$$
	and
	$$
	\beta_1^{(2)}(A_{\Gamma_v})
	=
	\beta_1^{(2)}(A_{\Gamma^1})
	+
	\beta_1^{(2)}(A_{\Gamma^2_v})
	+
	1.
	$$
	Substituting these identities into the preceding expression for
	$\beta_1^{(2)}(\mathcal{G}_\Gamma)$ yields the desired formula.\end{proof}

\begin{proof}[Theorem \ref{Main3}]
	If $u$ and $v$ are adjacent, the result follows from Theorem~\ref{BettiClique}. Suppose now that $u$ and $v$ are not adjacent. A direct count of the connected components yields
	$$\beta_1^{(2)}(A_{\Gamma_{uv}})=\beta_1^{(2)}(A_{\Gamma_u})+\beta_1^{(2)}(A_{\Gamma_v})-\beta_1^{(2)}(A_\Gamma).$$
	Combining this identity with the formula from the previous theorem, we obtain the desired formula.
\end{proof}

\begin{remark}
	In the setting of the above theorem, suppose in addition that $\mathcal{G}_\Gamma$ is two-dimensional, that is, that $\Gamma$ contains no triangles. Then $\beta_i^{(2)}(\mathcal{G}_\Gamma)=0$ for all $i\geq 3$, and hence the second $\ell^2$-Betti number can be determined from the Euler characteristic. By \cite{Chriswell}, if $\deg(w)$ counts the number of vertices that are adjacent to $w$ in $\Gamma$: $$\chi(\mathcal{G}_\Gamma)=1-\lvert V(\Gamma)\rvert+\lvert E(\Gamma)\rvert+\frac{1-\deg(u)}{o(u)}+\frac{1-\deg(v)}{o(v)}.$$
	Consequently, $\beta_2^{(2)}(\mathcal{G}_\Gamma)=\beta_1^{(2)}(\mathcal{G}_\Gamma)+\chi(\mathcal{G}_\Gamma)$.
	 As a concrete example, if $\Gamma$ is a cycle with $n$ vertices, then
	$$\beta_i^{(2)}(\mathcal{G}_\Gamma)
	=
	\begin{dcases}
		1-\dfrac{1}{o(u)}-\dfrac{1}{o(v)}, & \text{if } i=2,\\[6pt]
		0, & \text{otherwise}.
	\end{dcases}$$
\end{remark}

 \section{$\Sigma$-Invariants of $\mathcal{G}_\Gamma$}\label{SigmaSection}
We begin by defining the conditions appearing in Theorem~\ref{Main3}.
\begin{definition}
	Let $\mathcal{G}_\Gamma$ be a RAAG and let $\chi:\mathcal{G}_\Gamma\to\mathbb{R}$ be a character.
	\begin{itemize}
		\item $\chi$ satisfies the \textbf{homological $n$-link condition} if $\widehat{\mathrm{lk}_{\mathrm{Liv}^\chi}(\Delta)}$ is $(n-\abs{V(\Delta)}-1)$-acyclic for every clique $\Delta\subset\Gamma\setminus\mathrm{Liv}^\chi$ with $\abs{V(\Delta)}\leq n$.
		\item $\chi$ satisfies the \textbf{homotopical $n$-link condition} if $\widehat{\mathrm{lk}_{\mathrm{Liv}^\chi}(\Delta)}$ is $(n-\abs{V(\Delta)}-1)$-acyclic for every clique $\Delta\subset\Gamma\setminus\mathrm{Liv}^\chi$ with $\abs{V(\Delta)}\leq n$ and $\widehat{\mathrm{Liv}^\chi}$ is $(n-1)$-connected.	
		\item $\chi$ satisfies the \textbf{strong homotopical $n$-link condition}  if $\widehat{\mathrm{lk}_{\mathrm{Liv}^\chi}(\Delta)}$ is $(n-\abs{V(\Delta)}-1)$-connected for every clique $\Delta\subset\Gamma\setminus\mathrm{Liv}^\chi$ with $\abs{V(\Delta)}\leq n$.
	\end{itemize}
\end{definition}
The first two conditions were used by Meier, Meinert, and VanWyk to compute the BNSR-invariants of RAAGs.
\begin{theorem}[\cite{Meier-Meinert-VanWyk 1998} Meier-Meinert-VanWyk, 1998]\label{SigmaRAAG}
	Let $A_\Gamma$ be a RAAG and let $\chi:A_\Gamma\to\mathbb{R}$ be a character. Then:
	\begin{enumerate}
		\item $[\chi]\in\Sigma^n(A_\Gamma,\mathbb{Z})$ if and only if $\chi$ satisfies the homological $n$-link condition.
		\item $[\chi]\in\Sigma^n(A_\Gamma)$ if and only if $\chi$ satisfies the homotopical $n$-link condition.
	\end{enumerate}
\end{theorem}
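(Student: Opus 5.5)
This is the Meier--Meinert--VanWyk theorem (Theorem \ref{SigmaRAAG}), which the paper cites rather than reproves. If I had to prove it, I would use the Schmitt--Meinert criterion (Theorem \ref{teo:MMVW}) for the sufficiency directions and a retraction/restriction argument for the necessity directions. Throughout, $\mathrm{Liv}^\chi$ is the living subgraph, induced by the vertices $v$ with $\chi(v)\neq 0$.

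For sufficiency, I would take the action of $A_\Gamma$ on the universal cover $X$ of the Salvetti complex. This is contractible, but all its cell stabilisers are trivial, so it is not directly usable. Instead I would use the complex obtained by coning along the dead vertices: the $A_\Gamma$-complex whose cells are cosets $gA_{\Lambda}$, where $\Lambda$ runs over cliques of $\Gamma$ containing a fixed nonempty living part. A cleaner route is a Davis-type complex whose $p$-cells correspond to cosets of $A_{\Delta\ast \mathrm{lk}(\Delta)\cap\mathrm{Liv}^\chi}$-type subgroups.

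The key steps are as follows.
\begin{enumerate}
\item Show that the stabiliser of each cell is a special subgroup containing a living vertex, so that $\chi$ does not vanish on it.
\item Show that the restriction of $\chi$ to each stabiliser lies in $\Sigma^\infty$ of the stabiliser. This holds because a special subgroup $A_\Omega$ in which some living vertex $v$ is central has $\chi|_{A_\Omega}\in\Sigma^\infty(A_\Omega)$: the character is nonzero on a central element.
\item Show that the required connectivity (respectively acyclicity) of the complex reduces, via a nerve argument, to the $(n-|V(\Delta)|-1)$-connectivity (respectively acyclicity) of $\widehat{\mathrm{lk}_{\mathrm{Liv}^\chi}(\Delta)}$ over dead cliques $\Delta$, together with the $(n-1)$-connectivity of $\widehat{\mathrm{Liv}^\chi}$ in the homotopical case.
\end{enumerate}
Theorem \ref{teo:MMVW}(i) and (ii) then give the sufficiency of the homotopical and homological link conditions respectively.

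For necessity, I would induct on the number of vertices.
\begin{enumerate}
\item For a dead vertex $w$, the special subgroup $A_{\mathrm{lk}(w)}$ is a retract of $A_\Gamma$, and $A_{\mathrm{st}(w)}=A_{\mathrm{lk}(w)}\times\mathbb{Z}$ is a retract as well. Since $\Sigma$-invariants pass to retracts when the character factors through the retraction, $[\chi]\in\Sigma^n(A_\Gamma,\mathbb{Z})$ forces $[\chi|_{A_{\mathrm{lk}(w)}}]\in\Sigma^{n-1}(A_{\mathrm{lk}(w)},\mathbb{Z})$.
\item Iterating over dead cliques $\Delta$ reduces the condition on $\widehat{\mathrm{lk}_{\mathrm{Liv}^\chi}(\Delta)}$ to the base case $\Delta=\emptyset$.
\item In the base case, a character that is nonzero on every vertex of $\Omega=\mathrm{Liv}^\chi$ must have $\widehat{\Omega}$ $(n-1)$-acyclic (respectively connected). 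I would prove this by perturbing to a discrete character that is $\pm1$ on every vertex and identifying the kernel with a Bestvina--Brady-type group. Bestvina--Brady Morse theory shows that the finiteness of the kernel is governed exactly by the acyclicity or connectivity of $\widehat{\Omega}$, and Theorem \ref{teo:FinitenesSigma} converts this back into a statement about $\Sigma^n$.
\end{enumerate}

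I expect the main obstacle to be the base-case necessity. Turning Bestvina--Brady into a statement about a single character requires a careful Morse-theoretic analysis of ascending and descending links in the cube complex $X$, where the ascending link of a vertex is a join-like complex built from $\widehat{\Omega}$ and the dead part. Sign changes of $\chi$ on vertices can be absorbed by the automorphisms $v\mapsto v^{-1}$.
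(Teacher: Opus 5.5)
\textbf{The main gap: step 1 of your necessity argument fails.} The retraction principle you quote needs $\chi$ to factor through the retraction. But the retraction of $A_\Gamma$ onto $A_{\st(w)}$ (or onto $A_{\lk(w)}$) kills every vertex outside $\st(w)$, and $\chi$ need not vanish there.

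More fundamentally, nothing in your argument can produce the drop from $\Sigma^n$ to $\Sigma^{n-1}$. A compatible retraction never lowers the degree. Neither does passing from $A_{\st(w)}=A_{\lk(w)}\times\langle w\rangle$ to $A_{\lk(w)}$ when $\chi(w)=0$. What your argument would actually yield is $[\chi|_{A_{\st(w)}}]\in\Sigma^n$, and that is false in general. Take $\Gamma$ to be the $4$-cycle on $a,b,c,w$ (in this order), with $\chi(a)=\chi(b)=\chi(c)=1$ and $\chi(w)=0$. Then $\mathrm{Liv}^\chi$ is the path $a$--$b$--$c$, which is connected and dominant, so $[\chi]\in\Sigma^1(A_\Gamma)$. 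However, $A_{\st(w)}\cong\langle w\rangle\times F(a,c)$ with $a,c$ living and non-adjacent, so neither $[\chi|_{A_{\st(w)}}]$ nor $[\chi|_{A_{\lk(w)}}]$ lies in $\Sigma^1$.

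The missing idea is the HNN splitting, exactly as in Lemma~\ref{LivLemma}:
\begin{enumerate}
\item Retract onto $A_{\Gamma_w}$. This is compatible with $\chi$ precisely because $\chi(w)=0$, and gives $[\chi|_{A_{\Gamma_w}}]\in\Sigma^n(A_{\Gamma_w},\mathbb{Z})$.
\item Write $A_\Gamma=A_{\Gamma_w}\ast_{A_{\lk(w)}}$ with stable letter $w\in\ker\chi$, and apply the Mayer--Vietoris/Bass--Serre result for HNN extensions of Meier--Meinert--VanWyk (the one used in that lemma). This gives $[\chi|_{A_{\lk(w)}}]\in\Sigma^{n-1}(A_{\lk(w)},\mathbb{Z})$.
\end{enumerate}
The non-vanishing of $\chi$ on $A_{\lk(w)}$ comes from dominance of $\mathrm{Liv}^\chi$. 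It is this HNN step, not a retraction, that costs one degree.

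\textbf{Smaller points.} Your sufficiency outline is essentially the coset-complex argument the paper uses in Section~\ref{SigmaSection}, but three things need fixing.

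First, the homotopical case. Your step~3 needs the links $\widehat{\mathrm{lk}_{\mathrm{Liv}^\chi}(\Delta)}$ of nonempty dead cliques to be \emph{connected} in the relevant range. The homotopical $n$-link condition only makes them acyclic, so as written you prove sufficiency of the strong condition only. To repair this, note that for $n\geq 2$ the homotopical $n$-link condition implies the strong $2$-link condition, since acyclicity and connectivity coincide in degrees $-1$ and $0$. Your argument then gives $[\chi]\in\Sigma^2(A_\Gamma)$, the homological argument gives $[\chi]\in\Sigma^n(A_\Gamma,\mathbb{Z})$, and Renz's equality $\Sigma^n(A_\Gamma)=\Sigma^n(A_\Gamma,\mathbb{Z})\cap\Sigma^2(A_\Gamma)$ finishes the proof.

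Second, step~2 works for the coset complex of cliques, but not for your ``cleaner'' complex. Stabilisers of the form $A_{\Delta\ast(\lk(\Delta)\cap\mathrm{Liv}^\chi)}$ need not contain a central living vertex, and the $4$-cycle above shows $\chi$ restricted to them can fail to lie in $\Sigma^1$.

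Third, in the base case you cannot perturb to a character that is $\pm1$ on every vertex. Openness of $\Sigma^n$ only allows small perturbations, and vertex inversions change signs, not magnitudes (consider $\chi(a)=1$, $\chi(b)=100$). You do not need this perturbation:
\begin{enumerate}
\item Retract onto $A_{\mathrm{Liv}^\chi}$, which is compatible with $\chi$.
\item For any character nonzero on every vertex, the ascending and descending links in the Salvetti cube complex are copies of $\widehat{\mathrm{Liv}^\chi}$. So perturb to a nearby discrete $\chi'$ with the same signs.
\item The global inversion $v\mapsto v^{-1}$ gives $[-\chi']\in\Sigma^n$ as well. Theorem~\ref{teo:FinitenesSigma} then shows that $\ker\chi'$ is of type $\mathrm{FP}_n$ (respectively $\mathrm{F}_n$).
\item Run the Bestvina--Brady argument for that kernel.
\end{enumerate}
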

The aim of this section is to compute the $\Sigma$-invariants of graph products of cyclic groups. The case $n=1$ has already been established.
\begin{theorem}[\cite{Meinert 1995} Meinert, 1995]\label{Sigma1}
	Let $\mathcal{G}_\Gamma$ be a graph product of cyclic groups. Then
	\[
	\Sigma^1(\mathcal{G}_\Gamma)=\{[\chi]\in S(\mathcal{G}_\Gamma)\mid\mathrm{Liv}^\chi\text{ is connected and dominant}\}.
	\]
	A subgraph $\Gamma'\subset\Gamma$ is called \textbf{dominant} if, for every vertex $v\in V(\Gamma\setminus\Gamma')$, there exists a vertex $w\in V(\Gamma')$ such that $\{v,w\}\in E(\Gamma)$.
\end{theorem}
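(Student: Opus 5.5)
The proof would split into sufficiency and necessity. Write $\mathrm{Liv}^\chi$ for the subgraph induced by the vertices $v$ with $\chi(v)\neq 0$. Every finite-order vertex is automatically dead, since $\chi$ vanishes on torsion. For sufficiency I would use the combinatorial Cayley-graph description of $\Sigma^1$: $[\chi]\in\Sigma^1(G)$ if and only if the full subgraph $\mathrm{Cay}(G,S)_\chi$ of the Cayley graph on the vertices $g$ with $\chi(g)\geq 0$ is connected, for some (equivalently, any) finite generating set $S$. Equivalently, it suffices to find, for every generator $s$, a path from $1$ to $s$ inside $\mathrm{Cay}(G,S)_\chi$, and a path from $1$ to some $t$ with $\chi(t)>0$ that stays at height $\geq 0$. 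I would take $S=V(\Gamma)^{\pm1}$.

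\textbf{Step 1: living vertices.} Fix a living vertex $t$ with $\chi(t)>0$, replacing $t$ by $t^{-1}$ if necessary; such a vertex exists because $\chi\neq0$. Suppose $v,w$ are adjacent living vertices. The commutator squares $vw=wv$ let me push a path labelled by $v^{\pm1}$ upward by powers of $w^{\pm 1}$, choosing the sign that makes $\chi$ increase. Since $\mathrm{Liv}^\chi$ is connected, propagating this along edges of $\mathrm{Liv}^\chi$ starting from $t$ shows that every living generator edge can be realised at height $\geq 0$ after conjugating by a suitable high power of living generators.

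\textbf{Step 2: dead vertices.} For a dead vertex $w$, dominance gives a living neighbour $v$ with $vw=wv$. The path $v^{k}\,w\,v^{-k}$ then joins $1$ to $w$, and for $k$ of the right sign all its vertices have $\chi\geq 0$. This step uses only the commutation relation, so it works equally for finite-order $w$.

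\textbf{Step 3: necessity of dominance.} Suppose some dead vertex $w$ has no living neighbour. Then $\chi$ vanishes on the special subgroup $\mathcal{G}_{\mathrm{st}(w)}$, and
\[
\mathcal{G}_\Gamma=\mathcal{G}_{\Gamma_w}\ast_{\mathcal{G}_{\mathrm{lk}(w)}}\mathcal{G}_{\mathrm{st}(w)} .
\]
The edge group $\mathcal{G}_{\mathrm{lk}(w)}$ is proper in $\mathcal{G}_{\mathrm{st}(w)}$, because $w\notin\mathcal{G}_{\mathrm{lk}(w)}$. It is also proper in $\mathcal{G}_{\Gamma_w}$: $\chi\neq 0$ on $\mathcal{G}_{\Gamma_w}$ (it contains a living vertex) while $\chi=0$ on $\mathcal{G}_{\mathrm{lk}(w)}$. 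So the splitting is nontrivial and $\chi$ is zero on a vertex group. By the standard Bass--Serre criterion (the action of $\ker\chi$ on the Bass--Serre tree has infinitely many orbits of edges), $\ker$-finiteness fails and $[\chi]\notin\Sigma^1$.

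\textbf{Step 4: necessity of connectedness.} Suppose $\mathrm{Liv}^\chi$ is disconnected. Killing the normal closure of all dead vertices is a retraction $\mathcal{G}_\Gamma\to A_{\mathrm{Liv}^\chi}$, and $\chi$ factors through it. Here $A_{\mathrm{Liv}^\chi}$ is a RAAG, since all living vertices have infinite order, and it is a free product of at least two nontrivial factors, on each of which $\chi\neq0$. For such a free product $[\bar\chi]\notin\Sigma^1$, for instance by Theorem~\ref{SigmaRAAG} with $n=1$. Since $\Sigma^1$ passes to quotients, meaning $[\chi]\in\Sigma^1(G)$ implies $[\bar\chi]\in\Sigma^1(Q)$ for $G\twoheadrightarrow Q$, we get $[\chi]\notin\Sigma^1(\mathcal{G}_\Gamma)$.

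I expect the main obstacle to be making Step 1 rigorous. The delicate points are choosing signs and powers uniformly, and gluing the local paths into a single connected subcomplex, i.e.\ verifying the "$\chi$-controlled generating set" condition. If the direct path argument becomes messy, my fallback is Theorem~\ref{teo:MMVW} with $n=1$ applied to the Davis complex. Its cell stabilisers are conjugates of special subgroups on cliques, and the hypothesis $G_\sigma\nsubseteq\ker\chi$ requires passing to a subcomplex indexed by cliques meeting $\mathrm{Liv}^\chi$; dominance together with connectedness of $\mathrm{Liv}^\chi$ is exactly what should make that subcomplex connected.
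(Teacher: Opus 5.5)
\textbf{The sufficiency half has a genuine gap.} The reduction you state is not equivalent to connectivity of $\mathrm{Cay}(G,S)_\chi$. For a generator $s$ with $\chi(s)\geq 0$, the edge from $1$ to $s$ is itself a path inside $\mathrm{Cay}(G,S)_\chi$; for $\chi(s)<0$ no such path exists. The edge from $1$ to $t$ is the second path. So your condition carries no information. For instance, $F_2=\langle a,b\rangle$ with $\chi(a)=1$, $\chi(b)=0$ satisfies it, yet $\Sigma^1(F_2)=\emptyset$: the vertices $1$ and $a^{-1}ba$ are joined only through height $-1$.

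As a consequence, your Step~2 is vacuous: the path $v^kwv^{-k}$ is no better than the edge from $1$ to $w$. Dominance never really enters your sufficiency argument. What you need is the standard local criterion. Fix $t\in S^{\pm1}$ with $\chi(t)>0$. Then $\mathrm{Cay}(G,S)_\chi$ is connected if and only if, for each $s\in S^{\pm1}$, there is a path from $t$ to $st$ whose minimal height is strictly greater than $\min\{0,\chi(s)\}$. This is what allows arbitrary paths to be pushed up.

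With this criterion your ingredients finish the job. For each $s\in V(\Gamma)^{\pm1}$, pick a living $v$ commuting with $s$: a living neighbour, given by dominance if $s$ is dead and by connectedness of $\mathrm{Liv}^\chi$ if $s$ is living and $\mathrm{Liv}^\chi\neq\{s\}$; otherwise take $v=s$. Choose $\epsilon=\pm1$ with $\chi(v^\epsilon)>0$.
\begin{itemize}
\item The edge from $v^\epsilon$ to $v^\epsilon s=sv^\epsilon$ lies at heights $\geq\chi(v^\epsilon)+\min\{0,\chi(s)\}$.
\item Take a path $v_0,\dots,v_r=v$ in $\mathrm{Liv}^\chi$ with $v_0^{\epsilon_0}=t$. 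The moves $v_i^{\epsilon_i}\to v_i^{\epsilon_i}v_{i+1}^{\epsilon_{i+1}}=v_{i+1}^{\epsilon_{i+1}}v_i^{\epsilon_i}\to v_{i+1}^{\epsilon_{i+1}}$ give a path $q$ from $t$ to $v^\epsilon$ inside $\{\chi>0\}$.
\item Then $q$, that edge, and the $s$-translate of $q$ reversed form the required path from $t$ to $st$.
\end{itemize}
This is precisely the gluing you flagged and left open.

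The paper itself gives no proof of Theorem~\ref{Sigma1}; it only cites Meinert. However, your fallback is the paper's Proposition on the homological $n$-link condition taken at $n=1$. There, the connectivity you say ``should'' hold comes from Lemma~\ref{Conectivity Lemma}, because $|\mathcal{J}_{\{1\}}|\simeq\widehat{\mathrm{Liv}^\chi}$ and $\mathcal{J}_{\mathcal{G}_w}\neq\emptyset$ exactly when $w$ has a living neighbour. As written, you assert this rather than verify it.

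\textbf{Necessity is essentially right.} Step~4 is correct; it is the retraction of Lemma~\ref{LivLemma}(i). In Step~3 the conclusion holds, but the parenthetical reason does not give it. Infinitely many $\ker\chi$-orbits of edges shows, for discrete $\chi$, only that $\ker\chi$ is not finitely generated, i.e.\ that $[\chi]$ and $[-\chi]$ are not both in $\Sigma^1$. Compare $BS(1,2)$: the kernel is not finitely generated, yet one of $\pm[\chi]$ lies in $\Sigma^1$. The argument also says nothing directly for non-discrete $\chi$.

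You have two easy repairs.
\begin{itemize}
\item Cite the actual amalgam criterion: $G=A\ast_C B$ with $C\neq A,B$ and $\chi|_C=0$ forces $[\chi]\notin\Sigma^1(G)$.
\item Reuse your own Step~4 trick. Killing all dead vertices other than $w$ gives an epimorphism $\mathcal{G}_\Gamma\twoheadrightarrow A_{\mathrm{Liv}^\chi}\ast\langle w\rangle$ through which $\chi$ factors, and a nontrivial free product has empty $\Sigma^1$.
\end{itemize}
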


This follows because the $1$-link condition is equivalent to the living subgraph being connected and dominant. 

To prove one of the implications in Theorem~\ref{Main3}, we will use an argument analogous to that in \cite{Escartin-Perez 2024}, where the $\Sigma$-invariants of Artin groups satisfying the $K(\pi,1)$-conjecture are computed.

Let $G$ be a group, $\mathcal{P}$ a poset of subgroups of $G$ and $\mathcal{H}\subset\mathcal{P}$ a subposet. Assume we have a bounded height function $h:\mathcal{P}\setminus\mathcal{H}\to\mathbb{Z}_{\geq 0}$ such that $h(A)<h(B)$ for all $A\varsubsetneq B$. Given $S\in\mathcal{P}$, denote by $\mathrm{Del}^S$ to the geometric realization of the $S$-coset poset associated to $\lbrace T\in\mathcal{P}\mid T\nleq S\rbrace$ and consider the poset $\mathcal{J}_S=\lbrace T\in\mathcal{H}\mid S\subset T\rbrace$. If $C(\mathcal{P})$ and $C(\mathcal{H})$ denote the geometric realizations of the coset poset of $\mathcal{P}$ and $\mathcal{H}$ respectively we have:
\begin{lemma}[\cite{Escartin-Perez 2024} Lemma 2.10]\label{Conectivity Lemma} With the same notation as above assume that $C(\mathcal{P})$ is $(n-1)$-connected and that for all $S\in\mathcal{P}\setminus\mathcal{H}$ with $h(S)=s\leq n$ we have $\mathrm{Del}^S$ is $(s-2)$-connected, Then:
	\begin{enumerate}
		\item[(i)] If $\abs{\mathcal{J}_S}$ is $(n-s-1)$-connected, $C(\mathcal{H})$ is $(n-1)$-connected.
		\item[(ii)] If $\abs{\mathcal{J}_S}$ is $(n-s-1)$-acyclic, $C(\mathcal{H})$ is $(n-1)$-acyclic.
	\end{enumerate}
\end{lemma}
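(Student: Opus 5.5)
This is a discrete-Morse / coset-complex argument in the spirit of Quillen's poset fibre lemma. The plan is to build $C(\mathcal{H})$ from $C(\mathcal{P})$ by inserting or deleting cosets of elements of $\mathcal{P}\setminus\mathcal{H}$ one height at a time, and to track connectivity through each step.

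Order the elements of $\mathcal{P}\setminus\mathcal{H}$ by the height function $h$. For $k\geq 0$, let $\mathcal{P}_k=\mathcal{H}\cup\{S\in\mathcal{P}\setminus\mathcal{H}\mid h(S)\geq k\}$. Then $\mathcal{P}_0=\mathcal{P}$, and since $h$ is bounded, $\mathcal{P}_N=\mathcal{H}$ for $N$ large. Passing from $C(\mathcal{P}_k)$ to $C(\mathcal{P}_{k+1})$ removes all vertices $gS$ with $h(S)=k$. Two such cosets $gS$ and $g'S'$ with $h(S)=h(S')=k$ are never comparable, since strict inclusion forces a strict increase of $h$. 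So we are deleting a set of pairwise non-adjacent vertices, and $C(\mathcal{P}_k)$ is obtained from $C(\mathcal{P}_{k+1})$ by attaching, for each removed vertex $gS$, a cone over its link in $C(\mathcal{P}_{k+1})$. Equivalently, $C(\mathcal{P}_k)$ is the union of $C(\mathcal{P}_{k+1})$ and these cones, glued along disjoint links.

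Next I would identify that link. The link of $gS$ in $C(\mathcal{P}_{k+1})$ is the join of two pieces:
\begin{itemize}
\item the \emph{downward} part, consisting of cosets $hT\subsetneq gS$ with $T\in\mathcal{P}_{k+1}$. Translating by $g^{-1}$, this is the $S$-coset poset built from subgroups $T\cap S$, which should be identified with $\mathrm{Del}^S$;
\item the \emph{upward} part, consisting of cosets $hT\supsetneq gS$ with $T\in\mathcal{P}_{k+1}$. Each such coset is $gT$ with $T\supseteq S$, so this part is isomorphic to $\{T\in\mathcal{P}_{k+1}\mid S\subsetneq T\}$.
\end{itemize}
The upward part needs one more reduction. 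Suppose we proceed so that, when $S$ is removed, all higher non-$\mathcal{H}$ elements have already been handled; this means running the induction from the top height downwards, or arranging the filtration so that the relevant upward poset is exactly $\mathcal{J}_S$. Then the upward link becomes $|\mathcal{J}_S|$.

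Then I would run the connectivity count. A join $X\ast Y$ with $X$ $(a)$-connected and $Y$ $(b)$-connected is $(a+b+2)$-connected, and the same holds for acyclicity via the Künneth formula for joins. With $\mathrm{Del}^S$ $(s-2)$-connected and $|\mathcal{J}_S|$ $(n-s-1)$-connected, the link is $(n-2)$-connected. Removing a cone attached along an $(n-2)$-connected subcomplex preserves $(n-1)$-connectivity in both directions: Mayer–Vietoris handles homology, and van Kampen with Hurewicz handles $\pi_1$ and the higher homotopy groups. Elements with $h(S)>n$ have links that are only guaranteed to be $(n-2)$-connected when $s\leq n$. For these I would instead take the maximal height to be at most $n$, or note that cells of dimension above $n$ do not affect $(n-1)$-connectivity.

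Concluding, induction along the filtration transfers the $(n-1)$-connectivity of $C(\mathcal{P})$ to $C(\mathcal{H})$, proving (i). Part (ii) follows by replacing every connectivity statement with its acyclic analogue and using only Mayer–Vietoris.

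The main obstacle is the bookkeeping in the link decomposition. Specifically, I need to check that the downward link is precisely $\mathrm{Del}^S$ and the upward link is precisely $|\mathcal{J}_S|$ at the moment $S$ is removed. This depends on choosing the right order of removal, top-down or bottom-up, so that upward neighbours lie in $\mathcal{H}$ while downward neighbours are still all present. I would try the top-down order first and verify the identification of the downward part with the coset poset defining $\mathrm{Del}^S$.
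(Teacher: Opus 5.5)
The paper does not prove this lemma; it only quotes it from earlier work. Your strategy is the right one: compare $C(\mathcal{H})$ with $C(\mathcal{P})$ by coning off the cosets $gS$, $S\in\mathcal{P}\setminus\mathcal{H}$, one height at a time, with each link a join of a downward and an upward piece. But as written the argument has three genuine defects.

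First, your filtration $\mathcal{P}_k=\mathcal{H}\cup\{h\geq k\}$ deletes the \emph{lowest} heights first. When $gS$ is deleted, its downward link then contains only $\mathcal{H}$-cosets and its upward link is all of $\{T\in\mathcal{P}\mid S\subsetneq T\}$, so neither piece is $\mathrm{Del}^S$ or $|\mathcal{J}_S|$. Your closing guess is the correct fix. Put $\mathcal{Q}_k=\mathcal{H}\cup\{S\in\mathcal{P}\setminus\mathcal{H}\mid h(S)\leq k\}$ and build $C(\mathcal{P})$ from $C(\mathcal{H})$ in increasing height. Then:
\begin{enumerate}
\item a coset $xT\subsetneq gS$ forces $g^{-1}x\in S$ and $T\subsetneq S$ (no intersections $T\cap S$ occur), and all such $T$ already lie in $\mathcal{Q}_{k-1}$;
\item a coset $xT\supsetneq gS$ equals $gT$ with $S\subsetneq T$, and only those with $T\in\mathcal{H}$ are present.
\end{enumerate}
So the link is exactly $g\,\mathrm{Del}^S\ast|\mathcal{J}_S|$. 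Links of distinct cosets of equal height need not be disjoint; what is true, and sufficient, is that their open stars are.

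Second, your count is off by one. An $(s-2)$-connected space joined with an $(n-s-1)$-connected one is $(n-1)$-connected, not $(n-2)$-connected. This is not cosmetic. Your claim that coning along an $(n-2)$-connected subcomplex preserves $(n-1)$-connectivity ``in both directions'' is false (think of $S^{n-1}\subset D^n$), and the direction you need is the one passing from $C(\mathcal{P})$ down to $C(\mathcal{H})$. With $(n-1)$-connected (resp.\ $(n-1)$-acyclic) links, each pair $(C(\mathcal{Q}_k),C(\mathcal{Q}_{k-1}))$ is $n$-connected (resp.\ $n$-acyclic). Hence $\pi_i$ (resp.\ $\mathrm{H}_i$) of $C(\mathcal{H})$ and $C(\mathcal{P})$ agree for $i\leq n-1$.

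Third, your treatment of heights $s>n$ fails. Such a vertex carries low-dimensional simplices (itself and its edges), so ``cells above dimension $n$'' is irrelevant, and nothing lets you assume all heights are at most $n$. In fact, with the hypothesis on $\mathrm{Del}^S$ imposed only for $h(S)\leq n$, the statement as transcribed is false. Take $G=\mathbb{Z}/2$, $\mathcal{P}=\{1,G\}$, $\mathcal{H}=\{1\}$, $h(G)=2$ and $n=1$. Then $C(\mathcal{P})$ is a path, hence $0$-connected, the remaining hypotheses are vacuous, and $C(\mathcal{H})$ is two points.

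The missing ingredient is that $\mathrm{Del}^S$ be $(h(S)-2)$-connected for \emph{every} $S\in\mathcal{P}\setminus\mathcal{H}$. This is what Lemma~\ref{DeligneLemma} supplies in the application, for all $s$. Granting it, for $s\geq n+1$ the link $g\,\mathrm{Del}^S\ast|\mathcal{J}_S|$ is at least $(s-2)$-connected, hence $(n-1)$-connected, whatever $\mathcal{J}_S$ is, and the induction runs over all heights. Part (ii) is the same argument using Mayer--Vietoris and the K\"unneth formula for joins.
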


In our setting we will consider $G=\mathcal{G}_\Gamma$, $\mathcal{P}=\lbrace \mathcal{G}_\Delta\mid \Delta\subset\Gamma\textrm{ complete }\rbrace$, $\mathcal{H}=\lbrace\mathcal{G}_\Delta\in\mathcal{P}\mid\chi(G_\Delta)\neq0\rbrace$ and $h(\mathcal{G}_\Delta)=\abs{V(\Delta)}$. The connectivity of $\mathrm{Del}^S$ for $S\in\mathcal{P}\setminus\mathcal{H}$ follows in the same way as in \cite[Lemma 2.11]{Escartin-Perez 2024}. For the convenience of the reader we give an sketch of the proof.
\begin{lemma}\label{DeligneLemma}
	Let $S=\mathcal{G}_\Delta$ be a graph product of cyclic groups based on a complete graph $\Delta$ with $\abs{V(\Delta)}=s\neq0$. Then, $\mathrm{Del}^S$ is homotopically equivalent to a wedge of $(s-1)$-spheres, so it is $(s-2)$-connected.
\end{lemma}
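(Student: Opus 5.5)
We need to prove that $\mathrm{Del}^S$ is homotopy equivalent to a wedge of $(s-1)$-spheres when $S=\mathcal{G}_\Delta$ with $\Delta$ complete on $s$ vertices. Since $\Delta$ is a clique, $S\cong\prod_{v\in V(\Delta)}\mathbb{Z}/o(v)\mathbb{Z}$ (with $\mathbb{Z}$ factors allowed for infinite order vertices). Every special subgroup of $S$ is $\mathcal{G}_{\Delta'}$ for $\Delta'\subset\Delta$, and these are direct products of sub-collections of the factors. The poset $\{T\in\mathcal{P}\mid T\nleq S\}$ intersected with $S$-cosets should be read, as in \cite[Lemma 2.11]{Escartin-Perez 2024}, as the coset poset of the proper special subgroups $\mathcal{G}_{\Delta'}$, $\Delta'\subsetneq\Delta$, inside $S$. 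So the plan is first to identify $\mathrm{Del}^S$ concretely with the geometric realization of
$$\mathcal{C}_S=\{g\mathcal{G}_{\Delta'}\mid g\in S,\ \Delta'\subsetneq\Delta\},$$
ordered by inclusion.

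The key step is to recognise this as a join. Write $S=\prod_{v}C_v$ with $C_v=\langle v\rangle$. A coset $g\mathcal{G}_{\Delta'}$ is determined by the tuple $(g_vC_v^{\epsilon_v})_v$, where $\epsilon_v=1$ if $v\in\Delta'$ and $0$ otherwise. Here each coordinate is either a single element of $C_v$ or all of $C_v$. Hence the poset of all cosets of special subgroups (including $S$ itself) is the product $\prod_v P_v$, where $P_v=C_v\cup\{\hat 1\}$ is the discrete set $C_v$ with a top element adjoined. Removing the top element $\hat 1$ of the product gives exactly $\mathcal{C}_S$. By the standard result on proper parts of products of bounded-above posets (Quillen/Walker: $\lvert(\prod_v P_v)\setminus\{\hat 1\}\rvert\simeq \ast_v \lvert P_v\setminus\{\hat 1\}\rvert$, up to the usual suspension shift), we get
$$\mathrm{Del}^S\simeq \ast_{v\in V(\Delta)} C_v,$$
the join of $s$ discrete sets. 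When $\Delta$ has no infinite-order vertex this recovers the Coxeter/Deligne complex picture. I would verify the product formula directly: the realization of the proper part of a product of posets with top elements is homeomorphic to the join of the realizations of the proper parts. This is the step I expect to require the most care, in particular getting the dimension shift right. As a sanity check, for $s=1$ the poset $\mathcal{C}_S$ is just the discrete set $C_v$, a wedge of $0$-spheres, and this matches.

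Finally, a join of $s$ nonempty discrete sets is a wedge of $(s-1)$-spheres, by induction using $X\ast D\simeq\bigvee_{\lvert D\rvert-1}\Sigma X$ for discrete $D$. When some $C_v\cong\mathbb{Z}$ is infinite, the wedge is infinite, but the conclusion persists since connectivity is detected on finite subcomplexes, and each finite subcomplex lies in a finite subjoin. Thus $\mathrm{Del}^S$ is $(s-2)$-connected.
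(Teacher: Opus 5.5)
Your argument is correct, and it reaches the same intermediate object as the paper by a different mechanism. The paper covers $\mathrm{Del}^S$ by the subcomplexes $gX_v$ of cosets contained in a maximal proper coset $g\mathcal{G}_{\Delta_v}$. It notes that nonempty intersections have a maximal element (hence are contractible), applies the nerve theorem, and then cites Deligne for the wedge-of-spheres conclusion. Because $S$ is a direct product, that nerve is exactly the join $\ast_v C_v$.

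You instead use the product structure of the coset poset directly. This avoids the nerve theorem, gives a homeomorphism rather than only a homotopy equivalence, and replaces the appeal to Deligne by the elementary computation $X\ast D\simeq\bigvee_{\lvert D\rvert-1}\Sigma X$.

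On the step you flagged: there is no suspension shift. The shift occurs only in Walker's formula, where both $\hat 0$ and $\hat 1$ are removed from bounded posets. Removing only the top gives exactly the join: no chain of $(P\times Q)\setminus\{\hat 1\}$ meets both $X\times\{\hat 1\}$ and $\{\hat 1\}\times Y$, so the realization is the homotopy pushout of $X\leftarrow X\times Y\to Y$. More directly still, sending $g\mathcal{G}_{\Delta'}$ to the simplex $\{g_v\mid v\notin V(\Delta')\}$ of $\ast_v C_v$ identifies $\mathcal{C}_S$ with the opposite of the face poset of $\ast_v C_v$. Hence $\lvert\mathcal{C}_S\rvert$ is the barycentric subdivision of the join.

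The trade-off is generality. Your route depends on $S$ splitting as the product of its vertex groups. The paper's covering argument is the template inherited from Escartín--Pérez for Artin groups, where the coset poset does not factor and Deligne's theorem is genuinely needed.
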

\begin{proof} For each $v\in V(\Delta)$ and $g\in S$ let $gX_v$ be the subcomplex of $\mathrm{Del}^S$ induced by $\lbrace g\mathcal{G}_\Omega\mid\Omega\subset\Delta_v\rbrace$. The set $\lbrace gX_v\mid g\in S,v\in V(\Delta)\rbrace$ is a covering of $\mathrm{Del}^S$. One can see that any set of pairwise distinct $g_0X_{v_0},\dots, g_k X_{v_k}$ has empty intersection if two of the $v_i's$ are equal and that whenever such a set has a non-empty intersection the intersection of a subcomplex with a maximal element, hence it is contractible. Thus, $\mathrm{Del}^S$ is homotopy equivalent to the nerve of the covering (cf. \cite{Brown 2012} Chapter VII Theorem 4.4) and by (cf. \cite{Deligne 1972} Theorem 2.15) this complex is a wedge of $(s-1)$-spheres.
\end{proof}
\begin{proposition}
	Let $\mathcal{G}_\Gamma$ be a graph product of cyclic groups and $\chi:\mathcal{G}_\Gamma$ a non-zero character. Then:
	\begin{itemize}
		\item[(i)] If $\chi$ satisfies the homological $n$-link condition then $[\chi]\in\Sigma^n(\mathcal{G}_\Gamma,\mathbb{Z}) $.
		\item[(ii)]If $\chi$ satisfies the strong homotopical $n$-link condition then $[\chi]\in\Sigma^n(\mathcal{G}_\Gamma)$.
	\end{itemize}
\end{proposition}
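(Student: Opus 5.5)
We plan to apply Theorem~\ref{teo:MMVW} to the coset complex $X=C(\mathcal{H})$, where $\mathcal{H}=\{\mathcal{G}_\Delta\mid \Delta\subset\Gamma \text{ complete},\ \chi(\mathcal{G}_\Delta)\neq 0\}$. Its cells are chains of cosets $g_0\mathcal{G}_{\Delta_0}\subset\cdots\subset g_p\mathcal{G}_{\Delta_p}$. The stabilizer of such a chain is a conjugate of $\mathcal{G}_{\Delta_0}$, so by construction it is not contained in $\ker(\chi)$. Because there are finitely many cliques, $X$ is cocompact. Each stabilizer is a finite-index conjugate of a special subgroup on a clique, so it is virtually $\mathbb{Z}^k$ times a finite group. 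We will check that $[\chi|_{\mathcal{G}_{\Delta_0}}]\in\Sigma^\infty(\mathcal{G}_{\Delta_0})$. Indeed, $\mathcal{G}_{\Delta_0}$ is a finite abelian group times $\mathbb{Z}^k$, every nonzero character of a finitely generated abelian group lies in $\Sigma^\infty$, and $\chi|_{\mathcal{G}_{\Delta_0}}\neq 0$. Conjugation preserves the $\Sigma$-invariants, so all the stabilizer hypotheses of Theorem~\ref{teo:MMVW} hold. Both (i) and (ii) therefore reduce to showing that $C(\mathcal{H})$ is $(n-1)$-acyclic, respectively $(n-1)$-connected.

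To obtain this connectivity we apply Lemma~\ref{Conectivity Lemma} with $\mathcal{P}$ the poset of all clique subgroups and $h(\mathcal{G}_\Delta)=|V(\Delta)|$. First, $C(\mathcal{P})$ is contractible, which would give $(n-1)$-connectivity. This holds because the coset complex of the family of spherical (clique) special subgroups of a graph product is the Davis complex, which is a CAT(0) cube complex. Alternatively, the trivial subgroup gives the cone point after an argument as in \cite{Escartin-Perez 2024}. Second, Lemma~\ref{DeligneLemma} shows that $\mathrm{Del}^S$ is $(s-2)$-connected for every $S=\mathcal{G}_\Delta\in\mathcal{P}\setminus\mathcal{H}$ with $s=|V(\Delta)|\geq1$. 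The remaining case $S=\{1\}$ is empty in $\mathcal{P}\setminus\mathcal{H}$ only if we exclude it; otherwise it has $s=0$, and the condition on $\mathrm{Del}^S$ is vacuous.

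It remains to identify $\abs{\mathcal{J}_S}$ for $S=\mathcal{G}_\Delta$ with $\chi(\mathcal{G}_\Delta)=0$, that is, for $\Delta$ a clique in $\Gamma\setminus\mathrm{Liv}^\chi$. The poset $\mathcal{J}_S$ consists of the cliques $\Delta'\supseteq\Delta$ that contain at least one living vertex. Write $\Delta'=\Delta\sqcup\Lambda\sqcup\Theta$ with $\Lambda$ a nonempty clique in $\mathrm{lk}_{\mathrm{Liv}^\chi}(\Delta)$ and $\Theta$ dead. The map $\Delta'\mapsto\Lambda$ is a poset map onto the face poset of $\widehat{\mathrm{lk}_{\mathrm{Liv}^\chi}(\Delta)}$. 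Its fibres have a minimum, namely $\Theta=\emptyset$, so the fibres are contractible, and by Quillen's fibre lemma (or a simple deformation retraction onto the subposet with $\Theta=\emptyset$) we get $\abs{\mathcal{J}_S}\simeq\widehat{\mathrm{lk}_{\mathrm{Liv}^\chi}(\Delta)}$. For $\Delta=\emptyset$ this is $\widehat{\mathrm{Liv}^\chi}$. The homological link condition gives $(n-s-1)$-acyclicity and the strong homotopical condition gives $(n-s-1)$-connectivity. Lemma~\ref{Conectivity Lemma}(ii), respectively (i), then yields the required acyclicity or connectivity of $C(\mathcal{H})$, and Theorem~\ref{teo:MMVW} gives $[\chi]\in\Sigma^n(\mathcal{G}_\Gamma,\mathbb{Z})$, respectively $[\chi]\in\Sigma^n(\mathcal{G}_\Gamma)$.

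The main obstacle is careful bookkeeping in applying Lemma~\ref{Conectivity Lemma}. We must check that its hypotheses only involve $s\leq n$, which matches the restriction $|V(\Delta)|\leq n$ in the link conditions, and that the $s=0$ case is handled correctly by the $\Delta=\emptyset$ instance of the link condition. We also need to justify the contractibility of $C(\mathcal{P})$ for graph products with finite vertex groups, which is the step where the torsion genuinely enters. For that step we would first try the Davis-complex identification.
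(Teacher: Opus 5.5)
Your proposal is correct and follows the paper's proof essentially step for step. You use the same $\mathcal{P}$, $\mathcal{H}$ and $h$ in Lemma~\ref{Conectivity Lemma}, contractibility of $C(\mathcal{P})$ via the Davis complex, Lemma~\ref{DeligneLemma} for $\mathrm{Del}^S$, a Quillen-fibre identification $\abs{\mathcal{J}_S}\simeq\widehat{\mathrm{lk}_{\mathrm{Liv}^\chi}(\Delta)}$ (the paper maps $Y\mapsto Y\cup\Delta$ in the opposite direction), and then Theorem~\ref{teo:MMVW} on $C(\mathcal{H})$, where you spell out the stabilizer check the paper leaves implicit. Two small corrections: drop the ``cone point'' alternative for $C(\mathcal{P})$, since the trivial subgroup contributes one vertex per group element and none of these is a cone point; and read your fibres as the upper fibres $\{\Delta'\mid \Delta'\cap\mathrm{Liv}^\chi\supseteq\Lambda_0\}$, whose minimum is $\Delta\sqcup\Lambda_0$ (your deformation-retraction alternative is also fine).
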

\begin{proof}
	Let us prove the homological one first. We first need to prove that $C(\mathcal{H})$ is $(n-1)$-acyclic for which we use Lemma \ref{Conectivity Lemma}. $C(\mathcal{P})$ is contractible by \cite{Davis 1998} and $\mathrm{Del}^S$ is $(s-2)$-connected by Lemma \ref{DeligneLemma}, so we just need to prove that $\abs{\mathcal{J}_S}$ is $(n-s-1)$-connected for each $S=\mathcal{G}_\Delta\in\mathcal{P}\setminus\mathcal{H}$ and with $\abs{V(S)}=s$. Let $\mathcal{L}^\chi_\Delta$ be the poset of simplices of $\widehat{\mathrm{lk}_{\mathrm{Liv}^\chi}(\Delta)}$ and define the map:
	$$\begin{aligned}
		\tau :\mathcal{L}^\chi_\Delta & \longrightarrow \mathcal{J}_S \\
		Y & \longmapsto Y\cup\Delta
	\end{aligned}$$
	one sees that for each $X\in\mathcal{J}_S$ $\tau^{-1}_{\leq}(X)=\lbrace Y\in \mathcal{L}^\chi_\Delta\mid Y\cup\Delta\subset X\rbrace$ which has $X\setminus\Delta$ as a maximal element, thus it is contractible. Then, by Quillen's Lemma (cf. \cite{Benson} Lemma 6.5.2) $\tau$ induces an homotopy equivalence between the geometric realizations. Hence, the homological $n$-link conditions imply that $\abs{\mathcal{L}^\chi_\Delta}$, and so $\abs{\mathcal{J}_S}$ is $(n-s-1)$-acyclic. Hence $C(\mathcal{H})$ is an $(n-1)$-acyclic space with stabilizers of the form $g\mathcal{G}_\Delta g^{-1}$ with $g\in\mathcal{G}_\Gamma$ and $\mathcal{G}_\Delta\in\mathcal{H}$. Applying Theorem \ref{teo:MMVW} the claim follows. The same proof works for the homotopical case.
\end{proof}

For the converse we begin with the following lemma.
\begin{lemma}\label{LivLemma}
	Let $\chi:\mathcal{G}_\Gamma\to\mathbb{R}$ be a character such that $[\chi]\in\Sigma^n(\mathcal{G}_\Gamma,\mathbb{Z})$. Then:
	\begin{enumerate}
		\item[(i)] $[\restr{\chi}{\mathrm{Liv}^\chi}]\in\Sigma^n(A_{\mathrm{Liv}^\chi},\mathbb{Z})$.
		\item[(ii)] For every $v\in V(\Gamma\setminus\mathrm{Liv}^\chi)$, $\restr{\chi}{\mathcal{G}_{\mathrm{lk}(v)}}\neq0$ and $[\restr{\chi}{\mathcal{G}_{\mathrm{lk}(v)}}]\in\Sigma^{n-1}(\mathcal{G}_{\mathrm{lk}(v)},\mathbb{Z})$.
	\end{enumerate}
\end{lemma}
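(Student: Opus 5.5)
The plan is to exploit two features of $\mathcal{G}_\Gamma$. First, special subgroups are retracts. Second, $\mathcal{G}_\Gamma$ splits as an amalgam along the link of any vertex. Note that every finite-order vertex $w$ satisfies $\chi(w)=0$, since $\mathbb{R}$ is torsion-free. Hence $\mathrm{Liv}^\chi$ consists only of infinite-order vertices, and $\mathcal{G}_{\mathrm{Liv}^\chi}=A_{\mathrm{Liv}^\chi}$ is a RAAG. Throughout I would work with the homological characterization via Novikov homology: for a group $G$ of type $FP_n$, $[\chi]\in\Sigma^n(G,\mathbb{Z})$ if and only if $\mathrm{H}_i(G,\widehat{\mathbb{Z}G}_{-\chi})=0$ for $i\leq n$.

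\emph{Part (i).} Let $\rho:\mathcal{G}_\Gamma\to A_{\mathrm{Liv}^\chi}$ be the retraction that kills every vertex outside $\mathrm{Liv}^\chi$. It is well defined because deleting generators preserves all defining relators. Since $\chi$ vanishes on dead vertices, we have $\chi=\restr{\chi}{\mathrm{Liv}^\chi}\circ\rho$. I would invoke the standard retract principle: if $H\leq G$ is a retract via $\rho$ and $\chi=\restr{\chi}{H}\circ\rho$, then $[\chi]\in\Sigma^n(G,\mathbb{Z})$ implies $[\restr{\chi}{H}]\in\Sigma^n(H,\mathbb{Z})$. At the level of Novikov homology, $\mathrm{H}_i(H,\widehat{\mathbb{Z}H}_{-\chi})$ is then a direct summand of $\mathrm{H}_i(G,\widehat{\mathbb{Z}G}_{-\chi})$, because the composite $H\hookrightarrow G\xrightarrow{\rho}H$ is the identity and is compatible with the completions. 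This gives (i) immediately.

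\emph{Part (ii).} Fix $v\notin\mathrm{Liv}^\chi$ and write
\[
\mathcal{G}_\Gamma=\mathcal{G}_{\Gamma_v}\ast_{\mathcal{G}_{\mathrm{lk}(v)}}\mathcal{G}_{\mathrm{st}(v)},
\qquad
\mathcal{G}_{\mathrm{st}(v)}=\mathcal{G}_{\mathrm{lk}(v)}\times\langle v\rangle .
\]

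\emph{Step 1: $\restr{\chi}{\mathcal{G}_{\mathrm{lk}(v)}}\neq 0$.} Suppose instead that $\chi$ vanishes on $\mathcal{G}_{\mathrm{lk}(v)}$. Then $\chi$ vanishes on the whole vertex group $\mathcal{G}_{\mathrm{st}(v)}$, which properly contains the edge group. The Bieri--Neumann--Strebel criterion for amalgams then gives $[\chi]\notin\Sigma^1$, contradicting $[\chi]\in\Sigma^n(\mathcal{G}_\Gamma,\mathbb{Z})$ with $n\geq1$. Alternatively, this step follows from Theorem~\ref{Sigma1}: $\mathrm{lk}(v)\cap\mathrm{Liv}^\chi=\emptyset$ means $\mathrm{Liv}^\chi$ is not dominant.

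\emph{Step 2: Mayer--Vietoris.} Since $\chi\neq0$ on the edge group $C:=\mathcal{G}_{\mathrm{lk}(v)}$, the Mayer--Vietoris sequence in Novikov homology is available:
\[
\cdots\to \mathrm{H}_i(C)\to \mathrm{H}_i(\mathcal{G}_{\Gamma_v})\oplus \mathrm{H}_i(\mathcal{G}_{\mathrm{st}(v)})\to \mathrm{H}_i(\mathcal{G}_\Gamma)\to \mathrm{H}_{i-1}(C)\to\cdots
\]
Here all coefficients are the appropriate Novikov completions. I would combine this with retractions. $C$ is a retract of $\mathcal{G}_{\mathrm{st}(v)}$ by killing $v$, and $\chi$ factors through this retraction because $\chi(v)=0$. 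Hence $\mathrm{H}_i(C)\to\mathrm{H}_i(\mathcal{G}_{\mathrm{st}(v)})$ is split injective. Moreover, Künneth for the direct product $C\times\langle v\rangle$ with $\chi$ trivial on the second factor should identify the cokernel of this map with a copy of $\mathrm{H}_{i-1}(C)\otimes \mathrm{H}_1(\langle v\rangle)$. This term is present only when $o(v)=\infty$; when $o(v)<\infty$ I would instead use the finite-index subgroup $C\times\{1\}$ and induction.

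\emph{Step 3: Induction on $i$.} The aim is to prove $\mathrm{H}_i(C)=0$ for $i\leq n-1$ by induction on $i$, using the vanishing of $\mathrm{H}_j(\mathcal{G}_\Gamma)$ for $j\leq n$. Once the lower groups $\mathrm{H}_j(C)$ vanish, the Künneth correction terms vanish as well. Exactness at $\mathrm{H}_i(C)$ then forces the map $\mathrm{H}_{i+1}(\mathcal{G}_\Gamma)=0\to\mathrm{H}_i(C)$ to be surjective onto the kernel of the injective map above, so $\mathrm{H}_i(C)=0$. One also needs $C$ to be of type $FP_{n-1}$, which holds because $C$ is a special subgroup of a group of type $F$ (or virtually type $F$).

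The main obstacle is Step~3: making the Künneth/retraction bookkeeping for $\mathcal{G}_{\mathrm{lk}(v)}\times\langle v\rangle$ precise in Novikov homology, especially when $v$ has finite order. If this becomes too delicate, the fallback is a Meinert-type lemma for amalgams: if $G=A\ast_C B$ with $\restr{\chi}{C}\neq0$ and $C$ a $\chi$-compatible retract of $B$, then $[\chi]\in\Sigma^n(G,\mathbb{Z})$ implies $[\restr{\chi}{C}]\in\Sigma^{n-1}(C,\mathbb{Z})$. I would prove this lemma directly from the Mayer--Vietoris sequence above.
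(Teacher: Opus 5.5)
Part (i) and Step~1 of part (ii) are fine and agree with the paper: the retraction onto $A_{\mathrm{Liv}^\chi}$, and failure of dominance via Theorem~\ref{Sigma1}. The gap is in Steps~2--3.

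Write $G=\mathcal{G}_\Gamma$, $A=\mathcal{G}_{\Gamma_v}$, $B=\mathcal{G}_{\mathrm{st}(v)}$, $C=\mathcal{G}_{\mathrm{lk}(v)}$ and $M=\widehat{\mathbb{Z}G}_{-\chi}$. The Mayer--Vietoris sequence of $G=A\ast_C B$ is exact only when all three subgroups carry the same coefficient module, namely $M$ restricted to them, not their own Novikov rings.

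\emph{The splitting fails with $M$-coefficients.} Your retraction $B\to C$ splits $H_i(C;\widehat{\mathbb{Z}C})\to H_i(B;\widehat{\mathbb{Z}B})$. It does not act on $M$, however, and $H_i(C;M)\to H_i(B;M)$ is not injective in general. For $o(v)=\infty$ the Wang sequence of $C\times\langle v\rangle$ identifies its kernel with $(1-v)H_i(C;M)$. For instance, if $C=F(a,b)$ with $\chi(a),\chi(b)\neq0$, then $H_1(C;M)\cong M$ with $v$ acting by right multiplication, so the kernel is $M(1-v)\neq0$.

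\emph{Own completions do not help either.} Since $\chi(v)=0$, one gets $H_i(C\times\langle v\rangle;\widehat{\mathbb{Z}B})\cong H_i(C;\widehat{\mathbb{Z}C})$, because $H_*(\mathbb{Z};\mathbb{Z}[\mathbb{Z}])$ is concentrated in degree $0$. So the Künneth correction term you expect is absent.

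\emph{The deduction in Step~3 is a non sequitur.} If $H_i(C)\to H_i(A)\oplus H_i(B)$ is injective, exactness at $H_i(C)$ with $H_{i+1}(G)=0$ holds automatically and says nothing about $H_i(C)$. To kill $H_i(C)$ you need the map out of it to be zero. Your proposal never shows that $H_i(A;M)=0$.

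\emph{The missing input is what the paper uses.} The retraction $G\to A$ killing $v$ is $\chi$-compatible because $\chi(v)=0$. Your own part~(i) argument therefore gives $[\restr{\chi}{A}]\in\Sigma^n(A,\mathbb{Z})$. Take a finite-type free resolution $F_*$ of $\mathbb{Z}$ over $\mathbb{Z}A$. The complex $\widehat{\mathbb{Z}A}\otimes_{\mathbb{Z}A}F_*$ is exact, hence split, in degrees $\le n$. Base change along $\widehat{\mathbb{Z}A}\to M$ then gives $H_i(A;M)=0$ for $i\le n$.

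If $o(v)=\infty$, $G$ is the HNN extension $A\ast_{C,v}$. Its Mayer--Vietoris sequence contains $H_{i+1}(G;M)\to H_i(C;M)\to H_i(A;M)$, so $H_i(C;M)=0$ for $i\le n-1$. Restricting supports to $C$ makes $\widehat{\mathbb{Z}C}$ a $\mathbb{Z}C$-direct summand of $M$. Hence $H_i(C;\widehat{\mathbb{Z}C})=0$ as well, i.e.\ $[\restr{\chi}{C}]\in\Sigma^{n-1}(C,\mathbb{Z})$. This is what the paper obtains by citing Theorem~5.2 of Meier--Meinert--VanWyk.

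\emph{Your fallback lemma is false as stated.} Take $B=C$, $A=F(a,b)\times\langle t\rangle$, $\chi(t)=1$, $\chi(a)=\chi(b)=0$ and $C=\langle at,b\rangle\cong F_2$. Then $[\chi]\in\Sigma^2(A,\mathbb{Z})$, yet the lemma would force $[\restr{\chi}{C}]\in\Sigma^1(C,\mathbb{Z})=\emptyset$. What makes the argument work is the HNN structure, which places $H_i(C;M)$ between two vanishing groups.

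\emph{The case $o(v)<\infty$.} Passing to $C\times\{1\}\leq B$ is too vague: $o(v)$ is not invertible in $\mathbb{Z}$, so a transfer splits nothing. Instead pass to the index-$o(v)$ subgroup $\ker(G\to\mathbb{Z}/o(v)\mathbb{Z})$. By Proposition~\ref{Endomorphism} it is an iterated amalgam of the conjugates $v^kAv^{-k}$ over $C$. Run the same sandwich argument with its tree-of-groups Mayer--Vietoris sequence. The paper's HNN decomposition does not cover this case either, so using the amalgam $A\ast_C B$ here was the right instinct.
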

\begin{proof}
	Consider the retraction $\rho:\mathcal{G}_\Gamma\longrightarrow A_{\mathrm{Liv}^\chi}$, defined by $\rho(v)=v$ if $\chi(v)\neq0$ and $\rho(v)=1$ if $\chi(v)=0$.
	This yields the semidirect product decomposition $\mathcal{G}_\Gamma=\ker(\rho)\rtimes A_{\mathrm{Liv}^\chi}$. Hence (i) follows from \cite[Theorem 5.3]{Meier-Meinert-VanWyk 1998}.
	
	Similarly, let $\varphi:\mathcal{G}_\Gamma\longrightarrow\mathcal{G}_{\Gamma_v}$ be the natural retraction. Then, $[\restr{\chi}{\mathcal{G}_{\Gamma_v}}]\in\Sigma^n(\mathcal{G}_{\Gamma_v},\mathbb{Z})$. Moreover, from the standard presentation of graph products there is an HNN-product decomposition $\mathcal{G}_\Gamma=\mathcal{G}_{\Gamma_v}\ast_{\mathcal{G}_{\mathrm{lk}(v)},v}$. By Theorem \ref{Sigma1} it follows that $\restr{\chi}{\mathcal{G}_{\mathrm{lk}(v)}}\neq0$. Therefore, (ii) follows from \cite[Theorem 5.2]{Meier-Meinert-VanWyk 1998}.
\end{proof} 
The homotopical analogue follows by the same argument.
\begin{lemma}\label{LivLemma2}
	Let $\chi:\mathcal{G}_\Gamma\to\mathbb{R}$ be a character such that $[\chi]\in\Sigma^n(\mathcal{G}_\Gamma)$. Then:
	\begin{enumerate}
		\item[(i)] $[\restr{\chi}{\mathrm{Liv}^\chi}]\in\Sigma^n(A_{\mathrm{Liv}^\chi})$.
		\item[(ii)] For every $v\in V(\Gamma\setminus\mathrm{Liv}^\chi)$, $\restr{\chi}{\mathcal{G}_{\mathrm{lk}(v)}}\neq0$ and $[\restr{\chi}{\mathcal{G}_{\mathrm{lk}(v)}}]\in\Sigma^{n-1}(\mathcal{G}_{\mathrm{lk}(v)},\mathbb{Z})$.
	\end{enumerate}
\end{lemma}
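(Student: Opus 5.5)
The plan is to repeat the proof of Lemma~\ref{LivLemma} word for word, replacing the homological results of \cite{Meier-Meinert-VanWyk 1998} by their homotopical counterparts. For part (ii) I would instead reduce to the homological statement already proved.

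\emph{Step 1 (part (i)).} Consider the retraction $\rho:\mathcal{G}_\Gamma\to A_{\mathrm{Liv}^\chi}$ that sends living vertices to themselves and dead vertices to $1$. First I would check that $\rho$ is well defined. The only point needing care is that a living vertex $v$ has $o(v)=\infty$: if $o(v)<\infty$, then $\chi(v)=0$ because $\mathbb{R}$ is torsion-free. Hence the finite-order relators are killed, and every commutation relation is sent either to a commutation relation of $A_{\mathrm{Liv}^\chi}$ or to a trivial one. The map $\rho$ splits via the inclusion of the special subgroup, which gives $\mathcal{G}_\Gamma=\ker(\rho)\rtimes A_{\mathrm{Liv}^\chi}$. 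Moreover $\chi$ factors through $\rho$, since $\chi$ vanishes on every dead generator and hence on $\ker(\rho)$. The homotopical part of \cite[Theorem 5.3]{Meier-Meinert-VanWyk 1998} states that $\Sigma^n$ passes to quotients by a retraction when the character factors through it. Applying it yields $[\restr{\chi}{\mathrm{Liv}^\chi}]\in\Sigma^n(A_{\mathrm{Liv}^\chi})$.

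\emph{Step 2 (part (ii)).} Since $\Sigma^n(G)\subseteq\Sigma^n(G,\mathbb{Z})$, the hypothesis $[\chi]\in\Sigma^n(\mathcal{G}_\Gamma)$ gives $[\chi]\in\Sigma^n(\mathcal{G}_\Gamma,\mathbb{Z})$. Then Lemma~\ref{LivLemma}(ii) applies directly and yields both $\restr{\chi}{\mathcal{G}_{\mathrm{lk}(v)}}\neq 0$ and $[\restr{\chi}{\mathcal{G}_{\mathrm{lk}(v)}}]\in\Sigma^{n-1}(\mathcal{G}_{\mathrm{lk}(v)},\mathbb{Z})$, which is exactly the statement. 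The non-vanishing can also be seen directly from $\Sigma^1$. By Theorem~\ref{Sigma1}, $\mathrm{Liv}^\chi$ is dominant, so the dead vertex $v$ is adjacent to some living vertex, which lies in $\mathrm{lk}(v)$.

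\emph{Step 3 (homotopical version of (ii)).} If one wants the stronger conclusion $[\restr{\chi}{\mathcal{G}_{\mathrm{lk}(v)}}]\in\Sigma^{n-1}(\mathcal{G}_{\mathrm{lk}(v)})$, I would redo the argument of Lemma~\ref{LivLemma}. Using the natural retraction $\mathcal{G}_\Gamma\to\mathcal{G}_{\Gamma_v}$, I would first get $[\restr{\chi}{\mathcal{G}_{\Gamma_v}}]\in\Sigma^n(\mathcal{G}_{\Gamma_v})$. I would then apply the homotopical HNN criterion \cite[Theorem 5.2]{Meier-Meinert-VanWyk 1998} to $\mathcal{G}_\Gamma=\mathcal{G}_{\Gamma_v}\ast_{\mathcal{G}_{\mathrm{lk}(v)},v}$, where the stable letter $v$ satisfies $\chi(v)=0$. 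The only real obstacle is that the HNN description requires $o(v)=\infty$. If $v$ has finite order, the decomposition is instead an amalgam $\mathcal{G}_{\Gamma_v}\ast_{\mathcal{G}_{\mathrm{lk}(v)}}\mathcal{G}_{\mathrm{st}(v)}$, and I would use the amalgam version of the same theorem. That version is also available in \cite{Meier-Meinert-VanWyk 1998}, since $\chi$ is non-zero on the edge group. The statement as written only needs the homological conclusion, so Step 2 already suffices.
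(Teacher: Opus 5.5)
Your argument is correct relative to the paper. Part (i) is exactly the paper's proof: the retraction $\rho$ onto $A_{\mathrm{Liv}^\chi}$ combined with the homotopical retraction result of Meier--Meinert--VanWyk. Your remark that living vertices have infinite order, so that $\mathcal{G}_{\mathrm{Liv}^\chi}=A_{\mathrm{Liv}^\chi}$ and $\rho$ is well defined, fills in a detail the paper leaves implicit.

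For (ii) your route is slightly different. The paper says the lemma follows ``by the same argument'', i.e.\ by re-running the retraction to $\mathcal{G}_{\Gamma_v}$ and the HNN criterion. You instead note that (ii) is stated only homologically, and reduce it to Lemma~\ref{LivLemma}(ii) via $\Sigma^n(\mathcal{G}_\Gamma)\subseteq\Sigma^n(\mathcal{G}_\Gamma,\mathbb{Z})$. This is shorter and avoids any homotopical HNN criterion. It is also all that the subsequent proposition uses, since there only (i) has to be homotopical. Re-running the argument homotopically would in principle give the stronger membership in $\Sigma^{n-1}(\mathcal{G}_{\mathrm{lk}(v)})$, which is what your Step 3 aims at.

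There are two caveats.

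First, the obstacle you raise in Step 3 is not specific to the homotopical case. Every finite-order vertex is dead, and for such $v$ the HNN decomposition $\mathcal{G}_\Gamma=\mathcal{G}_{\Gamma_v}\ast_{\mathcal{G}_{\mathrm{lk}(v)},v}$ used in the paper's proof of Lemma~\ref{LivLemma}(ii) is not available. So your Step 2, like the paper, is only as complete as that lemma.

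Second, the amalgam fix you propose is circular. The second factor $\mathcal{G}_{\mathrm{st}(v)}=\mathcal{G}_{\mathrm{lk}(v)}\times\mathbb{Z}/o(v)\mathbb{Z}$ contains $\mathcal{G}_{\mathrm{lk}(v)}$ with finite index. Hence any $\Sigma$-hypothesis the amalgam criterion imposes on this vertex group is equivalent to, or stronger than, the conclusion you are after.

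One way around this is to pass to the finite-index kernel of $\mathcal{G}_\Gamma\to\mathbb{Z}/o(v)\mathbb{Z}$.
\begin{enumerate}
\item By Proposition~\ref{Endomorphism}, this kernel is the graph product over $o(v)$ copies of $\Gamma_v$ glued along $\mathrm{lk}(v)$.
\item It is therefore a tree of groups with vertex groups $v^i\mathcal{G}_{\Gamma_v}v^{-i}$ and edge groups $\mathcal{G}_{\mathrm{lk}(v)}$.
\item On each vertex group, $\chi$ restricts to a point of $\Sigma^n$, by the retraction argument together with $\chi(v)=0$.
\item $\Sigma$-invariants are detected on finite-index subgroups.
\end{enumerate}
The Mayer--Vietoris (tree-of-groups) form of the necessity criterion then yields $[\chi|_{\mathcal{G}_{\mathrm{lk}(v)}}]\in\Sigma^{n-1}(\mathcal{G}_{\mathrm{lk}(v)},\mathbb{Z})$ also when $o(v)<\infty$.
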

Using these lemmas, we can prove the converse implication in Theorem~\ref{Main3}.
\begin{proposition}\begin{enumerate}
		\item If $[\chi]\in\Sigma^n(\mathcal{G}_\Gamma,\mathbb{Z})$, then $\chi$ satisfies the homological $n$-link condition.
		\item If $[\chi]\in\Sigma^n(\mathcal{G}_\Gamma)$, then $\chi$ satisfies the homotopical $n$-link condition.
	\end{enumerate}
\end{proposition}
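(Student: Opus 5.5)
We argue by induction on $n$, proving both parts at once; the homotopical statement runs parallel to the homological one, with Lemma~\ref{LivLemma2} in place of Lemma~\ref{LivLemma}.

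\textbf{Base case.} For $n=1$ the claim is Theorem~\ref{Sigma1}, since both $1$-link conditions are equivalent to $\mathrm{Liv}^\chi$ being connected and dominant.

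\textbf{Inductive step.} Suppose $[\chi]\in\Sigma^n(\mathcal{G}_\Gamma,\mathbb{Z})$ and let $\Delta\subset\Gamma\setminus\mathrm{Liv}^\chi$ be a clique with $\abs{V(\Delta)}=s\leq n$. We must show that $\widehat{\mathrm{lk}_{\mathrm{Liv}^\chi}(\Delta)}$ is $(n-s-1)$-acyclic.
\begin{itemize}
\item If $s=0$, the link is $\mathrm{Liv}^\chi$ itself. By Lemma~\ref{LivLemma}(i), $[\chi|_{A_{\mathrm{Liv}^\chi}}]\in\Sigma^n(A_{\mathrm{Liv}^\chi},\mathbb{Z})$. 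Every vertex of $\mathrm{Liv}^\chi$ has infinite order, since a finite-order generator must have $\chi=0$, so $\mathcal{G}_{\mathrm{Liv}^\chi}=A_{\mathrm{Liv}^\chi}$. Theorem~\ref{SigmaRAAG} then gives the homological $n$-link condition there, and in particular (taking the empty clique) that $\widehat{\mathrm{Liv}^\chi}$ is $(n-1)$-acyclic. In the homotopical case the same step, via Lemma~\ref{LivLemma2}(i), gives that $\widehat{\mathrm{Liv}^\chi}$ is $(n-1)$-connected, which is the extra requirement in the homotopical condition.
\item If $s\geq1$, pick $v\in V(\Delta)$. Lemma~\ref{LivLemma}(ii) gives $\chi|_{\mathcal{G}_{\mathrm{lk}(v)}}\neq0$ and $[\chi|_{\mathcal{G}_{\mathrm{lk}(v)}}]\in\Sigma^{n-1}(\mathcal{G}_{\mathrm{lk}(v)},\mathbb{Z})$. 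The induction hypothesis, applied to the graph product $\mathcal{G}_{\mathrm{lk}(v)}$, says that $\chi|_{\mathrm{lk}(v)}$ satisfies the homological $(n-1)$-link condition. The living subgraph of the restriction is $\mathrm{Liv}^\chi\cap\mathrm{lk}(v)$, and $\Delta':=\Delta\setminus\{v\}$ is a clique in $\mathrm{lk}(v)\setminus\mathrm{Liv}^\chi$ with $s-1\leq n-1$ vertices.
\end{itemize}

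\textbf{Link identity.} The key combinatorial fact we will check is
$$\mathrm{lk}_{\mathrm{Liv}^\chi\cap\mathrm{lk}(v)}(\Delta')=\mathrm{lk}_{\mathrm{Liv}^\chi}(\Delta).$$
This holds because a vertex adjacent to every vertex of $\Delta'$ and to $v$ is exactly one adjacent to every vertex of $\Delta$; it is not in $\Delta$ since it lies in $\mathrm{Liv}^\chi$, so it is a genuine link vertex. The induced subgraphs coincide, so the flag complexes agree. The $(n-1)$-link condition then makes this complex $((n-1)-(s-1)-1)=(n-s-1)$-acyclic, as required.

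In the homotopical case, Lemma~\ref{LivLemma2}(ii) only yields the homological invariant $\Sigma^{n-1}(\mathcal{G}_{\mathrm{lk}(v)},\mathbb{Z})$. That is enough, because the homotopical $n$-link condition demands only acyclicity of the links, with connectivity required solely for $\widehat{\mathrm{Liv}^\chi}$, which the $s=0$ step already gave.

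\textbf{Main obstacle.} The main obstacle is making sure the induction is legitimate. Lemma~\ref{LivLemma} must apply to $\mathcal{G}_{\mathrm{lk}(v)}$ with a possibly different living subgraph. The $n-1=0$ edge case needs care; here $\Sigma^0=S(G)$, and we only need $\chi|_{\mathrm{lk}(v)}\neq0$, which Lemma~\ref{LivLemma}(ii) provides. Finally, the empty link must be treated correctly: $(-1)$-acyclic means nonempty, which again follows from the nonvanishing of $\chi$ on $\mathcal{G}_{\mathrm{lk}(v)}$ when $s=n$.
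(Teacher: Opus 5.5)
Your proof is correct and is essentially the paper's argument. It uses induction on $n$ with base case Theorem~\ref{Sigma1}, Lemma~\ref{LivLemma}(i) plus Theorem~\ref{SigmaRAAG} for $\widehat{\mathrm{Liv}^\chi}$, and Lemma~\ref{LivLemma}(ii) plus the induction hypothesis for nonempty dead cliques, with Lemma~\ref{LivLemma2}(i) supplying the connectivity in the homotopical part.

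The only variation is in the inductive step. You strip off a single vertex $v$ and apply the full $(n-1)$-link condition to $\Delta\setminus\{v\}$ inside $\mathrm{lk}(v)$, which also handles $|V(\Delta)|=n$ uniformly. The paper instead iterates Lemma~\ref{LivLemma}(ii) over all vertices of $\Delta$, uses only the empty-clique case of the hypothesis, and treats $|V(\Delta)|=n$ separately via dominance. One small correction to your closing remark: when $s=n$, nonemptiness comes from the hypothesis applied to the $(n-1)$-clique $\Delta\setminus\{v\}$, not from $\chi|_{\mathcal{G}_{\mathrm{lk}(v)}}\neq 0$ alone.
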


\begin{proof}
	We argue by induction on $n$. If $n=1$, the result follows from Theorem~\ref{Sigma1}. Assume now that the statement holds for $n-1$.
	
	By Lemma~\ref{LivLemma}(i), $[\restr{\chi}{\mathrm{Liv}^\chi}]\in\Sigma^n(A_{\mathrm{Liv}^\chi},\mathbb{Z})$, and therefore, by Theorem~\ref{SigmaRAAG}, $\widehat{\mathrm{lk}_{\mathrm{Liv}^\chi}(\emptyset)}=\widehat{\mathrm{Liv}^\chi}$
	is $(n-1)$-acyclic.
	
	Now let $\Delta\subset\Gamma\setminus\mathrm{Liv}^\chi$ be a clique with $1\leq\abs{V(\Delta)}\leq n-1$,
	and write $V(\Delta)=\{v_1,\ldots,v_r\}$. Applying Lemma~\ref{LivLemma}(ii) repeatedly, we obtain
	\[
	\left[\restr{\chi}
	{\mathcal{G}_{\mathrm{lk}(v_1)\cap\cdots\cap\mathrm{lk}(v_r)}}\right]
	\in
	\Sigma^{n-r}
	\left(
	\mathcal{G}_{\mathrm{lk}(v_1)\cap\cdots\cap\mathrm{lk}(v_r)},
	\mathbb{Z}
	\right).
	\]
	By the induction hypothesis, $\widehat{
		\mathrm{lk}_{\mathrm{Liv}^\chi\cap
			\mathrm{lk}(v_1)\cap\cdots\cap\mathrm{lk}(v_r)}
		(\emptyset)}
	=
	\widehat{\mathrm{lk}_{\mathrm{Liv}^\chi}(\Delta)}$
	is $(n-1-\abs{V(\Delta)})$-acyclic.
	
	Finally, let $\Delta\subset\Gamma\setminus\mathrm{Liv}^\chi$ be a clique with $\abs{V(\Delta)}=n$,
	and write $V(\Delta)=\{v_1,\ldots,v_n\}$.
	Again by Lemma~\ref{LivLemma},
	$$\left[\restr{\chi}{\mathcal{G}_{\mathrm{lk}(v_1)\cap\cdots\cap\mathrm{lk}(v_{n-1})}}
	\right]\in\Sigma^1\left(\mathcal{G}_{\mathrm{lk}(v_1)\cap\cdots\cap\mathrm{lk}(v_{n-1})},\mathbb{Z}\right).$$
	Hence, $\mathrm{Liv}^\chi\cap\mathrm{lk}(v_1)\cap\cdots\cap\mathrm{lk}(v_{n-1})$ is dominant in $\mathrm{lk}(v_1)\cap\cdots\cap\mathrm{lk}(v_{n-1})$.
	Therefore there exists $u\in
	V(\mathrm{lk}(v_1)\cap\cdots\cap\mathrm{lk}(v_{n-1}))$
	such that $\{u,v_n\}\in E(\Gamma)$ and $\chi(u)\neq 0$. It follows that $u\in\bigcap_{i=1}^n\mathrm{lk}(v_i)$,
	and consequently $\widehat{\mathrm{lk}_{\mathrm{Liv}^\chi}(\Delta)}$
	is non-empty, i.e. $(-1)$-acyclic.
	Thus $\chi$ satisfies the homological $n$-link condition.The homotopical part follows analogously, using the same argument together with Lemma~\ref{LivLemma2} instead of Lemma~\ref{LivLemma}. 
\end{proof}

\section{$\ell^2$-Betti numbers of \texorpdfstring{$\mathrm{Aut}(\mathcal{G}_\Gamma)$}{Aut(Gamma)} and \texorpdfstring{$\mathrm{Out}(\mathcal{G}_\Gamma)$}{Out(Gamma)}}
\subsection{Zeroth $\ell^2$-Betti number}

In this section we determine precisely when the groups $\mathrm{Aut}(\mathcal{G}_\Gamma)$ and $\mathrm{Out}(\mathcal{G}_\Gamma)$ are finite. This, in turn, completely characterises the vanishing of the zeroth $\ell^2$-Betti number of these groups. First, let us consider the case of $\operatorname{Aut}(\mathcal{G}_\Gamma)$.
\begin{proposition}\label{FiniteAut}
Let $\mathcal{G}_\Gamma$ be a graph product of cyclic groups. Then $\mathrm{Aut}(\mathcal{G}_\Gamma)$ is finite if and only if $\Gamma$ is complete and $|V(\Gamma^\infty)|\leq 1$, where $V(\Gamma^\infty)$ denotes the set of vertices corresponding to infinite cyclic vertex groups.
\end{proposition}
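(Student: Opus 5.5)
We prove both directions using the generating set of Theorem~\ref{GeneratorsAUT}: in one direction we exhibit an element of infinite order, and in the other we show that every generator lies in a finite group.

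For the "only if" direction we argue by contraposition. First suppose $\Gamma$ is not complete, and pick non-adjacent vertices $u,v$. Then $\Gamma\setminus\mathrm{st}(v)\neq\emptyset$, since it contains $u$. Let $C$ be the component of $\Gamma\setminus\mathrm{st}(v)$ containing $u$ and consider the partial conjugation $\pi^v_C$. Its $k$-th power sends $u$ to $v^k u v^{-k}$ and fixes the vertices outside $C$.

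If $o(v)=\infty$, the elements $v^kuv^{-k}$ are pairwise distinct by the normal form for graph products, because $u$ and $v$ do not commute. So $\pi^v_C$ has infinite order.

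If $o(v)<\infty$, powers of $\pi^v_C$ alone may be periodic. In that case we instead compose two partial conjugations $\pi^v_C\circ\pi^u_{C'}$, where $C'$ is the component of $\Gamma\setminus\mathrm{st}(u)$ containing $v$. On the subgroup $\langle u,v\rangle\cong\mathbb{Z}/o(u)\ast\mathbb{Z}/o(v)$ this acts like conjugation by $uv$ or similar on pieces, and $uv$ has infinite order in this free product. We then check via reduced words that the lengths of the images of $u$ grow without bound.

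The main obstacle is the case where $C$ or $C'$ is not the whole relevant complement. There the automorphism is only a partial inner map, and we need a normal-form length argument in $\mathcal{G}_\Gamma$, choosing a vertex inside $C$ on which the action is a genuine conjugation by an element of infinite order. As a fallback, if $\Gamma\setminus\mathrm{st}(v)$ is connected, then $\pi^v_C$ is the inner automorphism by $v$, and inner automorphisms of the centerless-ish part give an infinite subgroup $\mathcal{G}_\Gamma/Z(\mathcal{G}_\Gamma)$, which is infinite whenever $\Gamma$ is not complete.

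Next suppose $\Gamma$ is complete but has two infinite-order vertices $u,v$. Then $\mathrm{lk}(u)\subset\mathrm{st}(v)$, and the transvection $\tau^v_u$ satisfies $(\tau^v_u)^k(u)=uv^k$, so it has infinite order.

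For the "if" direction, assume $\Gamma$ is complete and $|V(\Gamma^\infty)|\le1$. Then $\mathcal{G}_\Gamma\cong F\times\mathbb{Z}^{\epsilon}$ with $F$ finite abelian and $\epsilon\in\{0,1\}$. For $\epsilon=0$, $\mathrm{Aut}(F)$ is finite. For $\epsilon=1$, write $G=F\times\mathbb{Z}$. An automorphism preserves the torsion subgroup $F$, and it sends the generator $t$ of $\mathbb{Z}$ to $f t^{\pm1}$ with $f\in F$, since it induces an automorphism of $G/F\cong\mathbb{Z}$. Hence $\mathrm{Aut}(G)$ embeds in $\mathrm{Aut}(F)\times F\times\{\pm1\}$, which is finite. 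This direction is routine and does not even need Theorem~\ref{GeneratorsAUT}.
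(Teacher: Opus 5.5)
Your proof is correct, but in the non-complete case it takes a more hands-on route than the paper.

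\textbf{Non-complete case.} The paper does not use partial conjugations here. It notes that $\mathrm{Inn}(\mathcal{G}_\Gamma)\cong\mathcal{G}_\Gamma/\mathrm{Z}(\mathcal{G}_\Gamma)\cong\mathcal{G}_\Delta$, where $\Delta$ is spanned by the vertices with $\mathrm{st}(v)\neq\Gamma$. If $v\in V(\Delta)$ and $w$ is not adjacent to $v$, then $w\in V(\Delta)$ too, so $\Delta$ is not complete and $\mathcal{G}_\Delta$ is infinite by Green's finiteness criterion. This is exactly your ``fallback''. It works unconditionally, not only when $\Gamma\setminus\mathrm{st}(v)$ is connected, so your case split on $o(v)$ is superfluous.

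The ``main obstacle'' you flag does not actually arise. For $\phi=\pi^v_C\circ\pi^u_{C'}$ one gets $\phi(u)=vuv^{-1}$ and $\phi(v)=(vu)v(vu)^{-1}$, whatever else $C$ and $C'$ contain. So $\phi$ preserves $\langle u,v\rangle\cong\mathbb{Z}/o(u)\ast\mathbb{Z}/o(v)$ and acts on it exactly as conjugation by $vu$. Hence $\phi^k(u)=(vu)^ku(vu)^{-k}\neq u$ for $k\neq 0$, because the centralizer of $u$ in this free product is $\langle u\rangle$. No length argument involving other vertices of $C$ is needed.

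\textbf{Complete case.} The paper writes $\mathcal{G}_\Gamma\cong\mathbb{Z}^k\times F$ and uses that $\mathrm{Aut}(\mathbb{Z}^k)$ has finite index in $\mathrm{Aut}(\mathbb{Z}^k\times F)$, so the group is finite iff $k\leq 1$. Your two arguments prove the two halves of this directly: the transvection $\tau^v_u$ handles $k\geq 2$, and the injection of $\mathrm{Aut}(F\times\mathbb{Z})$ into the finite set $\mathrm{Aut}(F)\times F\times\{\pm1\}$ handles $k\leq 1$.

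\textbf{Comparison.} The paper's route is shorter and needs neither the generating set of Theorem~\ref{GeneratorsAUT} nor normal forms. Yours exhibits explicit infinite-order automorphisms and gives a self-contained proof of the finite case, where the paper simply asserts the finite-index claim.
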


\begin{proof}
If $\mathrm{Aut}(\mathcal{G}_\Gamma)$ is finite, then its subgroup of inner automorphisms
$\mathrm{Inn}(\mathcal{G}_\Gamma)=\mathcal{G}_\Gamma/\mathrm{Z}(\mathcal{G}_\Gamma)$
is also finite.
We know that $\mathcal{G}_\Gamma/\mathrm{Z}(\mathcal{G}_\Gamma)\cong \mathcal{G}_\Delta$, where $\Delta$ is the subgraph induced by the vertices $v\in V(\Gamma)$ with $\mathrm{st}(v)\neq \Gamma$.
If this quotient is finite, then $\Delta$ must be complete, which implies $\mathrm{st}(v)=\Gamma$ for all $v\in V(\Gamma)$. Hence, if $\mathcal{G}_\Gamma/\mathrm{Z}(\mathcal{G}_\Gamma)$ is finite then $\mathcal{G}_\Gamma=\mathrm{Z}(\mathcal{G}_\Gamma)$, i.e. $\Gamma$ is complete.

In this case, $\mathcal{G}_\Gamma\cong \mathbb{Z}^k\times G$, where $G$ is a finite group.
If $k=0$, then $\mathrm{Aut}(\mathcal{G}_\Gamma)$ is clearly finite. Suppose $k>0$. Then $\mathrm{Aut}(\mathbb{Z}^k)$ embeds as a finite-index subgroup of $\mathrm{Aut}(\mathbb{Z}^k\times G)$. Thus $\mathrm{Aut}(\mathcal{G}_\Gamma)$ is finite if and only if $\mathrm{Aut}(\mathbb{Z}^k)$ is finite, which occurs precisely when $k=1$. This proves the claim.
\end{proof}

For the outer case, observe that, by Theorem~\ref{GeneratorsAUT}, there are two families of infinite-order generators of $\operatorname{Aut}(\mathcal{G}_\Gamma)$:
	\begin{itemize}
		\item Transvections corresponding to vertices $u,v$ such that $\mathrm{lk}(u)\subset\mathrm{st}(v)$ and $o(u)=o(v)=\infty$.
		\item Partial conjugations $\pi^v_C$ with $o(v)=\infty$.
	\end{itemize}

However, excluding these generators is not sufficient to guarantee finiteness: it is possible for two finite-order partial conjugations to have a product of infinite order.
To address this, we introduce a condition that encodes the commutation of partial conjugations.

\begin{definition}
Let $\Gamma$ be a simplicial graph and $u,v\in V(\Gamma)$ be non-adjacent vertices.
The pair $(u,v)$ forms a \textbf{separating intersection of links (SIL)} if there exists a connected component of $\Gamma\setminus(\mathrm{lk}(u)\cap\mathrm{lk}(v))$ which contains neither $u$ nor $v$.
\end{definition}

Let $\overline{\mathcal{G}}_\Gamma=\mathbb{Z}^n\times G$ denote the abelianisation of $\mathcal{G}_\Gamma$, where $G$ is finite.
There is a canonical map
$$\rho:\mathrm{Aut}(\mathcal{G}_\Gamma)\longrightarrow \mathrm{Aut}(\overline{\mathcal{G}}_\Gamma),$$
whose kernel is called the \textbf{Torelli subgroup} $\mathrm{IA}_\Gamma$.
Sale and Susse proved in \cite[Theorem 5.1]{Sale-Susse} that $\mathrm{IA}_\Gamma$ is generated by all partial conjugations and by commutator transvections $[\tau^v_w,\tau^u_w]$ whenever $w\leq u,v$ and $o(u)=\infty$.

We now show that if all transvections of $\mathrm{Aut}(\mathcal{G}_\Gamma)$ have finite order, then $\mathrm{IA}_\Gamma$ is a finite index subgroup.

\begin{proposition}\label{Finite index Torelli}
Let $\mathcal{G}_\Gamma$ be a graph product of cyclic groups such that whenever 
$\mathrm{lk}(u)\subset \mathrm{st}(v)$ and $o(u)=\infty$, we have $o(v)<\infty$. 
Then $\mathrm{IA}_\Gamma$ has finite index in $\mathrm{Aut}(\mathcal{G}_\Gamma)$.
\end{proposition}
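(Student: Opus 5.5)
We want $\mathrm{IA}_\Gamma=\ker\rho$ to have finite index in $\mathrm{Aut}(\mathcal{G}_\Gamma)$. Since $\mathrm{Aut}(\mathcal{G}_\Gamma)/\mathrm{IA}_\Gamma\cong\rho(\mathrm{Aut}(\mathcal{G}_\Gamma))$, the plan is to show that the image $\rho(\mathrm{Aut}(\mathcal{G}_\Gamma))\le \mathrm{Aut}(\mathbb{Z}^n\times G)$ is finite. By Theorem~\ref{GeneratorsAUT}, this image is generated by the images of the labelled graph automorphisms, factor automorphisms, transvections and partial conjugations. I will treat these generator types one by one and then control the group they generate.

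First, partial conjugations act trivially on the abelianisation, so they lie in $\mathrm{IA}_\Gamma$ and can be discarded. Labelled graph automorphisms permute the basis of $\overline{\mathcal{G}}_\Gamma$ given by the vertex images, so they generate a finite group $P$ (a quotient of $\mathrm{Aut}(\Gamma)$). Factor automorphisms act diagonally: on an infinite vertex by $\pm1$, on a finite vertex $v$ by a unit of $\mathbb{Z}/o(v)$; they generate a finite diagonal group $D$. For the transvections $\tau^v_u$, the hypothesis says: if $o(u)=\infty$ and $\mathrm{lk}(u)\subset\mathrm{st}(v)$, then $o(v)<\infty$. So every transvection is either of type $u\mapsto uv$ with $u$ infinite and $v$ finite, or of type $u\mapsto uv^k$ with $u,v$ both finite (prime-power orders). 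In the abelianisation, $\rho(\tau^v_u)$ sends $\bar u\mapsto \bar u+k\bar v$ with $\bar v\in G$ torsion, fixing everything else.

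Key step: let $T\le\mathrm{Aut}(\mathbb{Z}^n\times G)$ be the subgroup of automorphisms $\alpha$ with $\alpha(x)-x\in G$ for all $x$ and inducing the identity on $(\mathbb{Z}^n\times G)/G\cong\mathbb{Z}^n$, together with all automorphisms preserving $G$ and acting on $\mathbb{Z}^n$ by signed permutations. Concretely I would show the images of all generators lie in the subgroup $H$ of automorphisms of $\mathbb{Z}^n\times G$ which induce a signed permutation matrix on the free quotient $(\mathbb{Z}^n\times G)/\mathrm{Tor}\cong\mathbb{Z}^n$. Here $\mathrm{Tor}=G$ is characteristic, so every automorphism induces maps on $G$ and on $\mathbb{Z}^n$, giving a homomorphism $\mathrm{Aut}(\mathbb{Z}^n\times G)\to\mathrm{Aut}(\mathbb{Z}^n)\times\mathrm{Aut}(G)$ whose kernel is $\mathrm{Hom}(\mathbb{Z}^n,G)$, finite. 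Each generator maps into (signed permutations)$\times\mathrm{Aut}(G)$: graph automorphisms give permutations, infinite factor automorphisms give signs, and transvections act trivially on $\mathbb{Z}^n$ because their added term $k\bar v$ is torsion (for $u$ infinite) or everything happens inside $G$ (for $u$ finite). Hence $\rho(\mathrm{Aut}(\mathcal{G}_\Gamma))$ is contained in the preimage of a finite group under a map with finite kernel, hence finite.

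The only delicate point is checking that no transvection contributes a nontrivial unipotent action on the free part $\mathbb{Z}^n$, i.e.\ that the hypothesis excludes exactly the transvections $\tau^v_u$ with $u,v$ both infinite; I expect this, together with the identification of $\mathrm{Tor}(\overline{\mathcal{G}}_\Gamma)$ with the span of the finite-order vertex images (so that $\bar v\in G$ for finite $v$), to be the main thing to verify carefully. The conclusion is that $[\mathrm{Aut}(\mathcal{G}_\Gamma):\mathrm{IA}_\Gamma]=|\rho(\mathrm{Aut}(\mathcal{G}_\Gamma))|<\infty$.
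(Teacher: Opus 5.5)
Your argument is correct, but it takes a different route from the paper.

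\textbf{The paper's route.} It never examines the whole image $\rho(\mathrm{Aut}(\mathcal{G}_\Gamma))$. Instead it passes to the Sale--Susse subgroup $\mathrm{Aut}^1_\infty(\mathcal{G}_\Gamma)$, generated by partial conjugations and by the transvections $\tau^v_u$ with $o(u)=\infty$. It then quotes two results of Sale--Susse:
\begin{itemize}
\item their Corollary 2.8, that this subgroup has finite index and contains $\mathrm{IA}_\Gamma$;
\item their Lemma 5.3, that $\rho(\mathrm{Aut}^1_\infty(\mathcal{G}_\Gamma))$ is finite when no infinite-order transvections occur.
\end{itemize}
Finite index of $\mathrm{IA}_\Gamma$ then follows in two steps.

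\textbf{Your route.} You work with the full group, using the Corredor--Gutierrez generating set of Theorem~\ref{GeneratorsAUT}, and show directly that the full image is finite. The key facts are:
\begin{itemize}
\item the torsion subgroup $G$ of $\overline{\mathcal{G}}_\Gamma$ is characteristic and is spanned by the finite-order vertex images;
\item the induced map $\mathrm{Aut}(\mathbb{Z}^n\times G)\to\mathrm{Aut}(\mathbb{Z}^n)\times\mathrm{Aut}(G)$ has finite kernel $\mathrm{Hom}(\mathbb{Z}^n,G)$;
\item every generator lands in the preimage of the finite group given by signed permutation matrices (in the basis of infinite-order vertex images) times $\mathrm{Aut}(G)$.
\end{itemize}
The only generators that could act unipotently on the free part are the $\tau^v_u$ with $u,v$ both of infinite order, and the hypothesis rules these out. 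Transvections with $o(u)<\infty$ automatically have $o(v)<\infty$ by the prime-power condition, so they act inside $G$. The point you flagged as delicate is therefore immediate.

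\textbf{Comparison.} Your version is elementary and self-contained: it essentially reproves the content of Sale--Susse's Lemma 5.3 and avoids their Corollary 2.8 altogether. It also gives the explicit bound $[\mathrm{Aut}(\mathcal{G}_\Gamma):\mathrm{IA}_\Gamma]\leq 2^n\, n!\,\lvert\mathrm{Aut}(G)\rvert\,\lvert G\rvert^n$. The paper's version is shorter but relies on two external results.

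One cosmetic point: your sentence introducing the subgroup $T$ is garbled and never used. The proof is carried entirely by the subgroup $H$ and the finite-kernel map, so you should delete that sentence.
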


\begin{proof}
Let $\mathrm{Aut}^1_\infty(\mathcal{G}_\Gamma)$ denote the subgroup of $\mathrm{Aut}(\mathcal{G}_\Gamma)$ generated by all partial conjugations and by those transvections $\tau^v_u$ with $u,v\in V(\Gamma)$ satisfying $\mathrm{lk}(u)\subset \mathrm{st}(v)$ and $o(u)=\infty$. 
By \cite[Corollary 2.8]{Sale-Susse}, this subgroup has finite index in $\mathrm{Aut}(\mathcal{G}_\Gamma)$ and contains $\mathrm{IA}_\Gamma$.

Under the given hypothesis, no transvection of infinite order occurs. 
Hence, by \cite[Lemma 5.3]{Sale-Susse}, the image $\rho(\mathrm{Aut}^1_\infty(\mathcal{G}_\Gamma))$ in $\mathrm{Aut}(\overline{\mathcal{G}}_\Gamma)$ is finite. 
Since $\mathrm{IA}_\Gamma=\ker(\rho|_{\mathrm{Aut}^1_\infty(\mathcal{G}_\Gamma)})$, it follows that $\mathrm{IA}_\Gamma$ is of finite index in $\mathrm{Aut}^1_\infty(\mathcal{G}_\Gamma)$, and consequently of finite index in $\mathrm{Aut}(\mathcal{G}_\Gamma)$.
\end{proof}

\begin{proposition}\label{FiniteOut}
Let $\mathcal{G}_\Gamma$ be a graph product of cyclic groups. Then $\mathrm{Out}(\mathcal{G}_\Gamma)$ is finite if and only if the following conditions hold:
\begin{enumerate}
    \item $\Gamma$ has no SILs;
    \item $\Gamma\setminus\mathrm{st}(v)$ is connected for every $v\in V(\Gamma^\infty)$;
    \item if $\mathrm{lk}(u)\subset\mathrm{st}(v)$ and $o(u)=\infty$, then $o(v)<\infty$.
\end{enumerate}
\end{proposition}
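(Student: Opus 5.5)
We prove both directions by working inside $\mathrm{Out}(\mathcal{G}_\Gamma)$ with the generating set of Theorem~\ref{GeneratorsAUT} and the finite-index Torelli subgroup of Proposition~\ref{Finite index Torelli}.

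\emph{Necessity.} We show that each failed condition produces an element of infinite order in $\mathrm{Out}(\mathcal{G}_\Gamma)$.
\begin{itemize}
\item If condition 3 fails, take $u,v$ with $\mathrm{lk}(u)\subset\mathrm{st}(v)$ and $o(u)=o(v)=\infty$. The transvection $\tau^v_u$ acts on the abelianisation $\overline{\mathcal{G}}_\Gamma=\mathbb{Z}^n\times G$ as an elementary matrix on the free part, which has infinite order. Inner automorphisms act trivially on $\overline{\mathcal{G}}_\Gamma$, so the image of $\tau^v_u$ in $\mathrm{Out}$ also has infinite order.
\item If condition 2 fails, take $v$ with $o(v)=\infty$ and $\Gamma\setminus\mathrm{st}(v)$ disconnected, and pick a component $C$. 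Then $\pi^v_C$ has infinite order in $\mathrm{Aut}$. No power $(\pi^v_C)^k=\pi^{v^k}_C$ is inner: it conjugates $C$ by $v^k$ but fixes a vertex $w$ in another component. If it were conjugation by some $g$, then $g$ would centralise $w$ and the elements of $\mathrm{st}(v)$, while $g v^{-k}$ centralises $C$. Normal forms in graph products (centralisers of vertices are generated by their stars) make this impossible.
\item If condition 1 fails, take a SIL $(u,v)$ and a component $C$ of $\Gamma\setminus(\mathrm{lk}(u)\cap\mathrm{lk}(v))$ containing neither $u$ nor $v$. Then $C$ is a component of both $\Gamma\setminus\mathrm{st}(u)$ and $\Gamma\setminus\mathrm{st}(v)$. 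Consider $\pi^u_C\pi^v_C$, which conjugates $C$ by $uv$. Since $u,v$ are non-adjacent, $uv$ has infinite order even when $o(u),o(v)$ are finite. The same non-innerness argument applied to the powers, using a vertex outside $C\cup\mathrm{st}(u)\cup\mathrm{st}(v)$ or $u$ itself as a witness, gives an element of infinite order in $\mathrm{Out}$.
\end{itemize}

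\emph{Sufficiency.} Assume conditions 1--3 hold.
\begin{itemize}
\item By condition 3 and Proposition~\ref{Finite index Torelli}, $\mathrm{IA}_\Gamma$ has finite index in $\mathrm{Aut}(\mathcal{G}_\Gamma)$. It therefore suffices to show that its image in $\mathrm{Out}$ is finite.
\item Condition 3 also rules out the commutator transvections $[\tau^v_w,\tau^u_w]$ with $o(u)=\infty$. Hence by \cite[Theorem 5.1]{Sale-Susse}, $\mathrm{IA}_\Gamma$ is generated by partial conjugations.
\item By condition 2, for each $v$ with $o(v)=\infty$ the set $\Gamma\setminus\mathrm{st}(v)$ is a single component $C$. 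Then $\pi^v_C$ equals conjugation by $v$ on all generators (vertices of $\mathrm{st}(v)$ commute with $v$), so it is inner. The image of $\mathrm{IA}_\Gamma$ in $\mathrm{Out}$ is therefore generated by the classes of $\pi^v_C$ with $o(v)<\infty$, each of finite order.
\item Condition 1 is used to show that any two partial conjugations $\pi^u_C$, $\pi^v_D$ commute modulo inner automorphisms. This is the standard analysis from the RAAG case (Laurence, Gutierrez--Piggott--Ruane): either the supports are nested or disjoint so the automorphisms commute, or, in the absence of SILs, $C$ contains $v$ and $D$ contains $u$, and then the product differs from a commuting one by an inner automorphism.
\item Consequently the image of $\mathrm{IA}_\Gamma$ in $\mathrm{Out}$ is a finitely generated abelian group generated by torsion elements, hence finite.
\end{itemize}

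The main obstacle is this last commutation step, namely checking case by case that no SIL forces commutativity of the partial conjugations modulo inner automorphisms. The rigorous non-innerness argument in the necessity part, which needs centraliser computations in graph products, is a secondary difficulty.
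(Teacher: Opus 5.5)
Your proof is correct and has the same skeleton as the paper's: the finite-index Torelli subgroup, the Sale--Susse generators of $\mathrm{IA}_\Gamma$, and the Gutierrez--Piggott--Ruane commutation analysis of partial conjugations in the absence of SILs. It differs from the paper in two places.

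For the necessity of condition 1, the paper uses $\pi^u_{C_u}\pi^v_{C_v}$ and defers its infinite order in $\mathrm{Out}$ to Gutierrez--Piggott--Ruane. Here $C_u$ is the component of $\Gamma\setminus\mathrm{st}(u)$ containing $v$, and $C_v$ is the component of $\Gamma\setminus\mathrm{st}(v)$ containing $u$. You instead use the SIL component $C$ itself, which is indeed a common component of $\Gamma\setminus\mathrm{st}(u)$ and $\Gamma\setminus\mathrm{st}(v)$. Your non-innerness argument is self-contained: an implementing element centralises $u$ and $v$, so it lies in $\mathcal{G}_{\mathrm{lk}(u)\cap\mathrm{lk}(v)}$. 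The retraction onto $\mathcal{G}_{\{u,v\}}\cong\mathbb{Z}/o(u)\mathbb{Z}\ast\mathbb{Z}/o(v)\mathbb{Z}$ then forces $(uv)^k=1$ there, which is impossible for $k\neq 0$.

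The more substantial difference is that you carry out the sufficiency direction in $\mathrm{Out}(\mathcal{G}_\Gamma)$, whereas the paper concludes that $\mathrm{IA}_\Gamma$ itself is a finite abelian group. That cannot hold literally. First, $\mathrm{Inn}(\mathcal{G}_\Gamma)\leq\mathrm{IA}_\Gamma$ is infinite as soon as $\Gamma$ is not complete. Second, even without SILs, $\pi^u_{C_u}$ and $\pi^v_{C_v}$ need not commute in $\mathrm{Aut}$: for the path with consecutive vertices $a,u,b,v$ they are the inner automorphisms by $u$ and by $v$.

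In your version, condition 2 makes every partial conjugation by an infinite-order vertex inner, condition 3 removes the commutator transvections, and commutation is only claimed modulo $\mathrm{Inn}$. This is what actually makes the argument work. It also shows where condition 2 is needed, which the paper's sufficiency argument never invokes.

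One robustness remark. Your disposal of commutator transvections uses the Sale--Susse hypothesis exactly as quoted in the paper, namely $o(u)=\infty$. If it is meant to be $o(w)=\infty$, condition 3 no longer kills them, but condition 1 does. A nontrivial $[\tau^v_w,\tau^u_w]$ still forces $u,v$ to be non-adjacent and $\mathrm{lk}(w)\subseteq\mathrm{lk}(u)\cap\mathrm{lk}(v)$. Then $\{w\}$ is a component of $\Gamma\setminus(\mathrm{lk}(u)\cap\mathrm{lk}(v))$, so $(u,v)$ is a SIL.
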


\begin{proof}
If any of these conditions fails, then $\mathrm{Out}(\mathcal{G}_\Gamma)$ is infinite.
Indeed, if $(u,v)$ is a SIL and $C_u,C_v$ denote the connected components of $\Gamma\setminus\mathrm{st}(u)$ and $\Gamma\setminus\mathrm{st}(v)$ containing $v$ and $u$ respectively, then $\pi_{C_u}^u\pi_{C_v}^v$ has infinite order (c.f. \cite[Section 4]
{Gutierrez-Piggott-Ruane}).
Similarly, violation of conditions (2) or (3) produces standard generators of infinite order.

Conversely, assume that all three conditions hold.
By Proposition~\ref{Finite index Torelli}, $\mathrm{IA}_\Gamma$ has finite index in $\mathrm{Aut}(\mathcal{G}_\Gamma)$.
Since $\Gamma$ has no SILs, it follows from \cite[Section 4]
{Gutierrez-Piggott-Ruane} that all partial conjugations commute.
Hence $\mathrm{IA}_\Gamma$ is a finite abelian group.
Therefore, $\mathrm{Out}(\mathcal{G}_\Gamma)$ contains a finite subgroup of finite index, and is itself finite.
\end{proof}
\subsection{The first $\ell^2$-Betti number of $\mathrm{Aut}(\mathcal{G}_\Gamma)$}

In this section we compute the first $\ell^2$-Betti number of $\mathrm{Aut}(\mathcal{G}_\Gamma)$ in the case where $\abs{V(\Gamma^{<\infty})}\leq 2$.

We begin with a couple of technical lemmas before proving Theorem~\ref{Main4}.

\begin{lemma}\label{LemmaDisconnected}
	Let $\mathcal{G}_\Gamma$ be a graph product of cyclic groups with $\Out(\mathcal{G}_\Gamma)$ finite. Assume that $\Gamma$ is connected and that there exists $v\in V(\Gamma)$ such that $o(v)<\infty$ and $\Gamma_v$ is disconnected. If there exists a connected component $C$ of $\Gamma_v$ lying entirely in $\Gamma^\infty$, then $\abs{V(C)}=1$ and $\Gamma_v$ has two connected component.
\end{lemma}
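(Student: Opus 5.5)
The plan is to derive both conclusions from the finiteness criterion for $\Out(\mathcal{G}_\Gamma)$ in Proposition~\ref{FiniteOut}, by exhibiting a forbidden configuration whenever either conclusion fails. Conditions (2) and (3) of that proposition are the relevant ones. Since $\Gamma$ is connected, $v$ is adjacent to some vertex of every connected component of $\Gamma_v$. In particular $v$ has a neighbour in $C$.

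\textbf{Step 1: $\abs{V(C)}=1$.} Suppose $\abs{V(C)}\geq 2$. Then $C$ is a connected graph with at least two vertices, all of infinite order. I will try to find $u\in V(C)$ whose link leaves $C$ only through $v$, and apply condition (3).

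Any $u\in V(C)$ satisfies $\lk(u)\subset V(C)\cup\{v\}$, because $C$ is a component of $\Gamma_v$. If $u$ is adjacent to $v$, then $u$ lies in $\st(v)$, but we still need the whole link of $u$ to lie in $\st(w)$ for a suitable infinite-order vertex $w$. So I would rather compare two vertices of $C$. Choose $u\in V(C)$ to be a vertex that is not a cut vertex of $C$; for instance, take a leaf of a spanning tree of $C$.

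Next consider another component $D$ of $\Gamma_v$, which exists because $\Gamma_v$ is disconnected. Pick any $w\in V(C)$ with $w\neq u$; it has infinite order. Then $\Gamma\setminus\st(w)$ contains $D$. Every path from $D$ to a vertex of $C\setminus\st(w)$ passes through $v$, and $v$ may or may not lie in $\st(w)$. If $v\notin\st(w)$, then $D\cup\{v\}$ remains joined to the vertices of $C$ adjacent to $v$. The cleanest route is to choose $w\in V(C)$ adjacent to $v$. Then $v\in\st(w)$, so $D$ is separated in $\Gamma\setminus\st(w)$ from $C\setminus\st(w)$.

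Condition (2) therefore forces $C\setminus\st(w)=\emptyset$, that is, $C\subset\st(w)$. Now take $u\in V(C)$ with $u\neq w$. Then $\lk(u)\subset V(C)\cup\{v\}\subset\st(w)$, since $v\in\st(w)$ and $C\subset\st(w)$. Because $o(u)=o(w)=\infty$, condition (3) is violated. This contradiction gives $\abs{V(C)}=1$.

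\textbf{Step 2: exactly two components.} Write $C=\{w\}$, so $\lk(w)=\{v\}$ and $o(w)=\infty$. Now $\Gamma\setminus\st(w)=\Gamma\setminus\{v,w\}$. This graph is the disjoint union of the components of $\Gamma_v$ other than $C$. By condition (2) it must be connected. Hence $\Gamma_v$ has exactly one component besides $C$, so it has two components in total. If $\Gamma\setminus\{v,w\}$ is empty, then $\Gamma_v=C$ is connected, contradicting the hypothesis; so two components is exact.

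\textbf{Main obstacle.} The delicate point is in Step 1: choosing $w\in C$ adjacent to $v$, so that $v\in\st(w)$, and verifying that the separation argument really forces $C\subset\st(w)$. This is where the hypothesis that $v$ has finite order might be expected to matter. In fact it is not needed, because condition (2) is applied only at the infinite-order vertex $w$.
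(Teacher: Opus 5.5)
Your proof is correct and follows the paper's argument. You pick $w\in V(C)$ adjacent to $v$ and use connectivity of $\Gamma\setminus\st(w)$ (condition (2)) to force $C\subseteq\st(w)$. The infinite-order transvection condition (3) then gives $\abs{V(C)}=1$, and the two-component count comes again from connectivity of $\Gamma\setminus\st(w)$.

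Your version is slightly tighter than the paper's. It needs $C\subseteq\st(w)$ only for this single $w$, whereas the paper asserts without justification that it holds for every $w\in V(C)$. The detour through non-cut vertices and spanning-tree leaves is unnecessary and can be deleted.
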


\begin{proof}
	Let $w\in V(C)$ be a vertex adjacent to $v$. Since $\Out(\mathcal{G}_\Gamma)$ is finite, the graph $\Gamma\setminus\st(w)$ is connected. Hence, $C=\st(w)\setminus\{v\}$, and the same holds for every $w\in V(C)$. Therefore, for every $u,w\in V(C)$ we have $\lk(u)\subseteq\st(w)$. This implies that  $\abs{V(C)}=1$, otherwise $\Out(\mathcal{G}_\Gamma)$ would be infinite. It also follows that $\Gamma_v$ splits in two connected components $C$ and $\Gamma\setminus(C\cup\lbrace v\rbrace)$.
\end{proof}

\begin{lemma}\label{Gamma connected}
Let $\mathcal{G}_\Gamma$ be a graph product of cyclic groups with $V(\Gamma^{<\infty})=\{v\}$. 
If $\mathrm{Out}(\mathcal{G}_\Gamma)$ is finite, then:
\begin{enumerate}
    \item either $\Gamma$ is connected, or $\mathcal{G}_\Gamma \cong \mathbb{Z}\ast \mathbb{Z}/o(v)\mathbb{Z}$;
    \item the subgraph $\Gamma_v$ is connected.
\end{enumerate}
\end{lemma}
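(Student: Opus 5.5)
The plan is to derive both statements directly from the three conditions of Proposition~\ref{FiniteOut}, which hold because $\mathrm{Out}(\mathcal{G}_\Gamma)$ is finite. The conditions used are: (2) $\Gamma\setminus\st(w)$ is connected for every infinite-order vertex $w$; and (3) if $\lk(u)\subset\st(w)$ with $o(u)=\infty$, then $o(w)<\infty$, i.e.\ $w=v$. Since $v$ is the only finite-order vertex, every vertex other than $v$ has infinite order.

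For part 1, suppose $\Gamma$ is disconnected and let $B$ be a connected component not containing $v$; all vertices of $B$ have infinite order.
\begin{itemize}
\item Fix $w\in V(B)$. Then $\Gamma\setminus\st(w)$ contains every component other than $B$, together with $B\setminus\st(w)$. If $B\setminus\st(w)\neq\emptyset$, these pieces are not joined by any edge, so condition (2) fails. Hence $B=\st(w)$ for every $w\in V(B)$, i.e.\ $B$ is a clique.
\item If $|V(B)|\geq 2$, pick distinct $w,x\in V(B)$. Then $\lk(w)=B\setminus\{w\}\subset\st(x)$ with $o(w)=o(x)=\infty$, contradicting condition (3). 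So $B=\{b\}$ is an isolated vertex.
\item Then $\lk(b)=\emptyset\subset\st(u)$ for every vertex $u\neq b$, so condition (3) forces $o(u)<\infty$, i.e.\ $u=v$.
\end{itemize}
Hence $V(\Gamma)=\{b,v\}$ with no edge, and $\mathcal{G}_\Gamma\cong\mathbb{Z}\ast\mathbb{Z}/o(v)\mathbb{Z}$.

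For part 2, if $\mathcal{G}_\Gamma\cong\mathbb{Z}\ast\mathbb{Z}/o(v)\mathbb{Z}$ then $\Gamma_v$ is a single vertex and is connected. Otherwise part 1 gives that $\Gamma$ is connected. Suppose $\Gamma_v$ is disconnected. Every vertex of $\Gamma_v$ has infinite order, so every component of $\Gamma_v$ lies in $\Gamma^\infty$. Lemma~\ref{LemmaDisconnected} then applies to any such component, so $\Gamma_v$ has exactly two components, $C_1$ and $C_2$. Applying the lemma to each one shows both are single vertices, $u$ and $w$. Since $\Gamma$ is connected, both are adjacent to $v$, so $\Gamma$ is the path $u - v - w$. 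But then $\lk(u)=\{v\}\subset\st(w)$ with $o(u)=o(w)=\infty$, contradicting condition (3). Therefore $\Gamma_v$ is connected.

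The step most likely to be the obstacle is making the first bullet of part 1 rigorous: showing that a component $B$ avoiding $v$ is a clique. This rests on the observation that $B\setminus\st(w)$ is nonempty and has no edges to the other components of $\Gamma$, which lie in $\Gamma\setminus\st(w)$. Everything else follows mechanically from the conditions of Proposition~\ref{FiniteOut} and from Lemma~\ref{LemmaDisconnected}.
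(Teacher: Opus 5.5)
Your proof is correct, and Part 2 is essentially the paper's argument. The paper applies Lemma~\ref{LemmaDisconnected} to every component of $\Gamma_v$, obtains a star centred at $v$, i.e.\ $F_n\times\mathbb{Z}/o(v)\mathbb{Z}$, and rules it out by an infinite-order transvection between two leaves. You instead use the lemma's extra conclusion that there are exactly two components, which lands you directly on the path with centre $v$.

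Part 1 follows a different route. The paper uses condition (1) of Proposition~\ref{FiniteOut}, the absence of SILs, together with \cite[Lemma~2.17]{Escartin}. This gives at once that a disconnected $\Gamma$ is a disjoint union of exactly two cliques, which the paper then trims to two isolated vertices using the absence of infinite-order transvections. You never use the SIL condition:
\begin{enumerate}
\item condition (2), applied at the infinite-order vertices of a component $B$ avoiding $v$, forces $B$ to be a clique (this is the mechanism of the proof of Lemma~\ref{LemmaDisconnected}, run on components of $\Gamma$ instead of $\Gamma_v$);
\item condition (3) shrinks $B$ to an isolated vertex $b$;
\item since $\lk(b)=\emptyset$ lies in every star, condition (3) eliminates all other infinite-order vertices in one step, so the component containing $v$ never needs separate analysis.
\end{enumerate}

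Your version is self-contained and avoids the external citation. The paper's version does not depend on where the finite-order vertices sit, which is why the same citation can be reused in Lemma~\ref{Disconnected2}. There, two finite-order vertices may lie in different components, so there need not be any component consisting only of infinite-order vertices for your first step to act on.
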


\begin{proof}
For the first statement, suppose by contradiction that $\Gamma$ is disconnected. 
By \cite[Lemma~2.17]{Escartin}, since $\Gamma$ has no SILs, it must be the disjoint union of two complete subgraphs. 
The only such configuration with no infinite-order transvections corresponds to a disconnected graph with two vertices, and hence $\mathcal{G}_\Gamma \cong \mathbb{Z}\ast \mathbb{Z}/o(v)\mathbb{Z}$.

For the second statement, assume that $\Gamma_v$ is disconnected, with connected components $\Gamma_1,\dots,\Gamma_n$. By 1. $\Gamma$ is connected, so we can use Lemma \ref{LemmaDisconnected} and it follows that $\abs{V(\Gamma_1)}=\dots=\abs{V(\Gamma_n)}=1$. Therefore
$$\mathcal{G}_\Gamma \cong F_n\times \mathbb{Z}/o(v)\mathbb{Z},$$
which admits transvections of infinite order unless $n=1$. Hence, since  $\mathrm{Out}(\mathcal{G}_\Gamma)$ is finite it follows that $n=1$ and $\mathcal{G}_\Gamma \cong \mathbb{Z}\times \mathbb{Z}/o(v)\mathbb{Z}$, contradicting that $\Gamma_v$ is disconnected.
\end{proof}
\begin{proof}(Theorem \ref{Main4})
By Proposition~\ref{FiniteAut}, the group $\mathrm{Aut}(\mathcal{G}_\Gamma)$ is finite if and only if $\Gamma$ is a complete graph on one or two vertices. 
Hence we may assume that we are not in these cases.

Consider the short exact sequence
$$1 \longrightarrow \mathcal{G}_\Gamma/\mathrm{Z}(\mathcal{G}_\Gamma)
\longrightarrow \mathrm{Aut}(\mathcal{G}_\Gamma)
\longrightarrow \mathrm{Out}(\mathcal{G}_\Gamma)
\longrightarrow 1.$$
If $\mathcal{G}_\Gamma/\mathrm{Z}(\mathcal{G}_\Gamma)$ is finite, then $\mathcal{G}_\Gamma=\mathrm{Z}(\mathcal{G}_\Gamma)$, so 
$\mathcal{G}_\Gamma \cong \mathbb{Z}^k\times  \mathbb{Z}/o(v)\mathbb{Z}$. 
In this case $\mathrm{Aut}(\mathbb{Z}^k)$ embeds as a finite-index subgroup of $\mathrm{Aut}(\mathbb{Z}^k\times  \mathbb{Z}/o(v)\mathbb{Z})$, and 
$\beta_1^{(2)}(\mathrm{Aut}(\mathbb{Z}^k))\neq 0$ if and only if $k=2$.

If both $\mathcal{G}_\Gamma/\mathrm{Z}(\mathcal{G}_\Gamma)$ and $\mathrm{Out}(\mathcal{G}_\Gamma)$ are infinite, then by \cite[Theorem~7.2~(7)]{Luck} we have $\beta_1^{(2)}(\mathrm{Aut}(\mathcal{G}_\Gamma))=0$. 
Therefore, we may assume that $\mathrm{Out}(\mathcal{G}_\Gamma)$ is finite, so that $\mathcal{G}_\Gamma/\mathrm{Z}(\mathcal{G}_\Gamma)$ has finite index in $\mathrm{Aut}(\mathcal{G}_\Gamma)$.

If $\mathrm{Z}(\mathcal{G}_\Gamma)\neq 1$, then $\mathrm{Z}(\mathcal{G}_\Gamma)=\langle v\rangle$; otherwise, by condition~(3) of Proposition~\ref{FiniteOut}, $\mathrm{Out}(\mathcal{G}_\Gamma)$ would be infinite. 
In this case $\mathcal{G}_\Gamma/\mathrm{Z}(\mathcal{G}_\Gamma)\cong A_{\Gamma_v}$, which is a right-angled Artin group. 
By Lemma~\ref{Gamma connected}(2), the graph $\Gamma_v$ is connected, so $\beta_1^{(2)}(A_{\Gamma_v})=0$, and hence $\beta_1^{(2)}(\mathrm{Aut}(\mathcal{G}_\Gamma))=0$.

If $\mathrm{Z}(\mathcal{G}_\Gamma)=1$, then $\mathcal{G}_\Gamma$ has finite index in $\mathrm{Aut}(\mathcal{G}_\Gamma)$. 
By Theorem~\ref{BettiClique}, $\beta_1^{(2)}(\mathcal{G}_\Gamma)=0$ if and only if both $\Gamma$ and $\Gamma_v$ are connected. 
Thus, by Lemma~\ref{Gamma connected}, we deduce that $\beta_1^{(2)}(\mathrm{Aut}(\mathcal{G}_\Gamma))=0$ unless $\mathcal{G}_\Gamma\cong\mathbb{Z}\ast\mathbb{Z}/o(v)\mathbb{Z}$, completing the proof.
\end{proof}
To understand the case of two finite order vertices we need to introduce a family of graphs.
\begin{definition}
	Assume $\mathcal{G}_\Gamma$ is a graph product  of cyclic groups with $V(\Gamma^{<\infty})=\{u,v\}$ that $\Gamma,\Gamma_u,\Gamma_v$ are connected and that $\Gamma_{u,v}$ is disconnected. If the decomposition of $\Gamma_{u,v}$ in connected components is $C_1\sqcup\dots\sqcup C_n$ we say that $\Gamma$ is of \textbf{type $\mathcal{P}$} if at most one connected component $C_i$ satisfies $V(C_i)=1$ and every other component $C_j$ satisfies that the only infinite order generators of $\Out(A_{C_j\cup\{u,v\}})$, if any, are partial conjugations or transvections with respect to $u$ and $v$. 
\end{definition}
\begin{lemma}\label{Disconnected2}
	Let $\mathcal{G}_\Gamma$ be a graph product of cyclic groups with $V(\Gamma^{<\infty})=\{u,v\}$. 
	If $\mathrm{Out}(\mathcal{G}_\Gamma)$ is finite, then:
	\begin{enumerate}
		\item Either $\Gamma$ is connected, $\mathcal{G}_\Gamma\cong\mathbb{Z}\ast\left(\mathbb{Z}/o(u)\mathbb{Z}\times\mathbb{Z}/o(v)\mathbb{Z}\right)$ or $\mathcal{G}_\Gamma\cong\left(\mathbb{Z}\times\mathbb{Z}/o(u)\mathbb{Z}\right)\ast\left(\mathbb{Z}\times\mathbb{Z}/o(v)\mathbb{Z}\right)$.
		\item If $\Gamma$ is connected, either $\Gamma_v$ is connected or $\Gamma$ is one of the following graphs:
		$$\begin{tikzpicture}[main/.style = {draw, circle},node distance={15mm}] 
			\node[][main] (1)  {}; 
			\node[label={$v$}][main] (2) [right of=1] {};
			\node[][main] (3) [right of=2] {}; 
			\node[label={$u$}][main] (4) [right of=3] {};
			\draw[-] (1) -- (2);
			\draw[-] (2) -- (3);
			\draw[-] (3) -- (4);
		\end{tikzpicture}~~~~~~~~
		\begin{tikzpicture}[main/.style = {draw, circle},node distance={15mm}] 
			\node[][main] (1)  {}; 
			\node[label={$v$}][main] (2) [right of=1] {};
			\node[][main] (3) [above right of=2] {}; 
			\node[label={$u$}][main] (4) [right of=2] {};
			\draw[-] (1) -- (2);
			\draw[-] (2) -- (3);
			\draw[-] (3) -- (4);
			\draw[-] (2) -- (4);
		\end{tikzpicture} $$
		$$\begin{tikzpicture}[main/.style = {draw, circle},node distance={15mm}] 
			\node[][main] (1)  {}; 
			\node[label={$v$}][main] (2) [right of=1] {};
			\node[label={$u$}][main] (3) [right of=2] {}; 
			\draw[-] (1) -- (2);
			\draw[-] (2) -- (3);
		\end{tikzpicture}~~~~~~~~\begin{tikzpicture}[main/.style = {draw, circle},node distance={15mm}] 
			\node[][main] (1)  {}; 
			\node[label={$v$}][main] (2) [right of=1] {};
			\node[label={$u$}][main] (3) [right of=2] {}; 
			\node[][main] (4) [right of=3] {};
			\draw[-] (1) -- (2);
			\draw[-] (2) -- (3);
			\draw[-] (3) -- (4);
		\end{tikzpicture}$$
		\item If $\Gamma,\Gamma_u,\Gamma_v$ are connected and $\Gamma_{u,v}$ is disconnected then $\Gamma$ is of type $\mathcal{P}$ and $\mathrm{Z}(\mathcal{G}_\Gamma)=1$.
	\end{enumerate}
\end{lemma}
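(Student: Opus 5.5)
The plan is to follow the template of Lemmas \ref{LemmaDisconnected} and \ref{Gamma connected}: use the three criteria of Proposition~\ref{FiniteOut} (no SILs, $\Gamma\setminus\st(w)$ connected for every infinite-order $w$, and no infinite-order transvections $\tau^w_x$ with $o(x)=o(w)=\infty$) to restrict the shape of $\Gamma$ step by step.

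For (1), suppose $\Gamma$ is disconnected. Since there are no SILs, \cite[Lemma~2.17]{Escartin} gives $\Gamma=K_1\sqcup K_2$ with each $K_i$ complete. An infinite-order vertex $x\in K_i$ has $\lk(x)\subset K_i$. Hence if $K_i$ contains two infinite-order vertices $x,w$, then $\lk(x)\subset\st(w)$ with both orders infinite, contradicting condition (3). So each $K_i$ has at most one infinite-order vertex. If $K_i$ contains an infinite-order vertex $x$, any infinite-order vertex $w$ of $K_j$, $j\neq i$, satisfies $\lk(x)\subset K_i\subset\st(w)$ only if $\lk(x)=\emptyset$. This forces the remaining configurations to be: one component equal to $\{x\}$ and the other equal to $\{u,v\}$, or $\{x,u\}\sqcup\{y,v\}$. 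Three residual cases must be excluded by the same transvection argument or by condition (2) applied to $\Gamma\setminus\st(x)$: both $u,v$ in the same component as some infinite-order vertex, components without infinite-order vertices (e.g.\ $\{u\}\sqcup\{v\}$), and $K_i=\{u\}$ alone. I will enumerate these; cases where $\mathcal{G}_\Gamma$ is virtually free of a rank for which $\Out$ is infinite are excluded, and the remaining ones are the two listed groups. I will also double-check that the finite free product $\mathbb{Z}/o(u)\ast\mathbb{Z}/o(v)$ is handled, since the statement omits it.

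For (2), assume $\Gamma$ is connected and $\Gamma_v$ is disconnected. Apply Lemma~\ref{LemmaDisconnected}: every component of $\Gamma_v$ lying in $\Gamma^\infty$ is a single vertex, and then $\Gamma_v$ has exactly two components. Since $u$ lies in exactly one component of $\Gamma_v$, the other component lies in $\Gamma^\infty$ and is therefore a single vertex $a$ adjacent only to $v$. The remaining component $D$ contains $u$. Next I will show $D$ is tiny. Any infinite-order $w\in D$ must have $\Gamma\setminus\st(w)$ connected, and this set contains $a$, which attaches only through $v$. So either $v\in\st(w)$ (and then $\Gamma\setminus\st(w)$ must be $\{a\}$ together with pieces of $D$ connected through $v$), or $v\notin\st(w)$ and everything outside $\st(w)$ connects to $v$. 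Combining this with the absence of transvections $\tau^w_{w'}$ between infinite-order vertices of $D$, and of SILs between $a$ and vertices of $D$, should force $|V(D)|\le 2$. The possibilities are then $D=\{u\}$, $D=\{u,b\}$ with $b\sim u$, and the cases $b\sim v$ or $u\sim v$. These produce exactly the four pictured graphs. This case analysis is the step I expect to be the main obstacle. My first attempt will be to bound the number of infinite-order vertices of $D$ via pairwise link containments.

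For (3), with $\Gamma,\Gamma_u,\Gamma_v$ connected and $\Gamma_{uv}=C_1\sqcup\dots\sqcup C_n$ disconnected, each $C_i$ meets both $\lk(u)$ and $\lk(v)$. Two singleton components $\{a\},\{b\}$ would satisfy $\lk(a)=\lk(b)\subset\{u,v\}$, giving the infinite-order transvection $\tau^b_a$. Hence at most one singleton occurs. For a non-singleton $C_j$, any infinite-order generator of $\Out(A_{C_j\cup\{u,v\}})$ that does not involve $u$ or $v$ extends to $\mathcal{G}_\Gamma$. Links of vertices of $C_j$ are unchanged, and the components of complements of stars only grow by adjoining other $C_i$'s through $u,v$. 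Such a generator would give an infinite-order element of $\Out(\mathcal{G}_\Gamma)$. This yields type $\mathcal{P}$. Finally, a central vertex $z$ has $\st(z)=\Gamma$. It cannot be $u$ or $v$, since they are nonadjacent in this situation or else $\Gamma_{uv}$ would meet both. If $z$ were infinite-order in some $C_i$, it would be adjacent to all other $C_j$, contradicting disconnectedness. Hence $\mathrm{Z}(\mathcal{G}_\Gamma)=1$.
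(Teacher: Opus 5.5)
\textbf{Part (1): a step that cannot work.} Your reduction to two complete components, each with at most one infinite-order vertex, matches the paper. The failure comes after that. You propose to exclude $\{u\}\sqcup\{v\}$, and the configurations with a component equal to $\{u\}$ alone, using transvections, condition (2) of Proposition~\ref{FiniteOut}, or virtual freeness. No such exclusion is possible. Take $\Gamma=\{u\}\sqcup\{v\}$, or $\Gamma=\{u\}\sqcup\{v,x\}$ with $x$ adjacent to $v$ and $o(x)=\infty$. All three conditions of Proposition~\ref{FiniteOut} hold: every non-adjacent pair has empty common link and each component contains a member of the pair; $\Gamma\setminus\st(x)=\{u\}$ is connected; and there is at most one infinite-order vertex. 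You can also see it directly: a partial conjugation of one abelian free factor by an element $b$ of the other equals the inner automorphism by $b$. Virtual freeness is no obstruction either: $\mathbb{Z}/2\ast\mathbb{Z}/3\cong\mathrm{PSL}_2(\mathbb{Z})$ contains $F_2$ with finite index, yet $\Out(\mathrm{PSL}_2(\mathbb{Z}))\cong\mathbb{Z}/2$. Infinite $\Out$ of a finite-index subgroup says nothing about $\Out$ of the group. So (1) is false as stated. The list must also contain $\mathbb{Z}/o(u)\ast\mathbb{Z}/o(v)$, $\mathbb{Z}/o(u)\ast(\mathbb{Z}\times\mathbb{Z}/o(v))$ and $(\mathbb{Z}\times\mathbb{Z}/o(u))\ast\mathbb{Z}/o(v)$. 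The paper's ``it easily follows'' has exactly this omission, so your suspicion was justified. The correct move is to enlarge the list (transvections only kill $\{u,v,x\}\sqcup\{y\}$), not to exclude these cases.

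\textbf{Parts (2) and (3).} In (2), the decisive case analysis stops at ``should force $|V(D)|\le 2$'', and your one concrete claim is inaccurate. Suppose some infinite-order $w\in D$ is adjacent to $v$. Then $a$, whose only neighbour is $v\in\st(w)$, is isolated in $\Gamma\setminus\st(w)$, so condition (2) forces $\Gamma\setminus\st(w)=\{a\}$, that is, $\st(w)=D\cup\{v\}$. Every $r\in D\setminus\{w\}$ then has $\lk(r)\subset\st(w)$, and condition (3) gives $D=\{w,u\}$. Otherwise $\lk(v)\cap D=\{u\}$. Either $\Gamma_u$ is connected, which forces $D=\{u\}$, or Lemma~\ref{LemmaDisconnected} applied to $u$ gives $D=\{u,b\}$. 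This split on whether $v$ has a neighbour in $D$ other than $u$ is the paper's argument, and it is what your sketch lacks; SILs between $a$ and $D$ play no role. In (3), your proof that neither $u$ nor $v$ is central rests on $u$ and $v$ being non-adjacent, which need not hold. Take $a,b,c$ of infinite order with edges $uv, ua, va, ub, vc, bc$. This graph satisfies every hypothesis, including those of Proposition~\ref{FiniteOut}, and has $u$ adjacent to $v$. The missing ingredient is that a vertex $w\in C_i$ adjacent to both $u$ and $v$ forces $C_i=\{w\}$. Indeed, $\bigsqcup_{j\neq i}C_j\subseteq\Gamma\setminus\st(w)$ is separated there from $C_i\setminus\st(w)$, so condition (2) gives $C_i\subseteq\st(w)$, and then condition (3) applies. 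Consequently, if $\st(u)=\Gamma$, a non-singleton $C_j$ (one exists) has no vertex adjacent to $v$, so it is a component of $\Gamma_u$, contradicting connectivity of $\Gamma_u$. Your singleton count, and your extension of transvections and partial conjugations from $C_j\cup\{u,v\}$, are at the same informal level as the paper's treatment of type $\mathcal{P}$.
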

\begin{proof}
	For the first statement, suppose by contradiction that $\Gamma$ is disconnected. By \cite[Lemma~2.17]{Escartin}, since $\Gamma$ has no SILs, it must be the disjoint union of two complete subgraphs. Since there cannot be infinite-order transvections there are at most one infinite labelled vertex in each connected components, thus it easily follows that $\mathcal{G}_\Gamma$ is either $\mathbb{Z}\ast\left(\mathbb{Z}/o(u)\mathbb{Z}\times\mathbb{Z}/o(v)\mathbb{Z}\right)$ or $\left(\mathbb{Z}\times\mathbb{Z}/o(u)\mathbb{Z}\right)\ast\left(\mathbb{Z}\times\mathbb{Z}/o(v)\mathbb{Z}\right)$.
	
	For $2.$,  by Lemma \ref{LemmaDisconnected} it follows that $\Gamma_v$ splits in two connected components, one containing $u$ and one having just one vertex. Call $C$ the component of $u$. There are two situations, let us first assume that there exists $u\neq w\in V(C)$ with $\{v,w\}\in E(\Gamma)$. Then $C=\st(w)\setminus\{v\}$. Hence $\lk(r)\subset\st(w)$ for each $r\in V(C)$. Since $\mathrm{Out}(\mathcal{G}_\Gamma)$ is finite that implies that $V(C)=\lbrace w,u\rbrace$. This lead us to two possible graphs, depending on whether $\lbrace u,w\rbrace\in E(\Gamma)$ or not. Now assume that there is no $u\neq w\in V(C)$ with $\{v,w\}\in E(\Gamma)$, so $\lk(v)\cap C=\{u\}$. We have two possibilities, if $\Gamma_u$ is connected then $V(C)=\{u\}$, and if $\Gamma_u$ is disconnected, by Lemma \ref{LemmaDisconnected}  we have  $\abs{V(C)}=2$. Hence, each case gives one of the four graphs of the statement.
	
	For the last statement assume that the decomposition of $\Gamma_{u,v}$ in connected components is $C_1\sqcup\dots\sqcup C_n$.  For each connected component $C_i$ we have two cases:
	\begin{itemize}
		\item If there exists $w\in V(C_i)$ such that $\{w,v\},\{w,u\}\in E(\Gamma)$ then, arguing as in Lemma \ref{LemmaDisconnected} it follows that $C_i=\{w\}$. Moreover, if there are two connected components $C_i=\{w\}$ and $C_j=\{r\}$ it follows that $\lk(w)\subset\st(r)$ contradicting that $\Out(\mathcal{G}_\Gamma)$ is finite. Hence there can only be one connected component with one vertex.
		\item If there is no $w\in V(C_i)$ as above then $\abs{V(C_i)}>1$, otherwise either $\Gamma_u$ or $\Gamma_v$ would be disconnected. Since $\Out(\mathcal{G}_\Gamma)$ is finite it follows that all partial conjugations and transvections, if any, are with respect to $u$ or $v$.
	\end{itemize}
	Therefore $\Gamma$ is a type $\mathcal{P}$ graph. By way of contradiction assume that $\mathrm{Z}(\mathcal{G}_\Gamma)\neq1$. Since $\{u,v\}$ disconnects $\Gamma$ that means that either $u\in\mathrm{Z}(\mathcal{G}_\Gamma)$ or $v\in\mathrm{Z}(\mathcal{G}_\Gamma)$. If $u\in\mathrm{Z}(\mathcal{G}_\Gamma)$ then $\st(u)=\Gamma$. Let $w\in V(C_i)$ and assume that $\abs{V(C_i)}>1$, then $\{v,w\}\notin E(\Gamma)$. This implies that $C_i$ is a connected component of $\Gamma_u$, which is a contradiction.
\end{proof}
\begin{theorem}\label{TheoremAut2} Let $\mathcal{G}_\Gamma$ be a graph product of cyclic groups with $V(\Gamma^{<\infty})=\{u,v\}$. Then: $\beta_1^{(2)}(\mathrm{Aut}(\mathcal{G}_\Gamma))\neq 0$ if and only if 
	
\begin{enumerate}
	\item Either $\mathcal{G}_\Gamma\in\{\mathbb{Z}^2\times \mathbb{Z}/o(v)\mathbb{Z}\times\mathbb{Z}/o(u)\mathbb{Z},\mathbb{Z}\ast\left(\mathbb{Z}/o(u)\mathbb{Z}\times\mathbb{Z}/o(v)\mathbb{Z}\right),\left(\mathbb{Z}\times\mathbb{Z}/o(u)\mathbb{Z}\right)\ast\left(\mathbb{Z}\times\mathbb{Z}/o(v)\mathbb{Z}\right)\}$ or
	\item $\mathcal{G}_\Gamma$ is of type $\mathcal{P}$ or
	\item $\Gamma$ is one of the following graphs
		$$\begin{tikzpicture}[main/.style = {draw, circle},node distance={15mm}] 
		\node[][main] (1)  {}; 
		\node[label={$v$}][main] (2) [right of=1] {};
		\node[][main] (3) [right of=2] {}; 
		\node[label={$u$}][main] (4) [right of=3] {};
		\draw[-] (1) -- (2);
		\draw[-] (2) -- (3);
		\draw[-] (3) -- (4);
	\end{tikzpicture}~~~~~~~~
	\begin{tikzpicture}[main/.style = {draw, circle},node distance={15mm}] 
		\node[][main] (1)  {}; 
		\node[label={$u$}][main] (2) [right of=1] {};
		\node[][main] (3) [right of=2] {}; 
		\node[label={$v$}][main] (4) [right of=3] {};
		\draw[-] (1) -- (2);
		\draw[-] (2) -- (3);
		\draw[-] (3) -- (4);
	\end{tikzpicture}$$
	$$\begin{tikzpicture}[main/.style = {draw, circle},node distance={15mm}] 
		\node[][main] (1)  {}; 
		\node[label={$v$}][main] (2) [right of=1] {};
		\node[label={$u$}][main] (3) [right of=2] {}; 
		\draw[-] (1) -- (2);
		\draw[-] (2) -- (3);
	\end{tikzpicture}~~~~~~~~\begin{tikzpicture}[main/.style = {draw, circle},node distance={15mm}] 
	\node[][main] (1)  {}; 
	\node[label={$u$}][main] (2) [right of=1] {};
	\node[label={$v$}][main] (3) [right of=2] {}; 
	\draw[-] (1) -- (2);
	\draw[-] (2) -- (3);
	\end{tikzpicture}~~~~~~~~\begin{tikzpicture}[main/.style = {draw, circle},node distance={15mm}] 
		\node[][main] (1)  {}; 
		\node[label={$v$}][main] (2) [right of=1] {};
		\node[label={$u$}][main] (3) [right of=2] {}; 
		\node[][main] (4) [right of=3] {};
		\draw[-] (1) -- (2);
		\draw[-] (2) -- (3);
		\draw[-] (3) -- (4);
	\end{tikzpicture}$$
\end{enumerate}
\end{theorem}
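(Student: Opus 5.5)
We follow the same strategy as in the proof of Theorem~\ref{Main4}. First, Proposition~\ref{FiniteAut} tells us when $\mathrm{Aut}(\mathcal{G}_\Gamma)$ is finite. With two finite-order vertices this happens exactly when $\Gamma$ is complete with at most one infinite vertex; then $\beta_1^{(2)}=0$, and these groups appear in none of the listed families. Otherwise we use the short exact sequence
$$1\to\mathcal{G}_\Gamma/\mathrm{Z}(\mathcal{G}_\Gamma)\to\mathrm{Aut}(\mathcal{G}_\Gamma)\to\mathrm{Out}(\mathcal{G}_\Gamma)\to 1.$$
If both ends are infinite, \cite[Theorem~7.2~(7)]{Luck} gives $\beta_1^{(2)}(\mathrm{Aut}(\mathcal{G}_\Gamma))=0$. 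If the inner automorphism group is finite, then $\Gamma$ is complete and $\mathcal{G}_\Gamma\cong\mathbb{Z}^k\times\mathbb{Z}/o(u)\mathbb{Z}\times\mathbb{Z}/o(v)\mathbb{Z}$. Here $\mathrm{Aut}(\mathbb{Z}^k)$ sits with finite index, so the Betti number is nonzero exactly for $k=2$, which gives the first group in item~1. We may therefore assume $\mathrm{Out}(\mathcal{G}_\Gamma)$ is finite, so that $\beta_1^{(2)}(\mathrm{Aut}(\mathcal{G}_\Gamma))$ is a positive multiple of $\beta_1^{(2)}(\mathcal{G}_\Gamma/\mathrm{Z}(\mathcal{G}_\Gamma))$.

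Next we run through the case division of Lemma~\ref{Disconnected2}.

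\emph{$\Gamma$ disconnected.} The lemma gives the two free products in item~1. Both are virtually free and non-elementary, so their $\beta_1^{(2)}$ is nonzero by the free product formula \cite[Theorem 4.15]{Kammeyer}.

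\emph{$\Gamma$ connected and $\Gamma_u$ or $\Gamma_v$ disconnected.} Up to swapping $u$ and $v$ we get the four graphs of Lemma~\ref{Disconnected2}(2). For each one we compute the center: it is trivial, or generated by a finite vertex whose star is everything. We then evaluate $\beta_1^{(2)}$ of $\mathcal{G}_\Gamma$, or of the quotient graph product $\mathcal{G}_{\Gamma\setminus \mathrm{Z}}$, using Theorem~\ref{Main2} in the nonadjacent case and Theorem~\ref{BettiClique} in the adjacent case. In these small paths one of $\Gamma_u,\Gamma_v,\Gamma_{uv}$ is disconnected. The formula should then give a positive value, except in the triangle-type graph, where the center kills it. Checking which orientations survive produces the five pictures of item~3, the two paths with roles of $u,v$ swapped being counted separately.

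\emph{$\Gamma,\Gamma_u,\Gamma_v$ all connected.} If $\Gamma_{uv}$ is connected, every term in Theorem~\ref{Main2} vanishes (or Theorem~\ref{BettiClique} applies if $u\sim v$). The quotient by the center is again a graph product with connected defining graphs, so $\beta_1^{(2)}=0$. If $\Gamma_{uv}$ is disconnected, Lemma~\ref{Disconnected2}(3) says $\Gamma$ is of type~$\mathcal{P}$ with trivial center. Then $\mathcal{G}_\Gamma$ has finite index in $\mathrm{Aut}$, and Theorem~\ref{Main2} gives
$$\beta_1^{(2)}=\beta_1^{(2)}(A_{\Gamma_{uv}})/(o(u)o(v))>0.$$
Conversely, for type $\mathcal{P}$ graphs we must show that $\mathrm{Out}$ is finite, or at least that the argument still yields nonvanishing. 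Otherwise we use the structure of type $\mathcal{P}$: the only potentially infinite generators live on finite vertices, hence have finite order, so $\mathrm{Out}$ is finite.

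The main obstacle is the converse direction: showing that every graph in items 2 and 3 really has finite $\mathrm{Out}(\mathcal{G}_\Gamma)$, so that the computed nonzero $\beta_1^{(2)}$ of the finite-index inner automorphism group transfers to $\mathrm{Aut}$. We would verify the three conditions of Proposition~\ref{FiniteOut} directly for the small graphs. For type $\mathcal{P}$ we would argue, as in the proof of Lemma~\ref{Disconnected2}, that any SIL or infinite-order transvection would have to involve only $u$ or $v$, and hence is excluded.
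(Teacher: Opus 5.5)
Your outline is the paper's own: finiteness of $\mathrm{Aut}$, then the $\mathrm{Inn}$--$\mathrm{Out}$ extension with L\"uck's Theorem 7.2(7), reduction to finite $\mathrm{Out}(\mathcal{G}_\Gamma)$, and the case split of Lemma~\ref{Disconnected2}. However, the statement is false as written, so it cannot be completed, and the step you flag as the main obstacle is exactly where it breaks.

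Take the first picture of item 3, the path $x,v,w,u$ with $o(x)=o(w)=\infty$. Here $\mathrm{lk}(x)=\{v\}\subset\mathrm{st}(w)$, so condition (3) of Proposition~\ref{FiniteOut} fails. The transvection $x\mapsto xw$ acts on $H_1(\mathcal{G}_\Gamma)/\mathrm{torsion}\cong\mathbb{Z}^2$ by a nontrivial unipotent matrix, so it has infinite order in $\mathrm{Out}(\mathcal{G}_\Gamma)$. No vertex has full star, so the centre is trivial and both ends of your extension are infinite. Your own first step then gives $\beta_1^{(2)}(\mathrm{Aut}(\mathcal{G}_\Gamma))=0$, and likewise for the mirror picture.

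Your reason for discarding the triangle graph is also wrong. There $\mathrm{Z}(\mathcal{G}_\Gamma)=\langle v\rangle$, and $\mathcal{G}_\Gamma/\mathrm{Z}(\mathcal{G}_\Gamma)\cong\mathbb{Z}\ast(\mathbb{Z}\times\mathbb{Z}/o(u)\mathbb{Z})$ has $\beta_1^{(2)}=1$, so the centre does not kill it. It drops out because of the same transvection $x\mapsto xw$, which removes the path as well.

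Several other steps fail too, all inherited from the paper.

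(i) For type $\mathcal{P}$ you use $\beta_1^{(2)}(\mathcal{G}_\Gamma)=\beta_1^{(2)}(A_{\Gamma_{uv}})/(o(u)o(v))$, but Theorem~\ref{Main2} is wrong when $u,v$ are not adjacent. Its proof uses $\beta_1^{(2)}(A_{\Gamma_{uv}})=\beta_1^{(2)}(A_{\Gamma_u})+\beta_1^{(2)}(A_{\Gamma_v})-\beta_1^{(2)}(A_\Gamma)$. This fails for the $4$-cycle with $u,v$ opposite, where $\mathcal{G}_\Gamma=(\mathbb{Z}/o(u)\mathbb{Z}\ast\mathbb{Z}/o(v)\mathbb{Z})\times F_2$ has $\beta_1^{(2)}=0$ by K\"unneth. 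Now take the $6$-cycle with $u,v$ opposite: it has finite $\mathrm{Out}$, trivial centre, and is of type $\mathcal{P}$ by Lemma~\ref{Disconnected2}(3). The paper's theorem for non-adjacent $u,v$ gives $\beta_1^{(2)}(\mathcal{G}_\Gamma)=0$, since $\Gamma,\Gamma_u,\Gamma_v$ are connected (Mayer--Vietoris over $\mathcal{G}_{\{u,v\}}$ confirms this). Hence $\beta_1^{(2)}(\mathrm{Aut}(\mathcal{G}_\Gamma))=0$, contradicting item 2.

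(ii) Your claim that the quotient by the centre is a graph product on a connected graph ignores a central vertex of infinite order. For the path $u,w,v$ with $o(w)=\infty$, we have $\mathcal{G}_\Gamma=\mathbb{Z}\times H$ with $H=\mathbb{Z}/o(u)\mathbb{Z}\ast\mathbb{Z}/o(v)\mathbb{Z}$ centreless with finite abelianisation. So $\mathrm{Out}(\mathcal{G}_\Gamma)\cong\mathbb{Z}/2\mathbb{Z}\times\mathrm{Out}(H)$ is finite, and $\beta_1^{(2)}(H)=1-\frac{1}{o(u)}-\frac{1}{o(v)}>0$ unless $o(u)=o(v)=2$. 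This graph is not on the list. The paper's claim $\mathrm{Z}(\mathcal{G}_\Gamma)\in\{\mathcal{G}_u,\mathcal{G}_v,\mathcal{G}_{u,v}\}$ fails here, because condition (3) only excludes an infinite central vertex when a second infinite vertex exists.

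(iii) In the disconnected case, Lemma~\ref{Disconnected2}(1) misses components consisting of a single finite vertex. The graph $\{u\}\sqcup\{x,v\}$ with $x$ adjacent to $v$ gives $\mathbb{Z}/o(u)\mathbb{Z}\ast(\mathbb{Z}\times\mathbb{Z}/o(v)\mathbb{Z})$. This group has finite $\mathrm{Out}$, trivial centre and $\beta_1^{(2)}=1-\frac{1}{o(u)}>0$, yet it is not on the list.

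The statement has to be corrected before any version of this argument can go through.
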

\begin{proof}
	By Proposition~\ref{FiniteAut}, the group $\mathrm{Aut}(\mathcal{G}_\Gamma)$ is finite if and only if $\Gamma$ is a complete graph on two or three vertices. Hence we may assume that we are not in these cases.
	
	Consider the short exact sequence
	$$1 \longrightarrow \mathcal{G}_\Gamma/\mathrm{Z}(\mathcal{G}_\Gamma)
	\longrightarrow \mathrm{Aut}(\mathcal{G}_\Gamma)
	\longrightarrow \mathrm{Out}(\mathcal{G}_\Gamma)
	\longrightarrow 1.$$
	If $\mathcal{G}_\Gamma/\mathrm{Z}(\mathcal{G}_\Gamma)$ is finite, then $\mathcal{G}_\Gamma=\mathrm{Z}(\mathcal{G}_\Gamma)$, so 
	$\mathcal{G}_\Gamma \cong \mathbb{Z}^k\times  \mathbb{Z}/o(v)\mathbb{Z}\times \mathbb{Z}/o(u)\mathbb{Z}$. 
	In this case $\mathrm{Aut}(\mathbb{Z}^k)$ embeds as a finite-index subgroup of $\mathrm{Aut}(\mathbb{Z}^k\times  \mathbb{Z}/o(v)\mathbb{Z}\times \mathbb{Z}/o(u)\mathbb{Z})$, and 
	$\beta_1^{(2)}(\mathrm{Aut}(\mathbb{Z}^k))\neq 0$ if and only if $k=2$.
	
	If both $\mathcal{G}_\Gamma/\mathrm{Z}(\mathcal{G}_\Gamma)$ and $\mathrm{Out}(\mathcal{G}_\Gamma)$ are infinite, then by \cite[Theorem~7.2~(7)]{Luck} we have $\beta_1^{(2)}(\mathrm{Aut}(\mathcal{G}_\Gamma))=0$. 
	Therefore, we may assume that $\mathrm{Out}(\mathcal{G}_\Gamma)$ is finite, so that $\mathcal{G}_\Gamma/\mathrm{Z}(\mathcal{G}_\Gamma)$ has finite index in $\mathrm{Aut}(\mathcal{G}_\Gamma)$.
	
	If $\mathrm{Z}(\mathcal{G}_\Gamma)\neq 1$, then $\mathrm{Z}(\mathcal{G}_\Gamma)\in\{\mathcal{G}_v,\mathcal{G}_u,\mathcal{G}_{u,v}\}$; otherwise, by condition~(3) of Proposition~\ref{FiniteOut}, $\mathrm{Out}(\mathcal{G}_\Gamma)$ would be infinite. Moreover, the centre being non-trivial implies that $\Gamma$ is connected.
	\begin{itemize}
		\item If $\mathrm{Z}(\mathcal{G}_\Gamma)=\mathcal{G}_v$ then $\mathcal{G}_\Gamma/\mathrm{Z}(\mathcal{G}_\Gamma)\cong \mathcal{G}_{\Gamma_v}$ and by Theorem~\ref{BettiClique} $\beta_1^{(2)}(\mathcal{G}_{\Gamma_v})=0$ if and only if $\Gamma_v$ and $\Gamma_{u,v}$ are connected. $\Gamma$ and $\Gamma_u$ are connected, so if $\Gamma_v$ is connected then by Lemma \ref{Disconnected2}.3 $\Gamma_{u,v}$ must be connected. Hence, $\beta_1^{(2)}(\Aut(\mathcal{G}_\Gamma))\neq 0$ implies $\Gamma_v$ is disconnected. Applying Lemma \ref{Gamma connected}.1 it follows that $\mathcal{G}_{\Gamma_v}\cong\mathbb{Z}\ast\mathbb{Z}/o(u)\mathbb{Z}$ and thus $\Gamma$ is the graph $\begin{tikzpicture}[main/.style = {draw, circle},node distance={15mm}] 
			\node[][main] (1)  {}; 
			\node[label={$v$}][main] (2) [right of=1] {};
			\node[label={$u$}][main] (3) [right of=2] {}; 
			\draw[-] (1) -- (2);
			\draw[-] (2) -- (3);
		\end{tikzpicture}$.
		\item If  $\mathrm{Z}(\mathcal{G}_\Gamma)=\mathcal{G}_u$ it follows in the same way as the previous case that $\begin{tikzpicture}[main/.style = {draw, circle},node distance={15mm}] 
			\node[][main] (1)  {}; 
			\node[label={$u$}][main] (2) [right of=1] {};
			\node[label={$v$}][main] (3) [right of=2] {}; 
			\draw[-] (1) -- (2);
			\draw[-] (2) -- (3);
		\end{tikzpicture}$.
		\item If $\mathrm{Z}(\mathcal{G}_\Gamma)=\mathcal{G}_{u,v}$ then  $\mathcal{G}_\Gamma/\mathrm{Z}(\mathcal{G}_\Gamma)\cong A_{\Gamma_{u,v}}$, so $\beta_1^{(2)}(\Aut(\mathcal{G}_\Gamma))\neq 0$ if and only if $\Gamma_{u,v}$ is disconnected. But $\Gamma,\Gamma_u,\Gamma_v$ connected and $\Gamma_{u,v}$ disconnected implies by Lemma \ref{LemmaDisconnected} that $\mathrm{Z}(\mathcal{G}_\Gamma)=1$, contradiction.
	\end{itemize}
	If $\mathrm{Z}(\mathcal{G}_\Gamma)=1$, then $\mathcal{G}_\Gamma$ has finite index in $\mathrm{Aut}(\mathcal{G}_\Gamma)$. 
	By Theorem~\ref{BettiClique}, $\beta_1^{(2)}(\mathcal{G}_\Gamma)=0$ if and only if $\Gamma,\Gamma_u,\Gamma_v$ and $\Gamma_{u,v}$ are connected. 
	Thus, by Lemma~\ref{Disconnected2}, we deduce that $\beta_1^{(2)}(\mathrm{Aut}(\mathcal{G}_\Gamma))=0$ unless $\mathcal{G}_\Gamma$ is one of the groups in the statement.
\end{proof}

\end{document}